% change m derivatives to k derivatives

\documentclass[12pt,a4paper,reqno]{amsart}
\usepackage{amsmath}
\usepackage{amsfonts}
\usepackage{amssymb}
\usepackage{bm} 

\newcommand\R{{\mathbf{R}}}
\newcommand\C{{\mathbf{C}}}

\newcommand\D{{\mathbf{D}}}

\newcommand\e{{\mathbf{e}}}
\renewcommand\H{{\mathbf{H}}}

\renewcommand\S{{\mathcal{S}}}

\newcommand\bigO{{\mathcal{O}}}
\renewcommand\L{{\mathcal{L}}}
\newcommand\T{{\mathbf{T}}}
\newcommand\E{{\mathrm{E}}}

\newcommand\Sym{{\operatorname{Sym}}}
\newcommand\Hom{{\operatorname{Hom}}}
\newcommand\Gram{{\operatorname{Gram}}}
\newcommand\Dil{{\operatorname{Dil}}}
\newcommand\Trans{{\operatorname{Trans}}}

\newcommand\Rev{{\operatorname{Rev}}}
\newcommand\Rot{{\operatorname{Rot}}}
\newcommand\Frame{{\operatorname{Frame}}}
\newcommand\Energy{\dot{\mathcal{H}^1}}

\newcommand\loc{{\operatorname{loc}}}
\newcommand\eps{{\varepsilon}}

% \swapnumbers
% \pagestyle{headings}
\parindent = 0 pt
\parskip = 12 pt

\theoremstyle{plain}
  \newtheorem{theorem}[subsection]{Theorem}
  
  \newtheorem{claim}[subsection]{Claim}
  \newtheorem{proposition}[subsection]{Proposition}
  \newtheorem{lemma}[subsection]{Lemma}
  \newtheorem{corollary}[subsection]{Corollary}

\theoremstyle{remark}
  \newtheorem{remark}[subsection]{Remark}

\theoremstyle{definition}
  \newtheorem{definition}[subsection]{Definition}

\include{psfig}

\begin{document}

\title[Global regularity of wave maps IV]{Global regularity of wave maps IV.  Absence of stationary or self-similar solutions in the energy class}
\author{Terence Tao}
\address{Department of Mathematics, UCLA, Los Angeles CA 90095-1555}
\email{ tao@@math.ucla.edu}
\subjclass{35L70}

\vspace{-0.3in}
\begin{abstract}
Using the harmonic map heat flow, we construct an energy class $\Energy$ for wave maps $\phi$ from two-dimensional Minkowski space $\R^{1+2}$ to hyperbolic spaces $\H^m$, and then show (conditionally on a large data well-posedness claim for such wave maps) that no stationary, travelling, self-similar, or degenerate wave maps exist in this energy class.  These results form three of the five claims required in \cite{tao:heatwave} to prove global regularity for such wave maps.  (The conditional claim of large data well-posedness is one of the remaining claims required in \cite{tao:heatwave}.)
\end{abstract}

\maketitle

\section{Introduction}

\subsection{The energy space}

This paper is a technical component of a larger program \cite{tao:heatwave} to establish large data global regularity for the initial value problem for two-dimensional wave maps into hyperbolic spaces. 
A significant portion of this paper will, however, not concern wave maps \emph{per se}, but instead focus on the more mundane issue of constructing an energy space $\Energy$ to hold the initial data for such wave maps, and establishing the basic properties of that space.

To explain this, we quickly recall some notation from \cite{tao:heatwave}.  Fix $m \geq 1$; we allow all implied constants to depend on $m$.  Let
$\H = (\H^m,h) \equiv SO(m,1) / SO(m)$ be the $m$-dimensional \emph{hyperbolic space}, i.e. the simply-connected $m$-dimensional Riemannian manifold of constant negative sectional curvature $-1$.  We define \emph{classical data} to be a pair $\Phi = (\phi_0,\phi_1)$, where $\phi_0: \R^2 \to \H$ is a smooth map which differs from some constant $\phi_0(\infty)$ by a Schwartz function (embedding $\H$ in $\R^{1+m}$ to define the Schwartz space), and $\phi_1: \R^2 \to T \H$ is a Schwartz function such that $\phi_1(x)$ lies in the tangent plane $T_{\phi_0(x)} \H$ of $\H$ at $\phi_0(x)$ for every $x \in \R^2$, and let $\S$ be the space of all classical data; this can be given the structure of a topological space by using the Schwartz topology.  With regards to wave maps, one should interpret $\phi_0$ and $\phi_1$ as being the initial position and initial velocity respectively of a (classical) wave map at some time.  We observe the four symmetries
\begin{align}
\Trans_{x_0}: (\phi_0(x), \phi_1(x)) &\mapsto (\phi_0(x-x_0), \phi_1(x-x_0)) \label{space-trans-data}\\
\Rev: (\phi_0(x), \phi_1(x)) &\mapsto (\phi_0(x), -\phi_1(x)) \label{time-reverse-data}\\
\Rot_U: (\phi_0(x), \phi_1(x)) &\mapsto (U\phi_0(x), dU(\phi_0(x))(\phi_1(x))) \label{rotate-data}  \\
\Dil_\lambda: (\phi_0(x), \phi_1(x)) &\mapsto (\phi_0(\frac{x}{\lambda}), \frac{1}{\lambda} \phi_1(\frac{x}{\lambda})) \label{scaling-data}
\end{align}
of spatial translation, time reversal, target rotation, and dilation that act continuously on $\S$, where $x_0 \in \R^2$, $U \in SO(m,1)$, and $\lambda > 0$.

Given any classical initial data $\Phi = (\phi_0,\phi_1)$, one can form the \emph{stress-energy tensor} $\T_{\alpha \beta} = \T(\Phi)_{\alpha \beta}$ for $\alpha,\beta = 0,1,2$ by the formula
\begin{equation}\label{stress-def}
 \T_{\alpha \beta} = \Gram_{\alpha \beta} - \frac{1}{2} g_{\alpha \beta} \operatorname{tr}(\Gram)
\end{equation}
where $g_{\alpha \beta}$ is the Minkowski metric $dg^2 = -dt^2 + dx_1^2 + dx_2^2$ (with the usual raising, lowering, and summation conventions), $\operatorname{tr}(\Gram) := g^{\alpha \beta} \Gram_{\alpha\beta}$, and $\Gram$ is the \emph{Gram matrix}
$$ \Gram_{\alpha \beta} := \langle \partial_\alpha \phi_0, \partial_\beta \phi_0 \rangle_{\phi_0^* h}$$
with the convention that $\partial_0 \phi_0 := \phi_1$, and $\phi_0^* h \in \Gamma( \phi_0^*( \Sym^2 T^* \H) )$ is the pullback of the metric $h$ by $\phi_0$.  Note that one can also recover the Gram matrix from the stress-energy tensor by the formula
\begin{equation}\label{destress}
\langle \partial_\alpha \phi_0, \partial_\beta \phi_0 \rangle_{\phi_0^* h} = \T_{\alpha \beta} -
g_{\alpha \beta} \operatorname{tr}(\T).
\end{equation}
We also define the energy
\begin{equation}\label{energy-def}
 \E(\Phi) := \int_{\R^2} \T(\Phi)_{00}\ dx = \frac{1}{2} \int_{\R^2} |\nabla \phi_0|_{\phi_0^* h}^2 + |\phi_1|_{\phi_0^* h}^2\ dx.
 \end{equation}
The stress-energy tensor and the Gram matrix can be thought of as maps from $\S$ to $L^1( \R^2 \to \Sym(\R^{1+2})$.  It is not hard to see that these maps are continuous (since the topology on $\S$ is so strong).  Similarly, the energy functional $\E$ can be viewed as a continuous map from $\S$ to $[0,+\infty)$.

Our first main objective is a somewhat technical one, defining an energy space $\Energy$ that can be viewed as a completion of the classical data space $\S$ (once one quotients out by the rotation symmetry \eqref{rotate-data}), which respects the above symmetries, and for which the stress-energy tensor, Gram matrix, and energy can still be meaningfully defined.  More precisely, we will show

\begin{theorem}[Energy space]\label{energy-claim}  There exists a complete metric space $\Energy$ with a continuous map $\iota: \S \to \Energy$, that obeys the following properties:
\begin{itemize}
\item[(i)] $\iota(\S)$ is dense in $\Energy$.
\item[(ii)] $\iota$ is invariant under the action \eqref{rotate-data} of the rotation group $SO(m,1)$, thus $\iota(\Rot_U \Phi) = \iota(\Phi)$ for all $\Phi \in \S$. Conversely, if $\iota(\Phi) = \iota(\Psi)$, then $\Psi = \Rot_U(\Phi)$ for some $U \in SO(m,1)$.
\item[(iii)] The actions \eqref{space-trans-data}, \eqref{time-reverse-data}, \eqref{scaling-data} on $\S$ extend to a continuous isometric action on $\Energy$ (after quotienting out by rotations as in (ii)).
\item[(iv)] The Gram map $\Gram: \S \to L^1(\R^2 \to \operatorname{Sym}^2(\R^{1+2}))$ extends to a continuous map $\Gram: \Energy \to L^1(\R^2 \to \operatorname{Sym}^2(\R^{1+2}))$ (again after quotienting out by rotations as in (ii)).  In particular, the same is true for the stress-energy tensor $\T$ (by \eqref{stress-def}), and we have a continuous energy functional $\E: \Energy \to [0,+\infty)$.
\item[(v)] If $\Phi \in \Energy$ has zero energy, thus $\E(\Phi)=0$, then $\Phi$ is constant (or more precisely, $\Phi = \iota(p,0)$ for any constant $p \in \H$). 
\end{itemize}
\end{theorem}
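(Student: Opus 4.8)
\emph{Sketch of the construction.}
The plan is to realize $\Energy$ as a metric completion of $\S$, modulo the rotation symmetry \eqref{rotate-data}, with the metric built from the harmonic map heat flow and an associated caloric gauge in the manner of \cite{tao:heatwave}. Given classical data $\Phi=(\phi_0,\phi_1)\in\S$, first extend $\phi_0$ to a solution $\phi:\R^2\times[0,\infty)\to\H$ of the harmonic map heat flow $\partial_s\phi=\tau(\phi)$ (the tension field) with $\phi(0,\cdot)=\phi_0$. Since $\H^m$ is simply connected of nonpositive curvature and $\phi_0$ has finite energy, this flow exists globally, stays smooth and Schwartz-close to a constant for each fixed $s$, dissipates energy, and converges as $s\to\infty$ to a constant map $\phi(\infty)$ (with all spatial derivatives going to zero). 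Extend $\phi_1$ along the flow by the linearized (Jacobi) heat equation. Then build the caloric gauge: an orthonormal frame $e$ for $\phi^*T\H$ which is parallel in the $\partial_s$ direction and is $x$-independent at $s=\infty$; expressing $\partial_1\phi,\partial_2\phi$, the extended $\phi_1$, and $\partial_s\phi$ in this frame produces the $\R^m$-valued differentiated fields $\psi_1,\psi_2,\psi_0,\psi_s$ on $\R^2\times[0,\infty)$, which solve the usual heat and div-curl equations and, restricted to $s=0$, recover the data.

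I would then record the structural facts (following the heat-flow framework of \cite{tao:heatwave} and extending it as needed): \textbf{(a)} under $\Rot_U$ the frame pushes forward by $dU$ and the equivariant part of the gauge condition is preserved, so the fields $\psi=(\psi_0,\psi_1,\psi_2,\psi_s)$ are invariant up to at most a single constant $SO(m)$ rotation (the residual gauge freedom at $s=\infty$; for $m=1$ there is no freedom and $\psi$ is honestly invariant); \textbf{(b)} conversely, $\psi$ — together with the constant rotation — determines $(\phi_0,\phi_1)$ up to the $SO(m,1)$-action, via the reconstruction ODE/PDE for the frame and the map; \textbf{(c)} spatial translation acts on $\psi$ in the obvious way, time reversal $\Rev$ negates $\psi_0$, and $\Dil_\lambda$ acts by $(\psi_0,\psi_1,\psi_2)(s,x)\mapsto\lambda^{-1}(\psi_0,\psi_1,\psi_2)(\lambda^2 s,\lambda x)$ together with $\psi_s(s,x)\mapsto\lambda^{-2}\psi_s(\lambda^2 s,\lambda x)$; and \textbf{(d)} the Gram matrix is the bilinear expression $\Gram_{\alpha\beta}=\psi_\alpha\cdot\psi_\beta$ at $s=0$ and $\E(\Phi)=\tfrac12\int_{\R^2}(|\psi_0|^2+|\psi_1|^2+|\psi_2|^2)\,dx$ at $s=0$.

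Next I would fix a function space $X$ for the caloric-gauge representations, at the scaling of $\Energy$ so that by (c) the dilations — and likewise the translations and $\Rev$ — act isometrically, with $X$ controlling in particular the $L^2_x$ norm of $(\psi_0,\psi_1,\psi_2)$ at $s=0$; I then set $d(\Phi,\Psi):=\inf\|\psi[\Phi]-R\,\psi[\Psi]\|_X$, the infimum over the residual constant rotation $R\in SO(m)$, which by (a)--(b) descends to a genuine metric on $\S$ modulo $SO(m,1)$, and define $\Energy$ as its completion; $\iota$ is continuous because the heat-flow/gauge construction is continuous in the relevant topologies. Now (i) is immediate; (ii) follows from (a)--(b) together with the definiteness of $d$, extended by density; (iii) follows since (c) makes the symmetries $d$-isometric on $\iota(\S)$, so they extend continuously; (iv) follows because $\Gram$ and $\E$ are, by (d), continuous bilinear resp.\ quadratic functions of $(\psi_0,\psi_1,\psi_2)|_{s=0}\in L^2_x$ and $\|(\psi_0,\psi_1,\psi_2)|_{s=0}\|_{L^2_x}\lesssim\|\cdot\|_X$, so they extend continuously to $\Energy$; and (v) follows because $\E(\Phi)=0$ forces $(\psi_0,\psi_1,\psi_2)|_{s=0}=0$, whence by propagating with the heat equation and using $\psi_s=(\text{div-type expression in }\psi_x)$ the entire representation vanishes, so by the continuously-extended reconstruction (b) $\Phi=\iota(p,0)$ — and all such constant data are identified since $SO(m,1)$ acts transitively on $\H$.

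The main obstacle is the analytic substance behind (a)--(b) at energy regularity: showing that the harmonic map heat flow and the caloric gauge are not merely well-defined but \emph{Lipschitz stable} in the chosen norm — in particular that the reconstruction map $\psi\mapsto(\phi_0,\phi_1)$ extends continuously to the completion — since this is exactly what promotes $d$ from a pseudometric to a metric, realizes the converse in (ii), and upgrades the formal identities of (d) and the rigidity of (v) to genuine statements on $\Energy$; getting the $SO(m,1)$/$SO(m)$ equivariance bookkeeping of (a) precisely right is part of this. The global existence and $s\to\infty$ convergence of the heat flow for arbitrary (not merely small-energy) finite-energy data, needed so that the caloric gauge is globally defined and normalizable at $s=\infty$, is the other substantial input; both come from (or run parallel to) the heat-flow theory of \cite{tao:heatwave}.
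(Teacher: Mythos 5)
Your framework is the same as the paper's (heat flow, caloric gauge, pass to fields $\psi$, quotient by the residual $SO(m)$, define a metric and take the completion), but the proposal is unresolved at exactly the point where the paper has to work hardest, and it mischaracterizes what needs to be proven.

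The paper fixes a very specific space: $\iota(\Phi) = SO(m)\cdot(\psi_s, e^*\phi_1)$ lands in ${\mathcal L} = L^2(\R^+\times\R^2,\,ds\,dx) \times L^2(\R^2, \tfrac12 dx)$, and $\Energy$ is the closure of $\iota(\S)$ in $SO(m)\backslash{\mathcal L}$. This is important because the $\mathcal L$-norm then equals $\E(\Phi)^{1/2}$ \emph{exactly} (Lemma \ref{energy-ident}), which delivers (iii) and (v) cleanly. Your $X$ is left unspecified, and your claim that it ``controls the $L^2_x$ norm of $(\psi_0,\psi_1,\psi_2)$ at $s=0$'' either (a) builds that control into the definition of $X$ as an extra component of the norm — in which case you lose the exact energy identity and, more seriously, you still have to show that the augmented metric defines the \emph{same} completion as the natural one — or (b) is a theorem about $X={\mathcal L}$. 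In case (b) it is precisely the paper's Proposition \ref{moveit}: if $(\psi_s^{(n)},\psi_0^{(n)}(0,\cdot))$ is Cauchy in $\mathcal L$, then $\psi_x^{(n)}(0,\cdot)$ is Cauchy in $L^2_x$. This is the genuine analytic content behind (iv), and your write-up leaves it unproved. It is not a triviality: $\psi_x(0)$ sits in a gauge that is itself a nonlinear function of $\phi_0$, so when you form the ``difference'' $\psi_x^{(n)}(0)-\psi_x^{(n')}(0)$ you are comparing fields in two \emph{different} gauges, and one must integrate back from $s=\infty$ through the evolution equations \eqref{psi-evolve}, \eqref{sax} with a Gronwall argument, using the quantitative caloric estimates of Proposition \ref{abound} and Corollary \ref{corbound}, to show the discrepancy is controlled by $\|\psi_s^{(n)}-\psi_s^{(n')}\|_{L^2(ds\,dx)}$ and $\|\psi_0^{(n)}(0)-\psi_0^{(n')}(0)\|_{L^2_x}$.

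Your identification of the ``main obstacle'' is also off target. You point to Lipschitz stability of the reconstruction map $\psi\mapsto(\phi_0,\phi_1)$ extending continuously to the completion, but the paper neither proves this nor needs it (indeed, the remark closing Section \ref{energy-sec} explicitly declines to pursue such a ``Lebesgue perspective''). What is actually needed is much more modest: the converse of (ii) is a classical-data uniqueness statement for the $s$-ODE $\partial_s\phi_0 = e\psi_s$, $(\phi_0^*\nabla)_s e = 0$ with boundary data at $s=\infty$ (a Picard argument, requiring no continuity in the metric at all), while (iv) only requires the \emph{quadratic} map $\Phi\mapsto\Gram(\Phi)=\psi_\alpha(0)\cdot\psi_\beta(0)$ — not the full nonlinear reconstruction — to extend continuously. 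So the work to do is narrower and more specific than you suggest, but it is real, and it is the Gronwall analysis of Proposition \ref{moveit} that you would still need to carry out.
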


This result is the first of five claims required in the first paper \cite{tao:heatwave} in this program to establish large data global regularity of wave maps.  At present, the space $\Energy$ in the above theorem is only described abstractly; the concrete construction of this space will be important, however, for establishing the other four claims of the paper.

\begin{remark} Suppose we replaced the hyperbolic space target $\H$ by a Euclidean space target $\R^m$.  In this case, the energy space is simply the standard space $\dot H^1(\R^2 \to \R^m) \times L^2(\R^2 \to \R^m)$, with the Hilbert space structure given by the energy functional
$$ \E(\phi_0,\phi_1) := 
\frac{1}{2} \int_{\R^2} |\nabla \phi_0|^2 + |\phi_1|^2\ dx,$$
and with $\iota$ being the identity embedding.  The analogue of rotations \eqref{rotate-data} is that of translations $(\phi_0,\phi_1) \mapsto (\phi_0+c,\phi_1)$ by constants $c \in \R^m$; note that such translations do not affect the $\dot H^1$ norm of $\phi_0$.  All the claims of Theorem \ref{energy-claim} are then easily verified from the standard theory of Sobolev and Lebesgue spaces. The reader is encouraged to view the space $\Energy$ as a nonlinear counterpart to the standard Euclidean energy space.  
\end{remark}

\begin{remark}\label{perspectives}  In the Euclidean space setting, there are at least five ways in which one can view an element $\Phi$ of the energy space (or more generally, of other low-regularity function spaces, such as Sobolev spaces):
\begin{enumerate}
\item (Cauchy perspective) $\Phi$ is a  formal limit of (an equivalence class of) Cauchy sequences of classical data with respect to a suitable norm or metric.  
\item (Lebesgue perspective) $\Phi$ is a pair of functions $(\phi_0,\phi_1)$ such that $\phi_1$ and the (weak) derivative of $\phi_0$ are defined pointwise almost everywhere and are square integrable.  
\item (Schwartz perspective) $\Phi$ is a linear functional on the space of test functions, which is continuous if the test functions are assigned a suitable dual (negative regularity) Sobolev norm.
\item (Fourier perspective) $\Phi$ is a function whose Fourier transform obeys suitable weighted square-integrability estimates.  
\item (Littlewood-Paley perspective) $\Phi$ is a function whose Littlewood-Paley resolution (defined using the heat extension, harmonic extension, wavelet transform, or Littlewood-Paley projections) obeys suitable weighted square-integrability estimates.
\end{enumerate}
These five perspectives are of course well known to be equivalent in the case of Euclidean domains and targets, thanks to the general theory of Sobolev space.  However, in the case of hyperbolic targets, it seems difficult to make the second, third, and fourth perspectives work well; for instance, in the Lebesgue perspective it is difficult to decide what it means for a sequence $(\phi_0^{(n)},\phi_1^{(n)})$ to converge to a limit $(\phi_0,\phi_1)$ because there is no canonical way to define \emph{differences} $\phi_0^{(n)} - \phi_0$ and $\phi_1^{(n)} - \phi_1$ (also, the notion of a weak derivative becomes problematic).  We were also unable to discover a usable analogue of the notion of testing a function taking values in $\H$ or $T\H$ against a test function, or of taking a Fourier transform of such functions.  Hence we shall rely entirely on the first and fifth perspectives, using the non-linear Littlewood-Paley resolution arising from the harmonic map heat flow to define distances on classical data, and then taking metric completions.
\end{remark}

The proof of Theorem \ref{energy-claim} will occupy Sections \ref{caloric-sec}-\ref{energy-sec}.  The energy space $\Energy$ will be constructed using the harmonic map heat flow
\begin{equation}\label{heatflow}
 \partial_s \phi_0 = (\phi_0^* \nabla)_i \partial_i \phi_0
\end{equation}
(where we sum Roman indices $i,j$ over $1,2$), as well as its linearisation\footnote{We will explain our notation in later sections.  We are using the variable $s$ to denote the heat-temporal variable as we wish to reserve $t$ for the wave-temporal variable.  The heat flow \eqref{heatflow}, \eqref{heatflow-linearised} is essentially the gradient flow for the energy functional $\E$; we will exploit this fact via various useful energy identities and inequalities for this heat flow.}
\begin{equation}\label{heatflow-linearised}
\partial_s \phi_1 = (\phi_0^* \nabla)_i (\phi_0^* \nabla)_i \phi_1 - (\phi_1 \wedge \partial_i \phi_0) \partial_i \phi_0
\end{equation}
to achieve a ``nonlinear Littlewood-Paley resolution'' of the position $\phi_0$ and velocity $\phi_1$ respectively.  To motivate this, let us first return to the Euclidean setting, in which $\phi_0, \phi_1$ are just smooth maps from $\R^2$ to $\R^m$, with $\phi_0$ constant outside of a compact set and $\phi_1$ vanishing outside of a compact set for simplicity.  We can extend the former function to the upper half-plane $\R^+ \times \R^2 := \{ (s,x): s \geq 0, x \in \R^2\}$ by solving the heat equation
$$ \partial_s \phi_0 = \Delta \phi_0.$$
As is well known, there exist a unique smooth bounded extension of $\phi_0$ to this space.  (We can also extend $\phi_1$ to this space, though we will not need it here.)  We then recall the standard \emph{energy identity}
\begin{equation}\label{gf}
\E( \phi_0,\phi_1) = 
\int_0^\infty \int_{\R^2} |\partial_s \phi_0|^2 \ dx ds + \frac{1}{2} \int_{\R^2} |\phi_1(0,x)|^2\ dx
\end{equation}
which can be easily verified by either the Fourier transform, functional calculus, or by an integration by parts.  This energy identity can be viewed as an integrated version of the instantaneous energy identity
$$ \partial_s \int_{\R^2} |\nabla \phi_0|^2\ dx = - 2 \int_{\R^2} |\partial_s \phi_0|^2\ dx$$
The energy identity \eqref{gf}, together with linearity, provides an isometric embedding $\iota: \dot H^1(\R^2 \to \R^m) \times L^2(\R^2 \to \R^m) \mapsto {\mathcal L}$ of the energy space (using $\E$ to define a Hilbert space structure) into the \emph{Littlewood-Paley space}
\begin{equation}\label{ldef}
 {\mathcal L} := L^2( \R^+ \times \R^2 \to \R^m, dx ds ) \times L^2( \R^2 \to \R^m, \frac{1}{2} dx )
\end{equation}
given by the formula
$$ \iota: ( \phi_0, \phi_1 ) \mapsto ( \partial_s \phi_0, \phi_1(0,\cdot) ).$$
Thus one can identify the energy space with a certain subspace of functions on the upper half-space (and the plane $\R^2$) which are square-integrable with respect to an explicit measure.

It turns out that one can do something similar with maps into hyperbolic space $\H$.  Any classical data $(\phi_0,\phi_1)$ can be extended from $\R^2$ to $\R^+ \times \R^2$ via the equations \eqref{heatflow}, \eqref{heatflow-linearised}, thanks to the work of Eells and Sampson \cite{eells}; we shall reprove these facts here for the convenience of the reader.  Note that the negative curvature of the target manifold $\H$ is essential here, as it prevents the heat flow from developing singularities, or from asymptotically approaching a non-constant harmonic map.  The analogue of \eqref{gf} is the energy identity
\begin{equation}\label{gf-2}
\E(\phi_0,\phi_1) =
\int_0^\infty \int_{\R^2} |\partial_s \phi_0|_{\phi_0^* h}^2\ ds dx + \frac{1}{2} \int_{\R^2} |\phi_1|_{\phi_0^* h}^2\ dx
\end{equation}
(where the indices $i,j$ are summed over $i,j=1,2$).  We will prove this formula in Lemma \ref{energy-ident}.

The above formula suggests that there should be an analogue of the embedding $\iota$ into the Littlewood-Paley space ${\mathcal L}$, though we no longer expect $\iota$ to be exactly an isometry.  This turns out to indeed be the case; the key point is that we can interpret $\partial_s \phi_0$ and $\phi_1$ as lying in $\R^m$ (rather than in the tangent space $T_{\phi_0} \H$) by use of a canonical orthonormal frame (or \emph{gauge}) $e: \R^+ \times \R^2 \to \operatorname{Frame}( \phi_0^*(T\H) )$ for $T \H$ (or more precisely for the pullback bundle $\phi_0^*(T \H)$), namely the \emph{caloric gauge} from \cite{tao:forges}, defined by requiring $e$ to be parallel along the heat-temporal vector field $\partial_s$ and equal to a constant frame $e(\infty): \R^m \to T_{\phi_0(\infty)} \H$ at $s=\infty$.  This gauge is unique up to rotation symmetry (which is related to the rotation ambiguity in Theorem \ref{energy-claim}(ii)) and can be used to define an analogue of the Littlewood-Paley embedding $\iota$.  We shall then construct the energy space $\Energy$ by using $\iota$ and ${\mathcal L}$ to define a metric structure on $\S$ (quotiented out by rotations), and then taking metric completions.  The various claims from Theorem \ref{energy-claim} will then follow from the parabolic regularity, energy, and stability theory of the harmonic map heat flow, which we shall develop at length in this paper (this theory will also be used in the other papers \cite{tao:heatwave3}, \cite{tao:heatwave4}, \cite{tao:heatwave5} in this program).

\subsection{No light-speed travelling waves in the energy class}

Recall that the linear wave equation
$$ \partial^\alpha \partial_\alpha \phi = 0,$$
where $\phi: \R^{1+2} \to \R^m$ is smooth,
admits light-speed travelling wave solutions of the form
$$ \phi(t,x) = \phi_0( x - t v )$$
for any unit vector $v \in \R^2$, $|v|=1$, provided that $\phi_0$ is constant along all directions orthogonal to $v$.  The initial data $(\phi_0,\phi_1) := (\phi(0), \partial_t \phi_0)$ for such waves is then \emph{degenerate} in the sense that
$$|\phi_1 + v \cdot \nabla \phi_0|^2, |w \cdot \nabla \phi_0|^2 \equiv 0$$
whenever $w \in \R^2$ is orthogonal to $v$.  

On the other hand, it is easy to see (for instance via Plancherel's theorem) that no such waves exist in the energy class $\dot H^1(\R^2 \to \R^m) \times L^2(\R^2 \to \R^m)$, other than the constant waves $\phi_0 \equiv \hbox{const}$, $\phi_1 \equiv 0$, which have zero energy.  Our next result is to establish the analogous claim for hyperbolic space targets:

\begin{theorem}[No non-trivial shift-invariant finite energy data]\label{nondeg}  Let $v \in \R^2$ be such that $|v|=1$, and let $\Phi = (\phi_0,\phi_1) \in \Energy$ be such that\footnote{These expressions for energy class solutions are of course defined using the Gram tensor or stress-energy tensor using Theorem \ref{energy-claim}(iv), and thus exist as elements of $L^1(\R^2)$; in particular, they are only defined up to almost everywhere equivalence.} $|\phi_1 + v \cdot \nabla \phi_0|_{\phi_0^* h}^2, |w \cdot \nabla \phi_0|_{\phi_0^* h}^2 \equiv 0$ whenever $w \in \R^2$ is orthogonal to $v$.  Then $\Phi$ has zero energy.
\end{theorem}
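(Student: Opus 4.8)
The plan is to reduce the claim to the zero-energy statement in Theorem~\ref{energy-claim}(v) by showing that the hypotheses force the stress-energy tensor to vanish identically. Write $T_{\alpha\beta} = T(\Phi)_{\alpha\beta} \in L^1(\R^2)$ for the components of the stress-energy tensor, which exist by Theorem~\ref{energy-claim}(iv), and recall from \eqref{destress} that the Gram components $G_{\alpha\beta} := \langle \partial_\alpha \phi_0, \partial_\beta \phi_0\rangle_{\phi_0^* h}$ are recovered linearly from $T$; in particular $G_{\alpha\beta} \equiv 0$ for all $\alpha,\beta$ if and only if $T_{\alpha\beta} \equiv 0$ for all $\alpha,\beta$, and $\E(\Phi) = \int_{\R^2} T_{00}\, dx = \tfrac12 \int_{\R^2} (G_{11} + G_{22} + G_{00})\, dx$ by \eqref{energy-def}. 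So it suffices to show that the degeneracy hypotheses imply $G_{\alpha\beta} \equiv 0$ almost everywhere, and then invoke Theorem~\ref{energy-claim}(v).

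First I would exploit the pointwise positive semi-definiteness of the Gram matrix. Fix (almost every) $x \in \R^2$ and work in the inner product space $T_{\phi_0(x)}\H$; the five vectors in play are $\partial_0\phi_0 = \phi_1$, $\partial_1\phi_0$, $\partial_2\phi_0$. Choose coordinates on $\R^2$ so that $v = e_1$ and $w = e_2$; then the hypothesis $|w\cdot\nabla\phi_0|_{\phi_0^* h}^2 = 0$ says exactly $|\partial_2\phi_0|_{\phi_0^* h}^2 = G_{22} = 0$, hence $\partial_2\phi_0 = 0$ as a tangent vector, which immediately forces $G_{2\beta} = G_{\beta 2} = 0$ for all $\beta$ by Cauchy--Schwarz. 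The remaining hypothesis $|\phi_1 + v\cdot\nabla\phi_0|_{\phi_0^* h}^2 = 0$ says $|\phi_1 + \partial_1\phi_0|_{\phi_0^* h}^2 = 0$, i.e. $\phi_1 = -\partial_1\phi_0$ as tangent vectors. The key remaining task is to show $\partial_1\phi_0 = 0$ as well. At this stage, however, each individual $G_{\alpha\beta}$ can still be a nonzero $L^1$ function; what we have extracted is only $G_{00} = G_{11} = -G_{01}$ (all equal as $L^1$ functions), since $\phi_1 = -\partial_1\phi_0$. In the Euclidean case one concludes by a Plancherel/Fourier-support argument — a function in $\dot H^1(\R^2)$ whose gradient points in a single direction and which, after being restricted to depend on one variable, has $L^2$ gradient on $\R^2$, must be constant, because an $L^2(\R)$ function cannot be the derivative of a function that is also controlled in $\dot H^1(\R^2)$ when constant in the transverse direction. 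The hyperbolic version of this must instead be run through the Littlewood-Paley/caloric-gauge description of $\Energy$.

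I expect the main obstacle — and the part that genuinely uses the structure of $\Energy$ rather than pointwise linear algebra — to be this last step: ruling out a nonzero function constant in the $w$-direction. The natural approach is to run the harmonic map heat flow \eqref{heatflow} for the caloric extension of $\phi_0$ and observe that the symmetry/degeneracy structure is preserved: if $\phi_0$ is (in the appropriate weak sense) independent of the $x_2$-variable and $\phi_1 = -\partial_1\phi_0$, then by uniqueness of the heat extension the caloric extension $\phi_0(s,\cdot)$ is also independent of $x_2$ for all $s > 0$, so $\phi_0(s,x_1,x_2) = \psi(s,x_1)$ solves a one-dimensional heat flow into $\H$; but a finite-energy harmonic-map-heat-flow extension on $\R^+ \times \R$ with $L^2_{s,x}$ norm of $\partial_s\psi$ controlled by the (two-dimensional, hence formally infinite unless zero) energy forces $\partial_s\psi \equiv 0$, and then the energy identity \eqref{gf-2} shows the whole energy vanishes. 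Concretely, I would use the energy identity of Lemma~\ref{energy-ident} together with the invariance of $\iota$ and of the heat flow under the translation symmetry \eqref{space-trans-data} in the $w$-direction: $\Phi$ is fixed by every $\Trans_{t w}$, so its image under $\iota$ is translation-invariant in $\Energy$; but by Theorem~\ref{energy-claim}(iii) translations act isometrically and the only translation-invariant element of $L^1(\R^2 \to \Sym^2(\R^{1+2}))$ in the image of the Gram map is the zero element (a nonzero constant is not in $L^1(\R^2)$). Hence $\Gram(\Phi) = 0$ in $L^1$, so all $G_{\alpha\beta}$ vanish a.e., so $\E(\Phi) = 0$ by \eqref{energy-def}, as desired. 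The technical care needed is to make rigorous that "$\phi_1 = -v\cdot\nabla\phi_0$ and $w\cdot\nabla\phi_0 = 0$" does indeed imply full $w$-translation invariance of the $\Energy$-element (as opposed to merely of the Gram tensor), which is where one must descend to the Cauchy-sequence / caloric-gauge definition of $\Energy$ rather than argue purely on the level of the tensors; but once that is in hand the $L^1$-integrability obstruction closes the argument cleanly.
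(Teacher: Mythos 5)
The first part of your argument --- pointwise linear algebra on the Gram matrix showing $G_{22}=0 \Rightarrow G_{2\beta}=0$ and $\phi_1 = -v\cdot\nabla\phi_0$ forcing $G_{00}=G_{11}=-G_{01}$ --- is sound. The gap is in the core step, where you assert that ``$\Phi$ is fixed by every $\Trans_{tw}$.'' This is precisely the conclusion in disguise (the translation-invariant elements of $\Energy$ are the constants, by the same $L^1$-integrability obstruction you are trying to invoke), and nothing in your outline actually produces it. Knowing that the Gram tensor components $G_{22}$ and $G_{2\beta}$ vanish as elements of $L^1(\R^2)$ does \emph{not} directly yield $w$-translation invariance of the $\Energy$-element: the map $\iota$ is defined via the caloric gauge, and $\Energy$-elements are not functions $\phi_0$ on which you can act pointwise; the invariance of $\iota$ under $\Trans_{x_0}$ is an equivariance property, not an invariance of $\Phi$. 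Your acknowledgement that this requires ``descending to the Cauchy-sequence / caloric-gauge definition'' correctly names where the real work is, but that work is the entire theorem, and the translation-invariance route is not how the paper closes it.

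The paper's argument is genuinely different, and avoids any translation-invariance claim: it approximates $\Phi$ by classical data $(\phi_0^{(n)},\phi_1^{(n)})$, applies the caloric gauge, and works at fixed heat-time $s>0$ where everything is smooth. From $\|\psi^{(n)}_2(0)\|_{L^2}\to 0$ and covariant parabolic regularity (Lemma \ref{uheat-est}) it gets $\|D_x\psi^{(n)}_2(s)\|_{L^2}\to 0$; the zero-torsion identity \eqref{zerotor-frame} converts this into $\|D_2\psi^{(n)}_1(s)\|_{L^2}\to 0$, and then the diamagnetic inequality plus a one-dimensional Poincar\'e inequality (transverse to $v$) yields $\|\psi^{(n)}_1(s)\|_{L^2_{\text{loc}}}\to 0$. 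Combining with $\psi_s = D_i\psi_i$ gives $\psi^{(\infty)}_s\equiv 0$, and the energy identity finishes. The one-dimensional Poincar\'e inequality plays the role that the ``infinite transverse measure'' heuristic plays in your Euclidean sketch, but it is a quantitative estimate applicable to the regularized heat-flow fields, which is exactly the tool your outline is missing. If you want to salvage the translation-invariance idea, you would need to prove translation invariance of $(\psi_s,\psi_x,A_x)$ at positive $s$ from the hypotheses --- at which point you would essentially be rederiving the Poincar\'e-type estimate anyway.
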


This theorem will be proven in Section \ref{nondeg-sec}; again our main tool will be the harmonic map heat flow.  It is another of the five claims required in \cite{tao:heatwave} to establish the global regularity of wave maps.

\subsection{Wave maps}

Define a \emph{classical wave map} to be a pair $(\phi,I)$, where $I$ is a time interval and $\phi: I \times \R^2 \to \H$ is a smooth map which differs from a constant $\phi(\infty) \in \H$ by a Schwartz function in space, which obeys the equation
\begin{equation}\label{cov}
 (\phi^*\nabla)^\alpha \partial_\alpha \phi = 0.
\end{equation}
Observe that for any time $t \in I$, the data $\phi[t] := (\phi(t), \partial_t \phi(t))$ lies in $\S$, and indeed $\phi$ can be viewed as a smooth curve $\phi: I \to \S$.  We refer to \cite{kman.barrett}, \cite{kman.selberg:survey}, \cite{shatah-struwe}, \cite{struwe.barrett}, \cite{tataru:survey}, \cite[Chapter 6]{tao:cbms}, \cite{rod}, \cite{krieger:survey} for surveys of the initial value problem for wave maps, which is of course the primary concern of this project.

In \cite{tao:heatwave3} we shall establish the following local well-posedness result (which, incidentally, is another of the five claims required in \cite{tao:heatwave}):

\begin{claim}[Large data local-wellposedness in the energy space]\label{lwp-claim}  For every time $t_0 \in \R$ and every initial data $\Phi_0 \in \Energy$ there exists a \emph{maximal lifespan} $I \subset \R$, and a \emph{maximal Cauchy development} $\phi: t \mapsto \phi[t]$ from $I \to \Energy$, which obeys the following properties:
\begin{itemize}
\item[(i)] (Local existence) $I$ is an open interval containing $t_0$.
\item[(ii)] (Strong solution) $\phi: I \to \Energy$ is continuous.
\item[(iii)] (Persistence of regularity) If $\Phi_0 = \iota( \tilde \Phi_0 )$ for some classical data $\tilde \Phi_0$, then there exists a classical wave map $(\tilde \phi,I)$ with initial data $\tilde \phi[t_0] = \tilde \Phi_0$ such that $\phi[t] = \iota(\tilde \phi[t])$ for all $t \in I$.
\item[(iv)] (Continuous dependence)  If $\Phi_{0,n}$ is a sequence of data in $\Energy$ converging to a limit $\Phi_{0,\infty}$, and $\phi_n: I_n \to \Energy$ and $\phi_\infty: I_\infty \to \Energy$ are the associated maximal Cauchy developments on the associated maximal lifespans, then for every compact subinterval $K$ of $I_\infty$, we have $K \subset I_n$ for all sufficiently large $n$, and $\phi_n$ converges uniformly to $\phi$ on $K$ in the $\Energy$ topology.
\item[(v)] (Maximality)  If $t_* \in \R$ is a finite endpoint of $I$, then $\phi(t)$ has no convergent subsequence in $\Energy$ as $t \to t_*$.
\end{itemize}
\end{claim}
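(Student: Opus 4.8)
The plan --- which is the one executed in detail in \cite{tao:heatwave3} --- is to renormalise the wave map using the caloric gauge of \cite{tao:forges} and then run a Picard-type iteration in dispersive function spaces, using the energy identity \eqref{gf-2} (made rigorous at the level of $\Energy$ by Theorem \ref{energy-claim}(iv)) as the sole source of large-data a priori control. First I would dispose of the classical case: if $\Phi_0 = \iota(\tilde\Phi_0)$ with $\tilde\Phi_0 \in \S$, then \eqref{cov} is a quasilinear wave equation for a map whose difference from a constant is Schwartz, so a classical wave map $(\tilde\phi, I_{\mathrm{cl}})$ exists on some interval by standard energy methods; the substantive task is to propagate a bound on the $\Energy$-norm of $\tilde\phi[t]$ so that $I_{\mathrm{cl}}$ can be taken as large as claimed. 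To that end, at each wave-time $t$ one extends $\tilde\phi(t)$ in the heat-time variable $s$ via the harmonic map heat flow \eqref{heatflow}, builds the caloric gauge $e$ (parallel along $\partial_s$, equal to a constant frame at $s=\infty$), and forms the $\R^m$-valued ``heat-flow-resolved differentiated fields'' $\psi_s := e^*\partial_s\phi$, together with $\psi_t,\psi_1,\psi_2$ and the connection coefficients $A_s,A_t,A_1,A_2$, all recovered from $\psi_s$ by integrating the gauge equations downward in $s$ from $s=\infty$. The energy identity \eqref{gf-2} then supplies the master estimate $\int_0^\infty \int_{\R^2} |\psi_s|^2\,dx\,ds \lesssim \E(\Phi_0)$.

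Next I would write down the evolution system: combining \eqref{cov} with \eqref{heatflow} in the caloric gauge turns the wave map into a coupled parabolic--hyperbolic system in which each $\psi_i$ obeys a nonlinear wave equation $\partial^\alpha\partial_\alpha \psi_i = \mathcal{N}(\psi,A)$ whose quadratic part consists of \emph{null forms}, while the $A_\alpha,A_s$ solve heat-type ODEs in $s$ driven by $\psi$ and are therefore parabolically smoothed. One then sets up the $S^1$/$N^1$ family of spaces (energy, Strichartz, $X^{s,b}$, and null-frame norms, organised dyadically in spatial frequency) augmented by a weighted-in-$s$ ``caloric'' component carrying the heat-flow resolution, and runs the iteration frequency-by-frequency against a frequency envelope $c_k$ with $\|c_k\|_{\ell^2_k} \lesssim \E(\Phi_0)^{1/2}$. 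On a short enough interval $I \ni t_0$ --- whose length may depend on the \emph{profile} of $\Phi_0$ and not merely on $\E(\Phi_0)$, since the claim is local rather than global --- the solution map for the $\psi$-system is a contraction in these spaces; reconstructing $\phi$ from $\psi$ by integrating the frame and gauge equations yields the classical wave map, and the same estimates give uniqueness, so that (i)--(iii) hold in the classical case. Density of $\iota(\S)$ (Theorem \ref{energy-claim}(i)) together with the resulting Lipschitz bounds lets one pass to an arbitrary $\Phi_0 \in \Energy$; defining $I$ as the union of all open intervals carrying such a strong $\Energy$-solution gives the maximal lifespan and maximal Cauchy development, and properties (i)--(iv) are then read off from openness of the iteration, uniform convergence of the Picard scheme, and the stability estimates. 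Property (v) is forced afterwards: if $\phi(t)$ had a subsequence converging in $\Energy$ as $t$ approached a finite endpoint $t_*$ of $I$, one could take $t_*$ as a fresh initial time and continue the solution past $t_*$ by (i)--(iv), contradicting maximality.

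The step I expect to be the main obstacle is the multilinear estimate underpinning the iteration: the null-form and connection-coefficient bounds, schematically $\|\mathcal{N}(\psi,A)\|_{N^1} \lesssim (\text{envelope-weighted products of }\|\psi\|_{S^1})$, must hold \emph{uniformly} in the heat-time $s$ and with constants independent of $\E(\Phi_0)$ --- the energy entering only through the envelope, and the $s$-integration being absorbed by the parabolic weight. This is where essentially all of the genuine difficulty of two-dimensional wave maps is concentrated: one must exploit the null structure to beat the logarithmic failure of the endpoint energy--Strichartz estimate, control the high-low frequency interactions precisely by the parabolic smoothing of the $A_\alpha$, and verify that the caloric gauge --- whose quantitative good behaviour is exactly what the energy-space construction of this paper is designed to supply --- is compatible with the dispersive spaces. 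A secondary, essentially organisational, obstacle is keeping the three scales (heat-time $s$, wave-time $t$, and dyadic frequency) simultaneously under control, so that the function spaces close under heat flow, wave evolution, and gauge reconstruction all at once.
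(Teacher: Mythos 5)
The paper does not prove Claim \ref{lwp-claim}: immediately after stating it, the text reads ``We will not prove Claim \ref{lwp-claim} here,'' and the proof is deferred to the companion paper \cite{tao:heatwave3}. In the present paper the claim is used purely as a hypothesis --- Theorem \ref{selfsim} opens with ``Assume Claim \ref{lwp-claim} holds'' --- precisely so that the parabolic/elliptic results here can be combined conditionally with the hyperbolic result there. So there is no proof in this paper against which your sketch can be compared.

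That said, your outline is consistent with the guidance the paper does give: that the argument uses the harmonic map heat flow (the same device that builds $\Energy$) together with ``slight extensions of the delicate spacetime function spaces and estimates from \cite{tataru:wave2}, \cite{tao:wavemap2}.'' Your identification of the caloric-gauge renormalisation, the coupled $\psi$--$A$ system, the frequency-envelope Picard iteration in $S^1/N^1$-type spaces, and the null-form multilinear estimates as the genuine obstacle all match that description, and your treatment of the soft points (density of $\iota(\S)$ to pass from classical to energy-class data, maximal lifespan as a union of good intervals, (v) as a continuation argument from (i)--(iv)) is standard and sound. One small attribution to fix: \eqref{gf-2} is proven in this paper as Lemma \ref{energy-ident} for classical data, while Theorem \ref{energy-claim}(iv) supplies the continuity of the Gram/stress-energy map that lets the identity and its a priori bound pass to limits in $\Energy$ --- the two play different roles and should not be conflated. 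You are also right to flag that the lifespan in (i) may depend on the profile of $\Phi_0$ and not just on $\E(\Phi_0)$; a local theory with lifespan depending only on the energy would already yield global regularity by iteration, which is the conclusion of the whole program, not an input to it, so this dependence is not a defect but a necessity.
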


It should not be surprising that this result will be proven using the theory of the harmonic map heat flow, since this flow is used to construct the energy space.  It will also rely heavily on (slight extensions of) the delicate spacetime function spaces and estimates from \cite{tataru:wave2}, \cite{tao:wavemap2}.  We refer to maximal Cauchy developments, and any restriction of such developments to a smaller time interval, as \emph{energy class solutions}.

We will not prove Claim \ref{lwp-claim} here.  However, we shall use this claim to rule out two special types of energy class solutions which would otherwise cause great difficulty for the global regularity problem, namely travelling and self-similar solutions:

\begin{definition}[Travelling and self-similar solutions]\label{travel}  An energy class solution $\phi: I \to \Energy$ is said to be \emph{travelling} with velocity $v \in \R^2$ if 
\begin{equation}\label{static-def}
|\partial_t \phi + v \cdot \partial_x \phi|_{\phi^* h}^2 \equiv 0
\end{equation}
throughout $I \times \R^2$, where the quantity in \eqref{static-def} is of course defined via the Gram tensor (or stress-energy tensor).  Similarly, an energy class solution $\phi: I \to \Energy$ is said to be \emph{self-similar} if $\T = 0$ outside of the light cone $\{ (t,x): |x| \leq |t| \}$, and if
\begin{equation}\label{selfsimilar-def}
|t \partial_t \phi + x \cdot \partial_x \phi|_{\phi^* h}^2 \equiv 0
\end{equation}
throughout $I \times \R^2$.
\end{definition}

Our main result here is as follows.

\begin{theorem}[No non-trivial self-similar or travelling energy class solutions]\label{selfsim} Assume Claim \ref{lwp-claim} holds.  Then:  
\begin{itemize}
\item[(i)] The only energy class solutions $\phi: \R \to \Energy$ which are travelling with some velocity $v$ with $|v| < 1$ are the constant (i.e. zero-energy) solutions.
\item[(ii)] The only energy class solutions $\phi: (-\infty,0) \to \Energy$ which are self-similar are the constant solutions.
\end{itemize}
\end{theorem}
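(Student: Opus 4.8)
The plan is to exploit the fact that each of the two symmetry hypotheses collapses the wave map equation \eqref{cov} to an \emph{elliptic} problem — a harmonic map equation — and then to rule out non-trivial finite energy harmonic maps into $\H$ using the negative curvature of the target (equivalently, the harmonic map heat flow). Thus both parts reduce to the assertion: a finite energy harmonic map from $\R^2$ (or from the unit disk $\D := \{y \in \R^2 : |y|<1\}$ with constant boundary trace) into $\H = \H^m$ is constant; granting this, the energy vanishes.

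For (i), write $\Phi = (\phi_0,\phi_1)$. The travelling hypothesis \eqref{static-def} says $\partial_t\phi_0 = -v\cdot\nabla_x\phi_0$ as a section of the pullback bundle, so $\phi_0$ is constant along the timelike lines $t\mapsto(t,x_0+tv)$; since the solution is global on $\R\times\R^2$, this forces $\phi_0(t,x)=f(x-tv)$ with $f:=\phi_0(0,\cdot)$. Substituting into \eqref{cov} and using $\partial_t=-v\cdot\nabla$ on such maps turns the wave map equation into $(\delta_{ij}-v_iv_j)(f^*\nabla)_i\partial_j f = 0$ on $\R^2$. Since $|v|<1$, the symmetric matrix $\delta_{ij}-v_iv_j$ is positive definite (eigenvalues $1$ and $1-|v|^2$), so after the constant linear change of variables $y=(\delta-v\otimes v)^{-1/2}x$ — the spatial part of the Lorentz boost into the rest frame of the wave — this becomes exactly $(\psi^*\nabla)_i\partial_i\psi = 0$ on $\R^2$ for $\psi(y):=f((\delta-v\otimes v)^{1/2}y)$, with $\int_{\R^2}|\nabla\psi|^2\,dy \lesssim_v \int_{\R^2}|\nabla f|^2\,dx \le 2\E(\Phi)<\infty$. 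To finish, I would note that by H\'elein's regularity theorem a finite energy weakly harmonic map on a planar domain is smooth, so $\psi$ is a classical harmonic map; then the harmonic map heat flow \eqref{heatflow} started from $\psi$ is stationary in $s$ (its initial velocity is the tension field $(\psi^*\nabla)_i\partial_i\psi=0$, and uniqueness keeps it there), and the energy identity \eqref{gf-2} of Lemma \ref{energy-ident} applied to $(\psi,0)$ gives $\E(\psi,0)=\int_0^\infty\int_{\R^2}|\partial_s\psi|_{\psi^*h}^2\,ds\,dx + 0 = 0$; equivalently, by the Bochner formula and the nonpositivity of the curvature of $\H$, $|\nabla\psi|^2$ is subharmonic and integrable on $\R^2$, hence $\equiv 0$. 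Either way $\psi$, hence $\phi_0$, is constant and $\phi_1=-v\cdot\nabla\phi_0=0$, so $\E(\Phi)=0$.

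For (ii), the condition \eqref{selfsimilar-def} says $\phi$ is annihilated by the scaling field $t\partial_t+x\cdot\partial_x$, i.e. is homogeneous of degree zero; on $(-\infty,0)\times\R^2$ this means $\phi(t,x)=\Psi(y)$ with $y:=x/(-t)$, and the vanishing of $\T$ outside the light cone forces $\Psi\equiv q$ for some constant $q\in\H$ on $\{|y|\ge1\}$. Substituting $\phi(t,x)=\Psi(x/(-t))$ into \eqref{cov} reduces it, inside the backward light cone, to an elliptic equation for $\Psi$ on $\D$ with principal part governed by the positive definite matrix $\delta_{ij}-y_iy_j$; this is precisely the harmonic map equation for $\Psi$ regarded as a map from $\D$ with the Klein model metric — that is, from the hyperbolic plane $\H^2$ — into $\H$. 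Because harmonic maps in two dimensions depend only on the conformal class of the domain metric, and $(\D,\mathrm{Klein})$ is conformally the flat disk, $\Psi$ is a weakly harmonic map of the flat disk into $\H$ of finite energy $\int_\D|\nabla\Psi|^2\,dy<\infty$ (finiteness being exactly finiteness of $\E(\phi[t])$, with no contribution near $\partial\D$ since $\Psi\equiv q$ there). H\'elein's theorem gives interior smoothness, and the boundary regularity theory for harmonic maps with continuous (here constant) boundary data gives that $\Psi$ extends continuously to $\overline\D$ with $\Psi|_{\partial\D}=q$. Then $u:=\tfrac12 d_\H(\Psi(\cdot),q)^2$ is subharmonic on $\D$ (as $d_\H(\cdot,q)^2$ is convex, $\H$ being nonpositively curved), nonnegative, continuous on $\overline\D$, and vanishes on $\partial\D$; by the maximum principle $u\equiv 0$, so $\Psi\equiv q$, $\phi$ is constant, and $\E(\phi)=0$.

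Claim \ref{lwp-claim} enters first to give meaning to ``energy class solution'' and to conservation of energy, and more substantively to justify that the a priori only $\Energy$-valued, abstractly constructed solution actually solves \eqref{cov} distributionally in a way compatible with the symmetry hypotheses — this I would extract from persistence of regularity and continuous dependence (Claim \ref{lwp-claim}(iii)--(iv)) together with the continuity of the Gram/stress-energy map in Theorem \ref{energy-claim}(iv), which also makes sense of the a.e. identities \eqref{static-def}, \eqref{selfsimilar-def}, of the exterior vanishing of $\T$, and of ``finite energy'' for the reduced maps. The main obstacle I anticipate is exactly this regularity bookkeeping: upgrading the low-regularity, heat-flow-defined solution to a genuine (weakly, then classically) harmonic map to which the heat flow / Bochner / maximum principle arguments apply, and, in part (ii), checking carefully that the self-similar change of variables loses no energy near the light cone and that the reduced domain is conformally the flat disk so that two-dimensional conformal invariance of harmonic maps can be invoked. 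By contrast the final triviality statement for harmonic maps into $\H$ is soft, and is precisely where the negative curvature of the target — the same feature that tames the harmonic map heat flow used to build $\Energy$ — does the real work.
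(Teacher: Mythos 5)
Your formal reduction is the right picture — the paper agrees that travelling/self-similar wave maps are ``morally'' Lorentz-contracted or conformally-transformed harmonic maps, and that negative curvature of the target ultimately rules these out. But the proposed route through H\'elein's regularity theorem does not work in this setting, and this is precisely the difficulty the paper identifies as the content of the theorem (see the paragraph following Theorem~\ref{selfsim}: ``the claims \dots are well known in the context of classical wave maps; the main difficulty is to show that the proofs \dots are in some sense stable with respect to perturbations in the energy class'').

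The gap is that an element $\Phi\in\Energy$ is \emph{not} a pair of functions in $\dot H^1\times L^2$ with a weak formulation of \eqref{cov} to hand. The space $\Energy$ is built as a metric completion using the caloric-gauge Littlewood--Paley resolution (the ``Cauchy'' and ``Littlewood--Paley'' perspectives of Remark~\ref{perspectives}); the paper explicitly notes that the ``Lebesgue'' and ``Schwartz'' perspectives do not have a usable analogue for hyperbolic targets — there is no canonical notion of difference $\phi_0^{(n)}-\phi_0$, no weak derivative, and no testing against test functions. Consequently you cannot assert that the reduced map $\psi$ (or $\Psi$) is a \emph{weakly} harmonic map in $W^{1,2}$ to which H\'elein's theorem applies; the only pointwise-a.e.\ data available is the Gram matrix via Theorem~\ref{energy-claim}(iv), and the symmetry hypotheses \eqref{static-def}, \eqref{selfsimilar-def} only say certain Gram-tensor contractions vanish a.e., which is far short of the ODE/weak-PDE structure your reduction uses. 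The paper sidesteps this entirely by working \emph{quantitatively}: the key inputs are the nonlinear Poincar\'e inequalities (Theorem~\ref{nlpoin} for the plane, Corollary~\ref{nlpoin-hyper} for hyperbolic disks), which say that a smooth map whose tension field is merely \emph{small} in a weak norm has small energy. One then approximates the energy-class solution by classical wave maps (using Claim~\ref{lwp-claim} and Theorem~\ref{energy-claim}), regularises by running the heat flow for a small heat-time $1/S$, and crucially — via the wave-tension field $w=D^\alpha\psi_\alpha$ and the estimates of Section~\ref{heatwave-sec}, especially the uniform continuity of $w$ in time (Lemmas~\ref{wsmall}, \ref{kappalem}) — converts the time-differentiated smallness hypotheses into an elliptic smallness of the tension of the regularised map, at which point the nonlinear Poincar\'e inequality closes the argument. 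A soft ``exact harmonic maps into $\H$ are trivial'' statement would not suffice here, since the regularised maps are only approximately harmonic.

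There is a second, more localized gap in your treatment of (ii). You assert that the conformally transformed map $\Psi$ has finite (Euclidean) energy on the flat disk, ``with no contribution near $\partial\D$ since $\Psi\equiv q$ there.'' The Euclidean energy of the pushforward equals the \emph{hyperbolic} energy \eqref{hyperg}, whose angular-derivative term carries a weight $\sim(1-r^2)^{-1/2}$ blowing up at the boundary, and this is \emph{not} controlled by the ordinary finite energy of $\phi[t]$: with $\Psi\equiv q$ holding only a.e.\ in the exterior and $\nabla\Psi$ merely in $L^2$, the integral $\int_\D (1-r^2)^{-1/2}r^{-2}|\partial_\theta\Psi|^2$ can diverge. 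The paper handles this by a separate stress-energy (Hopf differential) argument giving the averaged angular decay \eqref{ehu}, which is exactly what makes \eqref{hyperg} verifiable; without that input, the conformal reduction to the flat disk does not come with a finite-energy harmonic map.

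=== END REVIEW ===
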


This result is yet another one of the five claims required\footnote{In \cite{tao:heatwave} an additional hypothesis was assumed that the solutions were almost periodic, but this hypothesis turns out to not be needed in our arguments.} in \cite{tao:heatwave}.  Thus the results in this paper (which constitute the ``elliptic'' and ``parabolic'' portions of the project) reduce the task of establishing large data global regularity for wave maps to just two claims, the local well-posedness claim (Claim \ref{lwp-claim}) and a further claim regarding the existence of non-trivial almost periodic maximal Cauchy developments in the event that global regularity breaks down (see \cite[Claim 1.16]{tao:heatwave}).  These last two ``hyperbolic'' claims will be the objective of the papers \cite{tao:heatwave3} and \cite{tao:heatwave4}, \cite{tao:heatwave5} respectively.

As remarked in \cite{tao:heatwave}, the claims in Theorem \ref{selfsim} are well known in the context of classical wave maps; the main difficulty is to show that the proofs in that case are in some sense stable with respect to perturbations in the energy class.  We prove part (i) of this theorem in Section \ref{travel-sec} and part (ii) in Section \ref{self-sec}.

\subsection{Organisation of the paper}

In Section \ref{notation-sec} we set out our basic notation on function spaces, the heat equation, and asymptotic notation, and record some standard parabolic regularity estimates for the heat equation as well as the Gagliardo-Nirenberg inequality; these inequalities will be used repeatedly throughout the paper.  In Section 3 we study the harmonic map heat flow and the caloric gauge from a \emph{qualitative} viewpoint - focusing on the existence theory and qualitative asymptotics, and relying heavily on the hypothesis that the initial data is classical.  Here we will also develop our basic geometric formalism for understanding maps into $\H$, in particular the use of differentiated fields $\psi_\alpha$ and connection fields $A_\alpha$ with respect to an orthonormal frame $e$.  The main objective of the section is to establish the existence and qualitative asymptotics of a caloric gauge for an arbitrary classical field, including fields that vary with respect to a time parameter $t$.  In Section 4 we then develop the \emph{quantitative} theory of these flows and gauges, in which the estimates are only allowed to depend on the energy of the data, rather than on smoother norms.  The estimates here can be viewed as nonlinear counterparts to the parabolic regularity estimates for the linear heat equation developed in Section \ref{linear-heat}.

In Section \ref{energy-sec} we use the above estimates to construct the energy space and establish Theorem \ref{energy-claim}.  The one delicate task here is to show that the Gram matrix operator $\Gram$ extends continuously from $\C$ to $\Energy$, which requres one to understand how the Gram matrix can be reconstructed from the Littlewood-Paley resolution given by the harmonic map heat flow.  Then, in Section \ref{nondeg-sec}, we establish Theorem \ref{nondeg}; the basic strategy here is to localise to a fixed ``frequency'' (or more precisely, to a fixed range of the heat-temporal parameter $s$) and exploit a one-dimensional Poincar\'e inequality in the direction orthogonal to $v$.

In the second half of the paper, we apply the above theory to wave maps, with the objective of establishing Theorem \ref{selfsim}.  We begin by establishing some basic estimates in Section \ref{heatwave-sec} on the heat flow when applied to a wave map with bounded energy, in particular obtaining some crucial boundedness and uniform continuity estimates on \emph{second time derivatives} of this flow, which we obtain by carefully measuring the extent to which the wave map equation and heat flow equation fail to commute.

Morally, travelling and self-similar wave maps should arise from harmonic maps (after applying a Lorentz transformation or a conformal transformation).  It is therefore necessary to rule out the existence of non-trivial harmonic maps into hyperbolic space $\H^m$.  This is easy for classical harmonic maps, but for applications to energy class wave maps we will need a robust version of this observation, in which the tension field $(\phi^* \nabla)_i \partial_i \phi$ is only assumed to be small in a rough norm, rather than vanshing completely.  Fortunately, the harmonic map heat flow machinery developed earlier can establish the results we need (after first applying the necessary change of variables to arrive at the point where one has an approximate harmonic map).  We establish these results in Section \ref{weakmap}.

In Section \ref{travel-sec} we use the above machinery to rule out non-trivial travelling wave maps in the energy class (the first part of Theorem \ref{selfsim}).  The basic idea is to use the heat flow (and the estimates in Sectino \ref{heatwave-sec}) to regularise the energy class wave map (or more precisely, a classical approximant to such maps) in order to gain enough regularity that one can justify the formal observation that travelling wave maps arise from harmonic maps, which can then be ruled out by the theory in Section \ref{weakmap}.

In Section \ref{self-sec} we use a similar strategy to rule out non-trivial self-similar wave maps in the energy class, thus establishing the second part of Theorem \ref{selfsim}.  Here there are some additional technical issues caused by some mild singularities at the light cone, requiring an additional stress-energy analysis (related to the holomorphicity of the Hopf differential for two-dimensional harmonic maps) to establish enough regularity near the boundary of the cone to ignore the singularity.

\subsection{Acknowledgements}

This project was started in 2001, while the author was a Clay Prize Fellow.  The author thanks Andrew Hassell and the Australian National University for their hospitality when a substantial portion of this work was initially conducted, and to Ben Andrews and Andrew Hassell for a crash course in Riemannian geometry and manifold embedding, and in particular to Ben Andrews for explaining the harmonic map heat flow.  The author also thanks Mark Keel for background material on wave maps, Daniel Tataru for sharing some valuable insights on multilinear estimates and function spaces, and to Igor Rodnianski and Jacob Sterbenz for valuable discussions.  The author is supported by NSF grant DMS-0649473 and a grant from the Macarthur Foundation.

\section{Notation and basic estimates}\label{notation-sec}

\subsection{Asymptotic notation}

We use $X = O(Y)$ or $X \lesssim Y$ to denote the estimate $|X| \leq CY$ for some absolute constant $C > 0$.  If we wish to permit $C$ to depend on some parameters, we shall denote this by subscripts, e.g. $X = O_k(Y)$ or $X \lesssim_k Y$ denotes the estimate $|X| \leq C_k Y$ where $C_k > 0$ depends on $k$.  On the other hand, we always allow the implied constants to depend on the dimension $m$ of the target hyperbolic space $\H^m$, which is fixed throughout the paper.

Now suppose we have an additional parameter $n$, with $X$ and $Y$ depending on $n$.  We write $X = o_{n \to \infty}(Y)$ to denote the statement that $|X| \leq c(n) Y$ for some $c(n)$ depending only on $n$ such that $c(n) \to 0$ as $n \to \infty$.  Similarly for $n$ replaced by other parameters (or $\infty$ replaced by a different limit).  Again, if $c(n)$ needs to depend on another parameter, such as $k$ (with $c(n) \to 0$ as $n \to \infty$ for each \emph{fixed} $k$), we denote this by subscripts, thus $X = o_{n \to \infty;k}(Y)$.  Conversely, if $c(n)$ does not depend on a parameter $k$, we say that the statement $X = o_{n \to \infty}(Y)$ holds \emph{uniformly} in $k$.

Note that parameters can be other mathematical objects than numbers.  For instance, the statement that a function $u: \R^2 \to \R$ is Schwartz is equivalent to the assertion that one has a bound of the form $|\partial_x^k u(x)| \lesssim_{j,k,u} \langle x \rangle^{-j}$ for all $j,k \geq 0$ and $x \in \R^2$, where $\langle x \rangle := (1+|x|^2)^{1/2}$.

\subsection{Function spaces}

We use the usual $L^p_x(\R^2)$ spaces, as well as the norm
$$ \| u \|_{C^k_x(\R^2)} := \sup_{0 \leq j \leq k} \sup_{x \in \R^2} |\partial_x^j u(x)|$$
and seminorm
$$ \| u \|_{\dot C^k_x(\R^2)} := \sup_{x \in \R^2} |\partial_x^k u(x)|$$
for the Banach space $C^k_x(\R^2)$ of $k$-times continuously differentiable functions, where $k=0,1,2,\ldots$ and $\partial_x = (\frac{\partial}{\partial x_1}, \frac{\partial}{\partial x_2})$ is the gradient operator.  (We will reserve the symbol $\nabla$ for the Levi-Civita connection on $H$.)

When analysing stationary or self-similar wave maps, it will be convenient to also use the norm
\begin{equation}\label{morrey}
\| f \|_{L^1_\loc(\R^2)} := \sup_{x_0 \in \R^2} \int_{|x-x_0| \leq 1} |f(x)|\ dx,
\end{equation}
as this norm is weak enough to be controlled by both $L^1_x(\R^2)$ and $L^2_x(\R^2)$.

We will rely frequently on various special cases of the \emph{Gagliardo-Nirenberg inequality}, such as
\begin{align}
\| \partial^k_x u \|_{L^p_x(\R^2)} &\lesssim_{p,k} \|u\|_{L^p_x(\R^2)}^{1/2} \|\partial_x^{2k} u\|_{L^p_x(\R^2)}^{1/2} \label{gag-1} \\
\| u \|_{L^\infty_x(\R^2)} &\lesssim \|u\|_{L^2_x(\R^2)}^{1/2} \|\partial_x^2 u\|_{L^2_x(\R^2)}^{1/2} \label{gag-2} \\
\| u \|_{L^2_x(\R^2)} &\lesssim \|u\|_{L^1_x(\R^2)}^{1/2} \|\partial_x^2 u\|_{L^1_x(\R^2)}^{1/2} \label{gag-6} \\
\| u \|_{L^\infty_x(\R^2)} &\lesssim
\| u \|_{L^2_x(\R^2)}^{1/3} \| \partial_x u \|_{L^4_x(\R^2)}^{2/3}\label{gag-3} \\
\| u \|_{L^4_x(\R^2)} &\lesssim
\| u \|_{L^2_x(\R^2)}^{1/2} \| \partial_x u \|_{L^2_x(\R^2)}^{1/2}\label{gag-4}
\end{align}
valid for all scalar or vector-valued Schwartz functions $u$ on $\R^2$ (we allow the constants here to depend on the dimension of the range of $u$) and all $k=0,1,2,\ldots$ and $1 \leq p \leq \infty$.  Such inequalities are standard in the literature, see e.g. \cite[Appendix A]{tao:cbms}.  

\begin{remark} We rely primarily on Gagliardo-Nirenberg inequalities rather than Sobolev inequalities in this paper due to the (well-known) failure of the endpoint Sobolev embeddings $\dot H^1_x(\R^2) \not \subset L^\infty_x(\R^2)$ and $\dot W^{1,1}_x(\R^2) \not \subset L^2_x(\R^2)$.
\end{remark}

\subsection{The linear heat equation}\label{linear-heat}

Throughout the paper we use $\Delta := \frac{\partial^2}{\partial x_1^2} + \frac{\partial^2}{\partial x_2^2}$ to denote the (spatial) Laplacian on $\R^2$. We use $e^{s\Delta}$ for $s > 0$ to denote the free heat propagator
\begin{equation}\label{heat-prop}
e^{s\Delta} u(x) := \frac{1}{4\pi s} \int_{\R^2} e^{-|x-y|^2/4s} u(y)\ dy.
\end{equation}
From Young's inequality we easily establish the parabolic regularity estimate
\begin{equation}\label{heat-lp}
\| \partial_x^k e^{s\Delta} u \|_{L^q_x(\R^2)} \lesssim_{p,q,k} s^{\frac{1}{q}-\frac{1}{p}-\frac{k}{2}} \| e^{s\Delta} u \|_{L^p_x(\R^2)} 
\end{equation}
valid for all $s > 0$, $k \geq 0$, and $1 \leq p \leq q \leq \infty$.  In particular we have
\begin{equation}\label{strict-reg}
\| e^{s\Delta} u \|_{\dot C^1_x(\R^2)} \lesssim s^{-1/2} \| u \|_{C^0_x(\R^2)} 
\end{equation}
and
\begin{equation}\label{nonstrict-reg}
\| e^{s\Delta} u \|_{C^1_x(\R^2)} \lesssim (1 + s^{-1/2}) \| u \|_{C^0_x(\R^2)}. 
\end{equation}
Since $L^1_\loc$ is translation invariant, we also see from \eqref{heat-prop} and Minkowski's inequality that
\begin{equation}\label{sdel}
\| e^{s\Delta} f\|_{L^1_\loc(\R^2)} \lesssim \|f\|_{L^1_\loc(\R^2)}
\end{equation}
for all $s > 0$.

We have the following variant of \eqref{heat-lp}:

\begin{lemma}[Parabolic Strichartz estimate]\label{ubang} For any $u \in L^2_x(\R^2)$ and $2 < p \leq \infty$ we have
$$ \int_0^\infty s^{-2/p} \| e^{s\Delta} u \|_{L^p_x(\R^2)}^2\ ds \lesssim_p \|u\|_{L^2_x(\R^2)}.$$
\end{lemma}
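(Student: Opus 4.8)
The plan is to reduce the estimate, for each fixed heat-time $s$, to a Sobolev embedding in the $x$ variable, and then to evaluate a single Plancherel integral in $\xi$; the endpoint $p=\infty$, where the relevant Sobolev embedding fails, will need a separate (but similar) argument. No duality or $TT^*$ argument is required. (Note that under the rescaling $u\mapsto u(\lambda\cdot)$ both sides of the claimed inequality carry a factor $\lambda^{-2}$, so the right-hand side should read $\|u\|_{L^2_x(\R^2)}^2$; this is what I prove.)

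For $2<p<\infty$, set $\sigma:=1-\tfrac2p\in(0,1)$, so that the non-endpoint Sobolev embedding $\dot H^\sigma_x(\R^2)\hookrightarrow L^p_x(\R^2)$ holds. For each $s>0$ the function $e^{s\Delta}u$ lies in $\dot H^\sigma_x(\R^2)$ (its Fourier transform is $e^{-s|\xi|^2}\hat u(\xi)$), so $\|e^{s\Delta}u\|_{L^p_x}\lesssim_p\||\nabla|^\sigma e^{s\Delta}u\|_{L^2_x}$. Squaring, multiplying by $s^{-2/p}$, integrating in $s$, applying Plancherel's theorem, and exchanging the $s$- and $\xi$-integrations by Tonelli's theorem (all integrands being non-negative), I obtain
\[
\int_0^\infty s^{-2/p}\,\|e^{s\Delta}u\|_{L^p_x}^2\,ds\ \lesssim_p\ \int_{\R^2}|\hat u(\xi)|^2\,|\xi|^{2\sigma}\Bigl(\int_0^\infty s^{-2/p}\,e^{-2s|\xi|^2}\,ds\Bigr)\,d\xi.
\]
By the substitution $t=2s|\xi|^2$ the inner integral equals $(2|\xi|^2)^{2/p-1}\,\Gamma(1-\tfrac2p)$, which is finite precisely because $\tfrac2p<1$, and the factor $|\xi|^{4/p-2}$ it produces cancels $|\xi|^{2\sigma}=|\xi|^{2-4/p}$; what remains is $2^{2/p-1}\Gamma(1-\tfrac2p)\int_{\R^2}|\hat u(\xi)|^2\,d\xi=2^{2/p-1}\Gamma(1-\tfrac2p)\,\|u\|_{L^2_x}^2$, which is the claim with the stated dependence on $p$.

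For the endpoint $p=\infty$ the Sobolev step is unavailable, since $\dot H^1_x(\R^2)\not\subset L^\infty_x(\R^2)$ --- the failure flagged in the remark above. Instead I bound $\|e^{s\Delta}u\|_{L^\infty_x}\le\|\widehat{e^{s\Delta}u}\|_{L^1_\xi}=\int_{\R^2}e^{-s|\xi|^2}|\hat u(\xi)|\,d\xi$ and apply Cauchy--Schwarz after the splitting $e^{-s|\xi|^2}=\bigl(e^{-s|\xi|^2}|\xi|^{-1}\bigr)^{1/2}\bigl(e^{-s|\xi|^2}|\xi|\bigr)^{1/2}$, the weight $|\xi|^{-1}$ being chosen so that $\int_{\R^2}e^{-s|\xi|^2}|\xi|^{-1}\,d\xi$ converges near $\xi=0$ and equals a constant times $s^{-1/2}$. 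This gives $\|e^{s\Delta}u\|_{L^\infty_x}^2\lesssim s^{-1/2}\int_{\R^2}e^{-s|\xi|^2}|\xi|\,|\hat u(\xi)|^2\,d\xi$; integrating in $s$ and using $\int_0^\infty s^{-1/2}e^{-s|\xi|^2}\,ds=\sqrt{\pi}\,|\xi|^{-1}$ (Tonelli once more) again collapses the bound to a constant times $\|u\|_{L^2_x}^2$.

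The one genuine obstacle is this endpoint $p=\infty$: the tidy Sobolev-plus-Plancherel argument degenerates there, and it must be replaced by the Hausdorff--Young-type bound $\|f\|_{L^\infty}\le\|\hat f\|_{L^1}$ together with a judiciously chosen weight in Cauchy--Schwarz. Everything else is routine; in particular the blow-up of the constant as $p\to2^+$ through $\Gamma(1-\tfrac2p)$ is consistent with the $\lesssim_p$ in the statement, and the only step that must not be overlooked is the justification of the interchanges of integration, which is immediate from Tonelli since all integrands are non-negative.
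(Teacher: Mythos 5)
Your proof is correct, and it takes a genuinely different route from the one in the paper. You work on the forward side of the estimate, fixing $s$, invoking the non-endpoint Sobolev embedding $\dot H^{1-2/p}_x(\R^2)\hookrightarrow L^p_x(\R^2)$, and then passing to the frequency side by Plancherel, where the $s$-integral evaluates to an exact Gamma-function constant and the powers of $|\xi|$ cancel; for $p=\infty$, where the Sobolev step is unavailable, you substitute the Hausdorff--Young bound $\|f\|_{L^\infty}\le\|\hat f\|_{L^1}$ together with a weighted Cauchy--Schwarz that again collapses after Tonelli. The paper instead runs a $TT^*$ duality argument: it reduces the lemma to a bilinear bound for $\langle e^{(s+s')\Delta}F(s),F(s')\rangle$, applies the smoothing estimate \eqref{heat-lp} to get a scalar kernel $(s+s')^{-(1-2/p)}s^{-1/p}(s')^{-1/p}$, and then dispatches the logarithmic borderline by a dyadic decomposition and Schur's test. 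Your approach is more elementary and more explicit (it gives the constant $2^{2/p-1}\Gamma(1-2/p)$ and makes visible exactly how the blow-up as $p\to 2^+$ enters), but it leans on Plancherel and hence on the self-adjointness and diagonalizability of $e^{s\Delta}$ on $L^2$; the paper's $TT^*$/Schur argument is slightly heavier but more portable, since it only needs the pointwise smoothing estimate \eqref{heat-lp} and so extends to settings (perturbed or non-translation-invariant propagators) where the Fourier transform is unavailable. You are also right that, by the scaling $u\mapsto u(\lambda\cdot)$, the right-hand side in the paper's statement should read $\|u\|_{L^2_x(\R^2)}^2$; this is an inconsequential typographical slip, and the squared norm is what the paper's own proof establishes.
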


\begin{remark} Note that a direct application of \eqref{heat-lp} would almost establish this claim except for a logarithmic divergence in the $s$ integral.  This ability to remove the logarithmic divergence is crucial for technical reasons at various points in this paper.
\end{remark}

\begin{proof} We use the $TT^*$ method.  By duality it suffices to show that
$$ \| \int_0^\infty s^{-1/p} e^{s\Delta} F(s)\ ds \|_{L^2_x(\R^2)}^2 \lesssim_p \int_0^\infty \|F(s)\|_{L^{p'}_x(\R^2)}^2\ ds$$
for all test functions $F$, where $p' := p/(p-1)$ is the dual exponent.  The left-hand side can be expanded as
$$ \int_0^\infty \int_0^\infty s^{-1/p} s^{-1/p'} \langle e^{(s+s')\Delta} F(s), F(s') \rangle_x\ ds ds'.$$
Applying \eqref{heat-lp} and writing $f(s) := \|F(s)\|_{L^{p'}_x(\R^2)}$, it suffices to show that
$$ \int_0^\infty \int_0^\infty \frac{ds ds'}{(s+s')^{1-2/p} s^{1/p} (s')^{1/p}} f(s) f(s')\ ds ds'
\lesssim_p \int_0^\infty f(s)^2\ ds.$$
By symmetry we can reduce to the region where $s' \leq s$.  If one decomposes into the dyadic ranges $2^{-n} s \leq s' \leq 2^{-n+1} s$, we can bound the left-hand side by
$$ \lesssim \sum_{n=1}^\infty 2^{n/p} 
\int_0^\infty \int_{2^{-n} s \leq s' \leq 2^{-n+1} s} \frac{ f(s') f(s) }{s}\ ds' ds.$$
By Schur's test, the summand is $O( 2^{n/p} 2^{-n/2} \int_0^\infty f(s)^2\ ds )$, and the claim follows.
\end{proof}

We recall \emph{Duhamel's formula}
\begin{equation}\label{duh}
 u(s_1) = e^{(s_1-s_0)\Delta} u(s_0) + \int_{s_0}^{s_1} e^{(s_1-s)\Delta} (\partial_s u - \Delta u)(s)\ ds
\end{equation}
for any continuous map $s \mapsto u(s)$ from the interval $[s_0,s_1]$ to the space of tempered distributions on $\R^2$, which can be either scalar or vector valued.  From this and \eqref{heat-prop} we immediately obtain

\begin{corollary}[Comparison principle]\label{compar}  For each $s \in [s_0,s_1]$, let $u(s)$ be a non-negative tempered distribution on $\R^2$ varying continuously in $s$ (in the tempered distributional topology) and such that $\partial_s u \leq \Delta u$ in the sense of distributions.  Then $u(s_1) \leq e^{(s_1-s_0)\Delta} u(s_0)$.
\end{corollary}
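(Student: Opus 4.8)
The plan is to deduce the corollary from Duhamel's formula \eqref{duh} together with the one essential feature of the heat propagator \eqref{heat-prop}: it is convolution against the strictly positive Gaussian kernel $\frac{1}{4\pi s}e^{-|x-y|^2/4s}$, and therefore carries non-positive distributions to non-positive distributions. Writing $f := \partial_s u - \Delta u$, the hypothesis is precisely that $f \le 0$ in the sense of distributions on the open slab $(s_0,s_1)\times\R^2$. Applying \eqref{duh} gives
\[ u(s_1) - e^{(s_1-s_0)\Delta} u(s_0) = \int_{s_0}^{s_1} e^{(s_1-s)\Delta} f(s)\ ds , \]
and since each integrand $e^{(s_1-s)\Delta} f(s)$ is non-positive, so is the integral; this is exactly the asserted inequality $u(s_1) \le e^{(s_1-s_0)\Delta} u(s_0)$. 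The entire content of the corollary is this observation, so the only real work is in making the right-hand side, and its sign, rigorous, given that $u$ is merely continuous — not differentiable — in $s$.

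To do this I would argue as follows. Fix a non-negative Schwartz function $\varphi$ on $\R^2$ and set $g(s) := \langle u(s), e^{(s_1-s)\Delta}\varphi \rangle$ for $s \in [s_0,s_1]$. Since $s \mapsto u(s)$ is continuous into the space of tempered distributions and $s \mapsto e^{(s_1-s)\Delta}\varphi$ is continuous into Schwartz space (including at the endpoint $s = s_1$, where it is just $\varphi$), the scalar function $g$ is continuous on $[s_0,s_1]$. By self-adjointness of $e^{t\Delta}$, the conclusion $u(s_1) \le e^{(s_1-s_0)\Delta}u(s_0)$ is equivalent to $g(s_1) \le g(s_0)$ for every such $\varphi$, so it suffices to show that $g$ is non-increasing.

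I would establish this by showing $g' \le 0$ in the sense of distributions on $(s_0,s_1)$. For any non-negative $\chi \in C_c^\infty((s_0,s_1))$, the function $\psi(s,x) := \chi(s)\,(e^{(s_1-s)\Delta}\varphi)(x)$ is a non-negative test function on the slab (here one uses positivity of the heat kernel), and the key computation — the backward heat equation $\partial_s [e^{(s_1-s)\Delta}\varphi] = -\Delta[e^{(s_1-s)\Delta}\varphi]$ — gives $\partial_s\psi + \Delta_x\psi = \chi'(s)\,e^{(s_1-s)\Delta}\varphi$. Pairing the hypothesis $\partial_s u - \Delta u \le 0$ against $\psi \ge 0$ and integrating by parts yields $\langle u, \partial_s\psi + \Delta_x\psi\rangle \ge 0$, i.e.\ $\int_{s_0}^{s_1} g(s)\chi'(s)\ ds \ge 0$; since this holds for all non-negative $\chi$, the distributional derivative $g'$ is non-positive, and a continuous function with non-positive distributional derivative is non-increasing. (An equally good route would be to mollify $u$ in the $s$ variable, apply the classical parabolic maximum principle to the resulting smooth subsolution, and pass to the limit.)

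I do not expect a serious obstacle: the statement really is ``immediate'' from \eqref{duh} and positivity of the Gaussian. The only points requiring attention are the low regularity in $s$ — which forces the sign verification to pass through distributional $s$-derivatives (or a mollification) rather than a naive differentiation of $g$ — and the bookkeeping of confirming that ``$\partial_s u \le \Delta u$ in the sense of distributions'' is read as pairing non-positively with non-negative test functions on $(s_0,s_1)\times\R^2$, which is precisely what the argument uses.
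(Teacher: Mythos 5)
Your argument is correct and matches the paper's intended proof exactly: the paper gives no proof beyond the one-line remark that the corollary follows immediately from Duhamel's formula \eqref{duh} and the positivity of the Gaussian kernel in \eqref{heat-prop}, which is precisely your first paragraph. The subsequent test-function/backward-heat-equation rigorization is a sound way to fill in the details that the paper leaves implicit, but it is elaborating the same idea rather than taking a different route.
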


\begin{remark}
This corollary will be particularly useful when combined with Lemma \ref{ubang}, \eqref{heat-prop}, or the $k=0$ case of \eqref{heat-lp}.
\end{remark}

\section{The harmonic map heat flow and the caloric gauge I.  Qualitative theory}\label{caloric-sec}

In this section, we study the qualitative properties of the harmonic map heat flow from classical data, and recall how this flow is used to define the \emph{caloric} gauge from \cite{tao:forges}.  The theory here is essentially already contained in the work of Eells and Sampson \cite{eells}, but for the convenience of the reader we give the full details here.

Throughout this section, all functions and vector fields are assumed to be smooth.  Our estimates here will be \emph{qualitative} in the sense that they will depend on smooth norms of the given data; in later sections we shall be much more interested in \emph{quantitative} estimates, which only depend on low-regularity quantities such as the total energy of the data.

\subsection{The geometry of hyperbolic space}

For the purposes of this qualitative analysis it is convenient to represent the hyperbolic space $\H = (\H^m,h)$ concretely as the upper unit hyperboloid
\begin{equation}\label{hyperdef}
 \H := \{ (t,x) \in \R^{1+m}: t = +\sqrt{1 + |x|^2} \} \subset \R^{1+m}
\end{equation}
(with the metric $dh^2$ induced from the Minkowski metric $dg^2 = -dt^2 + dx^2$ on $\R^{1+m}$).  In this case, the Levi-Civita connection can be written explicitly in coordinates as
\begin{equation}\label{nxy}
 \nabla_X Y(p) = \partial_X Y(p) - \langle X(p), Y(p) \rangle_h p
\end{equation}
for all vector fields $X, Y$ on $\H$ (thus $X(p) \in \R^{1+m}$ and $\langle X(p),p\rangle_{\R^{1+m}} = 0$ for all $p \in \H$) and all $p \in \H$.  We observe the \emph{zero-torsion property}
\begin{equation}\label{zerotor}
 \nabla_X \partial_Y f - \nabla_Y \partial_X f - \partial_{[X,Y]} f = 0
\end{equation}
for all $f: \H \to \R$ and vector fields $X,Y$, as well as the \emph{constant negative curvature property}
\begin{equation}\label{curv}
 \nabla_X \nabla Y Z - \nabla_Y \nabla_X Z - \nabla_{[X,Y]} Z = - (X \wedge Y) Z
\end{equation}
for all vector fields $X,Y,Z$, where $X \wedge Y \in \Gamma( \Hom( T\H \to T\H ) )$ is the anti-symmetric rank (1,1) tensor defined by the formula
$$ (X \wedge Y) Z := X \langle Y, Z \rangle_h - Y \langle X, Z \rangle_h.$$
We also observe the \emph{Leibniz rule}
\begin{equation}\label{leibnitz}
\partial_X h( Y, Z ) = h( \nabla_X Y, Z ) + h( Y, \nabla_X Z )
\end{equation}
for all vector fields $X,Y,Z$.

Now let $\phi: \R^d \to \H$ be a smooth map from a vector space $\R^d$ into hyperbolic space.  Then the tangent bundle $T\H$ over $\H$ pulls back under $\phi$ to a vector bundle $\phi^*(T\H)$; a section $\psi$ of this bundle is thus an assignment of a tangent vector $\psi(x) \in T_{\phi(x)} \H$ to every $x \in \R^d$; using the representation \eqref{hyperdef}, one can view $\psi$ as a map $\psi: \R^d \to \R^{1+m}$ such that $\langle \psi, \phi\rangle_{\R^{1+m}}=0$.  The Levi-Civita connection $\nabla$ on $T\H$ then pulls back to a connection $\phi^* \nabla$ on $\phi^*(T\H)$; using the standard coordinate vector fields $\partial_1,\ldots,\partial_d$ on $\R^d$, and using the representation \eqref{hyperdef}, the connection $\phi^* \nabla$ can be expressed explicitly in coordinates (using \eqref{nxy}) as
\begin{equation}\label{phicor}
(\phi^*\nabla)_i \psi(x) = \partial_i \psi(x) - \langle \psi(x), \partial_i \phi(x) \rangle_{\phi^* h} \phi(x).
\end{equation}
From \eqref{zerotor} (or \eqref{phicor}) we have the \emph{zero torsion property}
\begin{equation}\label{zerotor-phi}
 (\phi^*\nabla)_i \partial_j \phi = (\phi^*\nabla)_j \partial_i \phi
\end{equation}
while from \eqref{curv} (or \eqref{phicor}) we have the \emph{constant negative curvature property}
\begin{equation}\label{curv-phi}
(\phi^*\nabla)_i (\phi^*\nabla)_j \psi - (\phi^*\nabla)_j (\phi^*\nabla)_i \psi 
= - (\partial_i \phi \wedge \partial_j \phi) \psi
\end{equation}
for any section $\psi$ of $\phi^* T\H$, where $\partial_i \phi \wedge \partial_j \phi \in \Gamma( \Hom( \phi^* T\H \to \phi^* T\H ) )$ is the anti-symmetric rank $(1,1)$ tensor defined by the formula
$$(\partial_i \phi \wedge \partial_j \phi) \psi = \partial_i \phi \langle \partial_j \phi, \psi \rangle_{\phi^* h} - (\partial_j \phi \wedge \partial_i \phi) \psi.$$
Finally, from \eqref{leibnitz} (or \eqref{phicor}) we have the \emph{Leibniz rule}
\begin{equation}\label{leibnitz-phi}
\partial_i \langle \psi, \psi' \rangle_{\phi^* h} = \langle (\phi^* \nabla)_i \psi, \psi' \rangle_{\phi^* h} + \langle (\phi^* \nabla)_i \psi, \psi' \rangle_{\phi^* h}
\end{equation}
for any $\psi, \psi'$.  

\begin{remark} Of course, all the above discussion continues to hold if the vector space domain $\R^d$ is replaced by an open subset of a vector space, such as a spacetime slab $(t_0,t_1) \times \R^2 \subset \R^{1+2}$.
\end{remark}

\begin{remark} The formalism here is completely covariant with respect to the target $\H$.  Later on in this section (and for most of the paper) we shall instead work with respect to a gauge (an orthonormal frame), in which the derivative fields $\partial_j \phi$ are now replaced with fields $\psi_j$ taking values in $\R^m$, and the connection $(\phi^* \nabla)_j$ is represented by a matrix field $A_j$ taking values in $\mathfrak{so}(m)$.
\end{remark}

\subsection{Harmonic map heat flow}

A function $\phi: \R^+ \times \R^2 \to \H$ is said to be a \emph{harmonic map heat flow}, or \emph{heat flow} for short, if it obeys the equation
\begin{equation}\label{psphi}
\partial_s \phi = (\phi^* \nabla)_i \partial_i \phi
\end{equation}
where we parameterise $\R^+ \times \R^2$ by $(s,x_1,x_2)$, and $i$ ranges over $1,2$ with the usual summation conventions.  Using the representation \eqref{hyperdef} and \eqref{phicor}, we can express the harmonic map heat flow equation in coordinates as
\begin{equation}\label{pars}
 \partial_s \phi = \Delta \phi - |\partial_x \phi|_{\phi^* h}^2 \phi 
\end{equation}
where of course $\partial_x \phi := (\partial_1 \phi, \partial_2 \phi)$, and thus
$$ |\partial_x \phi|_{\phi^* h}^2 := \langle \partial_i \phi, \partial_i \phi \rangle_{\phi^* h}.$$

For each $k \geq 1$, define the \emph{energy densities} $\e_k$ of a heat flow by the formula
\begin{equation}\label{energy-dens-def}
\begin{split}
 \e_k &:= |(\phi^* \nabla)_x^{k-1} \partial_x \phi|_{\phi^* h}^2 \\
 &:= \left\langle (\phi^* \nabla)_{i_1} \ldots  (\phi^* \nabla)_{i_{k-1}} \partial_{i_k} \phi, (\phi^* \nabla)_{i_1} \ldots  (\phi^* \nabla)_{i_{k-1}} \partial_{i_k} \phi \right\rangle_{\phi^* h}
 \end{split}
\end{equation}
for $k \geq 1$, where $i_1,\ldots,i_k$ are summed over $1,2$ as usual.  The following estimates are crucial to us:

\begin{lemma}[Bochner-Weitzenb\"ock type identities]
Let $\phi$ be a heat flow.  Then we have
\begin{equation}\label{bochner}
\partial_s \e_1 = \Delta \e_1 - 2 \e_2 - |\partial_x \phi \wedge \partial_x \phi|_{\phi^* h}^2
\end{equation}
where the expression $|\partial_x \phi \wedge \partial_x \phi|_{\phi^* h}^2$ is the Hilbert-Schmidt norm of the operator $\nabla_i \phi \wedge \nabla_j \phi$ using the inner product $\phi^* h$, summed over $i,j$.  More generally, we have
\begin{equation}\label{psem}
\partial_s \e_k = \Delta \e_k - 2 \e_{k+1} + \sum_{a,b,c \geq 1: a+b+c=k+2} O_{k}( \e_a^{1/2} \e_b^{1/2} \e_c^{1/2} \e_k^{1/2} )
\end{equation}
for all $k \geq 1$.  
\end{lemma}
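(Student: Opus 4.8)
The plan is to establish the differentiated identities \eqref{bochner}, \eqref{psem} by a direct but careful computation, commuting $\partial_s$ past the spatial connection $\phi^*\nabla$ and using the curvature identity \eqref{curv-phi} to generate the lower-order terms. First I would record the basic commutation relations we need: since $\phi$ is a heat flow, $\partial_s \phi = (\phi^*\nabla)_i \partial_i \phi$, so differentiating and using zero torsion \eqref{zerotor-phi} gives $(\phi^*\nabla)_s \partial_j \phi = (\phi^*\nabla)_j \partial_s \phi = (\phi^*\nabla)_j (\phi^*\nabla)_i \partial_i \phi$. More generally, for any section $\psi$ of $\phi^*(T\H)$ one has the commutation law
\[
 (\phi^*\nabla)_s (\phi^*\nabla)_j \psi - (\phi^*\nabla)_j (\phi^*\nabla)_s \psi = -(\partial_s \phi \wedge \partial_j \phi)\psi = -\bigl((\phi^*\nabla)_i\partial_i\phi \,\wedge\, \partial_j\phi\bigr)\psi,
\]
which is the $s$-$j$ case of the curvature identity. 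I would treat $s$ and the spatial indices on an equal footing here, extending the domain to $\R^+\times\R^2$ and applying the formalism of the previous subsection verbatim (cf.\ the remark allowing arbitrary vector-space domains).

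For \eqref{bochner}, I would start from $\e_1 = \langle \partial_i\phi, \partial_i\phi\rangle_{\phi^*h}$ and apply the Leibniz rule \eqref{leibnitz-phi} twice in the $s$ direction: $\partial_s \e_1 = 2\langle (\phi^*\nabla)_s \partial_i\phi, \partial_i\phi\rangle = 2\langle (\phi^*\nabla)_i (\phi^*\nabla)_j\partial_j\phi, \partial_i\phi\rangle$. Separately, applying Leibniz twice in the spatial directions gives $\Delta\e_1 = 2\langle (\phi^*\nabla)_j(\phi^*\nabla)_j\partial_i\phi, \partial_i\phi\rangle + 2\langle (\phi^*\nabla)_j\partial_i\phi,(\phi^*\nabla)_j\partial_i\phi\rangle = 2\langle (\phi^*\nabla)_j(\phi^*\nabla)_j\partial_i\phi,\partial_i\phi\rangle + 2\e_2$. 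Subtracting, the difference $\partial_s\e_1 - \Delta\e_1 + 2\e_2$ equals $2\langle (\phi^*\nabla)_i(\phi^*\nabla)_j\partial_j\phi - (\phi^*\nabla)_j(\phi^*\nabla)_j\partial_i\phi, \partial_i\phi\rangle$; after one more use of zero torsion to rewrite $(\phi^*\nabla)_j\partial_i\phi = (\phi^*\nabla)_i\partial_j\phi$, this becomes a pure commutator $2\langle [(\phi^*\nabla)_i,(\phi^*\nabla)_j]\partial_j\phi,\partial_i\phi\rangle$, which \eqref{curv-phi} evaluates to $-2\langle (\partial_i\phi\wedge\partial_j\phi)\partial_j\phi,\partial_i\phi\rangle$. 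Expanding the wedge and comparing with the Hilbert--Schmidt norm of $\partial_i\phi\wedge\partial_j\phi$ yields exactly $-|\partial_x\phi\wedge\partial_x\phi|_{\phi^*h}^2$, giving \eqref{bochner}.

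For the general case \eqref{psem} I would induct on $k$, or equivalently compute directly with $\e_k = \langle (\phi^*\nabla)_x^{k-1}\partial_x\phi, (\phi^*\nabla)_x^{k-1}\partial_x\phi\rangle_{\phi^*h}$. Writing $\Psi := (\phi^*\nabla)_{i_1}\cdots(\phi^*\nabla)_{i_{k-1}}\partial_{i_k}\phi$, Leibniz gives $\partial_s\e_k = 2\langle (\phi^*\nabla)_s\Psi, \Psi\rangle$ and $\Delta\e_k = 2\langle (\phi^*\nabla)_j(\phi^*\nabla)_j\Psi,\Psi\rangle + 2\e_{k+1}$. The term $(\phi^*\nabla)_s\Psi$ is handled by commuting $(\phi^*\nabla)_s$ inward past each of the $k$ spatial derivatives; each commutation produces, via the $s$-$j$ curvature identity above, a term schematically of the form $(\text{curvature}) \cdot (\text{fewer derivatives of }\phi)$, where the curvature factor is $(\phi^*\nabla)_i\partial_i\phi\wedge\partial_x\phi$ — a product of a first-order and a second-order spatial derivative. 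Once $(\phi^*\nabla)_s$ has been moved all the way in, it hits $\partial_{i_k}\phi$ and becomes $(\phi^*\nabla)_{i_k}(\phi^*\nabla)_i\partial_i\phi$ by the heat flow equation and zero torsion; combined with the $\langle (\phi^*\nabla)_j(\phi^*\nabla)_j\Psi,\Psi\rangle$ term, one more round of commuting spatial derivatives past each other (again via \eqref{curv-phi}) produces the analogous curvature-times-lower-order corrections and leaves the principal terms canceling down to $-2\e_{k+1}$. Collecting all error terms: each is a product of four factors, each of which is $(\phi^*\nabla)_x^{a-1}\partial_x\phi$ for various $a\ge 1$ — three coming from expanding the curvature $\wedge$ times lower-order field and its pairing against $\Psi$, and the Cauchy--Schwarz/Hilbert--Schmidt bookkeeping shows the derivative counts sum to $a+b+c = k+2$ with the fourth factor being $\e_k^{1/2}$ (from the pairing against $\Psi$). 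Hence each error is $O_k(\e_a^{1/2}\e_b^{1/2}\e_c^{1/2}\e_k^{1/2})$ with $a+b+c=k+2$, $a,b,c\ge 1$, which is \eqref{psem}.

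The main obstacle I anticipate is purely bookkeeping: tracking, through $k$ successive commutations, exactly how the derivatives are distributed among the three factors of each curvature-generated error term, and verifying that the derivative count is always $k+2$ (equivalently, that the ``total number of derivatives'' is conserved: the left side $\partial_s\e_k$ has $2k$ spatial derivatives plus one $\partial_s$, which the heat flow trades for two spatial derivatives, giving $2k+2$; each error term being a product of $\e_a^{1/2}\e_b^{1/2}\e_c^{1/2}\e_k^{1/2}$ carries $(a-1)+(b-1)+(c-1)+(k-1) \times \tfrac12 \times 2 = a+b+c+k-4$... — the clean way to see it is that $\e_a$ carries $2a$ derivatives, so $\tfrac12(2a+2b+2c) + \tfrac12(2k) = a+b+c+k$ derivatives, matching $2k+2$ iff $a+b+c = k+2$). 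No analytic subtlety arises since everything here is smooth (qualitative regime); the estimate is an algebraic identity up to schematic constants, so the $O_k(\cdot)$ simply absorbs the combinatorial multiplicities and the bounded geometric factors (the $\phi$-factors from \eqref{phicor} are unit vectors in the hyperboloid model, contributing nothing). I would present the $k=1$ computation in full and the general case as an induction, displaying only the schematic form of the commutator terms.
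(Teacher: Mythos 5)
Your proof is correct and follows essentially the same route as the paper: apply the Leibniz rule \eqref{leibnitz-phi} to compute $\partial_s \e_k$ and $\Delta \e_k$, then commute $(\phi^*\nabla)_s$ inward past the spatial covariant derivatives using the curvature identity \eqref{curv-phi} (extended to the $s$--$j$ index pair via the heat flow equation), and track that each commutator error is a trilinear curvature term whose total derivative count forces $a+b+c=k+2$. The only cosmetic difference is that for $k=1$ you organize the computation as a pure commutator applied to the difference $\partial_s\e_1-\Delta\e_1+2\e_2$, whereas the paper first evaluates $(\phi^*\nabla)_i\partial_s\phi$ directly; these are the same algebra in a different order.
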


\begin{proof}
We begin with \eqref{bochner}.  From \eqref{zerotor-phi}, \eqref{leibnitz-phi} we have
$$ \partial_s \e_1 = 2 \langle (\phi^* \nabla)_i \partial_s \phi, \partial_i \phi \rangle_{\phi^* h}.$$
On the other hand, from \eqref{psphi}, \eqref{curv-phi}, \eqref{zerotor-phi} we have
$$(\phi^* \nabla)_i \partial_s \phi = (\phi^* \nabla)_j (\phi^* \nabla)_j \partial_i \phi - (\partial_i \phi \wedge \partial_j \phi) \partial_j \phi.$$
Also, from \eqref{leibnitz-phi} we have
$$ \Delta \e_1 = 2\e_2 + 2 \left\langle (\phi^* \nabla)_j (\phi^* \nabla)_j \partial_i \phi , \partial_i \phi \right\rangle_{\phi^* h}.$$
Putting these estimates together yield \eqref{bochner}.

Now we turn to \eqref{psem}.  From \eqref{leibnitz-phi} we have
$$ \partial_s \e_k = 2 \left\langle (\phi^* \nabla)_s (\phi^* \nabla)_x^{k-1} \partial_x \phi, (\phi^* \nabla)_x^{k-1} \partial_x \phi \right\rangle_{h^* \phi}.$$
From many applications of \eqref{curv-phi}, \eqref{zerotor-phi}, \eqref{psphi}, and the triangle inequality we have
$$ \left| (\phi^* \nabla)_s (\phi^* \nabla)_x^{k-1} \partial_x \phi -
(\phi^* \nabla)_x^k \partial_s \phi \right|_{h^* \phi} \lesssim_k \sum_{a,b,c \geq 1: a+b+c=k+2} \e_a^{1/2} \e_b^{1/2} \e_c^{1/2}.$$
Using \eqref{psphi} we can of course write $(\phi^* \nabla)_x^{m} \partial_s \phi  = (\phi^* \nabla)_x^{m} (\phi^* \nabla)_j \partial_j \phi$.  By many applications of \eqref{curv-phi}, \eqref{zerotor-phi} we have
\begin{align*}
\biggl| (\phi^* \nabla)_x^k (\phi^* \nabla)_j \partial_j \phi
 &- (\phi^* \nabla)_j (\phi^* \nabla)_j (\phi^* \nabla)_x^{k-1} \partial_x \phi\biggr|_{h^* \phi} \\
 & \quad \lesssim_k \sum_{a,b,c \geq 1: a+b+c=k+2} \e_a^{1/2} \e_b^{1/2} \e_c^{1/2}.
\end{align*}
Finally, we observe from \eqref{leibnitz-phi} that
$$ \Delta \e_m = 2\e_{m+1} + 2 \left\langle (\phi^* \nabla)_j (\phi^* \nabla)_j (\phi^* \nabla)_x^{m-1} \partial_x \phi, 
(\phi^* \nabla)_x^{m-1} \partial_x \phi \right\rangle_{h^* \phi}.$$
Putting all these estimates together (and using Cauchy-Schwarz) one obtains \eqref{psem} as desired.
\end{proof}

\begin{corollary}\label{boch}  Let $\phi$ be a heat flow.  Then we have the inequalities
\begin{equation}\label{bochner-3}
\partial_s \sqrt{\e_1} \leq \Delta \sqrt{\e_1}
\end{equation}
and more generally
\begin{equation}\label{psem-sqrt}
\partial_s \sqrt{\e_k} \leq \Delta \sqrt{\e_k} + \sum_{a,b,c \geq 1: a+b+c=k+2} O_{k}( \e_a^{1/2} \e_b^{1/2} \e_c^{1/2} )
\end{equation}
for all $k \geq 1$, where the expressions here are interpreted in a distributional sense.
\end{corollary}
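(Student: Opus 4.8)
The plan is to deduce the inequalities \eqref{bochner-3} and \eqref{psem-sqrt} from the identities \eqref{bochner} and \eqref{psem} by the standard device of dividing by $2\sqrt{\e_k}$, using the chain rule, and carefully controlling the region where $\e_k$ vanishes. First I would record the pointwise (Kato-type) inequality for smooth functions: if $u \geq 0$ is smooth, then wherever $u > 0$ one has $\partial_s \sqrt{u} = \frac{\partial_s u}{2\sqrt{u}}$ and $\Delta \sqrt{u} = \frac{\Delta u}{2\sqrt{u}} - \frac{|\partial_x u|^2}{4 u^{3/2}}$, so that $\partial_s \sqrt{u} - \Delta \sqrt{u} = \frac{\partial_s u - \Delta u}{2\sqrt{u}} + \frac{|\partial_x u|^2}{4 u^{3/2}}$. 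The last term is nonnegative, so it may simply be discarded to obtain an upper bound on $\partial_s \sqrt{u} - \Delta \sqrt{u}$ in terms of $\frac{\partial_s u - \Delta u}{2\sqrt{u}}$ at points where $u > 0$.

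Apply this with $u = \e_1$. By \eqref{bochner}, $\partial_s \e_1 - \Delta \e_1 = -2\e_2 - |\partial_x\phi\wedge\partial_x\phi|_{\phi^*h}^2 \leq 0$, so at points where $\e_1 > 0$ we get $\partial_s\sqrt{\e_1} - \Delta\sqrt{\e_1} \leq 0$, which is \eqref{bochner-3} on the open set $\{\e_1 > 0\}$. For \eqref{psem-sqrt}, apply the same computation with $u = \e_k$; by \eqref{psem}, $\partial_s\e_k - \Delta\e_k = -2\e_{k+1} + \sum_{a+b+c=k+2} O_k(\e_a^{1/2}\e_b^{1/2}\e_c^{1/2}\e_k^{1/2})$, so dividing by $2\sqrt{\e_k}$ (again legitimate where $\e_k > 0$) and discarding both $-\e_{k+1}/\sqrt{\e_k} \leq 0$ and the nonnegative gradient term yields $\partial_s\sqrt{\e_k} - \Delta\sqrt{\e_k} \leq \sum_{a+b+c=k+2} O_k(\e_a^{1/2}\e_b^{1/2}\e_c^{1/2})$ on $\{\e_k > 0\}$.

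The main obstacle — and the only genuinely delicate point — is making sense of these inequalities across the zero set $Z = \{\e_k = 0\}$, since $\sqrt{\e_k}$ need not be smooth there and the identity $\Delta\sqrt{u} = \tfrac{\Delta u}{2\sqrt u} - \tfrac{|\partial_x u|^2}{4 u^{3/2}}$ is meaningless on $Z$. The standard fix is a regularisation argument: for $\eps > 0$ set $u_\eps := \sqrt{\e_k + \eps^2} - \eps$, which is smooth and nonnegative. A direct computation gives $\partial_s u_\eps - \Delta u_\eps = \frac{\partial_s\e_k - \Delta\e_k}{2\sqrt{\e_k+\eps^2}} + \frac{|\partial_x\e_k|^2}{4(\e_k+\eps^2)^{3/2}}$; discarding the last (nonnegative) term and using the sign and size estimates on $\partial_s\e_k - \Delta\e_k$ above — noting that $|{-2\e_{k+1}}| / (2\sqrt{\e_k+\eps^2}) \geq 0$ can be dropped and that the error terms divided by $2\sqrt{\e_k+\eps^2} \leq 2\sqrt{\e_k}$... more precisely, the error term $\e_k^{1/2}/(2\sqrt{\e_k+\eps^2}) \leq 1/2$ is uniformly bounded — yields $\partial_s u_\eps - \Delta u_\eps \leq \sum_{a+b+c=k+2} O_k(\e_a^{1/2}\e_b^{1/2}\e_c^{1/2})$ pointwise and smoothly, uniformly in $\eps$. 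Since $u_\eps \to \sqrt{\e_k}$ locally uniformly (indeed in $C^0_{\loc}$, and one checks $\partial_x u_\eps$ stays bounded in $L^2_{\loc}$) as $\eps \to 0$, one passes to the limit in the sense of distributions: for any nonnegative test function $\varphi$ on $\R^+\times\R^2$, $\int u_\eps(\partial_s\varphi + \Delta\varphi) \to \int \sqrt{\e_k}(\partial_s\varphi + \Delta\varphi)$ while the right-hand side contributes $-\int \varphi \sum O_k(\e_a^{1/2}\e_b^{1/2}\e_c^{1/2})$, giving \eqref{psem-sqrt} distributionally; the case $k=1$ is identical with no error term, giving \eqref{bochner-3}.
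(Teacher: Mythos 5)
There is a genuine gap, and it occurs precisely at the step you describe as "the only genuinely delicate point" being elsewhere. You write the identity
\[
\partial_s\sqrt{u}-\Delta\sqrt{u}=\frac{\partial_s u-\Delta u}{2\sqrt{u}}+\frac{|\partial_x u|^2}{4u^{3/2}},
\]
observe that the last term is nonnegative, and claim it "may simply be discarded to obtain an upper bound." This is backwards: dropping a nonnegative \emph{added} term gives a \emph{lower} bound, $\partial_s\sqrt{u}-\Delta\sqrt{u}\geq \frac{\partial_s u-\Delta u}{2\sqrt{u}}$, which is useless for \eqref{bochner-3}. The same sign error propagates into your regularised computation with $u_\eps=\sqrt{\e_k+\eps^2}-\eps$.

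What actually closes this gap — and what the paper's proof does — is that the nonnegative gradient term must be cancelled against the available negative term $-2\e_{k+1}$ in \eqref{bochner}/\eqref{psem}, and this cancellation is exactly the content of the diamagnetic inequality \eqref{diamagnetic},
\[
|\partial_x\sqrt{\e_k}|\leq\sqrt{\e_{k+1}},
\]
which follows from the Leibniz rule \eqref{leibnitz-phi} and Cauchy-Schwarz: $|\partial_x\e_k|\leq 2\sqrt{\e_k}\sqrt{\e_{k+1}}$. Concretely, after dividing by $2\sqrt{\e_k}$ (and, for $k=1$, discarding the genuinely nonpositive curvature term $-|\partial_x\phi\wedge\partial_x\phi|^2$), one arrives at
\[
\partial_s\sqrt{\e_k}\leq\Delta\sqrt{\e_k}+\frac{|\partial_x\sqrt{\e_k}|^2-\e_{k+1}}{\sqrt{\e_k}}+\sum_{a+b+c=k+2}O_k(\e_a^{1/2}\e_b^{1/2}\e_c^{1/2}),
\]
and the diamagnetic inequality makes the middle fraction nonpositive, so it (and not the gradient term alone) can be discarded. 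Your $\eps$-regularisation scheme is essentially the one the paper alludes to and is fine as a device, but without the diamagnetic inequality the argument does not close. You should also note that in the regularised version the quantity $\sqrt{\e_k+\eps^2}$ satisfies the same diamagnetic bound, so the cancellation survives the limit $\eps\to 0$.
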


\begin{proof}  We shall work formally; one can justify the arguments here rigorously by replacing $\sqrt{\e_k}$ by $(\eps^2 + \e_k)^{1/2}$ and taking distributional limits as $\eps \to 0$.  We leave the details to the interested reader.

Writing $\partial_s \e_k = 2 \sqrt{\e_k} \partial_s \sqrt{\e_k}$ and $\Delta \e_k = 2 \sqrt{\e_k} \Delta \sqrt{\e_k} + 2 |\partial_x \sqrt{\e_k}|^2$, we see from \eqref{bochner} (discarding the final negative term on the right-hand side) and \eqref{psem-sqrt} that it suffices to show the \emph{diamagnetic inequality}
\begin{equation}\label{diamagnetic}
|\partial_x \sqrt{\e_k}| \leq \sqrt{\e_{k+1}}.
\end{equation}
But from \eqref{leibnitz-phi} and Cauchy-Schwarz we have
$$ |\partial_x \e_k| \leq 2 \sqrt{\e_k} \sqrt{\e_{k+1}}$$
and the claim follows.
\end{proof}

As a first  application of these estimates, we have the following global existence and asymptotics of harmonic map heat flow from classical initial data, essentially due to Eells and Sampson \cite{eells}:

\begin{proposition}[Global existence and qualitative decay of heat flows]\label{Decay}\cite{eells}  Let $\phi(0): \R^2 \to \H$ be smooth and differing from $\phi(\infty) \in \H$ by a Schwartz function.  Then there exists a unique smooth heat flow extension $\phi: \R^+ \times \R^2 \to \H$ with all derivatives uniformly bounded.  Furthermore, if we identify $\H$ with a subset of $\R^{1+m}$ using \eqref{hyperdef}, thus $\phi = (\phi^0,\ldots,\phi^m)$, then the components $\phi(s)^i - \phi(\infty)^i$ and all of its derivatives are rapidly decreasing in space for each $s$, and  we have the estimates
\begin{equation}\label{manytime}
 |\partial_s^j \partial_x^k(\phi^i - \phi(\infty)^i)(s,x)| \lesssim_{j,k,\phi} \langle s \rangle^{-(1+k+2j)/2}
 \end{equation}
for all $(s,x) \in \R^+ \times \R^2$, $0 \leq i \leq m$ and $j,k \geq 0$, where $\langle s \rangle := (1+|s|^2)^{1/2}$.  In particular, as $s \to \infty$, $\phi(s)$ converges in the $C^\infty(\R^2 \to \R^{1+m})$ topology to $\phi(\infty)$.
\end{proposition}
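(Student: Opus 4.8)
The plan is to establish global existence via a continuity/bootstrap argument anchored on the a priori bounds furnished by the Bochner--Weitzenböck identities of Corollary \ref{boch}, and then upgrade these to the quantitative decay \eqref{manytime} using the parabolic regularity estimates from Section \ref{linear-heat} (especially \eqref{heat-lp} and the smoothing in Duhamel's formula \eqref{duh}). First I would set up local existence: writing $\phi - \phi(\infty)$ in the ambient coordinates via \eqref{hyperdef}, equation \eqref{pars} becomes a semilinear heat equation with a quadratic nonlinearity $-|\partial_x \phi|_{\phi^* h}^2\, \phi$, so a standard contraction-mapping argument in, say, $C^0_t C^2_x$ on a short time interval gives a unique smooth local solution; persistence of Schwartz decay in $x$ for each fixed $s$ is propagated by the same fixed-point scheme applied in weighted spaces (or by differentiating the equation and using \eqref{heat-lp}). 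The key point that makes the local solution global is that the energy density $\e_1 = |\partial_x \phi|_{\phi^*h}^2$ cannot blow up: by \eqref{bochner-3} we have $\partial_s \sqrt{\e_1} \leq \Delta \sqrt{\e_1}$ in the distributional sense, so the comparison principle (Corollary \ref{compar}) gives $\sqrt{\e_1}(s) \leq e^{(s-s_0)\Delta} \sqrt{\e_1}(s_0)$, and hence by \eqref{heat-lp} with $p=2$, $q=\infty$ we get $\|\e_1(s)\|_{L^\infty_x} \lesssim (s-s_0)^{-1} \|\sqrt{\e_1}(s_0)\|_{L^2_x}^2 = (s-s_0)^{-1} \cdot 2\E(\phi(s_0),0)$, while the energy $\E$ is itself non-increasing along the flow (this is essentially \eqref{gf-2}, or a direct integration by parts). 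So $\|\partial_x \phi\|_{L^\infty_x}$ stays finite on any compact $s$-interval bounded away from $0$, and near $s=0$ it is controlled by the smooth initial data; bootstrapping higher $\e_k$ via \eqref{psem-sqrt} and the comparison principle (treating the cubic source terms perturbatively, inductively bounded by lower-order densities already under control) keeps all derivatives uniformly bounded, which upgrades the local solution to a global one and rules out any singularity formation — this is precisely where negative curvature enters, through the favorable sign of the curvature term $-|\partial_x\phi\wedge\partial_x\phi|^2_{\phi^*h}$ in \eqref{bochner}.

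The second half is the quantitative decay \eqref{manytime}. The case $j=0$, $k=1$, i.e. $|\partial_x(\phi^i - \phi(\infty)^i)| \lesssim \langle s\rangle^{-1}$ in $L^\infty_x$, follows from the comparison-principle bound $\|\e_1(s)\|_{L^\infty_x} \lesssim s^{-1}\E$ derived above, once one notes $\E < \infty$; for $s$ bounded this is just smoothness of the data. For the undifferentiated quantity $\phi^i - \phi(\infty)^i$ one integrates \eqref{pars} in $s$: since $\partial_s \phi = \Delta\phi - \e_1 \phi$ and $\|\e_1(s)\|_{L^1_x} \leq 2\E$ is bounded while $\|\e_1(s)\|_{L^\infty_x}\lesssim s^{-1}$, interpolation gives $\|\e_1(s)\|_{L^2_x} \lesssim s^{-1/2}$, so $\|\partial_s\phi(s)\|_{L^2_x}$ is integrable in $s$ near infinity only marginally — here the parabolic Strichartz estimate (Lemma \ref{ubang}) and the heat-semigroup decay $\|e^{s\Delta} f\|_{L^\infty_x} \lesssim s^{-1}\|f\|_{L^1_x}$ are used to close the estimate and extract the $\langle s\rangle^{-1/2}$ rate for $\phi^i - \phi(\infty)^i$. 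The higher $(j,k)$ estimates then follow by induction: each extra spatial derivative costs a factor $s^{-1/2}$ by \eqref{heat-lp}, and each $\partial_s$ is converted via \eqref{pars} into $\Delta$ plus lower-order terms, costing $s^{-1}$; the Schwartz decay in $x$ at each fixed $s$ is propagated by commuting $\langle x\rangle^N$ through the Duhamel formula. Convergence of $\phi(s)$ to $\phi(\infty)$ in $C^\infty$ is an immediate consequence.

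The main obstacle, I expect, is not any single step but the bookkeeping in the global-existence bootstrap: one must simultaneously control the infinite hierarchy of energy densities $\e_k$ in \eqref{psem-sqrt}, where the source terms are cubic products $\e_a^{1/2}\e_b^{1/2}\e_c^{1/2}$ with $a+b+c = k+2$, so that a given $\e_k$ couples to strictly lower-order densities (since $a,b,c \geq 1$ forces $a,b,c \leq k$ — and in fact at most one of them can equal $k$, with the other two summing to $2$); this triangular structure is what makes the induction run, but making it rigorous requires carefully tracking the $s$-dependence of the bounds and invoking the Gagliardo--Nirenberg inequalities \eqref{gag-1}--\eqref{gag-2} to convert $L^1$ and $L^\infty$ control into the $L^2$-type quantities on which the comparison principle acts. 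A secondary technical annoyance is justifying the distributional manipulations in Corollary \ref{boch} (the $\eps$-regularization of $\sqrt{\e_k}$) uniformly as the solution is continued, but this is routine. Throughout, the key structural input — and the reason the argument works at all — is the sign of the curvature term, which is what the hypothesis of negative sectional curvature buys us.
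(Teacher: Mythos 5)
Your overall architecture matches the paper: local existence by Picard iteration on Duhamel's formula, global continuation via the a priori bound from \eqref{bochner-3} plus the comparison principle, and induction on the energy densities $\e_k$ using \eqref{psem-sqrt} to extract decay. However, there are two concrete gaps that would need to be filled before this becomes a proof.

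First, your derivative bound is a half-power of $s$ too weak. From $\sqrt{\e_1}(s) \leq e^{s\Delta}\sqrt{\e_1}(0)$ and \eqref{heat-lp} with $p=2$, $q=\infty$ you obtain $\|\e_1(s)\|_{L^\infty_x} \lesssim s^{-1}$, i.e. $|\partial_x\phi| \lesssim s^{-1/2}$, which you then assert gives the $j=0$, $k=1$ case of \eqref{manytime}. But that case reads $|\partial_x(\phi^i-\phi(\infty)^i)| \lesssim \langle s\rangle^{-1}$, a full factor of $s^{1/2}$ stronger. The claimed rate $\langle s\rangle^{-(1+k+2j)/2}$ is the \emph{$L^1$-to-$L^\infty$} heat-kernel rate, not the energy-critical ($L^2$-to-$L^\infty$) rate; to get it one must exploit that $\sqrt{\e_1}(0)$ is not merely square-integrable but Schwartz (in particular in $L^1_x$). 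The paper does this explicitly: it feeds the bounded, rapidly decaying initial density directly into the kernel representation \eqref{heat-prop} to obtain the Gaussian-weighted bound $|\partial_x\phi|_{\phi^*h} \lesssim_\phi \langle s\rangle^{-1} e^{-|x|^2/8s}$, and this in turn is what makes the spatial integration producing the $j=k=0$ case converge. Your Strichartz/$L^1$-smoothing remark for the undifferentiated quantity is aimed at the right idea, but it is applied at the wrong point: the $L^1$ control must enter already at the level of $\sqrt{\e_1}$, not afterwards.

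Second, your induction on $\e_k$ does not close as stated. You claim the source terms couple $\e_k$ to ``strictly lower-order'' densities, but your own parenthetical notes that one of $a,b,c$ can equal $k$ (with the others equal to $1$), producing a self-interaction term of the shape $\e_1\,\e_k$ (after pairing with the extra $\e_k^{1/2}$ in \eqref{psem}). With only the inductive bounds on $\e_1,\dots,\e_{k-1}$ and no use of the dissipation $-2\e_{k+1}$, this is a linear ODE inequality in $\e_k$ with a decaying coefficient; Gronwall yields boundedness (or slow growth), not the decay $\e_k \lesssim \langle s\rangle^{-(k+1)}$ you need. The paper's device is a Lyapunov-type comparison: on a dyadic block $[s_0,2s_0]$ it applies the maximum principle to the combination $(s-s_0)\e_k + C\e_{k-1}$, so that the $-2\e_k$ dissipation in the evolution of $\e_{k-1}$ cancels the bad contribution from the $\e_k$ equation. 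Some mechanism of this kind — exploiting the dissipation rather than discarding it — is needed; otherwise the hierarchy does not yield the stated decay rates.
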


\begin{proof}  We first dispose of the uniqueness claim.  If $\phi, \tilde \phi$ are two heat flows with the same initial data, and with $\phi, \tilde \phi, \partial_x \phi, \partial_x \tilde \phi$ uniformly bounded, then from \eqref{pars} we see that the difference $u := \tilde \phi - \phi$ is smooth, bounded, has bounded first derivative, and obeys a heat equation of the form $\partial_s u = \Delta u + O_{\phi,\tilde \phi}(|u|) + O_{\phi,\tilde \phi}(|\partial_x u|)$.  Since $u$ vanishes at time zero, and is bounded with bounded first derivative, a routine application of the maximum principle shows that $u$ is identically zero, yielding uniqueness.

Next we establish local existence.  We can rewrite \eqref{pars} using the Duhamel formula \eqref{duh} as
\begin{equation}\label{duh-local}
\phi(s) = e^{s\Delta} \phi(0) - \int_0^s e^{(s-s')\Delta} (|\partial_x \phi|_{\phi^* h}^2 \phi)(s')\ ds'.
\end{equation}
As $\phi$ is smooth and differs from constant by a Schwartz function, the linear solution $e^{s\Delta} \phi$ has all derivatives uniformly bounded.  Using \eqref{nonstrict-reg}, one can iterate \eqref{duh-local} in the space $C^0_s C^1_x( [0,T] \times \R^2)$ for some sufficiently small $T > 0$ to establish a local solution in this space by the Picard iteration method; differentiating \eqref{duh-local} repeatedly and using further parabolic regularity estimates we can readily establish that $\phi$ is in fact smooth with all derivatives bounded in such an interval.  One can iterate the local theory in the usual manner and conclude that the heat flow can be continued in time and is smooth so long as the first derivatives $\partial_x \phi$ of $\phi$ remain bounded.  By working in weighted spaces such as $\| \langle x \rangle^k u \|_{C^0_s C^m_x([0,T] \times \R^2)}$, one can also ensure that $\phi(s)-\phi(\infty)$ is rapidly decreasing in space for each fixed $s$ for which the solution exists; we omit the standard details.

To estimate the first derivatives of $\phi$, we use \eqref{bochner-3}.  Applying Corollary \ref{compar} and \eqref{heat-prop}, we conclude that
$$ \sqrt{\e_1}(s,x) \leq \frac{1}{4\pi s} \int_{\R^2} e^{-|x-y|^2/4s} \sqrt{\e_1}(0,y)\ dy$$
for any $s > 0$ and $x \in \R^2$.  Since $\sqrt{\e_1}$ is bounded and compactly supported at time zero, we conclude in particular the pointwise estimate
\begin{equation}\label{joke}
 |\partial_x \phi(s,x)|_{\phi^* h} \lesssim_\phi \langle s \rangle^{-1} e^{-|x|^2/8s}
 \end{equation}
(say) on first derivatives.  This \emph{a priori} bound on first derivatives, combined with the above local existence theory, ensures a global smooth solution for the heat flow.  Integrating this (and using the fact that $\phi(s,x) - \phi(\infty)$ tends to zero as $x \to \infty$) we also obtain the pointwise bound
\begin{equation}\label{zero}
\phi(s,x) = \phi(\infty) + O_\phi(\langle s\rangle^{-1/2}).
\end{equation}
for all $(s,x) \in \R^+ \times \R^2$, where we are interpreting $\phi$ as taking vlaues in $\R^{1+m}$.  (The case when $s = O(1)$ can be handled directly from the local theory.)  This gives the $j=k=0$ case of \eqref{manytime}.

Having bounded the zeroth and first derivatives of $\phi$, we now turn to the higher derivatives.  Specifically, we claim a pointwise bound of the form
\begin{equation}\label{sqrtem}
\sqrt{\e_k}(s,x) \leq C_{k,\phi} \langle s \rangle^{-(k+1)/2}
\end{equation}
for $k \geq 1$.  We establish this bound by induction on $k$. The case $k = 1$ already follows from \eqref{joke}, so suppose that $k \geq 2$ and that the claim has already been proven for smaller values of $k$.  Then by repeatedly applying \eqref{sqrtem}, we see from \eqref{psem} that
$$ \partial_s \e_k = \Delta \e_k - 2 \e_{k+1} + O_{k,\phi}( \langle s \rangle^{-2} \e_k + \langle s \rangle^{-(k+5)/2} \e_k^{1/2} )$$
and thus by Cauchy-Schwarz (and discarding the $\e_{k+1}$ term)
$$ \partial_s \e_k \leq \Delta \e_k + O_{k,\phi}( \langle s \rangle^{-2} \e_k + \langle s \rangle^{-k-3} ).$$
Also, from \eqref{psem} with $k$ replaced by $k-1$ we see from \eqref{sqrtem} that
$$ \partial_s \e_{k-1} = \Delta \e_{k-1} - 2 \e_k + O_{k,\phi}( \langle s \rangle^{-k-2} ).$$
For any $s_0 > 1$, we thus have
$$ \partial_s ( (s-s_0) \e_k + C_{k,\phi} \e_{k-1} ) \leq \Delta ((s-s_0) \e_k + C_{k,\phi} \e_{m-1} ) + O_{k,\phi}( \langle s_0 \rangle^{-k-2} )$$
for all $s_0 \leq s \leq 2s_0$, if $C_{k,\phi}$ is sufficiently large.  By induction hypothesis, the quantity $(s-s_0) \e_k + C_{k,\phi} \e_{k-1}$ has size $O_{k,\phi}( \langle s_0 \rangle^{-k-1} )$ at time $s_0$, and hence by the maximum principle is similarly bounded at times $s_0 \leq s \leq 2s_0$.  From this we easily conclude \eqref{sqrtem} for $k$ for times $s \geq 2$; the case $0 \leq s < 2$ can be handled by the local theory.  This closes the induction and establishes \eqref{sqrtem} for all $k$.

From \eqref{sqrtem} (and the boundedness of $\phi$) we know that
\begin{equation}\label{phix}
|(\phi^* \nabla)_x^{k-1} \partial_x \phi| \lesssim_{k,\phi} \langle s \rangle^{-(k+1)/2} 
\end{equation}
for all $k \ge 1$.  By repeated use of \eqref{phicor} and induction we thus establish \eqref{manytime} for $j=0$ and all $k \geq 1$; the case $j=k=0$ was already established from \eqref{zero}.  The case $j > 0$ then follows by repeated use of \eqref{pars} to convert time derivatives to spatial derivatives.  Finally, since $\phi - \phi(\infty)$ is rapidly decreasing and has all derivatives bounded for every fixed $s > 0$, we see that all derivatives are also rapidly decreasing (as can be seen either by Taylor's theorem with remainder, or by using the Gagliardo-Nirenberg inequality \eqref{gag-1} applied to localisations of $\phi - \phi(\infty)$), and so $\phi-\phi(\infty)$ is Schwartz in space.
\end{proof}

\subsection{The caloric gauge}

Let $\phi: \R^+ \times \R^2 \to \H$ be a harmonic map heat flow with classical initial data that equals $\phi(\infty)$ at spatial infinity (and thus at temporal infinity also, thanks to Proposition \ref{Decay}). We now recall the \emph{caloric gauge} from \cite{tao:forges} which places a canonical orthonormal frame on $\phi$.

Given any point $p \in \H$, define an \emph{orthonormal frame} at $p$ to be any orthogonal orientation-preserving map $e: \R^m \to T_p \H$ from $\R^m$ to the tangent space at $p$ (with the metric $h(p)$, of course), and let $\Frame(T_p \H)$ denote the space of such frames; note that this space has an obvious transitive action of the special orthogonal group $SO(m)$.  We then define the \emph{orthonormal frame bundle} $\Frame( \phi^* T\H)$ of $\phi$ to be the space of all pairs $((s,x), e)$ where $(s,x) \in \R^+ \times \R^2$ and $e \in \Frame(T_{\phi(s,x)}\H)$; this is a smooth vector bundle over $\R^+ \times \R^2$.  We then define an \emph{orthonormal frame} $e \in \Gamma(\Frame(\phi^* T\H))$ for $\phi$ to be a section of this bundle, i.e. a smooth assignment $e(s,x) \in \Frame(T_{\phi(s,x)}\H)$ of an orthonormal frame at $\phi(s,x)$ to every point $(s,x) \in \R^+ \times \R^2$.

Each orthonormal frame $e \in \Gamma(\Frame(\phi^* T\H)) \phi$ provides an orthogonal, orientation-preserving identification between the vector bundle $\phi^* T \H$ (with the metric $\phi^* h$) and the trivial bundle $(\R^+ \times \R^2) \times \R^m$ (with the Euclidean metric on $\R^m$), thus sections $\Psi \in \Gamma(\phi^* T\H)$ can be pulled back to functions $e^* \Psi: \R^+ \times \R^2 \to \R^m$ by the formula $e^* \Psi := e^{-1} \circ \Psi$.  The connection $\phi^* \nabla$ on $\phi^* T\H$ can similarly be pulled back to a connection $D$ on the trivial bundle $(\R^+ \times \R^2) \times \R^m$, defined by
\begin{equation}\label{D-def}
 D_i := \partial_i + A_i
 \end{equation}
where $A_i \in \mathfrak{so}(m)$ is the skew-adjoint $m \times m$ matrix field is given by the formula
\begin{equation}\label{Adef}
(A_i)_{ab} = \langle (\phi^* \nabla)_i e_a, e_b \rangle_{\phi^* h}
\end{equation}
where $e_1,\ldots,e_m$ are the images of the standard orthonormal basis for $\R^m$ under $e$.  Of course one similarly has a covariant derivative $D_s = \partial_s + A_s$ in the $s$ direction, defined similarly.

We shall rely frequently on the following fact (cf. Corollary \ref{boch}):

\begin{lemma}[Diamagnetic inequalities]\label{dilemma}  If $\varphi: \R^+ \times \R^2 \to \R^m$ is any smooth function, then
$$ \left|\partial_i |\varphi|\right| \leq |D_i \varphi|$$
and
$$ (\partial_s - \Delta) |\varphi| \leq |(D_s - D_i D_i)\varphi + \eta|$$
in the distributional sense, where $\eta$ is any quantity such that $\eta \cdot \varphi \geq 0$.
\end{lemma}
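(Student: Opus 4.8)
The plan is to run the same regularisation scheme used in the proof of Corollary~\ref{boch}: replace the Lipschitz (but not smooth) function $|\varphi|$ by the smooth approximant $f_\eps := (\eps^2 + |\varphi|^2)^{1/2}$ for $\eps > 0$, prove both inequalities for $f_\eps$ by direct differentiation, and then let $\eps \to 0$ to recover the distributional statements. The one algebraic input driving everything is that $A_i, A_s \in \mathfrak{so}(m)$ are skew-adjoint, so that $\langle v, A_i v\rangle = \langle v, A_s v\rangle = 0$ for every $v \in \R^m$; this forces all connection terms to cancel and leaves expressions purely in terms of the covariant derivatives $D_i\varphi$ and $D_i D_i \varphi$.

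For the first inequality I would simply note that $\partial_i f_\eps = \langle \varphi, \partial_i \varphi\rangle / f_\eps = \langle\varphi, D_i\varphi\rangle/f_\eps$ (the $A_i$ contribution vanishing by skew-adjointness), whence $|\partial_i f_\eps| \le |\varphi|\,|D_i\varphi|/f_\eps \le |D_i\varphi|$ pointwise, by Cauchy--Schwarz and $|\varphi| \le f_\eps$. Since $f_\eps \to |\varphi|$ locally uniformly and the $\partial_i f_\eps$ stay bounded by the continuous (hence locally integrable) function $|D_i\varphi|$, pairing against a non-negative test function and passing to the limit gives $\left|\partial_i |\varphi|\right| \le |D_i\varphi|$ in the distributional (indeed, measure) sense.

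For the second inequality I would carry out the analogous, longer computation. Using skew-adjointness, $\partial_s f_\eps = \langle\varphi, D_s\varphi\rangle/f_\eps$; differentiating $\partial_i f_\eps = \langle\varphi, D_i\varphi\rangle/f_\eps$ once more and using the cancellation $\partial_i\langle\varphi, D_i\varphi\rangle = |D_i\varphi|^2 + \langle\varphi, D_i D_i\varphi\rangle$ (here again every $A_i$ term drops out), one arrives at
\[
(\partial_s - \Delta) f_\eps = \frac{\langle \varphi, (D_s - D_i D_i)\varphi\rangle - |D_i\varphi|^2}{f_\eps} + \frac{\langle \varphi, D_i\varphi\rangle^2}{f_\eps^3}.
\]
By Cauchy--Schwarz, $\langle\varphi, D_i\varphi\rangle^2 \le |\varphi|^2|D_i\varphi|^2 \le f_\eps^2 |D_i\varphi|^2$ (summing over $i$), so the terms $-|D_i\varphi|^2/f_\eps$ and $\langle\varphi, D_i\varphi\rangle^2/f_\eps^3$ combine to something $\le 0$, leaving $(\partial_s - \Delta)f_\eps \le \langle\varphi, (D_s - D_i D_i)\varphi\rangle/f_\eps$. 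Since $\langle\varphi,\eta\rangle \ge 0$ by hypothesis we may add $\eta$ and apply Cauchy--Schwarz (with $|\varphi| \le f_\eps$) once more to get $(\partial_s - \Delta)f_\eps \le |(D_s - D_i D_i)\varphi + \eta|$ pointwise; letting $\eps \to 0$ against non-negative test functions (using local uniform convergence of $f_\eps$ and the $\eps$-independence of the right-hand side) yields the claim.

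The computations are entirely routine once the regularisation is set up; the only points needing attention are the repeated appeals to skew-adjointness of $A_i, A_s$ that force the connection terms to cancel, and the observation that the residual term $\langle\varphi, D_i\varphi\rangle^2/f_\eps^3 - |D_i\varphi|^2/f_\eps$ has the favourable sign. A minor caveat is that $\eta$ need not be smooth, so the quantity $|(D_s - D_i D_i)\varphi + \eta|$ (and the conclusion) should be read in whatever mild sense is appropriate, e.g.\ with $\eta$ locally integrable; in every application of this lemma in the paper, $\eta$ will in fact be smooth.
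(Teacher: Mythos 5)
Your proof is correct and essentially mirrors the paper's: both regularise via $(\eps^2+|\varphi|^2)^{1/2}$, exploit skew-adjointness of the connection to replace $\partial$ by $D$ inside the inner products, and use Cauchy--Schwarz to discard the favourably-signed $|D_x\varphi|^2$ correction before passing to the distributional limit. The only cosmetic difference is that you differentiate $f_\eps$ directly while the paper works with $|\varphi|^2$ and then divides; the algebra is the same.
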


\begin{proof}  To prove the first inequality, observe that
$$ |\partial_i |\varphi|^2| = |2 \varphi \cdot D_i \varphi| \leq 2 |\varphi| |D_i \varphi|,$$
and the claim then formally follows from the product rule and dividing by $|\varphi|$.  This can be made rigorous by replacing $|\varphi|$ with $(\eps^2 + |\varphi|^2)^{1/2}$ and then taking distributional limits as $\eps \to 0$.

To prove the second inequality, we similarly observe that
\begin{align*}
(\partial_s - \Delta) |\varphi|^2 &= 2 \varphi \cdot (D_s \varphi - D_i D_i \varphi + \eta) - 2 \varphi \cdot \eta - 2 |D_x \varphi|^2 \\
&\leq 2 |\varphi| |D_s \varphi - D_i D_i \varphi + \eta| - 2 |\partial_x |\varphi||^2
\end{align*}
where we have (formally) used the first inequality.  Since we formally have
$$ (\partial_s - \Delta) |\varphi|^2 = 2 |\varphi| ( \partial_s |\varphi| - \Delta |\varphi| ) - 2 |\partial_x |\varphi||^2,$$
the claim then follows, after again replacing $|\varphi|$ with $(\eps^2 + |\varphi|^2)^{1/2}$ to make the arguments rigorous.
\end{proof}

We define the \emph{derivative fields} $\psi_j: \R^+ \times \R^2 \to \R^m$ by the formula 
\begin{equation}\label{psij-def}
\psi_j := e^* \partial_j \phi,
\end{equation}
and similarly define 
\begin{equation}\label{psis-def}
\psi_s := e^* \partial_s \phi.
\end{equation}
We write $\psi_x := (\psi_1, \psi_2)$ and $A_x := (A_1,A_2)$.
The zero-torsion property \eqref{zerotor-phi}, when viewed in the orthonormal frame $e$, becomes the assertion that
\begin{equation}\label{zerotor-frame}
D_i \psi_j = D_j \psi_i
\end{equation}
or equivalently
\begin{equation}\label{zerotor-frame2}
\partial_i \psi_j -\partial_j \psi_i = A_j \psi_i - A_i \psi_j
\end{equation}
while the negative curvature property \eqref{curv-phi} becomes
\begin{equation}\label{curv-frame}
[D_i, D_j] = \partial_i A_j - \partial_j A_i + [A_i,A_j] = - \psi_i \wedge \psi_j
\end{equation}
where $\psi_i \wedge \psi_j$ is the anti-symmetric matrix field
$$ \psi_i \wedge \psi_j := \psi_i \psi_j^* - \psi_j \psi_i^*$$
or in other words 
$$ \psi_i \wedge \psi_j: v \mapsto \psi_i (\psi_j \cdot v) - \psi_j (\psi_i \cdot v).$$
The Leibniz rule \eqref{leibnitz-phi} becomes
\begin{equation}\label{leibnitz-frame}
\partial_i ( \varphi \cdot \varphi' ) = (D_i \varphi) \cdot \varphi' + \varphi \cdot (D_i \cdot \varphi')
\end{equation}
and is equivalent to the antisymmetry of $A$.  Of course, one has analogues of \eqref{zerotor-frame}, \eqref{curv-frame}, \eqref{leibnitz-frame} if $i$ or $j$ is replaced by the $s$ subscript.  Finally, the heat flow equation \eqref{psphi}, when viewed in the frame $e$, becomes
\begin{equation}\label{ps-frame}
\psi_s = D_i \psi_i.
\end{equation}

We observe the gauge symmetry
\begin{equation}\label{gauge}
\begin{split}
\phi \mapsto \phi; &\quad e \mapsto e U; \quad \psi_i \mapsto U^{-1} \psi_i; \\
D_i \mapsto U^{-1} D_i U; \quad &A_i \mapsto U^{-1} \partial_i U + U^{-1} A_i U
\end{split}
\end{equation}
for any choice of \emph{gauge transform} $U: \R^+ \times \R^2 \to SO(m)$, with similar transformations when the $i$ subscript is replaced by $s$.  Geometrically, this transform corresponds to rotating the orthonormal frame $e$ by $U$, leaving the underlying heat flow $\phi$ unchanged.

\begin{definition}[Caloric gauge]  Let $\phi: \R^2 \to \H$ be a smooth function differing from $\phi(\infty)$ by a Schwartz function, and let $\phi: \R^+ \times \R^2 \to \H$ be its heat flow extension (as given by Proposition \ref{Decay}).  We say that a gauge $e$ is a \emph{caloric gauge} for $\phi$ with \emph{boundary frame} $e(\infty) \in \Frame(T_{\phi(\infty)}\H)$ if we have
\begin{equation}\label{ass}
A_s = 0
\end{equation}
throughout $\R^+ \times \R^2$, and if we have 
\begin{equation}\label{esx}
\lim_{s \to \infty} e(s,x) = e(\infty)
\end{equation}
for all $x \in \R^2$.  
\end{definition}

For future reference we record some basic evolution equations in the caloric gauge.

\begin{lemma}\label{evolution}  Let $\phi: \R^+ \times \R^2 \to \H$ be a heat flow with classical initial data, let $e$ be a caloric gauge for $\phi$, and let $\psi_x, \psi_s, A_x$ be the associated derivative fields and connection fields.  Then we have the evolution equations
\begin{align}
\partial_s \psi_x &= D_x \psi_s = \partial_x \psi_s + A_x \psi_s \label{psi-evolve}\\
\partial_s A_x &= - \psi_s \wedge \psi_x\label{sax} \\
\partial_s \psi_x &= D_i D_i \psi_x - (\psi_x \wedge \psi_i) \psi_i \label{psix-heat} \\
\partial_s \psi_s &= D_i D_i \psi_s - (\psi_s \wedge \psi_i) \psi_i. \label{psis-heat}
\end{align}
and the inequalities
\begin{align}
\partial_s |\psi_x| &\leq \Delta |\psi_x| \label{psix-delta} \\
\partial_s |\psi_s| &\leq \Delta |\psi_s| \label{psis-delta}
\end{align}
holding in the distributional sense. In particular, from Corollary \ref{compar} we have the pointwise bounds
\begin{align}
|\psi_x(s)| &\leq e^{s\Delta} |\psi_x(0)| \label{psix-compar} \\
|\psi_s(s)| &\leq e^{s\Delta} |\psi_s(0)|. \label{psis-compar}
\end{align}
These identities and inequalities do not require the gauge to obey \eqref{esx}.  
\end{lemma}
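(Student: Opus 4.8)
The plan is to derive the four evolution equations from the structural identities of Section 3 (zero torsion \eqref{zerotor-frame}, curvature \eqref{curv-frame}, heat flow \eqref{ps-frame}), using the caloric condition $A_s=0$, and then obtain the two differential inequalities by the diamagnetic inequality of Lemma \ref{dilemma} together with Corollary \ref{compar}. Throughout I will freely use that $A_s = 0$, so that $D_s = \partial_s$, and the $s$-versions of \eqref{zerotor-frame}, \eqref{curv-frame}, which read $D_s \psi_i = D_i \psi_s$ and $[D_s, D_i] = -\psi_s \wedge \psi_i$ (the latter reducing to $\partial_s A_i = -\psi_s \wedge \psi_i$ once $A_s = 0$ is inserted, which is exactly \eqref{sax}).

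First I would establish \eqref{psi-evolve}: since $D_s = \partial_s$ in the caloric gauge, the $s$-version of the zero-torsion identity \eqref{zerotor-frame} gives $\partial_s \psi_x = D_s \psi_x = D_x \psi_s = \partial_x \psi_s + A_x \psi_s$. Equation \eqref{sax} is then, as noted above, just the $s$-component of the curvature identity \eqref{curv-frame} with $A_s = 0$ substituted, so $\partial_s A_i = \partial_i A_s + \partial_s A_i = [D_s, D_i] = -\psi_s \wedge \psi_i$. Next, \eqref{psix-heat}: starting from \eqref{psi-evolve}, write $\partial_s \psi_i = D_i \psi_s$ and substitute the heat flow equation \eqref{ps-frame}, $\psi_s = D_j \psi_j$, to get $\partial_s \psi_i = D_i D_j \psi_j$; then commute the covariant derivatives using \eqref{curv-frame}, i.e. $D_i D_j \psi_j = D_j D_i \psi_j + [D_i, D_j]\psi_j = D_j D_j \psi_i - (\psi_i \wedge \psi_j)\psi_j$, where in the middle step I also used zero torsion \eqref{zerotor-frame} to replace $D_i \psi_j$ by $D_j \psi_i$. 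For \eqref{psis-heat} I would apply $D_s = \partial_s$ to \eqref{ps-frame}: $\partial_s \psi_s = \partial_s D_j \psi_j = (\partial_s A_j)\psi_j + D_j \partial_s \psi_j = -(\psi_s \wedge \psi_j)\psi_j + D_j D_j \psi_s$, using \eqref{sax} for the first term and $\partial_s \psi_j = D_j \psi_s$ (from \eqref{psi-evolve}) for the second, together with $D_j D_j \psi_s = D_j(D_j \psi_s)$.

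Finally, the inequalities \eqref{psix-delta} and \eqref{psis-delta} follow from the second part of Lemma \ref{dilemma} applied to $\varphi = \psi_x$ (respectively $\varphi = \psi_s$), taking $\eta = (\psi_x \wedge \psi_i)\psi_i$ (resp. $\eta = (\psi_s \wedge \psi_i)\psi_i$): by \eqref{psix-heat} we have $(D_s - D_i D_i)\psi_x + \eta = 0$, and one checks that $\eta \cdot \psi_x = \langle (\psi_x \wedge \psi_i)\psi_i, \psi_x\rangle = |\psi_i|^2 |\psi_x|^2 - (\psi_i \cdot \psi_x)^2 \geq 0$ by Cauchy--Schwarz (the antisymmetric wedge contributes a manifestly nonnegative Gram-determinant expression), with the analogous computation $\eta \cdot \psi_s = |\psi_i|^2|\psi_s|^2 - (\psi_i \cdot \psi_s)^2 \geq 0$ for $\psi_s$. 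Then $D_s = \partial_s$ gives $(D_s - D_i D_i)\psi = \partial_s \psi - D_i D_i \psi$, and Lemma \ref{dilemma} yields $(\partial_s - \Delta)|\psi| \leq |(D_s - D_iD_i)\psi + \eta| = 0$. The pointwise bounds \eqref{psix-compar}, \eqref{psis-compar} then come from Corollary \ref{compar}, the comparison principle, applied on intervals $[0,s]$; one should note that the relevant quantities are smooth and decaying by Proposition \ref{Decay}, so they are legitimately tempered distributions varying continuously in $s$. I expect the only mildly delicate point to be the distributional justification of the diamagnetic step (the $\eps$-regularization already flagged in Lemma \ref{dilemma}) and the verification that the curvature-commutator bookkeeping in \eqref{psix-heat} and \eqref{psis-heat} produces exactly the wedge terms with the correct sign; none of this is a serious obstacle, since it is all a matter of carefully tracking the identities \eqref{zerotor-frame}, \eqref{curv-frame}, \eqref{ps-frame} under $A_s = 0$. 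Remark that \eqref{esx} is used nowhere above, which is the final assertion of the lemma.
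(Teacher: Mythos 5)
Your proposal is correct and follows essentially the same approach as the paper: you derive \eqref{psi-evolve}--\eqref{psis-heat} from the structural identities \eqref{zerotor-frame}, \eqref{curv-frame}, \eqref{ps-frame} under the caloric condition $A_s=0$ (so $D_s=\partial_s$), and then obtain \eqref{psix-delta}--\eqref{psis-compar} from Lemma \ref{dilemma} with the Gram-determinant nonnegativity observation $((v\wedge w)w)\cdot v = |v|^2|w|^2-(v\cdot w)^2\geq 0$ and Corollary \ref{compar}. Incidentally, your sign bookkeeping in the commutator step is more careful than the paper's displayed intermediate line for \eqref{psix-heat}, which writes a $+$ where the correct sign (appearing in the lemma statement and in your derivation) is $-$.
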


\begin{proof}  The identity \eqref{psi-evolve} follows from \eqref{zerotor-frame}, \eqref{D-def} and \eqref{ass}, while \eqref{sax} follows from \eqref{curv-frame} and \eqref{ass}.  To prove \eqref{psix-heat}, we use \eqref{zerotor-frame}, \eqref{ass}, \eqref{curv-frame}, \eqref{ps-frame} to compute
\begin{align*}
\partial_s \psi_x &= D_s \psi_x \\
&= D_x \psi_s \\
&= D_x D_i \psi_i \\
&= D_i D_x \psi_i + (\psi_x \wedge \psi_i) \psi_i \\
&= D_i D_i \psi_x + (\psi_x \wedge \psi_i) \psi_i 
\end{align*}
as desired.  The proof of \eqref{psis-heat} is similar.  The inequalities \eqref{psix-delta}, \eqref{psis-delta} then follow from \eqref{psix-heat}, \eqref{psis-heat} by Lemma \ref{dilemma}, noting that $((v \wedge w) w) \cdot v = \frac{1}{2} |v \wedge w|^2$ is non-negative for any $v,w \in \R^m$.
\end{proof}

A fundamental fact is that caloric gauges exist and are unique once one specifies $e(\infty)$ (cf. \cite[Theorem 2.9]{tao:forges}):

\begin{theorem}[Existence and uniqueness of caloric gauge]\label{caloric-thm}  Let $\phi: \R^+ \times \R^2 \to \H$ be a heat flow with classical initial data that equals $\phi(\infty) \in \H$ at infinity, and let $e(\infty) \in \Frame( T_{\phi(\infty)} \H)$ be an orthonormal frame at $\phi(\infty)$.  Then there exists a unique caloric gauge $e \in \Gamma( \Frame( \phi^* T\H ) )$ for $\phi$ with boundary frame $e(\infty)$.  Furthermore, for each fixed $s$, $\psi_i$ and $\psi_s$ are Schwartz functions in space, and one has the qualitative decay estimates
\begin{align}
|\partial_s^j \partial_x^k \psi_x| &\lesssim_{j,k,\phi} \langle s \rangle^{-(2+k+2j)/2}\label{phi-1}\\
|\partial_s^j \partial_x^k \psi_s| &\lesssim_{j,k,\phi} \langle s \rangle^{-(3+k+2j)/2} \label{phi-2}\\
|\partial_s^j \partial_x^k A_x| &\lesssim_{j,k,\phi} \langle s \rangle^{-(3+k+2j)/2}\label{phi-3}
\end{align}
on $\R^+ \times \R^2$ for all $j,k \geq 0$ where we abbreviate $\psi_x := (\psi_1,\psi_2)$ and $A_x := (A_1,A_2)$.
\end{theorem}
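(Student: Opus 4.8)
The plan is to integrate the equation $\partial_s e = -A_s e$... wait, no: the defining ODE for the caloric gauge is $D_s e = 0$, i.e. $\partial_s e_a + A_s e_a$ projected appropriately, but with the condition $A_s = 0$ this just says that $e$ is parallel-transported along $\partial_s$ using $\phi^*\nabla$. So the construction is: for each fixed $x$, solve the linear parallel transport ODE $(\phi^*\nabla)_s e(s,x) = 0$ backwards from $s = \infty$ with the terminal condition $\lim_{s\to\infty} e(s,x) = e(\infty)$. The main point is that Proposition \ref{Decay} tells us $\phi(s,x) \to \phi(\infty)$ rapidly, and $\partial_s\phi = \psi_s$ (the tension field) decays like $\langle s\rangle^{-3/2}$ (from \eqref{manytime} with $j=1,k=0$), so the connection coefficient $A_s$ one would see in \emph{any} fixed reference frame is integrable near $s = \infty$; hence the parallel transport from $s = \infty$ is well-defined, and the frame $e(s,x)$ it produces is smooth in $(s,x)$ by smooth dependence of ODE solutions on parameters. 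Uniqueness is immediate: two caloric gauges differ by a gauge transform $U: \R^+\times\R^2 \to SO(m)$ with $\partial_s U = 0$ (from comparing the $A_s = 0$ conditions via \eqref{gauge}) and $U \to \mathrm{id}$ as $s\to\infty$, forcing $U \equiv \mathrm{id}$.

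Next I would establish the qualitative decay estimates \eqref{phi-1}--\eqref{phi-3}. The cleanest route is to work in the concrete hyperboloid model: pick a smooth reference frame $\tilde e(s,x)$ for $\phi^* T\H$ with $\tilde e \to e(\infty)$ rapidly (e.g. parallel transport along the geodesic from $\phi(\infty)$ to $\phi(s,x)$ in $\H$, which is explicit in the model \eqref{hyperdef}); by Proposition \ref{Decay} all the data of $\tilde e$ relative to a fixed frame decays like $\phi - \phi(\infty)$, hence like $\langle s\rangle^{-1/2}$ with the derivative gains in \eqref{manytime}. Writing $e = \tilde e V$ for $V: \R^+\times\R^2 \to SO(m)$, the condition $A_s = 0$ becomes a linear ODE $\partial_s V = -\tilde A_s V$ where $\tilde A_s$ is the $s$-connection coefficient of $\phi^*\nabla$ in the $\tilde e$ frame, which is $O(|\psi_s||\partial_x\phi|$-type terms$) = O_\phi(\langle s\rangle^{-2})$ by \eqref{manytime}; integrating from $s = \infty$ gives $V - \mathrm{id} = O_\phi(\langle s\rangle^{-1})$ with analogous bounds on derivatives. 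One then has $\psi_x = e^*\partial_x\phi = V^{-1} \tilde e^* \partial_x\phi$, so $|\psi_x| \lesssim |\partial_x\phi|_{\phi^*h} \lesssim_\phi \langle s\rangle^{-1}$ — but this is the $k=0$, $j=0$ case of \eqref{phi-1}, which asserts $\langle s\rangle^{-1}$; good. For $\psi_s = e^*\partial_s\phi$ one gets $\langle s\rangle^{-3/2}$, matching \eqref{phi-2} at $j=k=0$. For $A_x$: in the $\tilde e$ frame, $\tilde A_x$ is of size $O_\phi(\langle s\rangle^{-1/2} \cdot \langle s\rangle^{-1})$... one must be a little careful here, but the gauge transformation formula $A_x = V^{-1}\partial_x V + V^{-1}\tilde A_x V$ combined with $\partial_x V = O_\phi(\langle s\rangle^{-3/2})$ (differentiating the ODE for $V$) and $\tilde A_x = O_\phi(\langle s\rangle^{-3/2})$ yields \eqref{phi-3} at $j=k=0$. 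The higher $j,k$ cases follow by differentiating the defining ODEs and the Duhamel-type integral representations, using \eqref{manytime} at the required orders; alternatively, and more in the spirit of the later sections, one can bootstrap using the evolution equations \eqref{psi-evolve}--\eqref{psis-heat} of Lemma \ref{evolution} together with parabolic regularity \eqref{heat-lp} and Corollary \ref{compar}.

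Actually, a slicker way to get the sharp decay in \eqref{phi-1}--\eqref{phi-2} without chasing constants in the ODE argument is to use the pointwise bounds \eqref{psix-compar}, \eqref{psis-compar} from Lemma \ref{evolution}, which give $|\psi_x(s)| \leq e^{s\Delta}|\psi_x(0)|$ and $|\psi_s(s)| \leq e^{s\Delta}|\psi_s(0)|$; since $|\psi_x(0)| = |\partial_x\phi(0)|_{\phi^*h}$ and $|\psi_s(0)| = |\partial_s\phi(0)|_{\phi^*h} = |(\phi^*\nabla)_i\partial_i\phi(0)|$ are both Schwartz (bounded, compactly supported, say) functions of $x$, the heat kernel bound \eqref{heat-prop} gives $|\psi_x(s)|, |\psi_s(s)| \lesssim_\phi \langle s\rangle^{-1}$; to upgrade $\psi_s$ to $\langle s\rangle^{-3/2}$ one writes $\psi_s = D_i\psi_i$ via \eqref{ps-frame}, uses \eqref{psix-heat} and parabolic smoothing (each application of $D_i$ after running the flow for time $\sim s$ gains $s^{-1/2}$), and similarly one gains $s^{-1/2}$ per $\partial_s$ using \eqref{psis-heat}. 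The decay of $A_x$ then follows by integrating \eqref{sax}, $A_x(s) = A_x(\infty) - \int_\infty^s \psi_s \wedge \psi_x\,ds' = \int_s^\infty \psi_s\wedge\psi_x\,ds'$ (using $A_x(\infty) = 0$, which holds because $e(s)\to e(\infty)$ and $\phi(s)\to\phi(\infty)$ so all connection coefficients vanish in the limit), whence $|A_x(s)| \lesssim_\phi \int_s^\infty \langle s'\rangle^{-1}\langle s'\rangle^{-3/2}\,ds' \lesssim_\phi \langle s\rangle^{-3/2}$, matching \eqref{phi-3}; the derivative bounds follow the same way after differentiating under the integral. The Schwartz-in-$x$ claim for each fixed $s$ follows from that for $\phi$ in Proposition \ref{Decay} together with the smoothness of $e$ and the explicit formulas \eqref{Adef}, \eqref{psij-def}. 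The main obstacle I anticipate is not any single estimate but the bookkeeping needed to justify the parallel transport from $s = +\infty$ rigorously — one must know a priori that $A_s$ (in a fixed gauge) is integrable at infinity before one can even assert the terminal value problem has a solution, and this relies essentially on the $\langle s\rangle^{-3/2}$ (hence integrable) decay of the tension field $\partial_s\phi$ from Proposition \ref{Decay}, so the existence argument and the sharp decay estimates are somewhat intertwined and must be set up in the right order.
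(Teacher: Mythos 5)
Your proposal is correct in its essentials and shares the paper's main ingredients (Proposition \ref{Decay} for the decay of $\phi$ and its covariant derivatives, the $A_s=0$ parallel-transport ODE, and integration of \eqref{sax} from $s=\infty$ for the connection bound), but the construction runs in the opposite direction from the paper's. The paper starts with an \emph{arbitrary} Schwartz frame at $s=0$, flows it \emph{forward} in $s$ via $(\phi^*\nabla)_s e = 0$ (which makes $A_s=0$ hold tautologically with no a-priori integrability needed), proves $C^\infty_\loc$ convergence to a smooth limit $e(\infty,\cdot)$ as $s\to\infty$ using \eqref{phix}, and only then conjugates by an $s$-independent gauge transform $U(x)$ to match the prescribed boundary frame; the decay bounds \eqref{phi-1}--\eqref{phi-3} are then re-derived in the new gauge by rerunning the bootstrap with the boundary condition $A_x(\infty)=0$. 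You instead set up an explicit geodesic normal reference frame $\tilde e$ and solve a terminal-value ODE for $V=\tilde e^{-1}e$; you correctly flag that this requires knowing $\tilde A_s$ is integrable at $s=\infty$ before the terminal-value problem even makes sense, which is exactly why the paper's forward-then-normalize order of operations is cleaner, though your route is perfectly viable. Your ``slicker'' alternative via the diamagnetic bounds \eqref{psix-compar}, \eqref{psis-compar} handles $j=k=0$ immediately but, as you note, only gives $|\psi_s|\lesssim\langle s\rangle^{-1}$; the upgrade to $\langle s\rangle^{-3/2}$ and the higher $(j,k)$ cases are exactly where the paper leans on the covariant energy-density estimates $\sqrt{\e_k}\lesssim\langle s\rangle^{-(k+1)/2}$ from Proposition \ref{Decay} (via \eqref{dxpsi}, \eqref{dxpsis}) and the inductive passage between $D_x$ and $\partial_x$ using \eqref{ax-infty}-type bounds, so the systematic bookkeeping you defer is genuinely the bulk of the paper's argument. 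One minor slip: when you write $\tilde A_s = O(|\psi_s||\partial_x\phi|)$, the correct heuristic for a geodesic normal frame is $\tilde A_s = O(|\partial_s\phi|\cdot\dist(\phi,\phi(\infty))) = O(\langle s\rangle^{-3/2}\cdot\langle s\rangle^{-1/2}) = O(\langle s\rangle^{-2})$ --- the bound you state is right but the stated mechanism is not quite.
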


\begin{proof} Let us first prove uniqueness.   If $e$ is a caloric gauge, we see from \eqref{ass} that
\begin{equation}\label{phin}
(\phi^* \nabla)_s e_a =0 
\end{equation}
for all $a=1,\ldots,m$.  In particular, if $e'$ is another caloric gauge with the same boundary frame $e(\infty)$, we see that $|e_a-e'_a|$ is constant in $s$.  Since $e_a-e'_a$ vanishes in the limit $s \to \infty$, the uniqueness follows.

Now we establish existence.  We place an arbitrary Schwartz orthonormal frame $e(0,x) \in \Frame( T_{\phi(0,x)} \H )$ on the initial data $\phi(0,\cdot)$ (such a frame exists since the spatial domain $\R^2$ is contractible and $\phi(0,\cdot)$ differs from a constant by a Schwartz function).  We then evolve this frame in $s$ using \eqref{phin}; the fact that $\phi^* \nabla$ respects the metric on $\phi^* T\H$ shows that $e$ remains an orthonormal frame as $s$ increases, and from the Picard existence theorem we see that $e$ can be defined globally.  The smoothness of $\phi$ and $e(0,\cdot)$ easily implies that $e$ is also smooth, and so $A$ and $\psi$ are also smooth.

Recall from the proof of Proposition \ref{Decay} (or from \eqref{manytime}) that we have \eqref{phix}.  Viewing this estimate in the frame $e$, we conclude that
\begin{equation}\label{dxpsi}
|D_x^{k-1} \psi_x| \lesssim_{k,\phi} \langle s \rangle^{-(k+1)/2} 
\end{equation}
for all $k \geq 1$.  Since $\psi_s = D_i \psi_i$, we also have
\begin{equation}\label{dxpsis}
|D_x^{k-1} \psi_s| \lesssim_{k,\phi} \langle s \rangle^{-(k+2)/2}
\end{equation}
for all $k \geq 1$.
On the other hand, from differentiating \eqref{sax} we obtain
$$ |\partial_s D_x^k A_x| \lesssim_k  \sum_{0 \leq j \leq k} |D_x^j \psi_s| |D_x^{k-j} \psi_x|$$
for any $k \geq 0$, where the covariant derivative $D_x$ acts on matrix fields $B$ by the formula 
\begin{equation}\label{matrix-cov}
D_x B := \partial_x B + [A,B], 
\end{equation}
thus in particular we have the Leibniz rule
\begin{equation}\label{wedge-leibnitz}
D_x ( u \wedge v ) = (D_x u) \wedge v + u \wedge (D_x v)
\end{equation}
 for any vector fields $u,v$.  Applying \eqref{dxpsi} we conclude that 
\begin{equation}\label{dax}
|\partial_s D_x^k A_x| \lesssim_{k,\phi} \langle s \rangle^{-(k+5)/2};
\end{equation}
since $D_x^k A_x$ is initially bounded at time zero, we conclude that $D_x^k A_x = O_{k,\phi}(1)$ throughout $\R^+ \times \R^2$ for all $k \geq 0$.    Using the definition of $D_x$ on matrix fields, we thus also conclude that 
\begin{equation}\label{nax}
|\partial_x^k A_x| \lesssim_{k,\phi} 1
\end{equation}
for all $k \geq 0$, and thus by \eqref{dxpsi}, \eqref{dxpsis} we obtain
\begin{equation}\label{psax}
 |\partial_x^k \psi_x| \lesssim_{k,\phi} \langle s \rangle^{-1}; \quad |\partial_x^k \psi_s| \lesssim_{k,\phi} \langle s \rangle^{-3/2} 
 \end{equation}
for all $k \geq 0$.  Applying this to \eqref{sax}, we see that $A_x(s)$ converges in $C^\infty_x(\R^2 \to \mathfrak{so}(m))$ to some limit $A_x(\infty)$ as $s \to \infty$.

Since $A_x(s)$ is bounded in $C^\infty(\R^2 \to \mathfrak{so}(m))$, and $\phi(s)$ is bounded uniformly in $s$, we see from \eqref{Adef} that $e(s,\cdot)$ is locally bounded in $C^\infty$ uniformly in $s$, where we use the embedding \eqref{hyperdef} to view $e(s,x)$ as a linear transformation from $\R^m$ to $\R^{1+m}$.  Also, from \eqref{manytime} we have $\partial_s \phi = O_\phi( \langle s \rangle^{-3/2} )$, which when combined with \eqref{phin} shows that $e(s,\cdot)$ converges uniformly as $s \to \infty$ to a limit $e(\infty,\cdot)$.  Using the local $C^\infty$ bounds, we see that this convergence is also in $C^\infty_\loc$, and so $e(\infty,\cdot)$ is smooth.  In particular, we can take limits in \eqref{Adef} and conclude that $A_x(\infty,\cdot)$ are the connection coefficients for $e(\infty,\cdot)$.

Now we can apply a gauge transformation \eqref{gauge} by some smooth gauge transform $U(s,x) = U(x)$ independent of $x$ to normalise $e(\infty,\cdot) = e(\infty)$, thus creating a caloric gauge.  Note that in this gauge $A_x(s)$ converges in $C^\infty_{\operatorname{loc}}(\R^2 \to \mathfrak{so}(m))$ to zero as $s \to \infty$.  Also from \eqref{phix} we see that $\psi_x(s)$ is locally bounded in $C^\infty(\R^2 \to \R^m)$ uniformly in $s$.

The final task is to establish the bounds \eqref{phi-1}, \eqref{phi-2}, \eqref{phi-3} and to establish that $\psi_s, \psi_x$ are Schwartz.  Note that all the previous bounds on $e, A, \psi$ need not apply any more, because we have taken a gauge transform.  Nevertheless, we can largely recover these bounds from the caloric gauge condition.  First of all, the bounds \eqref{dxpsi}, \eqref{dxpsis}, \eqref{sax} continue to hold in this gauge, and so \eqref{dax} does also.  Using the boundary condition at $s=+\infty$ now instead of $s=0$, we now conclude a stronger decay estimate on $A$, namely that
$$ |D_x^k A_x| \lesssim_{k,\phi} \langle s \rangle^{-(k+3)/2}$$
for all $k \geq 0$. Writing $\partial_x = D_x - A_x$ we thus conclude that
$$ |\partial_x^k A_x| \lesssim_{k,\phi} \langle s \rangle^{-(k+3)/2} $$
for $k \geq 0$.  Inserting this back into \eqref{dxpsi}, \eqref{dxpsis} we now have
\begin{equation}\label{sasx}
|\partial_x^k \psi_x| \lesssim_{k,\phi} \langle s \rangle^{-(k+2)/2} 
\end{equation}
and
\begin{equation}\label{sass}
 |\partial_x^k \psi_s| \lesssim_{k,\phi} \langle s \rangle^{-(k+3)/2} 
\end{equation}
for $k \geq 0$.   This gives \eqref{phi-1}, \eqref{phi-2}, \eqref{phi-3} in the case $j=0$.  To handle the $j > 0$ case, we use the evolution equations from Lemma \ref{evolution}.
From repeated use of these evolution equations and induction on $j$ we obtain the $j>0$ cases of \eqref{phi-1}, \eqref{phi-2}, \eqref{phi-3} from the $j=0$ case.

Finally, we need to verify that $\psi_x$ and $\psi_s$ are Schwartz for each fixed $s$.  We already know from \eqref{sasx}, \eqref{sass} that all derivatives of $\psi_x, \psi_s$ are bounded.  Also, from Proposition \ref{Decay} we know that $\partial_x \phi$, $\partial_s \phi$ are rapidly decreasing, and hence $\psi_x, \psi_s$ are too.  Using Taylor's theorem with remainder (or \eqref{gag-1} and localisation) one can then verify that the derivatives of $\psi_x, \psi_s$ must also be rapidly decreasing, and the claim follows.
\end{proof}

\begin{remark}\label{rotor}
Suppose that $\phi: \R^+ \times \R^2 \to \H$ is a heat flow with classical initial data that equals $\phi(\infty)$ at infinity, let $e(\infty)$ be a frame for $\phi(\infty)$, and let $e$ be the corresponding caloric gauge given by Theorem \ref{caloric-thm}.  Observe that if $U \in SO(m,1)$ is any Lorentz transform (which thus acts on $\H$ using the representation \eqref{hyperdef}), then $U \circ \phi$ is another heat flow with classical initial data that equals $U(\phi(\infty))$ at infinity, and $U_* e$ is the caloric gauge with frame $U_* e(\infty) = dU(\phi(\infty)) \circ e(\infty)$ at infinity.  In particular, restricting $U$ to the stabiliser of $\phi(\infty)$ (which is isomorphic to the rotation group $SO(m)$), we see that the choice of frame $e(\infty)$ at infinity only affects the caloric gauge up to rotation.
\end{remark}

\begin{remark} While the fields $\psi_s$, $\psi_x$ are Schwartz, the same does not appear to be true of the connection field $A_x$; the estimates above can be used to establish a bound of the form $A_x = O( \min( \langle s \rangle^{-3/2}, \langle x \rangle^{-3} )$, which is a substantial amount of spatial decay (in particular, making $A_x$ absolutely integrable) but is not an infinite amount (although higher derivatives of $A_x$ will exhibit better decay).  Fortunately we will not need an infinite amount of decay on $A_x$ in our arguments.
\end{remark}

\subsection{Smooth deformations of heat flows}

Until now, we have constructed heat flows $\phi: \R^+ \times \R^2 \to \H$ and caloric gauges $e \in \Gamma(\Frame( \phi^* T\H ) )$ for a single (static) initial data $\phi: \R^2 \to \H$.  However, for applications to wave maps, it is necessary to construct these flows and gauges for a \emph{time-varying} (dynamic) field $\phi: \R^{1+2} \to \H$, thus obtaining a dynamic family of heat flows $\phi: \R^+ \times \R^{1+2} \to \H$ from the product spacetime $\R^+ \times \R^{1+2} := \{ (s,t,x): s \geq 0; t \in \R; x \in \R^2 \}$ to $\H$, creating a dynamic caloric gauge $e \in \Gamma(\Frame( \phi^* T\H ) )$ together with differentiated fields $\psi_x, \psi_t, \psi_s$ and connection fields $A_x, A_t$.  We now record the basic qualitative properties of such a construction.

\begin{theorem}[Dynamic caloric gauges]\label{dynamic-caloric}  Let $I$ be a time interval with non-empty interior, and let $\phi: I \times \R^2 \to \H$ be a smooth map which differs from constant $\phi(\infty)$ by a Schwartz function in space, and let $e(\infty) \in \Frame(T_{\phi(\infty)}\H)$ be a frame for $\phi(\infty)$.  Then $\phi$ extends smoothly to a dynamic heat flow $\phi: \R^+ \times I \times \R^2 \to \H$ that converges in $C^\infty_{\loc}(I \times \R^2)$ to $\phi(\infty)$ as $s \to \infty$, and there exists a unique smooth frame $e \in \Gamma(\Frame(\phi^* T\H))$ such that $e(t)$ is a caloric gauge for $\phi(t)$ which equals $e(\infty)$ at infinity for each $t \in I$.  
All derivatives of $\phi - \phi(\infty)$ in the variables $t,x,s$ are Schwartz in $x$ for each fixed $t,s$.

Furthermore, the time differentiated field $\psi_t$ obeys the linear parabolic equation
\begin{equation}\label{dst}
 \partial_s \psi_t = D_i D_i \psi_t + (\psi_t \wedge \psi_i) \psi_i,
\end{equation}
while the time connection $A_t$ obeys the ODE
\begin{equation}\label{ast}
\partial_s A_t = - \psi_s \wedge \psi_t
\end{equation}
with $\lim_{s \to \infty} A_t = 0$ for each $t,x$.

\end{theorem}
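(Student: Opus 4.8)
The plan is to reduce everything to the static results already in hand --- Proposition \ref{Decay} and Theorem \ref{caloric-thm}, applied to each time slice $\phi(t)=\phi(t,\cdot)$ --- and then to supply the missing ingredient, namely smooth (and suitably uniform) dependence on the parameter $t$. None of the genuinely new content is a hard theorem; it is a matter of rerunning the parabolic bootstraps of the earlier sections with $t$ as a passive parameter and keeping track of uniformity on compact subintervals of $I$.

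First I would construct the dynamic heat flow. For each fixed $t\in I$, Proposition \ref{Decay} produces the heat flow extension $\phi(s,t,\cdot)$ together with the decay estimates \eqref{manytime}; since $\phi(t,\cdot)-\phi(\infty)$ is Schwartz with seminorms that vary continuously (hence are locally bounded) in $t$, every implied constant in \eqref{manytime} may be taken uniform for $t$ in any compact subinterval of $I$. To get joint smoothness in $(s,t,x)$, I would differentiate the coordinate form \eqref{pars} of the flow in $t$: each $t$-derivative of $\phi$ solves a linear parabolic equation whose coefficients are built from the (already controlled) spatial derivatives of $\phi$, so the Duhamel/Picard iteration used in the proof of Proposition \ref{Decay}, run in weighted $C^0_sC^k_x$ spaces, propagates smoothness and rapid spatial decay to all mixed derivatives $\partial_t^j\partial_s^l\partial_x^k(\phi-\phi(\infty))$; the needed $s$-decay of the $t$-derivatives comes from the comparison principle (Corollary \ref{compar}) applied via the diamagnetic inequality to the linearised flow, giving in particular $|\partial_t\phi(s,t,x)|\lesssim\langle s\rangle^{-1}$. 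This yields the Schwartz-in-$x$ property and the $C^\infty_{\loc}(I\times\R^2)$ convergence of $\phi$ to $\phi(\infty)$ as $s\to\infty$.

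Next I would build the dynamic caloric gauge. For each fixed $t$, Theorem \ref{caloric-thm} supplies a unique caloric gauge $e(t)$ for $\phi(t)$ with boundary frame $e(\infty)$, so uniqueness of the whole family is immediate. For smoothness in $t$, I would follow the construction in the proof of Theorem \ref{caloric-thm}: pick a Schwartz orthonormal frame $e(0,t,\cdot)$ on the initial data depending smoothly on $t$ (possible since $\R^2$ is contractible and the data varies smoothly), parallel transport it along $\partial_s$ via $(\phi^*\nabla)_s e_a=0$ (an ODE depending smoothly on the parameter $t$), then observe that both the $s\to\infty$ limits $e(\infty,t,\cdot)$ and the $s$-independent normalising gauge transform $U(t,x)$ carrying $e(\infty,t,\cdot)$ to the constant frame $e(\infty)$ depend smoothly on $t$ (again using the uniform-in-$t$ decay estimates). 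The resulting frame $e$ is then smooth in all variables, and by construction $e(\infty,t,x)=e(\infty)$ is independent of $t$ and $x$.

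Finally I would derive the two evolution equations. The parabolic equation \eqref{dst} for $\psi_t=e^*\partial_t\phi$ is obtained by exactly the commutator computation used for \eqref{psix-heat} in Lemma \ref{evolution}: writing $\partial_s\psi_t=D_s\psi_t=D_t\psi_s=D_tD_i\psi_i$ using $A_s=0$ and zero torsion \eqref{zerotor-frame}, then commuting $D_t$ past $D_i$ using the curvature identity \eqref{curv-frame} and zero torsion once more. The ODE \eqref{ast} is just \eqref{curv-frame} in the $(s,t)$ directions combined with $A_s=0$, which gives $\partial_s A_t=-\psi_s\wedge\psi_t$; the boundary value $\lim_{s\to\infty}A_t=0$ follows because $e(\infty,t,x)$ is a constant frame, and integrability in $s$ (which simultaneously yields decay of $A_t$) follows from $|\psi_s|\lesssim\langle s\rangle^{-3/2}$ and $|\psi_t|\lesssim\langle s\rangle^{-1}$. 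I expect the main obstacle to be purely technical: pushing the regularity bootstraps of Proposition \ref{Decay} and Theorem \ref{caloric-thm} through to joint smoothness of $\phi$ and $e$ in $(s,t,x)$, with the uniform-in-$t$ Schwartz-in-$x$ bounds and the $s$-decay of the $t$-differentiated fields needed to make sense of the $s=\infty$ boundary condition in \eqref{ast}. This is bookkeeping rather than a conceptually new step.
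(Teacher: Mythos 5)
Your proposal is correct and follows the paper's overall strategy: extend slice-by-slice via Proposition \ref{Decay}, prove joint smoothness and decay as $s\to\infty$, construct the gauge by parallel transport along $\partial_s$ from a smooth initial frame and then normalizing by an $s$-independent gauge transform $U(t,x)$, and finally derive \eqref{dst} and \eqref{ast} from the commutator computations of Lemma \ref{evolution}, with $\lim_{s\to\infty}A_t=0$ forced by constancy of $e(\infty,t,x)=e(\infty)$. The one point where your handling diverges from the paper's is the uniform-in-$s$ control as $s\to\infty$ of the $t$-derivatives of $\phi$: the paper explicitly flags that gluing infinitely many short-time Picard iterations does \emph{not} directly yield uniform smoothness at $s=\infty$, and resolves this by running a single contraction mapping on $[s_0,\infty)$ (for $s_0$ large) in the weighted norm $X=\sup_{s\ge s_0} s\,\|\cdot\|_{\dot C^1_x(\R^2)}$, so that smooth $t$-dependence of the iteration in $X$ immediately delivers $|\partial_t^j\partial_x\phi|\lesssim_{j,\phi}\langle s\rangle^{-1}$ for all $j$ at once. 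Your alternative --- the comparison principle and diamagnetic inequality applied to the linearised flow --- does cleanly produce the $j=1$ bound $|\partial_t\phi|\lesssim\langle s\rangle^{-1}$, but to cover all $j$ you would have to iterate it (each $\partial_t^j\phi$ solves a parabolic equation forced by lower-order $t$-derivatives and their spatial derivatives), which essentially rebuilds the paper's weighted-norm bootstrap by hand. Both routes work; the paper's single global-in-$s$ iteration is simply tidier and should be stated explicitly rather than hidden in the phrase ``weighted $C^0_sC^k_x$ spaces.''
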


\begin{proof}  The uniqueness follows from the uniqueness in Theorem \ref{caloric-thm}, so we now turn to existence.  Without loss of generality we can take $I$ to be compact.  For each fixed time $t$, we can extend the initial data $\phi(t,\cdot): \R^2 \to \H$ to a smooth heat flow $\phi(\cdot,t,\cdot): \R^+ \times \R^2 \to \H$ thanks to Proposition \ref{Decay}.  An inspection of the proof of that proposition shows that the global heat flow was obtained by gluing together local heat flows obtained by a Picard iteration method.  As the expression \eqref{duh-local} being iterated in that method is a smooth function of the unknown $\phi$ (in fact, it is real analytic), it follows by standard arguments that the solution $\phi$ depends smoothly (and even real analytically) on the initial data, as measured in the $C^1(\R^2)$ topology.  An inspection of the proof of Proposition \ref{Decay} reveals that all the constants that depend on the initial data $\phi(t,\cdot)$ in fact only depend on a bound on finitely many derivatives of that data, as well as a bound on the support of that data, and so can be made uniform in $t$ since $I$ is compact.  From this we see that the map $t \mapsto \phi(\cdot,t,\cdot)$ is locally smooth in smooth topologies, and so we can glue together all the heat flows $\phi(\cdot,t,\cdot)$ to create a smooth dynamic heat flow $\phi: \R^+ \times I \times \R^2 \to \H$.

Unfortunately, as we are gluing together infinitely many local flows to create the global flow, this argument does not directly establish uniform smoothness of $\phi$ in the limit $s \to \infty$.  However, this can be remedied by constructing the heat flow on an infinite heat-temporal interval $[s_0,+\infty)$ directly by an iteration method.  The starting point is the the variant
\begin{equation}\label{duh-2}
\phi(s,t) = e^{(s-s_0)\Delta} \phi(s_0,t) - \int_{s_0}^s e^{(s-s')\Delta} (|\partial_x \phi|_{\phi^* h}^2 \phi)(s',t)\ ds'
\end{equation}
of \eqref{duh-local} for any $s_0 \geq 0$, which of course follows from \eqref{duh}.  Now we take $s_0$ to be large and consider the norm $X$ defined by
$$ \|u\|_X := \sup_{s \in [s_0,+\infty)} s \| u(s) \|_{\dot C^1(\R^2)}.$$
Observe from \eqref{manytime} that $\phi(\cdot,t,\cdot)$ has norm $O_{\phi}(1)$ in this space, regardless of the values of $s_0$ or $t$; from \eqref{duh-2}, \eqref{strict-reg}, and the triangle inequality this implies that the linear evolution $e^{(s-s_0)\Delta} \phi(s_0,t)$ does also, if $s_0$ is sufficiently large depending on $\phi$.  For similar reasons, if $s_0$ is sufficiently large depending on $\phi$, the Picard iteration map associated to \eqref{duh-2} is a contraction on the ball of $X$ of radius $O_\phi(1)$ centred at the origin.  Because of this, we can construct the solution $\phi(\cdot,t,\cdot)$ on $[s_0,+\infty) \times \R^2$ from the initial data $\phi(s_0,t,\cdot)$ by a single iteration scheme, uniformly in $t$; and so $\phi$ varies smoothly in $t$ in the $X$ topology, in particular we have
$$ |\partial_t^j \partial_x \phi|\lesssim_{\phi,j} \langle s \rangle^{-1} $$
for all $j \geq 0$.  By differentiating \eqref{duh-2} and using higher order analogues of \eqref{strict-reg}, one can similarly obtain
$$ |\partial_t^j \partial_x^m \phi| \lesssim_{\phi,j,m} \langle s \rangle^{-(m+1)/2} $$
for all $m \geq 1$.  In particular, from \eqref{pars} we have
\begin{equation}\label{sthok}
 |\partial_t^j \partial_x^m \partial_s \phi| \lesssim_{\phi,j,m}  \langle s \rangle^{-(m+3)/2} 
\end{equation}
for all $j,m \geq 0$.  Since we also have $\phi = \phi(\infty) + O_\phi(s^{-1/2})$ by \eqref{manytime}, it is not hard to conclude that $\phi(s)$ converges in $C^\infty_\loc(I \times \R^2)$ to $\phi(\infty)$ as claimed.  A modification of the argument also shows that $\phi-\phi(\infty)$ and its derivatives are Schwartz in $x$ for each $t,s$.

We now repeat the construction of the caloric gauge in Theorem \ref{caloric-thm}, but now taking the dynamic variable $t$ into account. Namely, we begin as before by selecting an arbitrary smooth orthonormal frame $e(0,t,x) \in \Frame( T_{\phi(0,t,x)} \H )$ on the initial data $\phi(0,\cdot,\cdot)$, which differs from $e(\infty)$ by a Schwartz function in spac.  We then extend this in $s$ using \eqref{phin} as before; using \eqref{sthok} and Picard iteration we see that $e$ is smooth in all variables and extends smoothly to a limit $e(\infty,t,x)$ as $s \to \infty$.  We then repeat the arguments in Theorem \ref{caloric-thm} and establish a smooth caloric gauge $e \in \Gamma(\Frame( \phi^* T\H ) )$ as required.  Finally, \eqref{dst}, \eqref{ast} follow from the arguments used to prove Lemma \ref{evolution}. The convergence of $A_t$ to zero as $s \to \infty$ follows from the smoothness of $e$ all the way up to $s=\infty$ and the fact that $e(\infty,t,x) = e(\infty)$ is constant in $t$.
\end{proof}

\section{The harmonic map heat flow and the caloric gauge II.  Quantitative estimates}\label{littlewood-sec}

In the previous section we established various qualitative properties of heat flows with classical data (especially when viewed in the caloric gauge), in which the implied constants were allowed to depend on smooth norms of the initial data.  Now we turn to the more quantitative theory, in which we still work with classical heat flows (in order to easily justify all integration by parts, etc.) but the constants are only allowed to depend on the energy of the heat flow rather than on any higher regularity norms.  Such estimates are of course essential if we are to use the heat flow to construct an energy space.  Some similar computations in the context of the global regularity problem for wave maps, but using an extrinsic regularisation method in place of the harmonic map heat flow, also appear in \cite{tataru:wave3}.  Whereas the qualitative decay estimates in the previous section were ``subcritical'' (favourable with respect to scaling), the quantitative estimates here will be ``critical'' (scale-invariant), basically because of the scale-invariant nature of the energy functional.

Throughout this section, the reader may find it useful to keep the (non-rigorous) dimensional analysis heuristics
\begin{align*}
\partial_x, D_x &\approx s^{-1/2}; \\
\partial_s &\approx s^{-1};\\
\|u\|_{L^\infty_x(\R^2)} &\lessapprox s^{-1/2} \|u\|_{L^2_x(\R^2)};\\
\sup_{s > 0} |f(s)| &\lessapprox \int_0^\infty |f(s)|\frac{ds}{s};\\
\| \psi_x \|_{L^2_x(\R^2)}, \| A_x \|_{L^2_x(\R^2)} &\lessapprox 1;\\
\psi_s &\approx \partial_x \psi_x; \\
\psi_x(s) &\approx e^{s\Delta} \psi_x(0)
\end{align*}
in mind.

We begin with some parabolic regularity estimates which, roughly speaking, assert that the covariant derivatives of a heat flow with bounded energy enjoy the same decay estimates as their linear counterparts (i.e. solutions to the free heat equation with finite energy initial data).

\begin{proposition}[Covariant parabolic regularity]\label{corpar} Let $\phi: \R^+ \times \R^2 \to \H$ be a heat flow with classical initial data, and with energy bound
\begin{equation}\label{en}
 \int_{\R^2} \e_1(0,x)\ dx \leq E
\end{equation}
for some $E > 0$, where the energy densities $\e_k$ were defined in \eqref{energy-dens-def}.  Then one has the bounds
\begin{align}
|\int_0^\infty \int_{\R^2} s^{k-1} \e_{k+1}(s,x)\ dx ds| &\lesssim_{E,k} 1 \label{l2-integ} \\
\sup_{s > 0} s^{k-1} \int_{\R^2} \e_{k}(s,x)\ dx &\lesssim_{E,k} 1 
\label{l2-const} \\
\sup_{s > 0, x \in \R^2} s^{k} \e_k(s,x) &\lesssim_{E,k} 1 
\label{linfty-const}
\end{align}
for all $k \geq 1$.
\end{proposition}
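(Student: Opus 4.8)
The plan is to prove all three families of bounds \eqref{l2-integ}, \eqref{l2-const}, \eqref{linfty-const} simultaneously by induction on $k$, the inductive hypothesis at stage $k$ being all three bounds for every index strictly less than $k$. The tools are the parabolic identities \eqref{bochner}, \eqref{psem} and their square-root forms \eqref{bochner-3}, \eqref{psem-sqrt}, the comparison principle (Corollary~\ref{compar}), the diamagnetic inequality \eqref{diamagnetic}, and the Gagliardo--Nirenberg inequalities \eqref{gag-1}--\eqref{gag-4}; all are scale-invariant, so every estimate here is ``critical''.

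For the base case $k=1$: integrating \eqref{bochner} over $\R^2$ gives $\partial_s \int_{\R^2}\e_1\,dx = -2\int_{\R^2}\e_2\,dx - \int_{\R^2}|\partial_x\phi\wedge\partial_x\phi|_{\phi^*h}^2\,dx\le 0$, so $\int_{\R^2}\e_1(s)\,dx$ is non-increasing; with \eqref{en} this gives \eqref{l2-const} at $k=1$, and integrating in $s$ gives \eqref{l2-integ} at $k=1$. For \eqref{linfty-const} at $k=1$ I would apply Corollary~\ref{compar} to the subsolution inequality \eqref{bochner-3} to get $\sqrt{\e_1}(s)\le e^{(s/2)\Delta}\sqrt{\e_1}(s/2)$, and then bound the right side by $\lesssim s^{-1/2}\|\sqrt{\e_1}(s/2)\|_{L^2_x}\lesssim_E s^{-1/2}$ using the $L^2_x\to L^\infty_x$ case of \eqref{heat-lp} together with \eqref{l2-const}; squaring yields $s\,\e_1\lesssim_E 1$.

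For the inductive step I would establish \eqref{l2-const}, then \eqref{linfty-const}, then \eqref{l2-integ} at index $k$. Integrating \eqref{psem} over $\R^2$ gives $\partial_s\|\e_k(s)\|_{L^1_x}\le -2\|\e_{k+1}(s)\|_{L^1_x}+\int_{\R^2}N_k\,dx$, where $N_k$ collects the cubic errors $\e_a^{1/2}\e_b^{1/2}\e_c^{1/2}\e_k^{1/2}$ with $a+b+c=k+2$; placing the factors of index $<k$ into $L^\infty_x$ or $L^2_x$ via the inductive bounds (and Cauchy--Schwarz), one gets $\int_{\R^2}N_k\,dx\lesssim_k s^{-1}\|\e_k\|_{L^1_x}+s^{-(k+1)/2}\|\e_k\|_{L^1_x}^{1/2}$, the only borderline term being the one \emph{linear} in $\e_k$ with coefficient $s^{-1}$. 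To pass to the pointwise-in-$s$ bound without logarithmic loss I would localize to a dyadic heat-time interval $[S/2,S]$: by \eqref{l2-integ} at index $k-1$ one has $\int_{S/2}^S s^{k-2}\|\e_k\|_{L^1_x}\,ds\lesssim_{E,k}1$, so pigeonholing produces $s_*\in[S/2,S]$ with $\|\e_k(s_*)\|_{L^1_x}\lesssim_{E,k}S^{-(k-1)}$; substituting $z:=\|\e_k\|_{L^1_x}^{1/2}$ turns the differential inequality on $[s_*,S]$ into the linear one $z'\lesssim_k s^{-1}z+s^{-(k+1)/2}$, whose integrating factor is $\exp(O_k(\log(S/s_*)))=O_k(1)$ on this interval, so Gronwall propagates the good value at $s_*$ to $s=S$; this is \eqref{l2-const} at index $k$, valid for all $S>0$. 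Then \eqref{linfty-const} at index $k$ follows from \eqref{psem-sqrt}: on $[S/2,S]$, after absorbing the potential term $\lesssim s^{-1}\sqrt{\e_k}$ (from the $\{k,1,1\}$ error) into an exponential integrating factor that is $O_k(1)$ there, Corollary~\ref{compar} and Duhamel give $\sqrt{\e_k}(S)\lesssim_k e^{(S/2)\Delta}\sqrt{\e_k}(S/2)+O_k(S^{-k/2})$, and the first term is $\lesssim_k S^{-1/2}\|\e_k(S/2)\|_{L^1_x}^{1/2}\lesssim_{E,k}S^{-k/2}$ by \eqref{heat-lp} and the just-proven \eqref{l2-const}. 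Finally, for \eqref{l2-integ} at index $k$ I would multiply the $\|\e_k\|_{L^1_x}$-inequality by $s^{k-1}$ and integrate over $(0,\infty)$: the dissipative term yields $\int_0^\infty s^{k-1}\|\e_{k+1}\|_{L^1_x}\,ds$, the term $(k-1)s^{k-2}\|\e_k\|_{L^1_x}$ is controlled by \eqref{l2-integ} at $k-1$, and $\int_0^\infty s^{k-1}\int_{\R^2}N_k\,dx\,ds$ is handled by a short case analysis — the $\{k,1,1\}$ error is $\lesssim s^{k-2}\|\e_k\|_{L^1_x}$ after \eqref{linfty-const} at index $1$; a generic error with all indices $<k$ is bounded, after placing two of its factors into $L^\infty_x$ via \eqref{linfty-const}, by $s^{-(a+b)/2}\|\e_c\|_{L^1_x}^{1/2}\|\e_k\|_{L^1_x}^{1/2}$ (or $s^{-(a+1)/2}\|\e_b\|_{L^1_x}^{1/2}\|\e_k\|_{L^1_x}^{1/2}$ when $c=1$), and Cauchy--Schwarz in $s$ distributing the weight $s^{k-1}$ as $s^{c-2}$ (resp.\ $s^{b-2}$) on the first factor and $s^{k-2}$ on the second makes each resulting integral exactly of the form controlled by \eqref{l2-integ} at a lower index.

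The main obstacle is exactly this criticality: under naive pointwise bounds the nonlinear contribution $s^{k-1}\int_{\R^2}N_k\,dx$ is only logarithmically divergent in $s$, so neither a global Gronwall argument (for \eqref{l2-const}) nor a crude termwise estimate (for \eqref{l2-integ}) closes, and the resolution in both cases is to exploit that the integrated bound \eqref{l2-integ} at lower indices is \emph{genuinely stronger} than the pointwise decay it would imply — the extra room being extracted via pigeonholing on dyadic intervals for \eqref{l2-const} and via Cauchy--Schwarz in $s$ for \eqref{l2-integ}. All constants remain $E$-dependent (and $k$-dependent) only, because every step uses solely Corollary~\ref{compar}, the Gagliardo--Nirenberg inequalities, and the energy-level bounds \eqref{en}, \eqref{l2-const}, \eqref{l2-integ}, \eqref{linfty-const}, never the qualitative decay estimates of Section~\ref{caloric-sec}; for $s$ near $0$ the asserted bounds are weak and follow from the already-established lower-index estimates, the genuine parabolic smoothing being encoded entirely in the convergence of the $s$-integrals.
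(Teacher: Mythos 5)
Your proposal is correct, and at the high level it is the same proof as the paper's: induct on $k$, drive everything from the Bochner--Weitzenb\"ock system \eqref{psem}/\eqref{psem-sqrt}, and close the critical (log-borderline) estimates by exploiting that the \emph{integrated} bounds \eqref{l2-integ} at lower indices carry genuinely more information than the pointwise bounds. The one structural difference is the order of the three conclusions within the inductive step. The paper goes \eqref{l2-integ} $\Rightarrow$ \eqref{l2-const} $\Rightarrow$ \eqref{linfty-const}: it first tests the AM--GM form \eqref{2m} against dyadic bump functions $\psi(s/s_0)$ and sums to get \eqref{l2-integ} at $k$; then, \emph{because} \eqref{l2-integ} at $k$ is now available, it integrates \eqref{2m} over a dyadic interval to get the clean increment bound $E_k(s_1)-E_k(s_0)=O(s_0^{1-k})$ and combines this with the pigeonhole on $\int s^{k-2}E_k$; finally it proves \eqref{linfty-const} by Duhamel with the whole forcing in $L^2_x$ and the $L^2\to L^\infty$ smoothing factor $(s_0-s)^{-1/2}$. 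You instead go \eqref{l2-const} $\Rightarrow$ \eqref{linfty-const} $\Rightarrow$ \eqref{l2-integ}: since \eqref{l2-integ} at $k$ is not yet available, the increment bound is replaced by a Gronwall argument in $z=\|\e_k\|_{L^1}^{1/2}$ whose integrating factor is $O_k(1)$ on a dyadic $s$-interval; for \eqref{linfty-const} you absorb only the $\{k,1,1\}$ potential $s^{-1}\sqrt{\e_k}$ into the integrating factor and bound the remaining (all-lower-index) forcing directly in $L^\infty$; and for \eqref{l2-integ} you multiply the $L^1$ differential inequality by $s^{k-1}$ and integrate, using Cauchy--Schwarz in $s$ (with the weight split $s^{k-1}\to s^{(c-2)/2}\cdot s^{(k-2)/2}$) to land on lower-index instances of \eqref{l2-integ}. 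Both routes close; yours is slightly more self-contained in that each of the three bounds at level $k$ is derived using only level-$(<k)$ information, whereas the paper's ordering lets \eqref{l2-const} borrow the freshly proven \eqref{l2-integ} at level $k$, buying a simpler absolute increment estimate in place of Gronwall. One small caveat (shared by both proofs and not a real gap): the finiteness of the various $s$-integrals and the vanishing of boundary terms at $s=0,\infty$ used in the integration by parts ultimately rest on the qualitative decay of Proposition~\ref{Decay}, so ``never using the qualitative estimates'' is true of the quantitative bounds but not of their a priori finiteness.
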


The estimates here should be compared with those in Section \eqref{linear-heat}.  Note, in contrast to the qualitative estimates in Proposition \ref{Decay}, the bounds here depend only on the energy rather than on $\phi$ itself.

\begin{proof}  For simplicity of notation we allow all implied constants to depend on $E$ and $k$.

We induct on $k$, beginning with the base case $k=1$.  From \eqref{bochner-3} and Corollary \ref{compar} we have the pointwise estimate
\begin{equation}\label{point}
 \sqrt{\e_1(s,x)} \leq (e^{s\Delta} \sqrt{\e_1(0)})(x)
 \end{equation}
for $s \geq 0$.
The claims \eqref{l2-const}, \eqref{linfty-const} for $k=0$ then follow from \eqref{en}, \eqref{heat-lp}.  Next, from integrating \eqref{bochner} in space we obtain the energy identity
$$ \partial_s \int_{\R^2} \e_1\ dx \leq - 2 \int_{\R^2} \e_2\ dx.$$
Integrating this in $s$, we establish \eqref{l2-integ} for $k=0$ from \eqref{l2-const} for $k=0$.

Now suppose that $k \geq 1$ and that the claims have already been established for smaller values of $k$.  Integrating \eqref{psem} we have
\begin{align*}
\partial_s \int_{\R^2} \e_k\ dx &= - 2 \int_{\R^2} \e_{k+1}\ dx \\
&\quad + 
O\left( \sum_{a,b,c \geq 1: a+b+c=k+2} \int_{\R^2} \e_a^{1/2} \e_b^{1/2} \e_c^{1/2} \e_k^{1/2}\ dx \right).
\end{align*}
By symmetry we may take $a \geq b \geq c$, which forces $a \geq 2$ and $b,c < k$.  Applying the inductive hypothesis \eqref{linfty-const} we conclude
$$ 
\partial_s \int_{\R^2} \e_k\ dx = - 2 \int_{\R^2} \e_{k+1}\ dx + 
O\left( \sum_{2 \leq a \leq k} \int_{\R^2} s^{-(k+2-a)/2} \e_a^{1/2} \e_k^{1/2}\ dx \right).$$
Writing $E_a(s) := \int_{\R^2} \e_a\ ds$, we conclude from Cauchy-Schwarz that
$$ 2 E_{k+1} + \partial_s E_k = O\left( \sum_{a=2}^k s^{-(k+2-a)/2} E_a^{1/2} E_k^{1/2} \right)$$
and hence by the arithmetic mean-geometric mean inequality
\begin{equation}\label{2m}
 2 E_{k+1} + \partial_s E_k = O\left( \sum_{a=2}^k s^{a-k-1} E_a \right).
\end{equation}
Suppose we integrate this against $\psi(s/s_0)$, where $s_0 > 0$ and $\psi$ is a non-negative smooth cutoff function supported on $[1/2,2]$.  We conclude that
$$ \int_0^\infty \psi(s/s_0) E_{k+1} \leq O\left( \sum_{a=2}^k s_0^{a-k-1} \int_{s_0/2}^{2s_0} E_a\ ds \right).$$
Summing this dyadically in $s_0$ and using the inductive hypothesis \eqref{l2-integ}, we establish \eqref{l2-integ} for $k$ as required.  If we now return to \eqref{2m} and integrate this on an interval $[s_0,s_1]$ with $s_0 \leq s_1 \leq 2s_0$, we conclude using \eqref{l2-integ} and Cauchy-Schwarz that
$$ E_k(s_1) - E_k(s_0) = O( s_0^{1-k} ).$$
Combining this with \eqref{l2-integ} for $k-1$, we quickly obtain \eqref{l2-const} for $k$ as required.

The final task is to establish \eqref{linfty-const} for $k$.  We start here with \eqref{psem-sqrt}.  Using Duhamel's formula \eqref{duh} and \eqref{heat-lp}, we see that
\begin{align*}
\| \sqrt{\e_k}(s_0) \|_{L^\infty_x(\R^2)} &\lesssim s_0^{-1/2} \| \sqrt{\e_k}(s_0/2) \|_{L^2_x(\R^2)}\\
&\quad  + \int_{s_0/2}^{s_0} \sum_{a,b,c \geq 1: a+b+c=k+2} (s_0-s)^{-1/2} \| \e_a^{1/2} \e_b^{1/2} \e_c^{1/2} \|_{L^2_x(\R^2)}\ ds
\end{align*}
for all $s_0 > 0$.
Applying \eqref{l2-const} (which has been established up to $m$) and \eqref{linfty-const} (which has been established up to $k-1$) we conclude
$$
\| \sqrt{\e_k}(s_0) \|_{L^\infty_x(\R^2)} \lesssim s_0^{-1/2} s_0^{(1-k)/2}
+ \int_{s_0/2}^{s_0} \sum_{a,b,c \geq 1: a+b+c=k+2} (s_0-s)^{-1/2} s_0^{(-k-1)/2}\ ds$$
and \eqref{linfty-const} for $k$ follows.
\end{proof}

We also have the following variant estimate:

\begin{lemma}[Integrated $L^\infty$ parabolic regularity]\label{infty-lemma} Let the notation and assumptions be as in the previous proposition.  Then we also have
$$ \int_0^\infty s^{k-1} \sup_{x \in \R^2} \e_k(s,x)\ ds \lesssim_{E,k} 1$$
for all $k \geq 1$.
\end{lemma}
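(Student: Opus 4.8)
The plan is to prove the bound by induction on $k$, drawing all quantitative input from Proposition \ref{corpar} --- above all the integrated $L^2$ bound \eqref{l2-integ} --- together with the parabolic Strichartz estimate (Lemma \ref{ubang}) for the base case. Since $\sup_x \e_k(s,x) = \|\sqrt{\e_k}(s)\|_{L^\infty_x(\R^2)}^2$, the target is $\int_0^\infty s^{k-1} \|\sqrt{\e_k}(s)\|_{L^\infty_x(\R^2)}^2\ ds \lesssim_{E,k} 1$. For $k=1$ the weight is trivial, and \eqref{bochner-3} together with Corollary \ref{compar} gives the pointwise bound $\sqrt{\e_1}(s,x) \le (e^{s\Delta}\sqrt{\e_1}(0))(x)$; hence $\int_0^\infty \sup_x \e_1(s,x)\ ds \le \int_0^\infty \|e^{s\Delta}\sqrt{\e_1}(0)\|_{L^\infty_x(\R^2)}^2\ ds \lesssim \|\sqrt{\e_1}(0)\|_{L^2_x(\R^2)}^2 = \int_{\R^2} \e_1(0,x)\ dx \le E$ by Lemma \ref{ubang} with $p=\infty$.

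For the inductive step ($k \ge 2$, the cases $1,\dots,k-1$ being known), I would start from \eqref{psem-sqrt}, which reads $\partial_s \sqrt{\e_k} \le \Delta \sqrt{\e_k} + F_k$ with $F_k := \sum_{a,b,c \ge 1:\, a+b+c=k+2} O_k(\e_a^{1/2}\e_b^{1/2}\e_c^{1/2}) \ge 0$, and apply Duhamel's formula \eqref{duh}, with positivity of the heat kernel, starting from heat-time $s/2$:
$$ \sqrt{\e_k}(s) \le e^{(s/2)\Delta}\sqrt{\e_k}(s/2) + \int_{s/2}^s e^{(s-s')\Delta} F_k(s')\ ds', $$
made rigorous by the $(\eps^2+\e_k)^{1/2}$ regularisation used in Corollary \ref{boch}. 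The linear term is controlled by \eqref{heat-lp} ($L^2_x \to L^\infty_x$ smoothing) and then \eqref{l2-integ} applied with index $k-1$: $\int_0^\infty s^{k-1} \|e^{(s/2)\Delta}\sqrt{\e_k}(s/2)\|_{L^\infty_x(\R^2)}^2\ ds \lesssim \int_0^\infty s^{k-2} \int_{\R^2} \e_k(s/2,x)\ dx\,ds \lesssim_k 1$. For the forcing term, order the indices $a \ge b \ge c \ge 1$ (so $a \le k$) and split off the extreme contribution $a=k$, $b=c=1$, i.e. $O_k(\e_k^{1/2}\e_1)$, from the rest. For this extreme (self-referential) piece I would bound $\e_1 \lesssim s^{-1}$ by \eqref{linfty-const} and keep $\sqrt{\e_k}$ in $L^2_x$, using the $(s-s')^{-1/2}$ smoothing of \eqref{heat-lp}, Cauchy--Schwarz in $s'$, and Fubini, to reduce again to $\int_0^\infty s^{k-2} \int_{\R^2} \e_k(s,x)\ dx\,ds \lesssim_k 1$ (index $k-1$ in \eqref{l2-integ}); note that because the Duhamel integral runs only over $[s/2,s]$ no Gronwall argument is needed. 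For each remaining term ($a \le k-1$) I would estimate $\e_b^{1/2}\e_c^{1/2} \lesssim s^{-(k+2-a)/2}$ by \eqref{linfty-const}, keep $\sqrt{\e_a}$, and, after Cauchy--Schwarz in $s'$ and Fubini, bound the contribution by $\int_0^\infty s^{a-1} \sup_x \e_a(s,x)\ ds \lesssim_k 1$, which is the inductive hypothesis since $1 \le a \le k-1$. Summing these closes the induction.

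The main obstacle is that there is no slack: at the level of the scale-invariant pointwise bounds $\sup_x \e_k(s) \lesssim s^{-k}$ and $\int_{\R^2} \e_k(s)\ dx \lesssim s^{1-k}$ from Proposition \ref{corpar}, the integral $\int_0^\infty s^{k-1} \sup_x \e_k(s)\ ds$ is exactly logarithmically divergent. Hence every estimate in the induction must be organised so that precisely one factor is integrated in the heat-time variable $s$ --- via \eqref{l2-integ} for the linear and extreme terms, via the inductive hypothesis for the lower-order nonlinear terms --- while the remaining factors are controlled pointwise; getting this bookkeeping exactly right (in particular, using $L^2_x$ rather than $L^\infty_x$ smoothing for the self-referential term to avoid circularity) is the only real point of the argument.
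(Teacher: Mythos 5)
Your argument is correct, but it takes a genuinely different route from the paper. The paper proves this lemma non-inductively and in a few lines: it applies the Gagliardo--Nirenberg inequalities \eqref{gag-3} and \eqref{gag-4}, together with the diamagnetic inequality \eqref{diamagnetic}, to obtain $\|\sqrt{\e_k}(s)\|_{L^\infty_x} \lesssim \prod_{j=0,1,2}\|\sqrt{\e_{k+j}}(s)\|_{L^2_x}^{1/3}$, and then concludes for $k\geq 2$ by H\"older in $s$ using \eqref{l2-integ}, \eqref{l2-const}; the $k=1$ case is handled separately via \eqref{point} and Lemma \ref{ubang}, exactly as you do. Your proof instead runs a fresh Duhamel/Bochner induction off \eqref{psem-sqrt}, starting the integral at $s/2$, and your bookkeeping is sound: the linear term and the self-referential piece ($a=k,b=c=1$) both reduce after $L^2\to L^\infty$ smoothing, Cauchy--Schwarz, and Fubini to $\int_0^\infty s^{k-2}\int\e_k\,dx\,ds\lesssim 1$ (which is \eqref{l2-integ} with index $k-1$, valid for $k\ge 2$), while the lower-order pieces ($a\le k-1$) reduce to the inductive hypothesis $\int_0^\infty s^{a-1}\sup_x\e_a\,ds\lesssim 1$. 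Your observation that no Gronwall loop is needed --- because the $L^2\to L^\infty$ smoothing recasts the self-referential contribution in terms of $\|\sqrt{\e_k}\|_{L^2_x}$, which is already controlled --- is precisely the point that makes the induction close. The paper's interpolation proof is shorter and avoids reanalysing the forcing; your parabolic argument is longer but more self-contained (it reuses the same Duhamel machinery as \eqref{linfty-const}, rather than invoking the dimension-two Gagliardo--Nirenberg endpoints), and both hit the same obstruction at $k=1$ (the failure of the $k=0$ case of \eqref{l2-integ}), which both arguments resolve by Lemma \ref{ubang}.
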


\begin{proof}  From the Gagliardo-Nirenberg inequality \eqref{gag-3} we have
$$ \| \sqrt{\e_k}(s) \|_{L^\infty_x(\R^2)} \lesssim
\| \sqrt{\e_k}(s) \|_{L^2_x(\R^2)}^{1/3} \| \partial_x \sqrt{\e_k}(s) \|_{L^4_x(\R^2)}^{2/3}$$
and hence by the diamagnetic inequality \eqref{diamagnetic}
$$ \| \sqrt{\e_k}(s) \|_{L^\infty_x(\R^2)} \lesssim
\| \sqrt{\e_k}(s) \|_{L^2_x(\R^2)}^{1/3} \| \sqrt{\e_{k+1}}(s) \|_{L^4_x(\R^2)}^{2/3}.$$
Another application of the Gagliardo-Nirenberg inequality \eqref{gag-4} gives
$$ \| \sqrt{\e_{k+1}}(s) \|_{L^4_x(\R^2)} \lesssim
\| \sqrt{\e_{k+1}}(s) \|_{L^2_x(\R^2)}^{1/2} \| \nabla \sqrt{\e_{k+1}}(s) \|_{L^2_x(\R^2)}^{1/2}$$
and thus by the diamagnetic inequality \eqref{diamagnetic} we conclude that
$$
 \| \sqrt{\e_k}(s) \|_{L^\infty_x(\R^2)} \lesssim
\prod_{j=0,1,2} \| \sqrt{\e_{k+j}}(s) \|_{L^2_x(\R^2)}^{1/3}.
$$
For $k \geq 2$, the claim now follows from \eqref{l2-integ}, \eqref{l2-const}, and H\"older's inequality.  

For $k=1$, the above argument does not quite work (it would require the $k=0$ case of \eqref{l2-integ}, which fails); nevertheless, the claim follows immediately in this case from \eqref{point}, \eqref{en}, and Lemma \ref{ubang}.
\end{proof}

Now we work in the caloric gauge and control the connection $A_x$ and its derivatives.

\begin{proposition}[Connection bounds]\label{abound}  Let $\phi$ be a heat flow with classical initial data obeying the energy bound \eqref{en}, let $e$ be a caloric gauge for $\phi$, and let $A$ be the connection coefficients.  Then we have the pointwise bounds
\begin{align}
\| \partial_x^k A_x(s) \|_{L^\infty_x(\R^2)} &\lesssim_{k,E} s^{-(k+1)/2} \label{ax-infty} \\
\| \partial_x^k A_x(s) \|_{L^2_x(\R^2)} &\lesssim_{k,E} s^{-k/2} \label{ax-2} 
\end{align}
for all $k \geq 0$ and $s > 0$, as well as the integrated estimates
\begin{align}
\int_0^\infty s^{(k-1)/2} \| \partial_x^{k+1} A_x(s) \|_{L^2_x(\R^2)}\ ds &\lesssim_{k,E} 1 \label{aint-2} \\
\int_0^\infty s^{(k-1)/2} \| \partial_x^k A_x(s) \|_{L^\infty_x(\R^2)}\ ds &\lesssim_{k,E} 1 \label{aint-infty}.
\end{align}
for all $k \geq 0$.
\end{proposition}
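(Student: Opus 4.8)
The plan is to build everything from the evolution equation \eqref{sax}, namely $\partial_s A_x = -\psi_s \wedge \psi_x$, together with the fact (established while proving Theorem \ref{caloric-thm}) that the caloric gauge condition forces $A_x(s) \to 0$ as $s \to \infty$. Integrating \eqref{sax} backwards in $s$ gives
\[
 A_x(s) = \int_s^\infty (\psi_s \wedge \psi_x)(s')\ ds'.
\]
Applying the matrix covariant derivative $D_x$ of \eqref{matrix-cov} repeatedly, using the Leibniz rule \eqref{wedge-leibnitz} and the commutator identity $[\partial_s, D_x]B = [\partial_s A_x, B] = -[\psi_s \wedge \psi_x, B]$, one gets for each $k$ an analogous integral representation for $D_x^k A_x(s)$ whose integrand is $O_{k,E}\bigl(\sum_{a+b=k} |D_x^a \psi_s|\,|D_x^b \psi_x|\bigr)$ plus terms that are products of covariant derivatives of $A_x$ of order $<k$. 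One moves between $\partial_x^k A_x$ and $D_x^k A_x$ by substituting $\partial_j = D_j - [A_j,\cdot\,]$, which again only introduces products of lower order derivatives of $A_x$. Thus the proposition is proved by induction on $k$, the lower order terms at stage $k$ being controlled by the inductive hypothesis and obeying, by a scaling count, the same bounds as the main term.

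For the integrands we feed in Proposition \ref{corpar}: keeping covariant derivatives in standard order gives $|D_x^b \psi_x| \le \sqrt{\e_{b+1}}$, while $\psi_s = D_i\psi_i$ gives $|D_x^a \psi_s| \lesssim_a \sqrt{\e_{a+2}}$ up to lower order curvature corrections, so \eqref{linfty-const}--\eqref{l2-const} yield $|D_x^b\psi_x| \lesssim_{b,E} s^{-(b+1)/2}$, $\|D_x^b\psi_x\|_{L^2_x(\R^2)} \lesssim_{b,E} s^{-b/2}$, $|D_x^a\psi_s| \lesssim_{a,E} s^{-(a+2)/2}$, $\|D_x^a\psi_s\|_{L^2_x(\R^2)} \lesssim_{a,E} s^{-(a+1)/2}$. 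Plugging these into the integral representation --- with the $\psi_s$-factor in $L^\infty_x(\R^2)$ and the $\psi_x$-factor in $L^2_x(\R^2)$, respectively both in $L^\infty_x(\R^2)$ --- shows that the integrand of $D_x^k A_x(s)$ decays like $(s')^{-(k+2)/2}$ in $L^2_x(\R^2)$ and like $(s')^{-(k+3)/2}$ in $L^\infty_x(\R^2)$; both are integrable at $s'=\infty$ for all $k \geq 0$ in the $L^\infty$ case and for $k \geq 1$ in the $L^2$ case, and carrying out the $s'$-integration proves \eqref{ax-infty} for all $k \geq 0$ and \eqref{ax-2} for $k \geq 1$.

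The integrated estimates \eqref{aint-2} and \eqref{aint-infty} cannot come from plugging in these pointwise bounds --- that leaves a logarithmic divergence --- but follow from a square-function argument. After interchanging the $s$ and $s'$ integrations, e.g.\ $\int_0^\infty s^{(k-1)/2} \|\partial_x^{k+1}A_x(s)\|_{L^2_x(\R^2)}\, ds \lesssim_k \int_0^\infty (s')^{(k+1)/2}\|\partial_x^{k+1}(\psi_s\wedge\psi_x)(s')\|_{L^2_x(\R^2)}\, ds'$, one expands by Leibniz, estimates each product with one factor in $L^\infty_x(\R^2)$ and the other in $L^2_x(\R^2)$, and applies Cauchy--Schwarz in $s'$ with the power weight split so that the $\psi_x$-type factor is absorbed by the spacetime bound \eqref{l2-integ} and the $\psi_s$-type factor by the integrated $L^\infty$ regularity of Lemma \ref{infty-lemma}; the two weight exponents sum to exactly the required one precisely because the problem is scale invariant. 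Estimate \eqref{aint-infty} is the same, placing both factors in $L^\infty_x(\R^2)$ and using Lemma \ref{infty-lemma} for each.

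The one genuinely delicate point, and the main obstacle, is the case $k = 0$ of \eqref{ax-2}, the uniform bound $\|A_x(s)\|_{L^2_x(\R^2)} \lesssim_E 1$. The crude estimate only gives $\|(\psi_s\wedge\psi_x)(s')\|_{L^2_x(\R^2)} \lesssim_E (s')^{-1}$, which is logarithmically non-integrable at $s'=\infty$; since this quantity is scale invariant no H\"older-in-$x$, pointwise-in-$s'$ bound can improve it, and since $\psi_s = D_i\psi_i$ is morally one derivative rougher than $\psi_x$ --- its ``initial data'' need not lie in $L^2$, unlike $\psi_x(0)$, for which $\|\psi_x(0)\|_{L^2_x(\R^2)}^2 = \int_{\R^2}\e_1(0)\,dx \le E$ by \eqref{en} --- parabolic regularity alone does not suffice. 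The resolution is a bilinear estimate: write $\psi_s = D_i\psi_i$, use the zero-torsion identity \eqref{zerotor-frame} to transfer the derivative onto $\psi_x$, split the $s'$-integral dyadically, apply on each block the pointwise heat domination $|\psi_x(s')| \le e^{(s'-s'')\Delta}|\psi_x(s'')|$ (a consequence of \eqref{psix-delta} and Corollary \ref{compar}), and invoke the parabolic Strichartz estimate Lemma \ref{ubang}, whose gain is exactly what makes the dyadic sum converge. This is the one step where the naive integration of \eqref{sax} must be replaced by a genuinely bilinear argument, and it is the heart of the proposition.
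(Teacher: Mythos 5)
Your overall structure matches the paper's: integrate \eqref{sax} from $s$ to $\infty$ (using $A_x \to 0$), differentiate covariantly via \eqref{matrix-cov}--\eqref{wedge-leibnitz}, feed in Proposition \ref{corpar} and Lemma \ref{infty-lemma}, and convert between $D_x^k$ and $\partial_x^k$ by induction. The H\"older splitting (paired $L^2_x \times L^\infty_x$ for $L^2_x$ outputs, $L^\infty_x \times L^\infty_x$ for $L^\infty_x$ outputs) and the Cauchy--Schwarz in $s'$ for the integrated estimates \eqref{aint-2}, \eqref{aint-infty} are exactly the paper's moves.

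Where you go astray is in the final paragraph. You correctly observe that the pointwise-in-$s'$ bound $\|\psi_s\wedge\psi_x(s')\|_{L^2_x(\R^2)} \lesssim (s')^{-1}$ is logarithmically non-integrable at $s'=\infty$, and that the ultimate cure for this scaling-borderline loss is the endpoint gain of Lemma \ref{ubang}. But you then conclude that the $k=0$ case of \eqref{ax-2} requires a \emph{new} bilinear device --- transferring a derivative onto $\psi_x$ via zero-torsion, a dyadic decomposition in $s'$, and a fresh invocation of Lemma \ref{ubang}. This is a misdiagnosis: the $k=0$ case falls to the \emph{same} Cauchy--Schwarz in $s'$ that you already use two sentences earlier for \eqref{aint-2} and \eqref{aint-infty}. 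Indeed from the integral representation and H\"older,
\[
\|A_x(s)\|_{L^2_x(\R^2)} \lesssim \int_s^\infty \bigl\|\sqrt{\e_2}(s')\bigr\|_{L^2_x(\R^2)}\,\bigl\|\sqrt{\e_1}(s')\bigr\|_{L^\infty_x(\R^2)}\, ds' \leq \Bigl(\int_0^\infty \bigl\|\sqrt{\e_2}\bigr\|_{L^2_x}^2 ds'\Bigr)^{1/2}\Bigl(\int_0^\infty \bigl\|\sqrt{\e_1}\bigr\|_{L^\infty_x}^2 ds'\Bigr)^{1/2},
\]
and the two factors are bounded by \eqref{l2-integ} (with $k=1$) and Lemma \ref{infty-lemma} (with $k=1$) respectively. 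The Lemma \ref{ubang} gain you know is essential is already packaged inside the $k=1$ case of Lemma \ref{infty-lemma} (whose proof invokes it directly precisely because $\int_0^\infty\!\int_{\R^2}\e_1$ is not finite), so nothing new needs to be derived. In short: the proposition is proved by one uniform technique for all $k\geq 0$, and the step you singled out as ``the heart'' and proposed to attack by a derivative transfer plus dyadic split is in fact a routine instance of the method you are already using; your alternative route is both unnecessary and not developed far enough to check that the transferred-derivative commutator terms are harmless.
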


\begin{remark}\label{al1} It is also possible to obtain the estimate $\|\partial_x^{k+1} A_x\|_{L^1_x(\R^2)} \lesssim_{k,E} s^{-k/2}$ for $k \geq 0$, but we will not need this estimate here and so will omit the proof.  (The $k=0$ case of this estimate is in fact somewhat delicate, requiring some non-trivial paraproduct estimates from harmonic analysis.)
\end{remark}

\begin{proof} As before we omit the dependence of the implied constants on $m$ and $E$. From \eqref{sax} we have the integral formula
\begin{equation}\label{sax-integrated}
 A_x(s) = \int_s^\infty \psi_s(s') \wedge \psi_x(s')\ ds'.
\end{equation}
Repeatedly differentating this covariantly using \eqref{matrix-cov}, \eqref{wedge-leibnitz} we obtain
\begin{equation}\label{dmax}
 |D_x^k A_x(s)| \lesssim \int_s^\infty \sum_{j=0}^k \sqrt{\e_{j+2}(s')}  \sqrt{\e_{k-j+1}(s')}\ ds'.
\end{equation}
Applying \eqref{linfty-const} we obtain
$$ \| D_x^k A_x(s) \|_{L^\infty_x(\R^2)} = O( s^{-(k+1)/2} )$$
for every $k \geq 0$, and the claim \eqref{ax-infty} then follows from \eqref{matrix-cov} and an inductive argument.  In a similar spirit, from \eqref{dmax} and the Minkowski and H\"older inequalities, we have
\begin{equation}\label{dxax}
 \| D_x^k A_x(s) \|_{L^2_x(\R^2)} \lesssim \int_s^\infty \sum_{j=0}^k \left\| \sqrt{\e_{j+2}(s')} \right\|_{L^2_x(\R^2)}
\left\| \sqrt{\e_{k-j+1}(s')} \right\|_{L^\infty_x(\R^2)}\ ds'.
\end{equation}
Using \eqref{l2-integ}, Lemma \ref{infty-lemma} and Cauchy-Schwarz we obtain \eqref{ax-2} with the ordinary derivatives $\partial_x$ replaced by covariant ones, but by using \eqref{matrix-cov}, \eqref{ax-infty} one can recover the ordinary derivatives.  

Next, applying \eqref{dxax} with $k$ replaced by $k+1$ and then using the arithmetic mean-geometric mean inequality we have
\begin{align*}
 \| D_x^{k+1} A_x(s) \|_{L^2_x(\R^2)} &\lesssim \sum_{j=0}^{k+1} \int_s^\infty (s')^{j-1/2} \left\| \sqrt{\e_{j+2}(s')} \right\|_{L^2_x(\R^2)}^2 \\
 &\quad \quad + s^{1/2-j} \left\| \sqrt{\e_{k-j+2}(s')} \right\|_{L^\infty_x(\R^2)}\ ds'.
\end{align*}
The claim \eqref{aint-2} (with ordinary derivatives replaced by covariant ones) then follows from \eqref{l2-integ}, Lemma \ref{infty-lemma}, and Fubini's theorem, and one can then recover the ordinary derivatives using \eqref{matrix-cov}, \eqref{ax-infty} as before.  

Finally, we turn to \eqref{aint-infty}.  If $k \geq 1$, we can apply the Gagliardo-Nirenberg inequality \eqref{gag-2} to deduce \eqref{aint-infty} from \eqref{aint-2}, so it suffices to check the case $k=0$.  From \eqref{dmax} and Fubini's theorem we can bound the left-hand side of \eqref{aint-infty} by
$$ \int_0^\infty (s')^{1/2} \| \sqrt{e_2}(s') \|_{L^\infty_x(\R^2)} \| \sqrt{e_1}(s') \|_{L^\infty_x(\R^2)}\ ds'$$
and the claim follows from Lemma \ref{infty-lemma} and Cauchy-Schwarz.
\end{proof}

\begin{remark}  The $k=0$ case of \eqref{ax-2} asserts in particular that the \emph{Coulomb functional} $\int_{\R^2} |A_x(0,x)|^2\ dx$ of the gauge is bounded by $O_E(1)$.  In particular, if one works not in the caloric gauge, but in the \emph{minimal Coulomb gauge} of H\'elein \cite{helein}, defined as the orthonormal frame that minimises the Coulomb functional, we thus conclude that the Coulomb functional is $O_E(1)$ in this gauge also.  Intriguingly, this fact seems to be rather difficult to deduce directly from the Coulomb gauge condition $\partial_i A_i$, even when the energy $E$ is small (so that the Coulomb gauge becomes unique).  Thus we see that the caloric gauge can be used to deduce some non-trivial facts about other gauges as well.
\end{remark}

As a corollary we may replace the covariant derivatives in Proposition \ref{corpar} and Lemma \ref{infty-lemma} with ordinary derivatives, thus obtaining the same bounds for $\psi_x$ that we have just established\footnote{As a rule of thumb, $A_x$ seems to always obey at least as good bounds as $\psi_x$, although there are also additional estimates for $A_x$, such as the $L^1$ estimates mentioned in Remark \ref{al1}, which are not obeyed by $\psi_x$.} for $A_x$:

\begin{corollary}\label{corbound}  With the assumptions and notation in Proposition \ref{abound}, we have
\begin{align}
\int_0^\infty s^{k-1} \| \partial_x^k \psi_x \|_{L^2_x(\R^2)}^2 ds &\lesssim_{E,k} 1 \label{l2-integ-ord} \\
\sup_{s > 0} s^{(k-1)/2} \| \partial_x^{k-1} \psi_x \|_{L^2_x(\R^2)} &\lesssim_{E,k} 1 
\label{l2-const-ord} \\
\int_0^\infty s^{k-1} \| \partial_x^{k-1} \psi_x \|_{L^\infty_x(\R^2)}^2\ ds &\lesssim_{E,k} 1\label{linfty-integ-ord} \\
\sup_{s > 0} s^{k/2} \| \partial_x^{k-1} \psi_x \|_{L^\infty_x(\R^2)} &\lesssim_{E,k} 1 
\label{linfty-const-ord}
\end{align}
for all $k \geq 1$.  Similar estimates hold if one replaces $\partial_x \psi_x$ with $\psi_s$, $\partial_x^2$ with $\partial_s$, and/or $\partial_x$ with $D_x$.
\end{corollary}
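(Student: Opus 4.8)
The plan is to deduce the corollary from the covariant parabolic regularity already established. The first observation is that in the caloric gauge the energy densities take the form $\e_k = |D_x^{k-1}\psi_x|^2$ (apply the frame $e$ to \eqref{energy-dens-def} and use \eqref{psij-def}, \eqref{Adef}), so that Proposition \ref{corpar} and Lemma \ref{infty-lemma} \emph{are} the estimates \eqref{l2-integ-ord}--\eqref{linfty-const-ord} with the ordinary derivative $\partial_x$ replaced throughout by the covariant derivative $D_x$; this already proves the ``$\partial_x$ replaced by $D_x$'' variants. In particular the $k=1$ cases give $\sup_{s>0}\|\psi_x(s)\|_{L^2_x(\R^2)}\lesssim_E 1$, $\sup_{s>0}s^{1/2}\|\psi_x(s)\|_{L^\infty_x(\R^2)}\lesssim_E 1$, and $\int_0^\infty\|\psi_x(s)\|_{L^\infty_x(\R^2)}^2\,ds\lesssim_E 1$. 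It remains only to trade covariant derivatives for ordinary ones, which is exactly where the connection bounds of Proposition \ref{abound} enter.

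Next I would record the schematic Leibniz expansion obtained by iterating $\partial_x=D_x-A_x$ on vector fields and $D_xB=\partial_xB+[A_x,B]$ on matrix fields (cf. \eqref{matrix-cov}): for each $k\geq 1$,
$$ \partial_x^k\psi_x = D_x^k\psi_x + \sum O\bigl( (\partial_x^{a_1}A_x)\cdots(\partial_x^{a_r}A_x)\,D_x^j\psi_x \bigr), $$
where the sum is finite, $r\geq 1$, $0\leq j<k$, and $a_1+\cdots+a_r+r+j=k$. One then estimates each error term by H\"older's inequality in $x$, placing all but one factor in $L^\infty_x(\R^2)$ and the remaining ``distinguished'' factor in $L^2_x(\R^2)$ for the $L^2$-type conclusions, or in $L^\infty_x(\R^2)$ for the $L^\infty$-type conclusions. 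The $L^\infty_x$ factors are controlled by the pointwise bounds \eqref{ax-infty} and the $D_x$-version of \eqref{linfty-const-ord}, with occasional help from the Gagliardo--Nirenberg inequalities \eqref{gag-1}--\eqref{gag-4}, while the distinguished factor is supplied the appropriate fixed-time or time-integrated covariant bound. For the fixed-time conclusions \eqref{l2-const-ord}, \eqref{linfty-const-ord} one simply multiplies the resulting powers of $s$ and checks they match the target exponent. For the time-integrated conclusions \eqref{l2-integ-ord}, \eqref{linfty-integ-ord} the point is to choose as distinguished factor one carrying a time-integrable square bound --- either a $D_x^a\psi_x$ factor via the $D_x$-versions of \eqref{l2-integ-ord}, \eqref{linfty-integ-ord}, or a $\partial_x^a A_x$ factor via \eqref{aint-2}, \eqref{aint-infty} --- and to bound every other factor by its $\sup_{s>0}$ fixed-time estimate; a dyadic decomposition of the $s$-integral (or H\"older in $s$) then closes it. The one case needing separate treatment is $k=1$, where the required covariant input (the $k=0$ cases of \eqref{l2-integ} and of Lemma \ref{infty-lemma}) is false; there one argues directly from the pointwise bound \eqref{point}, the energy hypothesis \eqref{en}, and Lemma \ref{ubang}, exactly as in the proof of Lemma \ref{infty-lemma}.

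Finally, the remaining variant assertions follow by the same mechanism. Replacing $\partial_x\psi_x$ by $\psi_s$ uses $\psi_s=D_i\psi_i$ from \eqref{ps-frame}: the pointwise bound $|\psi_s|\leq\sqrt{2}\,|D_x\psi_x|$ is immediate, and for higher derivatives one commutes covariant derivatives past the contraction, incurring only lower-order curvature terms that are absorbed as above. Replacing $\partial_x$ by $D_x$ is immediate, since $D_x^k\psi_x=\partial_x^k\psi_x+O(\text{connection terms})$ with both sides already controlled. And replacing $\partial_x^2$ by $\partial_s$ uses the heat equations \eqref{psix-heat}, \eqref{psis-heat}, which write $\partial_s\psi_x$ as $D_iD_i\psi_x$ plus the cubic term $(\psi_x\wedge\psi_i)\psi_i$, the latter handled by placing two of the $\psi_x$ factors in $L^\infty_x(\R^2)$ (using \eqref{linfty-integ-ord}, \eqref{linfty-const-ord}) and the third in the norm being estimated.

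I expect the main obstacle to be organisational rather than analytic: one must verify that in \emph{every} error term above there is a legitimate assignment of a single distinguished factor carrying a time-integrable square bound while all other factors carry finite $\sup_{s>0}$ bounds, so as to avoid the logarithmic divergence in $s$ that a naive application of the pointwise estimates would produce. This reduces to a dimensional-analysis check of the exponent relation $a_1+\cdots+a_r+r+j=k$ against the scalings in Propositions \ref{corpar}, \ref{abound}; once the bookkeeping is set up the estimates close by H\"older in $x$ and $s$, and no analytic ingredient beyond Propositions \ref{corpar}, \ref{abound} and Lemmas \ref{ubang}, \ref{infty-lemma} is needed.
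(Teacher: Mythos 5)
Your proposal is correct and takes the same route as the paper, which disposes of the corollary in a single line: write $\partial_x = D_x - A_x$ and apply Proposition \ref{corpar}, Lemma \ref{infty-lemma}, and the connection bounds of Proposition \ref{abound}. You have simply spelled out the Leibniz/H\"older bookkeeping the paper leaves implicit; the only small slip is that your flagged \emph{separate treatment} of $k=1$ is unnecessary --- the $j=0$ error terms in the time-integrated estimates are handled by placing $\psi_x$ in $L^\infty_x$ (integrable in $s$ by Lemma \ref{infty-lemma} at index $1$, not a fictitious index $0$) and one $A_x$ factor in $L^2_x$ via \eqref{ax-2}, though the fallback you suggest via \eqref{point} and Lemma \ref{ubang} would also work.
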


\begin{proof} Write $\partial_x = D_x - A_x$ and apply Proposition \ref{corpar}, Lemma \ref{infty-lemma}, and \eqref{ax-infty}.
\end{proof}

\subsection{The covariant heat equation}

From \eqref{psix-heat}, \eqref{psis-heat}, \eqref{dst} we see that the derivative fields $\psi_x, \psi_s, \psi_t$ in the caloric gauge are all solutions to the covariant heat equation
\begin{equation}\label{uheat}
\partial_s u = D_i D_i u - (u \wedge \psi_i) \psi_i.
\end{equation}

We will repeatedly use the following parabolic estimates for such solutions.

\begin{lemma}[Covariant parabolic regularity]\label{uheat-est}   Let the assumptions and notation be as in Proposition \ref{abound}.  Let $u(0): \R^2 \to \R^m$ be Schwartz.  Then there exists a unique smooth solution $u: \R^+ \times \R^2 \to \R^m$ to \eqref{uheat} with initial data $u(0)$ such that $u(s)$ is Schwartz for each $s$.  Furthermore we have the pointwise estimate
\begin{equation}\label{upoint}
|u(s)| \leq e^{s\Delta} |u(0)|
\end{equation}
the energy inequality
\begin{equation}\label{energy-ineq}
\partial_s \int_{\R^2} |u(s,x)|^2\ dx \leq - 2 \int_{\R^2} |D_x u(s,x)|^2\ dx
\end{equation}
and the parabolic estimates
\begin{align}
\sup_{s > 0} s^{k/2} \| \partial_x^k u(s) \|_{L^2_x(\R^2)} \lesssim_{E,k} \|u(0)\|_{L^2_x(\R^2)} \label{u-l2-fixed}\\
\sup_{s > 0} s^{(k+1)/2} \| \partial_x^k u(s) \|_{L^\infty_x(\R^2)} \lesssim_{E,k} \|u(0)\|_{L^2_x(\R^2)} \label{u-linfty-fixed}\\
\int_0^\infty s^k \| \partial_x^{k+1} u(s) \|_{L^2_x(\R^2)}^2\ ds \lesssim_{E,k} \|u(0)\|_{L^2_x(\R^2)}^2 \label{u-l2-integ}\\
\int_0^\infty s^k \| \partial_x^k u(s) \|_{L^\infty_x(\R^2)}^2\ ds \lesssim_{E,k} \|u(0)\|_{L^2_x(\R^2)}^2 \label{u-linfty-integ}
\end{align}
for all $k \geq 0$.  We also have the variant estimate
\begin{equation}\label{ulp}
\int_0^\infty s^{-2/p} \| u(s) \|_{L^p_x(\R^2)}^2\ ds \lesssim_p \|u(0)\|_{L^2_x(\R^2)}^2 
\end{equation}
for all $2 < p \leq \infty$.
\end{lemma}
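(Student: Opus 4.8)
The plan is to treat \eqref{uheat} as a linear parabolic equation for $u$ whose coefficients are the caloric gauge fields $A_x,\psi_x,\psi_s$, and to show that $u$ inherits the parabolic regularity of the free heat flow with constants depending only on $E$ and $\|u(0)\|_{L^2_x(\R^2)}$ — the smoothness and Schwartz properties being allowed to carry $\phi$-dependent constants. First I would dispose of existence, uniqueness, and the two ``soft'' estimates. Writing $D_i=\partial_i+A_i$, equation \eqref{uheat} reads $\partial_s u=\Delta u+2A_i\partial_i u+(\partial_iA_i+A_iA_i)u-(u\wedge\psi_i)\psi_i$, a linear heat equation with smooth coefficients enjoying the qualitative decay of Theorem \ref{caloric-thm}; a Duhamel/Picard iteration as in the proof of Proposition \ref{Decay}, run in suitable weighted $C^0_sC^k_x$ spaces, produces a unique smooth solution that is Schwartz in $x$ for each $s$ (uniqueness also follows from the energy estimate below). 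For \eqref{upoint}, apply the second inequality of Lemma \ref{dilemma} with $\varphi=u$ and $\eta:=(u\wedge\psi_i)\psi_i$: since $A_s=0$ we have $D_s=\partial_s$, so \eqref{uheat} gives $(D_s-D_iD_i)u+\eta=0$, while $\eta\cdot u=\tfrac12\sum_i|u\wedge\psi_i|^2\ge 0$ (as in the proof of Lemma \ref{evolution}); hence $\partial_s|u|\le\Delta|u|$ distributionally, and Corollary \ref{compar} yields $|u(s)|\le e^{s\Delta}|u(0)|$. For \eqref{energy-ineq}, pair \eqref{uheat} with $u$, integrate by parts using \eqref{leibnitz-frame} (the antisymmetry of $A$), and discard the non-negative term $\tfrac12\sum_i\int|u\wedge\psi_i|^2$.

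Next I would handle the $k=0$ cases and \eqref{ulp}. The $k=0$ case of \eqref{u-l2-fixed} is the monotonicity of $s\mapsto\|u(s)\|_{L^2_x(\R^2)}$ from \eqref{energy-ineq}. The $k=0$ cases of \eqref{u-linfty-fixed}, \eqref{u-linfty-integ}, and \eqref{ulp} follow from \eqref{upoint} together with \eqref{heat-lp} (for the fixed-time $L^\infty$ bound) and with Lemma \ref{ubang} applied to $|u(0)|$ (which has the same $L^2$ norm as $u(0)$). For the $k=0$ case of \eqref{u-l2-integ}, integrating \eqref{energy-ineq} gives $\int_0^\infty\|D_xu(s)\|_{L^2_x(\R^2)}^2\,ds\le\tfrac12\|u(0)\|_{L^2_x(\R^2)}^2$; to pass from $D_x$ to $\partial_x$, write $\partial_xu=D_xu-A_xu$ and bound $\|A_xu\|_{L^2_x(\R^2)}\le\|A_x\|_{L^4_x(\R^2)}\|u\|_{L^4_x(\R^2)}\lesssim s^{-1/4}\|u\|_{L^4_x(\R^2)}$ using \eqref{ax-2} and \eqref{gag-4}, whence $\int_0^\infty\|A_xu\|_{L^2_x(\R^2)}^2\,ds\lesssim\int_0^\infty s^{-1/2}\|u(s)\|_{L^4_x(\R^2)}^2\,ds\lesssim\|u(0)\|_{L^2_x(\R^2)}^2$ by \eqref{ulp} with $p=4$. (A crude split $\|A_xu\|_{L^2}\le\|A_x\|_{L^\infty}\|u\|_{L^2}$ diverges logarithmically; the $L^4\times L^4$ Hölder split is the fix, and it will handle every subsequent covariant-to-ordinary conversion.)

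For $k\ge 1$ I would induct on $k$, establishing the $L^2$ bounds \eqref{u-l2-fixed}, \eqref{u-l2-integ} first and deducing the $L^\infty$ bounds \eqref{u-linfty-fixed}, \eqref{u-linfty-integ} from them via \eqref{gag-2} applied to the components of $D_x^k u$ (using the first inequality of Lemma \ref{dilemma} to trade $\partial_x$ for $D_x$). The mechanism is that $v:=D_x^j u$ solves a perturbed covariant heat equation $\partial_s v=D_iD_iv-(v\wedge\psi_i)\psi_i+N_j$, where, by repeated use of the curvature identity \eqref{curv-frame}, the zero-torsion identity \eqref{zerotor-frame}, and $\partial_sA_i=-\psi_s\wedge\psi_i$ from \eqref{sax}, the forcing $N_j$ is a finite sum of trilinear terms $(D_x^a\psi)(D_x^b\psi)(D_x^c u)$ with $\psi\in\{\psi_x,\psi_s\}$, $a+b+c\le j$, and $c\le j-1$. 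Running the weighted energy identity for $v$ — multiplying $\partial_s\|v(s)\|_{L^2_x(\R^2)}^2\le -2\|D_xv\|_{L^2_x(\R^2)}^2+2\int N_j\cdot v$ by a smooth dyadic cutoff $\chi(s/s_0)$ supported in $[1/2,2]$ and summing in $s_0$, exactly as in the proof of Proposition \ref{corpar} — reduces \eqref{u-l2-fixed}, \eqref{u-l2-integ} at index $j$ to bounding $\int_0^\infty s^j\,|\int_{\R^2}N_j\cdot v\,dx|\,ds$; each trilinear term is then estimated by Hölder, using the bounds on $\psi_x,\psi_s$ and their covariant derivatives from Corollary \ref{corbound} together with the inductive hypotheses on the lower-order covariant derivatives of $u$. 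Finally the ordinary-derivative forms follow from the covariant ones as in the proof of Corollary \ref{corbound}, via $\partial_x=D_x-A_x$, Proposition \ref{abound}, and the $L^4\times L^4$ device above.

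I expect the main obstacle to be precisely this last bookkeeping: checking that for every trilinear term in $N_j$ and every admissible split of Hölder and Gagliardo--Nirenberg exponents, the $\psi$-bounds from Corollary \ref{corbound} combined with the inductive $u$-bounds produce a total $s$-weight integrable near both $0$ and $\infty$ — the analogue of the trilinear error analysis in the proof of Proposition \ref{corpar}, but now with the extra care that $u$ carries only the single norm $\|u(0)\|_{L^2_x(\R^2)}$ rather than an energy bound, so exactly one such norm must be spent in each term. A minor technical point along the way is justifying the vanishing of the boundary term at $s=\infty$ in the weighted energy identity, which follows from the fixed-time decay of $\|D_x^ju(s)\|_{L^2_x(\R^2)}$ already obtained, or can be sidestepped by integrating on $[\delta,T]$ and passing to the limit as in Proposition \ref{corpar}.
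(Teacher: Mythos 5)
Your proposal is correct and follows the paper's argument in its essentials: you deduce \eqref{upoint} from Lemma \ref{dilemma} and Corollary \ref{compar}, \eqref{energy-ineq} by pairing with $u$, \eqref{ulp} from \eqref{upoint} and Lemma \ref{ubang}, and then run an induction in $k$ on the covariant quantities $D_x^k u$ using the Bochner-type commutation identity, the diamagnetic inequality, AM-GM, and the dyadic-cutoff Gronwall device borrowed from Proposition \ref{corpar}. Where you genuinely diverge is in the covariant-to-ordinary conversion. The paper dispatches this in one line (``From \eqref{ax-infty} it suffices to establish the covariant version''), but as you rightly flag, the crude H\"older split $\|A_x u\|_{L^2} \le \|A_x\|_{L^\infty}\|u\|_{L^2}$ costs $s^{-1/2}\|u(0)\|_{L^2}$, which is just barely not square-integrable against $s^k\,ds$ for the integrated estimate \eqref{u-l2-integ} — a logarithmic endpoint failure that occurs for the zero-derivative-on-$u$ term at every $k$, not merely $k=0$. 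Your $L^4\times L^4$ split combined with \eqref{ulp} at $p=4$ closes this cleanly; a parallel fix is to invoke the integrated connection bound \eqref{aint-infty} of Proposition \ref{abound}, since $\|A_x\|_{L^\infty}\lesssim s^{-1/2}$ together with $\int_0^\infty s^{-1/2}\|A_x\|_{L^\infty}\,ds\lesssim 1$ gives $\int_0^\infty \|A_x\|_{L^\infty}^2\,ds\lesssim 1$, which suffices. A similar endpoint issue afflicts the paper's stated deduction of \eqref{u-linfty-integ} from \eqref{u-l2-integ} via \eqref{gag-2} and Cauchy–Schwarz: that route requires $\int_0^\infty s^{-1}\|u\|_{L^2}^2\,ds$ at $k=0$, which diverges; your recognition that the $k=0$ case of \eqref{u-linfty-integ} should instead be read off from \eqref{ulp} with $p=\infty$ (equivalently, from \eqref{upoint} and Lemma \ref{ubang}) is the right repair. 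In short, your proof is the paper's proof with the two delicate $k=0$/conversion endpoints handled correctly, at the price of one extra interpolation step.
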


\begin{proof}  This will largely be a reprise of Proposition \ref{corpar}, though the bounds already obtained on $\psi_x$ and its derivatives will make our task slightly easier.

From \eqref{uheat} and Lemma \ref{dilemma} we have
$$ \partial_s |u| \leq \Delta |u|$$
and so \eqref{upoint} follows from Corollary \ref{compar}.  Similarly, from \eqref{uheat} we have
$$ \partial_s |u|^2 = \Delta |u|^2 - 2 |D_x u|^2 - |u \wedge \psi_x|^2$$
from which \eqref{energy-ineq} is immediate.

We now prove \eqref{u-l2-fixed}, \eqref{u-l2-integ}.  From \eqref{ax-infty} it suffices to establish the covariant version of these estimates, in which $\partial_x$ is relpaced by $D_x$.  We do this by induction on $k$.  The case $k=0$ follows from \eqref{energy-ineq} and the fundamental theorem of calculus, so assume that $k \geq 1$ and the claim has already been proven for smaller values of $k$.  We now suppress dependence of constants on $E, k$. 
Repeated application of the covariant Leibniz rule and \eqref{curv-frame} gives the equation
\begin{equation}\label{dxu}
 \partial_s D_x^k u = D_j D_j D_x^k u + \sum_{k_1+k_2+k_3=k-1} O( |D_x^{k_1} \psi_x| |D_x^{k_2} \psi_x| |D_x^{k_3} u| )
 \end{equation}
and hence by Lemma \ref{dilemma}
$$ \partial_s |D_x^k u| \leq \Delta |D_x^k u| + \sum_{k_1+k_2+k_3=k} O( |D_x^{k_1} \psi_x| |D_x^{k_2} \psi_x| |D_x^{k_3} u| ).$$
Using \eqref{linfty-const} we conclude
$$ \partial_s |D_x^k u| \leq \Delta |D_x^k u| + \sum_{k_3=0}^{k} O( s^{(k_3-k-2)/2} |D_x^{k_3} u| ).$$
Now we prove (the covariant version of) \eqref{u-l2-fixed} for some $s > 0$.  From (the covariant version of) the inductive hypothesis \eqref{u-l2-integ} and the pigeonhole principle we can find $s/2 \leq s_0 \leq s$ such that
$$ \| D_x^k u(s_0) \|_{L^2_x(\R^2)} \lesssim s^{-k/2}$$
so hence by Duhamel's formula \eqref{duh} and \eqref{heat-lp} we have
$$ \| D_x^k u(s) \|_{L^2_x(\R^2)} \lesssim s^{-k/2} + 
\sum_{k_3=0}^{k} s^{(k_3-k-2)/2} \int_{s_0}^s \|D_x^{k_3} u(s')\|_{L^2_x(\R^2)}\ ds'.$$
Using (the covariant version of) the inductive hypothesis \eqref{u-l2-fixed} (for $k_3 < k$) and \eqref{u-l2-integ} (for $k_3 = k$) we obtain 
$$ \| D_x^k u(s) \|_{L^2_x(\R^2)} \lesssim s^{-k/2}$$
which is the covariant form of \eqref{u-l2-fixed}.

Finally, to show \eqref{u-l2-integ}, we return to \eqref{dxu} and conclude that
$$ \partial_s |D_x^k u|^2 \leq \Delta |D_x^k u|^2 - 2 |D_x^{k+1} u|^2
+ \sum_{k_1+k_2+k_3=k-1} O( |D_x^{k_1} \psi_x| |D_x^{k_2} \psi_x| |D_x^{k_3} u| |D_x^k u|)$$
and hence on integrating and using the induction hypothesis
$$ \partial_s \int_{\R^2} |D_x^k u|^2 \leq - 2 \int_{\R^2} |D_x^{k+1} u|^2
+ \sum_{k_3=0}^{k} O\left( s^{(k_3-k-2)/2} \int_{\R^2} |D_x^{k_3} u| |D_x^k u|\right).$$
Writing $E_k(s) := \int_{\R^2} |D_x^k u|^2$, we then obtain \eqref{2m} from the arithmetic mean-geometric mean inequality, and so arguing as in Proposition \ref{corpar} we establish \eqref{u-l2-integ} as required.

The estimate \eqref{u-linfty-fixed} follows from \eqref{u-l2-fixed} and the Gagliardo-Nirenberg inequality \eqref{gag-2}, and \eqref{u-linfty-integ} similarly follows from \eqref{u-l2-integ}, \eqref{gag-2}, and Cauchy-Schwarz. Finally, the estimate \eqref{ulp} follows from \eqref{upoint} and Lemma \eqref{ubang}.
\end{proof}

\section{Proof of Theorem \ref{energy-claim}}\label{energy-sec}

We are now ready to prove Theorem \ref{energy-claim}.

\subsection{Construction of the energy space}

Recall that the space ${\mathcal L}$ in \eqref{ldef} is the Hilbert space of pairs $(\psi_s, \phi_1)$ of measurable functions $\psi_s: \R^+ \times \R^2 \to \R^m$ and $\phi_1: \R^2 \to \R^m$ whose norm
\begin{equation}\label{l-def}
\| (\psi_s, \phi_1) \|_{\mathcal L}^2 := \int_0^\infty \int_{\R^2} |\psi_s|^2\ dx ds + \frac{1}{2} \int_{\R^2} |\phi_1|^2\ dx
\end{equation}
is finite.  As usual we identify functions which agree almost everywhere.  The orthogonal group $SO(m)$ acts on $\R^m$ and thus acts unitarily on ${\mathcal L}$ in the obvious manner, with each rotation matrix $U \in SO(m)$ sending $(\psi_s, \phi_1)$ to $(U \circ \psi_s, U \circ \phi_1)$. This is clearly an isometry.  If we then quotient out by this compact group we obtain a metric space $SO(m)\backslash {\mathcal L}$.

Now let $(\phi_0,\phi_1) \in \S$ be classical initial data, with $\phi_0(\infty)$ equal to $\phi_0(\infty)$ at infinity.  We extend $\phi_0$ to $\R^+ \times \R^2$ by the heat flow, and use Theorem \ref{caloric-thm} to pick a caloric gauge $e$ for $\phi_0$ which equals some arbitrary frame $e(\infty) \in \Frame(T_{\phi_0(\infty)} \H)$ at infinity, giving rise to the differentiated fields $\psi_x, \psi_s$ and connections $A_x$ in the usual manner.  We then define the \emph{nonlinear Littlewood-Paley resolution map} $\iota: \S \to SO(m) \backslash {\mathcal L}$ by the formula
$$ \iota( \phi_0, \phi_1 ) := SO(m) (\psi_s, e^* \phi_1).$$
One easily verifies from \eqref{l2-integ} that $(\psi_s,e^* \phi_1)$ does indeed lie in ${\mathcal L}$.  Note that rotating the frame $e(\infty)$ rotates the fields $(\psi_s, e^* \phi_1)$ by an element of $SO(m)$, and so $\iota$ is well-defined.

We then define the energy space $\Energy$ to be the closure of $\iota(\S)$ in $SO(m) \backslash {\mathcal L}$.

\subsection{Easy verifications}

We can now quickly establish all the claims in Theorem \ref{energy-claim} except for property (iv), which is more delicate and will be treated later.

Property (i) of Theorem \ref{energy-claim} is immmediate by construction.  The first part of property (ii) follows immediately from Remark \ref{rotor}.  To prove the converse claim in (ii), suppose that we had two classical data $\Phi = (\phi_0,\phi_1)$ and $\tilde \Phi = (\tilde \phi_0,\tilde \phi_1)$ which had the same image under $\iota$, thus we have (after applying a rotation in $SO(m)$ if necessary) caloric gauges $e, \tilde e$ with respect to which $\psi_s = \tilde \psi_s$ and $\tilde e^* \tilde \phi^1 = e^* \phi^1$.  By applying a rotation in $SO(m,1)$ we may take $\phi_0(\infty) = \tilde \phi_0(\infty)$ and $e(\infty) = \tilde e(\infty)$.  

From \eqref{psis-def} we have
$$ \partial_s \phi_0 = e \psi_s$$
while from \eqref{phin} we have
$$ (\phi_0^* \nabla)_s e = 0.$$
This gives us a system of ODE with which to recover $\phi_0, e$ from the boundary data $\phi_0(\infty)$, $\tilde \phi_0(\infty)$.  Since $\tilde \phi_0, \tilde e$ has the same data, we thus see from the Picard uniqueness theorem that $\phi_0 = \tilde \phi_0$ and $e = \tilde e$. (Here we need the qualitative decay of $\psi_s$ and $\tilde \psi_s$ from \eqref{phi-2}.)  Since $\tilde e^* \tilde \phi^1 = e^* \phi^1$ we thus conclude $\tilde \phi^1 = \phi^1$, and so $\Phi = \tilde \Phi$ as required.

It is easy to see that the actions \eqref{space-trans-data}, \eqref{time-reverse-data}, \eqref{scaling-data} on $\S$ intertwine with analogous actions on ${\mathcal L}$ or $SO(m) \backslash {\mathcal L}$, which one easily verifies to be isometric, and property (iii) then follows from property (i).

To verify property (v), we use

\begin{lemma}[Energy identity]\label{energy-ident}  For any $\Phi \in \S$ we have
$$ \E( \Phi ) = d_{SO(m) \backslash {\mathcal L}}( \iota(\Phi), 0 )^2.$$
\end{lemma}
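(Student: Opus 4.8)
The plan is to unwind both sides of the asserted identity into explicit integrals and thereby reduce it to the energy identity \eqref{gf-2}, which is what this lemma is really charged with proving. First I would reformulate the distance. Since $SO(m)$ acts on ${\mathcal L}$ by isometries and fixes $0$, the quotient distance from $\iota(\Phi) = SO(m)(\psi_s, e^*\phi_1)$ to $0$ equals $\|(\psi_s, e^*\phi_1)\|_{{\mathcal L}}$; in particular this is independent of the choice of caloric gauge, since two caloric gauges for $\phi_0$ differ by a fixed rotation in $SO(m)$ (Remark \ref{rotor}). By \eqref{l-def} this gives $d_{SO(m)\backslash{\mathcal L}}(\iota(\Phi),0)^2 = \int_0^\infty\int_{\R^2}|\psi_s|^2\,dx\,ds + \frac12\int_{\R^2}|e^*\phi_1|^2\,dx$. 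Because $e(s,x)$ is at every point an orthonormal frame, the pullback $e^*$ preserves inner products, so $|\psi_s| = |e^*\partial_s\phi_0| = |\partial_s\phi_0|_{\phi_0^*h}$ and $|e^*\phi_1| = |\phi_1|_{\phi_0^*h}$. Comparing with the formula \eqref{energy-def} for $\E(\Phi)$, the lemma then reduces, after cancelling the common $\frac12\int_{\R^2}|\phi_1|_{\phi_0^*h}^2\,dx$ term, to the instantaneous-to-integrated energy identity
$$\frac12\int_{\R^2}|\partial_x\phi_0(0,x)|_{\phi_0^*h}^2\,dx = \int_0^\infty\int_{\R^2}|\partial_s\phi_0(s,x)|_{\phi_0^*h}^2\,dx\,ds,$$
which is exactly \eqref{gf-2}.

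Next I would prove the instantaneous identity $\partial_s\bigl(\frac12\int_{\R^2}|\partial_x\phi_0(s,x)|_{\phi_0^*h}^2\,dx\bigr) = -\int_{\R^2}|\partial_s\phi_0(s,x)|_{\phi_0^*h}^2\,dx$. Differentiating in $s$ via the Leibniz rule \eqref{leibnitz-phi} gives $\partial_s\frac12\int_{\R^2}|\partial_i\phi_0|_{\phi_0^*h}^2\,dx = \int_{\R^2}\langle(\phi_0^*\nabla)_s\partial_i\phi_0, \partial_i\phi_0\rangle_{\phi_0^*h}\,dx$; by the zero-torsion property \eqref{zerotor-phi} applied to the mixed $s$-$x_i$ derivatives (legitimate by the remark following \eqref{leibnitz-phi}) one has $(\phi_0^*\nabla)_s\partial_i\phi_0 = (\phi_0^*\nabla)_i\partial_s\phi_0$; integrating by parts in $x_i$ using \eqref{leibnitz-phi} — with no boundary term, since $\phi_0(s,\cdot)-\phi_0(\infty)$ is Schwartz in space for every $s$ by Proposition \ref{Decay} — and then using the heat flow equation \eqref{psphi} to replace $(\phi_0^*\nabla)_i\partial_i\phi_0$ by $\partial_s\phi_0$ yields the claim. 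Integrating this identity in $s$ over $(0,\infty)$ — the right-hand side being absolutely integrable by the qualitative decay $|\partial_s\phi_0|\lesssim_{\phi_0}\langle s\rangle^{-3/2}$ together with rapid spatial decay from \eqref{manytime} — gives $\frac12\int_{\R^2}|\partial_x\phi_0(0,x)|_{\phi_0^*h}^2\,dx = \int_0^\infty\int_{\R^2}|\partial_s\phi_0(s,x)|_{\phi_0^*h}^2\,dx\,ds + \lim_{s\to\infty}\frac12\int_{\R^2}|\partial_x\phi_0(s,x)|_{\phi_0^*h}^2\,dx$, and the last limit vanishes: \eqref{joke} gives $|\partial_x\phi_0(s,x)|_{\phi_0^*h}\lesssim_{\phi_0}\langle s\rangle^{-1}e^{-|x|^2/8s}$, whence $\int_{\R^2}|\partial_x\phi_0(s,x)|_{\phi_0^*h}^2\,dx\lesssim_{\phi_0}\langle s\rangle^{-2}s\to 0$.

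The argument is essentially routine given the qualitative theory of Section \ref{caloric-sec}: the only slightly delicate points are the absence of boundary terms in the integration by parts and the vanishing of the Dirichlet energy of $\phi_0(s)$ as $s\to\infty$, both of which are supplied by the Schwartz-in-space decay and the comparison-principle bound \eqref{joke} already established in Proposition \ref{Decay}. I do not anticipate a genuine obstacle here, since for classical data every integral that appears converges absolutely; the content of the lemma is simply the gradient-flow structure of the harmonic map heat flow, expressed through the orthonormal-frame identification $|\psi_s| = |\partial_s\phi_0|_{\phi_0^*h}$, $|e^*\phi_1| = |\phi_1|_{\phi_0^*h}$.
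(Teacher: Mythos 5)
Your proof is correct and follows essentially the same route as the paper's: both reduce (via the isometric $SO(m)$-action and the fact that $e$ is an orthonormal frame) to the identity $\frac12\int_{\R^2}|\partial_x\phi_0(0)|_{\phi_0^*h}^2\,dx = \int_0^\infty\int_{\R^2}|\partial_s\phi_0|_{\phi_0^*h}^2\,dx\,ds$, prove the instantaneous version by differentiating the Dirichlet energy in $s$, commuting covariant derivatives via zero-torsion, integrating by parts, and invoking the heat flow equation, and then integrate in $s$ with the boundary term at $s\to\infty$ killed by \eqref{joke}. The only difference is cosmetic: you work covariantly with $(\phi_0^*\nabla)$ and \eqref{zerotor-phi}, \eqref{leibnitz-phi}, \eqref{psphi}, whereas the paper phrases the same computation in the caloric frame using \eqref{psi-evolve} and \eqref{ps-frame}.
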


\begin{proof} In view of \eqref{energy-def}, \eqref{l-def}, and the unitary nature of the $SO(m)$ action, and it suffices to show that
$$ \frac{1}{2} \int_{\R^2} |\nabla \phi_0(0,\cdot)|_{\phi_0^* h}^2\ dx = \int_0^\infty \int_{\R^2} |\psi_s|^2\ dx ds.$$
The left-hand side can be expressed as
$$ \frac{1}{2} \int_{\R^2} |\psi_x(0,\cdot)|^2\ dx.$$
But from \eqref{psi-evolve} we have
$$ \partial_s |\psi_x|^2 = 2 \psi_i \cdot D_i \psi_s$$
and so by integration by parts
$$ \partial_s \frac{1}{2} \int_{\R^2} |\psi_x(s,\cdot)|^2\ dx = - \int_{\R^2} D_i \psi_i \cdot \psi_s\ dx.$$
By \eqref{ps-frame}, the right-hand side is $\int_{\R^2} |\psi_s|^2\ dx$.  Finally, using \eqref{joke} we see that $\int_{\R^2} |\psi_x(s,\cdot)|^2\ dx$ goes to zero as $s \to \infty$, and the claim follows from the fundamental theorem of calculus.
\end{proof}

If $\Phi \in \Energy$ is such that $\E(\Phi) = 0$, then by Property (i) we can find a sequence $\Phi^{(n)}$ of classical data such that $\iota(\Phi^{(n)})$ converges to $\Phi$.  Assuming Property (iv) for now, this implies that $\E(\Phi^{(n)})$ converges to zero, and thus by Lemma \ref{energy-ident} $\Phi^{(n)}$ converges to the constant data, and Property (v) follows (conditionally on Property (iv)).

\subsection{Continuity of the Gram matrix}

It remains to establish Property (iv), namely that the Gram matrix map $\Gram: \S \to L^1(\R^2 \to \Sym^2(\R^{1+2}))$ extends continuously to $\Energy$.  It suffices to show that if $\Phi^{(n)} = (\phi_0^{(n)}, \phi_1^{(n)}) \in \S$ is a sequence of classical data such that $\iota( \Phi^{(n)} )$ a Cauchy sequence in $SO(m) \backslash {\mathcal L}$, then $\Gram(\Phi^{(n)})$ is a Cauchy sequence in $L^1$.  We may then use Theorem \ref{caloric-thm} to find caloric gauges $e^{(n)}$ (with the attendant fields $\psi_x^{(n)}, \psi_s^{(n)}, A_x^{(n)}$) such that $(\psi_s^{(n)}, \psi^{(n)}_0(0,\cdot))$ is Cauchy in ${\mathcal L}$, where we adopt the convention $\psi^{(n)}_0(0,\cdot) := (e^{(n)})^* \phi_1^{(n)}$.  (The data $\phi_0^{(n)}(\infty)$, $e^{(n)}(\infty)$ can depend on $n$, but this will not concern us.)

Observe that the Gram matrix can be expressed as
$$ \Gram(\Phi^{(n)})_{\alpha \beta} = \psi^{(n)}_\alpha(0,\cdot) \cdot \psi^{(n)}_\beta(0,\cdot).$$
By Cauchy-Schwarz, it thus suffices to show 

\begin{proposition}\label{moveit}  If $(\psi_s^{(n)}, \psi^{(n)}_0(0,\cdot))$ is Cauchy in ${\mathcal L}$, then
$\psi^{(n)}_\alpha(0,\cdot)$ is Cauchy in $L^2_x(\R^2 \to \R^m)$ for each $\alpha=0,1,2$.
\end{proposition}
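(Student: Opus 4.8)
The case $\alpha=0$ is part of the hypothesis, so the content is to show that $\psi_x^{(n)}(0,\cdot) := (\psi_1^{(n)}(0,\cdot),\psi_2^{(n)}(0,\cdot))$ is Cauchy in $L^2_x(\R^2\to\R^m)$. First I would note that a Cauchy sequence is bounded, so by Lemma \ref{energy-ident} the energies $\E(\Phi^{(n)})$ are bounded by some $E_0$; hence all the quantitative estimates of Section \ref{littlewood-sec} (Propositions \ref{corpar}, \ref{abound}, Corollary \ref{corbound}, Lemma \ref{uheat-est}) apply to $\psi_x^{(n)}, \psi_s^{(n)}, A_x^{(n)}$ uniformly in $n$.

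The heart of the argument is a \emph{polarised} version of the energy identity. Fixing two indices $n,m$, I would compute $\partial_s\int_{\R^2}\psi_x^{(n)}\cdot\psi_x^{(m)}\,dx$ using $\partial_s\psi_x=D_x\psi_s$ (equation \eqref{psi-evolve}), $\psi_s=D_i\psi_i$ (equation \eqref{ps-frame}) and integration by parts, exactly as in the proof of Lemma \ref{energy-ident}, which is the diagonal case $n=m$. When one converts the covariant derivative $D$ back into $\partial$, the antidiagonal contributions no longer cancel; using the antisymmetry of $A_x$ and the identity $a\cdot(Mb)=\tfrac12\operatorname{tr}(M(b\wedge a))$ for skew $M$, these collect into a quantity $\operatorname{Cross}(s)$, each of whose terms carries a factor of $\delta A_x:=A_x^{(n)}-A_x^{(m)}$ together with a factor of $\delta\psi_x:=\psi_x^{(n)}-\psi_x^{(m)}$ or $\delta\psi_s:=\psi_s^{(n)}-\psi_s^{(m)}$. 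Integrating in $s$ over $(0,\infty)$ (using $\psi_x(\infty)=0$ and the qualitative decay of Section \ref{caloric-sec} to justify the boundary term and the convergence of the integral), and combining with the two diagonal identities $\|\psi_x^{(k)}(0)\|_{L^2_x}^2 = 2\|\psi_s^{(k)}\|_{L^2(\R^+\times\R^2)}^2$ from Lemma \ref{energy-ident}, yields
\[
 \|\psi_x^{(n)}(0)-\psi_x^{(m)}(0)\|_{L^2_x}^2 = 2\,\|\psi_s^{(n)}-\psi_s^{(m)}\|_{L^2(\R^+\times\R^2)}^2 + 2\int_0^\infty \operatorname{Cross}(s)\,ds .
\]
The first term vanishes as $n,m\to\infty$ by hypothesis, so the proposition reduces to showing $\int_0^\infty\operatorname{Cross}(s)\,ds\to 0$.

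Since schematically $|\operatorname{Cross}(s)|\lesssim\int_{\R^2}|\delta A_x|\,\big(|\psi_x|\,|\delta\psi_s|+|\psi_s|\,|\delta\psi_x|+|\delta\psi_x|\,|\delta\psi_s|\big)\,dx$, it suffices to control $\delta\psi_x$ and $\delta A_x$ quantitatively in terms of the small quantity $\|\delta\psi_s\|_{L^2(\R^+\times\R^2)}$. For this I would use the difference equations: $\delta A_x$ solves $\partial_s\delta A_x = -\delta\psi_s\wedge\psi_x^{(n)}-\psi_s^{(m)}\wedge\delta\psi_x$ with $\delta A_x(\infty)=0$ (from \eqref{sax}), while $\delta\psi_x$ solves a perturbation of the covariant heat equation \eqref{psix-heat}, $\partial_s\delta\psi_x=\Delta\delta\psi_x+O(\cdots)$, whose error is linear in $\delta A_x,\delta\psi_x,\partial_x\delta\psi_x,\delta\psi_s$ with coefficients built from the uniformly bounded fields of the two flows. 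Running the energy inequality for $\delta\psi_x$ and the integral formula for $\delta A_x$ together, in the weighted norms of Corollary \ref{corbound} and Lemma \ref{uheat-est} and split at a heat-time threshold $s_0$, should close a Gronwall--bootstrap argument giving control of the form $\sup_s\|\delta\psi_x(s)\|_{L^2_x}+\sup_s s^{1/2}\|\delta A_x(s)\|_{L^\infty_x}\lesssim_{E_0}\|\delta\psi_s\|_{L^2(\R^+\times\R^2)}+o_{n,m\to\infty}(1)$, after which Hölder's inequality and the integrated parabolic bounds yield $\int_0^\infty\operatorname{Cross}(s)\,ds\to 0$.

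I expect this last step to be the main obstacle. The estimates of Section \ref{littlewood-sec} are \emph{critical}, so there is no exponential or polynomial room to spare, and the coupling of $\delta A_x$ and $\delta\psi_x$ produces weights that are only borderline integrable near $s=0$ and near $s=\infty$. Two ingredients should rescue the argument: the logarithmically-improved integrated bounds, such as the parabolic Strichartz estimate (Lemma \ref{ubang}) and \eqref{aint-infty}, \eqref{l2-integ-ord}, which supply exactly the missing logarithm; and the \emph{uniform integrability in $s$} of $\{|\psi_s^{(n)}|^2\}$ — a consequence of $\{\psi_s^{(n)}\}$ being precompact in $L^2(\R^+\times\R^2)$ — which gives $\sup_n\int_{\{s<s_0\}\cup\{s>S_0\}}|\psi_s^{(n)}|^2\,dxds\to 0$ as $s_0\to 0$ and $S_0\to\infty$, so that one may truncate the heat-time variable to a compact interval $[s_0,S_0]$ on which $\psi_s^{(n)}$, $A_x^{(n)}$ and $\psi_x^{(n)}$ converge and the error is genuinely small, the two tails being absorbed uniformly.
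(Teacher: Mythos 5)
Your top-level decomposition is a genuine alternative to the paper's. The paper does not polarise the energy identity; instead it writes $\psi_x^{(n)-(n')}(0)=-\int_0^\infty \partial_s\psi_x^{(n)-(n')}\,ds$, substitutes \eqref{psn}, and then disposes of the main piece $\int_0^S\partial_x\psi^{(n)-(n')}_s\,ds$ by a symmetrisation, integration by parts in $x$, and a Schur test in the two $s$-variables. Your polarised identity
$\|\delta\psi_x(0)\|_{L^2_x}^2 = 2\,\|\delta\psi_s\|_{L^2(\R^+\times\R^2)}^2 + 2\int_0^\infty \operatorname{Cross}(s)\,ds$
is correct and avoids that final Schur-test step entirely, which is a real simplification. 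Your claim about the structure of $\operatorname{Cross}$ also checks out, but it is a slightly delicate point and worth spelling out: computing $\partial_s\int\psi_i^{(n)}\cdot\psi_i^{(n')}$ with the two different connections and integrating by parts produces, on its face, $\operatorname{Cross}(s) = -\int (\delta A_i\,\psi_s^{(n)})\cdot\psi_i^{(n')} + \psi_i^{(n)}\cdot(\delta A_i\,\psi_s^{(n')})\,dx$, which is linear, not bilinear, in the differences. It is only after writing $\psi^{(n')}=\psi^{(n)}+\delta\psi$ and using the skew-symmetry of $\delta A_i$ that the two undifferenced terms cancel, leaving $\operatorname{Cross}=\int -(\delta A_i\psi_s^{(n)})\cdot\delta\psi_i + \psi_i^{(n)}\cdot(\delta A_i\,\delta\psi_s)\,dx$. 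This cancellation is the reason your route works and must be verified explicitly.

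The real difficulty, as you correctly anticipate, is the same one the paper faces: proving that $\delta A_x$ and $\delta\psi_x$ converge in the critical weighted norms, e.g.\ $\int_0^\infty\|\delta A_x(s)\|_{L^\infty_x}^2\,ds\to 0$ and $\sup_s s^{1/2}\|\delta\psi_x(s)\|_{L^\infty_x}\to 0$ (these are precisely what you need to kill $\int\operatorname{Cross}$ by H\"older against the $O(1)$ bounds on the undifferenced fields). Here your sketch diverges from what actually closes. The paper first upgrades the Cauchy hypothesis on $\psi_s$ to $\int_0^\infty s^k\|\partial_x^k\delta\psi_s\|_{L^2_x}^2\,ds = o_k(1)$ by Gagliardo--Nirenberg interpolation against the uniform bounds of Corollary \ref{corbound}, then converts these into \emph{fixed-time} smallness $s^{(k+2)/2}\|\partial_x^k\delta\psi_s(s)\|_{L^\infty_x}=o_k(1)$ by averaging over dyadic $s$-windows using the $\partial_s$-bound. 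With those in hand it runs a Gronwall argument on $h(s):=\|\delta\psi_x(s)\|_{L^\infty_x}+\|\delta A_x(s)\|_{L^\infty_x}$, which satisfies $|\partial_s h|\lesssim s^{-3/2}g(s)+s^{-1}f(s)h(s)$ with $f=O(1)$ and $g=o(1)$ in the appropriate weighted spaces; the key is that the Gronwall exponential is $\exp(O(\log^{1/2}(s'/s_0)))$, which grows slower than any power and is therefore harmless under the $ds/s$ integration. This sub-polynomial exponential is the decisive mechanism, and neither Lemma \ref{ubang} (which removes a logarithm from a linear estimate) nor uniform integrability of $\{|\psi_s^{(n)}|^2\}$ supplies it. In particular the truncation-to-$[s_0,S_0]$ idea you propose is not enough near $s=0$: $\delta A_x(s)=\int_s^\infty(\cdots)\,ds'$ is not local in $s$, and on $[s_0,S_0]$ the coupling of $\delta A_x$ with $\delta\psi_x$ still produces the borderline $s^{-1}f(s)$ coefficient that Gronwall has to absorb. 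So your outline is sound and the decomposition is a nice improvement, but the step you flag as the main obstacle is indeed a real gap; to close it you essentially have to reproduce the paper's $\exp(O(\log^{1/2}))$ Gronwall estimate.
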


We now prove the proposition.  For $\alpha = 0$, the claim follows immediately from \eqref{l-def} and the Cauchy nature of $\psi^{(n)}_\alpha(0,\cdot)$, so it suffices to show that $\psi^{(n)}_x(0,\cdot)$ is Cauchy in $L^2$, i.e. we need to show that
\begin{equation}\label{psio}
 \| \psi^{(n)-(n')}_x(0,\cdot) \|_{L^2_x(\R^2)} = o_{n,n' \to \infty}(1)
 \end{equation}
where $o_{n,n' \to \infty}(1)$ denotes an expression which goes to zero as $n,n'$ jointly go to infinity, and we adopt the convention that $\psi^{(n)-(n')}_x$ is short for $\psi^{(n)}_x - \psi^{(n')}_x$ (and similarly for other fields).  On the other hand, from \eqref{l-def} we already know that
\begin{equation}\label{ncauchy}
\int_0^\infty \|\psi_s^{(n)-(n')}(s)  \|_{L^2_x(\R^2)}^2\ ds = o_{n,n' \to \infty}(1).
\end{equation}

Also, as Cauchy sequences are bounded, we see from Lemma \ref{energy-ident} that the $\Phi^{(n)}$ are uniformly bounded.  Thus there exists a finite $E$ such that \eqref{en} holds uniformly in $n$.  We fix this $E$ and allow all implied constants to depend on $E$.  In particular, from \eqref{l2-integ-ord} and \eqref{ax-infty} we have
$$ 
\int_0^\infty s^k \|\partial_x^k \psi_s^{(n)}(s) \|_{L^2_x(\R^2)}^2\ ds \lesssim_k 1$$
for all $k \geq 0$, and in particular
$$ 
\int_0^\infty s^k \|\partial_x^k \psi_s^{(n)-(n')}(s) \|_{L^2_x(\R^2)}^2\ ds \lesssim_k 1.$$
Using the Gagliardo-Nirenberg inequality \eqref{gag-1}, \eqref{ncauchy}, and Cauchy-Schwarz, we thus conclude that
\begin{equation}\label{soak}
\int_0^\infty s^k \|\partial_x^k \psi_s^{(n)-(n')}(s) \|_{L^2_x(\R^2)}^2\ ds = o_{n,n' \to \infty;k}(1)
\end{equation}
for all $k \geq 0$, where the $k$ subscript on the right-hand side means that we allow the rate of decay in the $o()$ notation to depend on $k$.  Another application of Gagliardo-Nirenberg \eqref{gag-2} then gives
\begin{equation}\label{soso}
\int_0^\infty s^{k+1} \|\partial_x^k \psi_s^{(n)-(n')}(s) \|_{L^\infty_x(\R^2)}^2\ ds = o_{n,n' \to \infty;k}(1)
\end{equation}
for all $k \geq 0$.

We can convert these integrated convergence estimates into fixed-time convergence estimates by exploiting some regularity in time as follows.  From \eqref{l2-const-ord}, \eqref{ax-infty}, \eqref{ps-frame} we obtain the estimate
$$ \| \partial_s \partial_x^k \psi_s^{(n)}(s) \|_{L^2_x(\R^2)} \lesssim_k s^{-(k+3)/2} $$
for all $s > 0$ and $k \geq 0$, and thus by the triangle inequality
$$ \| \partial_s \partial_x^k \psi_s^{(n)-(n')}(s)\|_{L^2_x(\R^2)} \lesssim_k s^{-(k+3)/2}.$$
Applying  the fundamental theorem of calculus and Minkowski's inequality we have
$$ \| \partial_x^k \psi_s^{(n)-(n')}(s) \|_{L^2_x(\R^2)} = \| \partial_x^k \psi_s^{(n)-(n')}(s') \|_{L^2_x(\R^2)} + O_k(s^{-(k+3)/2} |s-s'|)$$
whenever $s \leq s' \leq 2s$.  Averaging this, we conclude that
$$\| \partial_x^k \psi_s^{(n)-(n')}(s)\|_{L^2_x(\R^2)} = \frac{1}{\eps s} \int_s^{s+\eps s} \| \partial_x^k \psi_s^{(n)-(n')}(s') \|_{L^2_x(\R^2)}\ ds' + O_k(\eps s^{-(k+1)/2})$$
for any $0 < \eps < 1$.  Applying Cauchy-Schwarz and \eqref{soak} we conclude that
$$\| \partial_x^k \psi_s^{(n)-(n')}(s) \|_{L^2_x(\R^2)} \lesssim_k
(\frac{1}{\eps} o_{n,n' \to \infty;k}(1) + \eps ) s^{-(k+1)/2}.$$
Setting $\eps$ to decay to zero sufficiently slowly in $n,n'$, we conclude that
$$ s^{(k+1)/2} \| \partial_x^k \psi_s^{(n)-(n')}(s)\|_{L^2_x(\R^2)} = o_{n,n' \to \infty;k}(1)$$
and then by Gagliardo-Nirenberg \eqref{gag-2} as before
\begin{equation}\label{nono}
 s^{(k+2)/2} \| \partial_x^k \psi_s^{(n)-(n')}(s)\|_{L^\infty_x(\R^2)} = o_{n,n' \to \infty;k}(1).
 \end{equation}
Note that the decay on the right-hand side is uniform in $s$.

To apply all these estimates to establish \eqref{soak}, we must express $\psi^{(n)-(n')}_x$ in terms of $\psi^{(n)-(n')}_s$.  It is convenient to introduce the scalar functions
\begin{align*}
f(s) &:= s \| \psi^{(n)}_s(s) \|_{L^\infty_x(\R^2)} + s \| \psi^{(n')}_s(s) \|_{L^\infty_x(\R^2)} \\
&\quad +
s^{1/2} \| \psi^{(n)}_x(s) \|_{L^\infty_x(\R^2)} + s^{1/2} \| \psi^{(n')}_x(s) \|_{L^\infty_x(\R^2)} \\
&\quad +
s^{1/2} \| A^{(n)}_x(s) \|_{L^\infty_x(\R^2)} + s^{1/2} \| A^{(n')}_x(s) \|_{L^\infty_x(\R^2)}
\end{align*}
and
$$
 g(s) := s \| \psi^{(n)-(n')}_s \|_{L^\infty_x(\R^2)} +s^{3/2} \| \partial_x \psi^{(n)-(n')}_s \|_{L^\infty_x(\R^2)};$$
observe from Proposition \ref{abound} and Corollary \ref{corbound} that
\begin{equation}\label{sups}
\sup_{s > 0} f(s) + \int_0^\infty \frac{f(s)^2}{s}\ ds = O(1).
\end{equation}
while from \eqref{soso}, \eqref{nono} we have
\begin{equation}\label{sugs}
\sup_{s > 0} g(s) + \int_0^\infty \frac{g(s)^2}{s}\ ds = o_{n,n' \to \infty}(1).
\end{equation}

From \eqref{psi-evolve} we have
\begin{equation}\label{psn}
\partial_s \psi^{(n)-(n')}_x = \partial_x \psi^{(n)-(n')}_s +
O( |A^{(n')}_x| |\psi^{(n)-(n')}_s| + |A^{(n)-(n')}_x| |\psi^{(n)}_s|)
\end{equation}
and thus
\begin{align*}
|\partial_s \psi^{(n)-(n')}_x|
&\lesssim |\partial_x \psi^{(n)-(n')}_s| + |A^{(n')}_x| |\psi^{(n)-(n')}_s|
+ |A^{(n)-(n')}_x| |\psi^{(n)}_s|\\
&\lesssim s^{-3/2} g(s) + s^{-3/2} f(s) g(s) + s^{-1} f(s) |A^{(n)-(n')}_x|\\
&\lesssim s^{-3/2} g(s) + s^{-1} f(s) |A^{(n)-(n')}_x|.
\end{align*}
Similarly, from \eqref{sax} we have
\begin{align*}
|\partial_s A^{(n)-(n')}_x| &\lesssim |\psi^{(n')}_x| |\psi^{(n)-(n')}_s| + |\psi^{(n)-(n')}_x| |\psi^{(n)}_s| \\
&\lesssim s^{-3/2} f(s) g(s) + s^{-1} f(s) |\psi^{(n)-(n')}_x|.
\end{align*}
If we thus introduce another scalar function
$$ h(s) := \|\psi^{(n)-(n')}_x\|_{L^\infty_x(\R^2)} + \|A^{(n)-(n')}_x\|_{L^\infty_x(\R^2)}$$
we thus have
$$ |\partial_s h(s)| \lesssim s^{-3/2} g(s) + s^{-1} f(s) h(s)$$
(interpreted in a distributional sense).  By Gronwall's inequality we thus conclude that
$$
h(s_0) \lesssim \exp\left( \int_{s_0}^{s_1} \frac{f(s)}{s}\ ds \right) h(s_1)
+ \int_{s_0}^{s_1} \exp\left( \int_{s_0}^{s'} \frac{f(s)}{s}\ ds \right)  \frac{g(s')}{(s')^{3/2}}\ ds'$$
for any $0 < s_0 < s_1$.
From \eqref{sups} and Cauchy-Schwarz we have
$$ \int_{s_0}^{s'} \frac{f(s)}{s}\ ds \lesssim \log^{1/2}(s'/s_0),$$
while from Theorem \ref{caloric-thm} we have $h(s_1) = O_{\phi^{(n)}, \phi^{(n')}}( s_1^{-1} )$.  We can thus take limits as $s_1 \to \infty$ and conclude that\footnote{The exponential of the square root of the logarithm also appears in \cite[Section 4, Step 2(d)]{tao:wavemap2}, for much the same reason as it does here.  The key point is that this quantity grows slower than any polynomial.}
$$ h(s_0) \lesssim \int_{s_0}^\infty \exp( O( \log^{1/2}(s'/s_0) ) ) \frac{g(s')}{(s')^{3/2}}\ ds'.$$
Applying \eqref{sugs} and Young's inequality for multiplicative convolution (with Haar measure $ds/s$), we conclude that 
$h(s_0) = s_0^{-1/2} o_{n,n' \to \infty}(1)$ for all $s_0 > 0$ and that
$$ \sup_{s > 0} s^{1/2} h(s) + \int_0^\infty h(s)^2\ ds = o_{n,n' \to \infty}(1).$$
In particular we have
\begin{equation}\label{axis}
 \int_0^\infty \| A^{(n)-(n')}_x(s) \|_{L^\infty_x(\R^2)}^2\ ds = o_{n,n' \to \infty}(1).
\end{equation}
Meanwhile, from \eqref{joke} we know that $\psi^{(n)-(n')}_x(s)$ converges to zero in $L^2_x(\R^2)$ as $s \to \infty$.  Thus, from \eqref{psn}, we have
\begin{align*}
\| \psi^{(n)-(n')}_x(0) \|_{L^2_x(\R^2)}
&\lesssim \limsup_{S \to \infty} \| \int_0^S \partial_x \psi^{(n)-(n')}_s(s)\ ds \|_{L^2_x(\R^2)}\\
&\quad
+ \int_0^\infty \| A^{(n')}_x(s) \|_{L^\infty_x(\R^2)} \| \psi^{(n)-(n')}_s(s) \|_{L^2_x(\R^2)}\\
&\quad
+ \| A^{(n)-(n')}_x(s) \|_{L^\infty_x(\R^2)} \| \psi^{(n)}_s(s) \|_{L^2_x(\R^2)}\ ds.
\end{align*}
Applying \eqref{aint-infty}, \eqref{ncauchy}, \eqref{axis}, \eqref{l2-integ} and Cauchy-Schwarz we thus have
$$ \| \psi^{(n)-(n')}_x(0) \|_{L^2_x(\R^2)}
\lesssim \limsup_{S \to \infty} \| \int_0^S \partial_x \psi^{(n)-(n')}_s(s)\ ds \|_{L^2_x(\R^2)} + o_{n,n' \to \infty}(1)$$
and so to show \eqref{psio}, it suffices to show that
$$ \| \int_0^S \partial_x \psi^{(n)-(n')}_s(s)\ ds \|_{L^2_x(\R^2)}^2 = o_{n,n' \to \infty}(1)$$
uniformly in $S$.  But the left-hand side can be expanded using symmetry and integration by parts as
$$ - 2 \int_{0 \leq s_1 \leq s_2 \leq S} \int_{\R^2} \psi^{(n)-(n')}_s(s_1) \partial_x^2 \psi^{(n)-(n')}_s(s_2)\ ds_1 ds_2.$$
Introducing the function
$$ k(s) := \| \psi^{(n)-(n')}_s \|_{L^2_x(\R^2)} + s \| \partial_x^2 \psi^{(n)-(n')}_s \|_{L^2_x(\R^2)}$$
one can estimate this quantity using Cauchy-Schwarz by
$$ O\left( \int_{0 \leq s_1 \leq s_2 \leq \infty} \frac{1}{s_2} k(s_1) k(s_2)\ ds_1 ds_2 \right)$$
which by Schur's test can be bounded by $O( \int_0^\infty k(s)^2\ ds)^{1/2}$.  But this is $o_{n,n' \to \infty}(1)$ as desired, thanks to \eqref{soak}.  This yields \eqref{psio} as desired, and the proof of Proposition \ref{moveit} and hence Theorem \ref{energy-claim} is now complete.

\begin{remark}  The above analysis in fact shows that $\psi^{(n)}_x$, $A^{(n)}_x$, and $\psi^{(n)}_s$ all converge in smooth topologies to some limits $\psi^{(\infty)}_x$, $A^{(\infty)}_x$, and $\psi^{(\infty)}_s$, which then behave like ``virtual heat flows'' in the sense that identities such as \eqref{zerotor-frame}, \eqref{curv-frame}, \eqref{ps-frame}
continue to hold in the limit.  Because of this, one can meaningfully solve the heat flow equation in the energy class $\Energy$ (up to the usual $SO(m)$ ambiguity), and identify members of this class (up to this ambiguity) with heat flows viewed in an orthonormal frame on $(0,+\infty) \times \R^2$ whose asymptotic energy $\lim_{s \to 0^+} \int_{\R^2} |\psi_x|^2$ is finite (cf. the definition of the classical Hardy space ${\mathcal H}^2(\R)$ via Poisson extension to the upper half-plane). It may in fact be possible to extract a meaningful limiting value of $\phi$ and $e$ at $s=0$ (subject, of course, to the $SO(m,1)$ ambiguity), thus providing an alternate way to interpret the energy space $\Energy$ (cf. the ``Lebesgue perspective'' from Remark \ref{perspectives}).  We will not pursue these matters.
\end{remark}

\section{Proof of Theorem \ref{nondeg}}\label{nondeg-sec}

We are now ready to prove Theorem \ref{nondeg}.  By rotation symmetry we can fix $v = (1,0)$; thus we need to show that any $(\phi_0,\phi_1) \in \Energy$ with $|\phi_1 + \partial_1 \phi_0|_{\phi^* h}^2 \equiv |\partial_2 \phi_0|_{\phi^* h}^2 \equiv 0$ has zero energy.  In view of Theorem \ref{energy-claim} and a limiting argument, it suffices to show

\begin{proposition}  Let $(\phi^{(n)}_0, \phi^{(n)}_1) \in \S$ be a sequence of classical data with $\iota(\phi^{(n)}_0, \phi^{(n)}_1)$ convergent in $\Energy$, and such that $|\phi^{(n)}_1 + \partial_1 \phi^{(n)}_0|_{(\phi^{(n)})^* h}^2 \equiv |\partial_2 \phi^{(n)}_0|_{(\phi^{(n)})^* h}^2$ converge to zero in $L^1_x(\R^2)$.  Then $\E(\phi^{(n)}_0, \phi^{(n)}_1)$ converges to zero.
\end{proposition}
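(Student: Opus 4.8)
The plan is to pass to caloric gauges, reduce the claim to the single limit $\int_0^\infty\int_{\R^2}|\psi^{(n)}_s|^2\,dx\,ds\to 0$, identify the limiting ``virtual heat flow'' supplied by the remark after Proposition~\ref{moveit}, and there run a one-dimensional Poincar\'e argument in the $x_2$ variable (taking $v=(1,0)$ by rotation symmetry). First I would set up the gauges exactly as in the proof of Proposition~\ref{moveit}: since $\iota(\phi^{(n)}_0,\phi^{(n)}_1)$ converges in $\Energy$, choose caloric gauges $e^{(n)}$ for the heat flows of the $\phi^{(n)}_0$ so that the representatives $(\psi^{(n)}_s,\psi^{(n)}_0(0,\cdot))$, with $\psi^{(n)}_0(0,\cdot):=(e^{(n)})^*\phi^{(n)}_1$, form a convergent sequence in ${\mathcal L}$. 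The energies are uniformly bounded, say by $E$, so the quantitative estimates of Section~\ref{littlewood-sec} apply uniformly in $n$. Because the frame is a pointwise isometry, the degeneracy hypotheses translate (independently of the gauge choice) into
$$ \|\psi^{(n)}_2(0,\cdot)\|_{L^2_x(\R^2)}^2 \to 0, \qquad \|(\psi^{(n)}_0+\psi^{(n)}_1)(0,\cdot)\|_{L^2_x(\R^2)}^2 \to 0 . $$
I claim it then suffices to prove $\int_0^\infty\int_{\R^2}|\psi^{(n)}_s|^2\,dx\,ds\to 0$: the computation inside the proof of Lemma~\ref{energy-ident} gives the exact identity $\tfrac12\|\psi^{(n)}_1(0,\cdot)\|_{L^2_x}^2+\tfrac12\|\psi^{(n)}_2(0,\cdot)\|_{L^2_x}^2=\int_0^\infty\int_{\R^2}|\psi^{(n)}_s|^2$, so that limit forces $\|\psi^{(n)}_1(0,\cdot)\|_{L^2_x}\to 0$, hence $\|\psi^{(n)}_0(0,\cdot)\|_{L^2_x}\to 0$ by the triangle inequality, and finally $\E(\phi^{(n)}_0,\phi^{(n)}_1)=\int_0^\infty\int_{\R^2}|\psi^{(n)}_s|^2+\tfrac12\|\psi^{(n)}_0(0,\cdot)\|_{L^2_x}^2\to 0$ by Lemma~\ref{energy-ident}.

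To prove the displayed limit I would pass to the limit object. By the remark following Proposition~\ref{moveit}, for these gauges the fields $\psi^{(n)}_x=(\psi^{(n)}_1,\psi^{(n)}_2)$, $A^{(n)}_x$, $\psi^{(n)}_s$ converge in $C^\infty_\loc((0,\infty)\times\R^2)$ to limits $\psi^{(\infty)}_x$, $A^{(\infty)}_x$, $\psi^{(\infty)}_s$ for which the structural identities \eqref{zerotor-frame}, \eqref{curv-frame}, \eqref{ps-frame} persist (with $D^{(\infty)}_i:=\partial_i+A^{(\infty)}_i$, and $A^{(\infty)}_i\in\mathfrak{so}(m)$ since it is a locally uniform limit of $\mathfrak{so}(m)$-valued maps); moreover this limit $\psi^{(\infty)}_s$ agrees almost everywhere with the ${\mathcal L}$-limit of $\psi^{(n)}_s$, so it is enough to show $\psi^{(\infty)}_s\equiv 0$. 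Now each $\psi^{(n)}_2$ solves the covariant heat equation \eqref{uheat} (it is the second component of \eqref{psix-heat}), so the pointwise bound \eqref{upoint} and the $L^2$-contractivity of $e^{s\Delta}$ give $\|\psi^{(n)}_2(s)\|_{L^2_x(\R^2)}\le\|\psi^{(n)}_2(0,\cdot)\|_{L^2_x(\R^2)}\to 0$ uniformly in $s>0$; combined with the $C^0_\loc$ convergence this forces $\psi^{(\infty)}_2\equiv 0$.

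The endgame is the Poincar\'e argument for $\psi^{(\infty)}_1$. From the zero-torsion identity \eqref{zerotor-frame} in the limit, $D^{(\infty)}_2\psi^{(\infty)}_1=D^{(\infty)}_1\psi^{(\infty)}_2=0$. Since $A^{(\infty)}_2$ is skew-adjoint, $\partial_2|\psi^{(\infty)}_1|^2=2\langle\psi^{(\infty)}_1,D^{(\infty)}_2\psi^{(\infty)}_1\rangle-2\langle\psi^{(\infty)}_1,A^{(\infty)}_2\psi^{(\infty)}_1\rangle=0$, so for each $s>0$ and $x_1\in\R$ the modulus $|\psi^{(\infty)}_1(s,x_1,x_2)|$ is independent of $x_2$. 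On the other hand $\psi^{(\infty)}_1(s,\cdot)\in L^2(\R^2)$: indeed $\|\psi^{(n)}_1(s)\|_{L^2_x(\R^2)}\le\|\psi^{(n)}_x(s)\|_{L^2_x(\R^2)}\le\|\psi^{(n)}_x(0,\cdot)\|_{L^2_x(\R^2)}\lesssim E^{1/2}$ by \eqref{psix-compar} and $L^2$-contractivity, and Fatou's lemma transfers the bound to the limit. A continuous function on $\R^2$ whose modulus is independent of $x_2$ and which lies in $L^2(\R^2)$ must vanish identically, so $\psi^{(\infty)}_1\equiv 0$. Feeding $\psi^{(\infty)}_1=\psi^{(\infty)}_2=0$ into \eqref{ps-frame} gives $\psi^{(\infty)}_s=D^{(\infty)}_i\psi^{(\infty)}_i=0$, hence $\int_0^\infty\int_{\R^2}|\psi^{(n)}_s|^2\to\int_0^\infty\int_{\R^2}|\psi^{(\infty)}_s|^2=0$, and the reduction of the first paragraph completes the proof.

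The main obstacle is the rigorous passage to the limit: it rests on the uniform-in-$n$ parabolic and connection estimates of Section~\ref{littlewood-sec} together with the stability bounds established inside the proof of Proposition~\ref{moveit}, which must guarantee convergence strong enough (locally uniform with all derivatives on $(0,\infty)\times\R^2$) that the algebraic identities \eqref{zerotor-frame}, \eqref{curv-frame}, \eqref{ps-frame} survive in the limit and that the ${\mathcal L}$-limit of $\psi^{(n)}_s$ is identified with the pointwise limit. Once this ``virtual heat flow'' is in hand, the remaining steps — propagating the smallness of $\psi_2$ up the heat flow and the one-dimensional Poincar\'e argument — are routine.
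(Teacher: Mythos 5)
Your proposal is correct, but it takes a genuinely different route from the paper's proof of this proposition (which appears in Section \ref{nondeg-sec}).

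The paper works entirely at finite $n$ and at a fixed spatial scale. Having set up the caloric gauges as you do, it fixes $S \geq 1$ and $s\in[1/S,S]$, shows $\|D_x\psi^{(n)}_2(s)\|_{L^2_x(\R^2)}=o_{n\to\infty;S}(1)$ via Lemma~\ref{uheat-est}, transfers this to $\|D_2\psi^{(n)}_1(s)\|_{L^2_x}$ through zero torsion and then to $\|\partial_2|\psi^{(n)}_1(s)|\|_{L^2_x}$ through the diamagnetic inequality, and runs a one-dimensional Poincar\'e inequality \emph{on the bounded cylinder} $\{|x_2|\leq 2S\}$, interpolating against the a priori bound $\|\psi^{(n)}_1(s)\|_{L^2_x}=O_S(1)$ to conclude $\|\psi^{(n)}_1(s)\|_{L^2_x(|x|\leq 2S)}=o_{n\to\infty;S}(1)$. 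This is then upgraded to $\|D_x\psi^{(n)}_1(s)\|_{L^2_x(|x|\leq S)}=o(1)$ by a localised Gagliardo--Nirenberg, and combined with \eqref{ps-frame} to kill $\psi^{(n)}_s$ on compact regions of $(0,\infty)\times\R^2$, hence the $\L$-limit $\psi^{(\infty)}_s$ vanishes. Crucially, the paper never needs to identify $\psi^{(\infty)}_s$ as anything other than an $L^2(ds\,dx)$ function, and never invokes the ``virtual heat flow'' structure of the limit.

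Your argument instead passes directly to the limit object: you invoke the remark following Proposition~\ref{moveit} to extract limits $\psi^{(\infty)}_x, A^{(\infty)}_x, \psi^{(\infty)}_s$ satisfying the structural identities, kill $\psi^{(\infty)}_2$ by $L^2$-contractivity of the covariant heat flow, deduce $\partial_2|\psi^{(\infty)}_1|\equiv 0$ from zero torsion and skew-adjointness, and then observe that a continuous $L^2(\R^2)$ function whose modulus is $x_2$-independent must vanish. This is a \emph{global rigidity} step, replacing the paper's \emph{local Poincar\'e-plus-Gagliardo--Nirenberg} step, and is conceptually cleaner once the limit is in hand. The trade-off is exactly the one you flag: the paper's route only needs the Cauchy estimates already proven inside Proposition~\ref{moveit} (in particular, it needs them only at fixed $s$ and on balls), whereas your route leans on the remark --- which the paper deliberately labels as a matter it ``will not pursue'' --- to the effect that the identities \eqref{zerotor-frame}, \eqref{curv-frame}, \eqref{ps-frame} survive passage to the limit. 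That remark is indeed justified by the bounds of Section~\ref{littlewood-sec} and the proof of Proposition~\ref{moveit} (one gets $C^\infty_\loc((0,\infty)\times\R^2)$ convergence of $\psi_x,A_x,\psi_s$ by combining the Cauchy estimates in $L^\infty_x$ at each $s>0$ with the uniform higher-derivative bounds of Corollary~\ref{corbound} and Proposition~\ref{abound} via Gagliardo--Nirenberg), but one should recognise that the paper's proof is written precisely so as not to have to make that infrastructure rigorous. In short: both approaches work; yours buys a slicker endgame at the price of formalising the limit structure, while the paper's stays closer to the quantitative estimates already established.

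Two small points worth noting. First, your a priori bound $\|\psi^{(n)}_1(s)\|_{L^2_x}\lesssim E^{1/2}$ via \eqref{psix-compar} plus $L^2$-contractivity, together with Fatou, does correctly place $\psi^{(\infty)}_1(s,\cdot)$ in $L^2(\R^2)$ without appealing to any $\L$-type convergence of $\psi_x$. Second, the identification of the $\L$-limit of $\psi^{(n)}_s$ with the locally-uniform limit $\psi^{(\infty)}_s$ is legitimate (pass to a subsequence converging a.e. from the $L^2$ convergence, and note both limits agree with the a.e. limit), and it is this identification that lets you convert $\psi^{(\infty)}_s\equiv 0$ into $\int_0^\infty\int_{\R^2}|\psi^{(n)}_s|^2\to 0$, since norm convergence in $L^2$ is part of $\L$-convergence.
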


\begin{remark}  It is essential here that $\iota(\phi^{(n)}_0, \phi^{(n)}_1)$ is convergent (or at least precompact) and not merely bounded in the energy space.  To see this, let us take the Euclidean model, in which $\phi^{(n)}_0, \phi^{(n)}_1$ take values in $\R$ rather than $\H$.  If one sets $\phi^{(n)}_0(x_1,x_2) := \frac{1}{\sqrt{n}} \eta( x_1, \frac{x_2}{n} )$ and $\phi^{(n)}_1 := - \partial_1 \phi^{(n)}_0$, one easily verifies that $|\phi^{(n)}_1 + \partial_1 \phi^{(n)}_0|^2 \equiv |\partial_2 \phi^{(n)}_0|^2$ converges to zero in $L^1_x(\R^2)$, but that the energies $\E(\phi^{(n)}_0, \phi^{(n)}_1)$ are bounded.
\end{remark}

We now prove the proposition.  We use Theorem \ref{caloric-thm} to place caloric gauges $e^{(n)}$ on each $\phi^{(n)}$, with attendant derivative fields $\psi^{(n)}_x, \psi^{(n)}_s$ and connection fields $A^{(n)}_x$; by rotating these gauges we may then assume that $(\psi^{(n)}_s, (e^{(n)})^* \phi^{(n)}_1)$ is convergent in $\L$ to some limit $(\psi^{(\infty)}_s, \psi^{(\infty)}_t)$, thus by \eqref{l-def}
\begin{equation}\label{conv}
\int_0^\infty\int_{\R^2} |\psi^{(n)}_s - \psi^{(\infty)}_s|^2\ dx ds + \| (e^{(n)})^* \phi^{(n)}_1 - \psi^{(\infty)}_t \|_{L^2_x(\R^2)} = o_{n \to \infty}(1).
\end{equation}
Our task is to show that $\psi^{(\infty)}_s$ and $\psi^{(\infty)}_t$ vanish.  By hypothesis, we have
\begin{equation}\label{epsin}
\| (e^{(n)})^* \phi^{(n)}_1 + \psi^{(n)}_1(0) \|_{L^2_x(\R^2)} = o_{n \to \infty}(1)
\end{equation}
and
\begin{equation}\label{psi2}
 \| \psi^{(n)}_2(0) \|_{L^2_x(\R^2)} = o_{n \to \infty}(1).
 \end{equation}
It will then suffice to show that $\psi^{(\infty)}_s$ vanishes, since by the energy identity (Lemma \ref{energy-ident}) this shows that $\psi^{(n)}_1(0)$ converges to zero in $L^2_x(\R^2)$, which by \eqref{epsin}, \eqref{conv} yields that $\psi^{(\infty)}_t$ vanishes as required.

Clearly, it will suffice to show that
$$ \int_{1/S}^S \int_{|x| \leq S} |\psi^{(\infty)}_s|^2\ dx ds = 0 $$
for any $S \geq 1$.

Fix $S$.  From \eqref{conv} it suffices to show that
$$ \int_{1/S}^S \int_{|x| \leq S} |\psi^{(n)}_s|^2\ dx ds \lesssim o_{n \to \infty;S}(1)$$
for all $n$, which in turn will follow if we can show that
\begin{equation}\label{jon}
 \int_{|x| \leq S} |\psi^{(n)}_s(s)|^2\ dx ds \lesssim o_{n \to \infty;S}(1)
 \end{equation}
for all $n$ and all $1/S \leq s \leq S$.

Fix $n,s$.  From \eqref{psix-heat}, Lemma \ref{uheat-est}, and \eqref{ax-infty} we have
\begin{equation}\label{dx2}
  \| D_x \psi^{(n)}_2(s) \|_{L^2_x(\R^2)} = o_{n \to \infty;S}(1).
\end{equation}
By \eqref{zerotor-frame} this implies in particular that
$$  \| D_2 \psi^{(n)}_1(s) \|_{L^2_x(\R^2)} = o_{n \to \infty;S}(1)$$
and hence by the diamagnetic inequality (Lemma \ref{dilemma})
$$  \| \partial_2 |\psi^{(n)}_1(s)| \|_{L^2_x(\R^2)} = o_{n \to \infty;S}(1).$$
Meanwhile from \eqref{l2-const-ord} we have
$$  \| \psi^{(n)}_1(s) \|_{L^2_x(\R^2)} = O_S(1).$$
Applying the Poincar\'e inequality
$$ \int_{|x_2| \leq 2S} |f(x_1,x_2)|^2\ dx_2 \lesssim_S ( \int_\R |f(x_1,x_2)|^2\ dx_2)^{1/2}
( \int_\R |\partial_2 f(x_1,x_2)|^2\ dx_2)^{1/2}$$
for all $x_1 \in \R$ (cf. \eqref{gag-2}), integrating in $x_1$, and applying Cauchy-Schwarz and Fubini's theorem, we conclude that
\begin{equation}\label{psilocal}
\| \psi^{(n)}_1(s) \|_{L^2_x(|x| \leq 2S)} = o_{n \to \infty;S}(1).
\end{equation}
Meanwhile, from \eqref{l2-const-ord} we have
$$  \| \partial_x \psi^{(n)}_1(s) \|_{L^2_x(|x| \leq 2S)} = O_S(1)$$
and
$$  \| \partial_x^2 \psi^{(n)}_1(s) \|_{L^2_x(|x| \leq 2S)} = O_S(1).$$
Applying Gagliardo-Nirenberg \eqref{gag-1} (applied to a smooth truncation of $\psi^{(n)}_1$ to the ball of radius $2S$), we conclude that
$$  \| \partial_x \psi^{(n)}_1(s) \|_{L^2_x(|x| \leq S)} = o_{n \to \infty;S}(1)$$
and thus by \eqref{ax-infty}, \eqref{psilocal}, \eqref{D-def}
\begin{equation}\label{dx1}
\| D_x \psi^{(n)}_1(s) \|_{L^2_x(|x| \leq S)} = o_{n \to \infty;S}(1).
\end{equation}
From \eqref{dx2}, \eqref{dx1}, \eqref{ps-frame} we obtain \eqref{jon} as required.  This completes the proof of Theorem \ref{nondeg}.

\section{The heat flow applied to wave maps}\label{heatwave-sec}

In the Euclidean setting, applying the linear heat operator $e^{s\Delta}$ to a rough solution $u$ to the free wave equation $\partial^\alpha\partial_\alpha u$ yields a smooth solution to the free wave equation.  This gives one a means to regularise a rough wave into a smooth one.

We would similarly like to apply the harmonic map heat flow to regularise a rough wave map into a smooth wave map.  Unfortunately, the heat flow and the wave map equation do not quite commute, but fortunately the commutator is sufficiently well behaved that the heat flow regularises the rough wave map into a smooth \emph{approximate} wave map.  

More precisely, let $(\phi,I)$ be a classical wave map on a compact interval $I$, and let $\phi: \R^+ \times I \times \R^2 \to \H$ be its dynamic heat flow extension given by Theorem \ref{dynamic-caloric}.  We let $e$ be a caloric gauge for this wave map given by that theorem, thus giving the usual fields $\psi_x, \psi_t, \psi_s, A_x, A_t$.  We define the \emph{wave-tension field}
\begin{equation}\label{wn-def}
w := D^\alpha \psi_\alpha = - D_t \psi_t + \psi_s,
\end{equation}
thus the wave map equation \eqref{cov} asserts that $w$ vanishes when $s=0$:
\begin{equation}\label{wn0}
w(t,0,\cdot) = 0.
\end{equation}

For $s > 0$, we have the following parabolic evolution equation for $w$.

\begin{lemma}[Parabolic evolution of wave-tension field]\label{w-evolve}  For any classical wave map viewed in a caloric gauge, we have
\begin{equation}\label{wneq}
 \partial_s w = D_i D_i w - (w \wedge \psi_i) \psi_i - 2 (\psi_t \wedge \psi_i) D_t \psi_i 
 \end{equation}
and 
\begin{equation}\label{psw}
 \partial_s |w| \leq \Delta |w| + O( |D_x \psi_t| |\psi_x| |\psi_t| )
 \end{equation}
in the sense of distributions.
\end{lemma}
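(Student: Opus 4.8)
The plan is to derive \eqref{wneq} by differentiating the definition \eqref{wn-def} of $w$ in $s$, using the commutation relations for the covariant derivatives together with the heat flow equations from Lemma \ref{evolution} and Theorem \ref{dynamic-caloric}, and then to deduce \eqref{psw} from \eqref{wneq} via the diamagnetic inequality of Lemma \ref{dilemma}.

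First I would establish \eqref{wneq}. The cleanest route is to work from the representation $w = D^\alpha \psi_\alpha$, i.e.\ $w = \psi_s - D_t \psi_t$ (recall that in the caloric gauge $A_s = 0$, so $D_s = \partial_s$). Applying $\partial_s = D_s$ and commuting it past the various covariant derivatives: for the $\psi_s$ term we use \eqref{psis-heat}, which gives $\partial_s \psi_s = D_i D_i \psi_s - (\psi_s \wedge \psi_i)\psi_i$. For the $D_t \psi_t$ term, I would write $D_s D_t \psi_t = D_t D_s \psi_t + [D_s, D_t]\psi_t$, where the curvature $[D_s, D_t] = -\psi_s \wedge \psi_t$ by the $s,t$ analogue of \eqref{curv-frame}. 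Then $D_s \psi_t = D_t \psi_s$ (zero-torsion, the $s,t$ analogue of \eqref{zerotor-frame}, using $A_s=0$), and $D_s \psi_s = D_i\psi_i$-evolution as above; meanwhile $D_t$ of these produces second-order spatial covariant terms that must be recollected as $D_i D_i w$ after two more applications of \eqref{curv-frame} to move $D_t$ past the spatial $D_i$'s. The curvature terms generated by all these commutations are products of the schematic form $(\psi \wedge \psi)\psi$ and $(\psi\wedge\psi)D_t\psi$; the bookkeeping should show they collapse to exactly $-(w\wedge\psi_i)\psi_i - 2(\psi_t\wedge\psi_i)D_t\psi_i$ once one uses $w = \psi_s - D_t\psi_t$ to re-express the $\psi_s$ and $D_t\psi_t$ pieces. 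This algebraic collapse—verifying that the coefficient of the cross term is exactly $2$ and that no other curvature terms survive—is where the main work lies; it is entirely mechanical but requires care with the antisymmetry conventions for $\wedge$ and with the fact that $((v\wedge w)w)\cdot v \geq 0$ is not needed here, only the bilinear identities.

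Second, \eqref{psw} follows from \eqref{wneq} by Lemma \ref{dilemma}: writing \eqref{wneq} as $(D_s - D_iD_i)w = -(w\wedge\psi_i)\psi_i - 2(\psi_t\wedge\psi_i)D_t\psi_i$, I take $\eta := (w\wedge\psi_i)\psi_i$, which satisfies $\eta\cdot w = |w\wedge\psi_i|^2 / \ldots \geq 0$ (indeed $((w\wedge\psi_i)\psi_i)\cdot w = \tfrac12 |w\wedge\psi_i|^2 \geq 0$, as noted in the proof of Lemma \ref{evolution}). Then Lemma \ref{dilemma} gives $(\partial_s - \Delta)|w| \leq |(D_s - D_iD_i)w + \eta| = |{-2(\psi_t\wedge\psi_i)D_t\psi_i}| \lesssim |\psi_t||\psi_x||D_x\psi_t|$, which is \eqref{psw}.

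The main obstacle is the first step: correctly carrying out the sequence of curvature commutations in $D_s D_t \psi_t$ and $D_s D_i D_i$-type terms so that everything reassembles into the claimed right-hand side of \eqref{wneq}, with the precise numerical coefficient $2$ on the $(\psi_t\wedge\psi_i)D_t\psi_i$ term and with no leftover terms. Everything else is a direct application of results already established (the evolution equations of Lemma \ref{evolution} and Theorem \ref{dynamic-caloric}, the structural identities \eqref{zerotor-frame}, \eqref{curv-frame}, and the diamagnetic inequality of Lemma \ref{dilemma}).
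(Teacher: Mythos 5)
Your proposal is correct and follows essentially the same route as the paper: the paper also computes $\partial_s D_t\psi_t$ by the chain $D_s D_t\psi_t = D_t D_s\psi_t + [D_s,D_t]\psi_t$, zero-torsion $D_s\psi_t = D_t\psi_s$, the heat-flow identity $\psi_s = D_i\psi_i$, and then two commutations of $D_t$ past $D_i$ (plus Leibniz expansions of the resulting wedge terms), before combining with \eqref{psis-heat} to reassemble $D_iD_i w - (w\wedge\psi_i)\psi_i$ plus the cross term; and \eqref{psw} is obtained exactly as you describe, applying Lemma \ref{dilemma} with $\eta = (w\wedge\psi_i)\psi_i$ (nonnegative against $w$) and converting $D_t\psi_i$ to $D_i\psi_t$ via torsion. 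You leave the mechanical wedge bookkeeping unverified, but the approach, the lemmas invoked, and the structure of the argument are identical to the paper's.
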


\begin{proof} 
Using \eqref{ass}, \eqref{zerotor-frame}, \eqref{curv-frame}, \eqref{ps-frame} we have
\begin{align*}
\partial_s D_t \psi_t &= D_t D_s \psi_t - (\psi_s \wedge \psi_t) \psi_t \\
&= D_t D_t \psi_s - (D_i \psi_i \wedge \psi_t) \psi_t \\
&= D_t D_t D_i \psi_i - (D_i \psi_i \wedge \psi_t) \psi_t \\
&= D_i D_t D_t \psi_i - D_t ((\psi_t \wedge \psi_i) \psi_i) - (\psi_t \wedge \psi_i) D_t \psi_i
- (D_i \psi_i \wedge \psi_t) \psi_t \\
&= D_i D_i D_t \psi_t - D_i ((\psi_t \wedge \psi_i) \psi_t) - D_t ((\psi_t \wedge \psi_i) \psi_i) \\
&\quad - (\psi_t \wedge \psi_i) D_t \psi_i - (D_i \psi_i \wedge \psi_t) \psi_t  \\
&= D_i D_i D_t \psi_t - (D_i \psi_t \wedge \psi_i) \psi_t - 3 (\psi_t \wedge \psi_i) D_t \psi_i \\
&quad - (\psi_t \wedge D_i \psi_t) \psi_i - (D_t \psi_t \wedge \psi_i) \psi_i \\
&= D_i D_i D_t \psi_t - 2 (\psi_t \wedge \psi_i) D_t \psi_i - (D_t \psi_t \wedge \psi_i) \psi_i. 
\end{align*}
Combining this with \eqref{psis-heat} we obtain \eqref{wneq}.  The inequality \eqref{psw} then follows from Lemma \ref{dilemma}.
\end{proof}

The presence of the forcing term in \eqref{psw} means that $w$ need not vanish for $s > 0$.  However, observe that no time derivatives appear in that forcing term (other than those implicit in the $\psi_t$ field).  Because of this, it is still possible to obtain reasonable estimates on $w$, which in turn let us control time derivatives of the $\psi$ or $A$ fields (thus essentially providing double time derivative control on $\phi$).  The purpose of this section is to record all the necessary bounds that we shall need.

Throughout this section we assume an energy bound
\begin{equation}\label{energy-bound}
\E( \phi ) \leq E
\end{equation}
on the energy of the wave map.  

We begin with some basic fixed-time estimates.

\begin{lemma}[Basic estimates]\label{basic}  Let $(\phi,I)$ be a classical wave map in the caloric gauge satisfying \eqref{energy-bound} for some $E > 0$.  Then for all $t \in I$, $s > 0$, $k \geq 0$, we have
\begin{align*}
\| \partial_x^k \Psi \|_{L^2_x(\R^2)} &\lesssim_{k,E} s^{-k/2} \\
\| \partial_x^k \Psi \|_{L^\infty_x(\R^2)} &\lesssim_{k,E} s^{-(k+1)/2}
\end{align*}
where $\Psi := (\psi_x, \psi_t, A_x, A_t)$.  Similarly if $\partial_x \Psi$ is replaced by $\psi_s$, if $\partial_x^2$ is replaced by $\partial_s$, and/or $\partial_x$ is replaced by $D_x$.
\end{lemma}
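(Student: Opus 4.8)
The plan is to treat $\psi_x$, $A_x$, $\psi_s$ first (these are already controlled by the static caloric gauge theory of Section \ref{littlewood-sec}), and then bootstrap to $\psi_t$, $A_t$ using the parabolic equations \eqref{dst}, \eqref{ast}, \eqref{wneq}. For the first group, note that $\phi(t,\cdot)$ for each fixed $t\in I$ is a classical heat flow with energy $\le E$ by \eqref{energy-bound}, so Proposition \ref{corpar}, Lemma \ref{infty-lemma}, Proposition \ref{abound}, and Corollary \ref{corbound} apply verbatim (uniformly in $t$, since they depend only on $E$), giving exactly the asserted bounds for $\psi_x$, $A_x$, and $\psi_s$ (with the stated shifts $\partial_x\psi_x\leftrightarrow\psi_s$, $\partial_x^2\leftrightarrow\partial_s$, $\partial_x\leftrightarrow D_x$).

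The main work is to get the same bounds for $\psi_t$ and $A_t$. For $\psi_t$: it solves the covariant heat equation \eqref{dst}, which is of the form \eqref{uheat}, so I would like to invoke Lemma \ref{uheat-est}. The obstacle is that Lemma \ref{uheat-est} requires the initial data $\psi_t(0,\cdot)=e^*\partial_t\phi(0,\cdot)$ to be Schwartz (true, by Theorem \ref{dynamic-caloric}) \emph{and} bounded in $L^2_x$ by a quantity controlled by $E$ — but $\|\psi_t(0)\|_{L^2_x}^2 = \int |\partial_t\phi(0)|^2_{\phi^*h}$ is bounded by $2E$ directly from the energy functional \eqref{energy-def} (taking $\phi_1 = \partial_t\phi(0)$). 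Hence Lemma \ref{uheat-est} gives $\|\partial_x^k\psi_t(s)\|_{L^2_x}\lesssim_{k,E} s^{-k/2}\|\psi_t(0)\|_{L^2_x}\lesssim_{k,E} s^{-k/2}$ and the $L^\infty$ analogue with $s^{-(k+1)/2}$, which is what we want. The same reasoning via \eqref{psix-heat} re-derives the $\psi_x$ bounds if one prefers a uniform treatment. Then the variants: $\psi_s = D_i\psi_i$ by \eqref{ps-frame}, so its bounds follow from those for $D_x\psi_x$ together with \eqref{ax-infty}; and $\partial_s\psi_x$, $\partial_s\psi_t$ are rewritten via \eqref{psix-heat}, \eqref{dst} as $D_iD_i\psi_{x,t} - (\psi_{x,t}\wedge\psi_i)\psi_i$, whose size $\lesssim_{k,E} s^{-(k+2)/2}$ (pointwise) and $s^{-(k+2)/2}$ ($L^2$, i.e. matching $\partial_x^2$) follows from the already-established bounds on $\psi_x$, $\psi_t$ and their first two covariant derivatives plus \eqref{linfty-const}.

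For $A_t$: integrate \eqref{ast} from $s$ to $\infty$ (using $A_t\to 0$ as $s\to\infty$, from Theorem \ref{dynamic-caloric}) to get $A_t(s) = \int_s^\infty \psi_s(s')\wedge\psi_t(s')\,ds'$, in exact analogy with \eqref{sax-integrated} for $A_x$. Differentiating covariantly via \eqref{matrix-cov}, \eqref{wedge-leibnitz} gives $|D_x^k A_t(s)|\lesssim \int_s^\infty \sum_{j=0}^k \sqrt{\e^{(s)}_{j+2}(s')}\,|D_x^{k-j}\psi_t(s')|\,ds'$ where $\psi_s$ contributes $\sqrt{\e_{\cdot+2}}$-type decay; then apply Lemma \ref{infty-lemma} and the just-proved $L^\infty$ bounds on $D_x^{k-j}\psi_t$ to get $\|D_x^k A_t(s)\|_{L^\infty_x}\lesssim_{k,E} s^{-(k+1)/2}$, and Minkowski/H\"older plus the $L^2$ bounds on $\psi_s$ for $\|D_x^k A_t(s)\|_{L^2_x}\lesssim_{k,E} s^{-k/2}$; finally convert $D_x$ to $\partial_x$ using $\partial_x = D_x - A_x$, \eqref{ax-infty}, and induction on $k$, exactly as in the proof of Proposition \ref{abound}. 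I expect the only real subtlety — and the place to be careful — is bookkeeping the commutator terms when converting $\partial_s$-derivatives to the covariant heat operator and checking that every term that arises is among those already estimated; the estimates themselves are all direct consequences of Section \ref{littlewood-sec} applied $t$-slicewise, since the hypotheses there depend only on the energy, which is uniformly bounded by \eqref{energy-bound}.
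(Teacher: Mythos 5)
Your proof follows essentially the same approach as the paper: the $\psi_x$, $A_x$, $\psi_s$ bounds are imported $t$-slicewise from Proposition \ref{abound} and Corollary \ref{corbound}; $\psi_t$ is handled by applying Lemma \ref{uheat-est} to \eqref{dst} with the initial $L^2$ bound coming from the energy; and $A_t$ is obtained by integrating \eqref{ast} from $s$ to $\infty$ and using the bounds just established. The paper states these steps more tersely but identically in substance.
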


\begin{proof} For the $\psi_x$ and $A_x$ components of $\Psi$, this follows from Proposition \ref{abound} and Corollary \ref{corbound}.  For the $\psi_t$ component, this follows from \eqref{dst} and Lemma \ref{uheat-est}.  For the $A_t$ component, we use \eqref{ast} to write
$$ A_t = \int_s^\infty \psi_s \wedge \psi_t(s')\ ds'$$
and the claims then follow from the bounds just established.
\end{proof}

\begin{remark} One also has some integrated estimates for derivatives of $\Psi$ in the heat-temporal variable $s$ analogous to those in Proposition \ref{abound} and Corollary \ref{corbound}, but we will not need them here.
\end{remark}

Now we control the wave-tension field.

\begin{lemma}[Preliminary wave-tension field bound]\label{wprelim}  Let the notation and assumptions be as in Lemma \ref{basic}.  Then for all $t \in I$ and $s \geq 0$ we have
$$ \| w(s,t) \|_{L^1_x(\R^2)} \lesssim_E 1.$$
\end{lemma}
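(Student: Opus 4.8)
The plan is to use the parabolic inequality \eqref{psw} together with the vanishing boundary condition \eqref{wn0} at $s=0$. First I would write Duhamel's formula for $|w|$ using \eqref{psw}: since $w(0,t)=0$ and $(\partial_s - \Delta)|w| \leq O(|D_x\psi_t||\psi_x||\psi_t|)$ in the sense of distributions, the comparison principle (Corollary \ref{compar}) applied to the difference of $|w|$ and the Duhamel solution of the forcing gives
$$ |w(s,t)| \leq \int_0^s e^{(s-s')\Delta} \bigl( C |D_x \psi_t| |\psi_x| |\psi_t| \bigr)(s',t)\ ds'$$
pointwise. Taking $L^1_x$ norms and using that $e^{(s-s')\Delta}$ is a contraction on $L^1_x$, it suffices to bound
$$ \int_0^\infty \| |D_x \psi_t(s')| |\psi_x(s')| |\psi_t(s')| \|_{L^1_x(\R^2)}\ ds' \lesssim_E 1.$$

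Next I would estimate the integrand by Hölder's inequality, splitting the three factors across $L^2_x, L^\infty_x, L^2_x$ (or $L^\infty_x, L^2_x, L^2_x$, etc.) so as to extract a power of $s'$ that is integrable near $s'=0$ and near $s'=\infty$. By Lemma \ref{basic}, $\|\psi_x(s')\|_{L^2_x}, \|\psi_t(s')\|_{L^2_x} \lesssim_E 1$ and $\|D_x\psi_t(s')\|_{L^\infty_x} \lesssim_E (s')^{-1}$; this gives an integrand of size $O_E((s')^{-1})$, which is logarithmically divergent at both ends — not quite enough. To fix the divergence one should instead use the $s$-integrated (``Strichartz-type'') bounds: for instance $\int_0^\infty \|D_x\psi_t(s')\|_{L^\infty_x}^2 s' \, ds' \lesssim_E 1$ from Lemma \ref{uheat-est} (the estimate \eqref{u-linfty-integ} applied to $u = \psi_t$ via \eqref{dst}), paired with $\int_0^\infty \|\psi_x(s')\|_{L^\infty_x}^2 s'\, ds' \lesssim_E 1$ from \eqref{linfty-integ-ord}, and the uniform bound $\|\psi_t(s')\|_{L^2_x} \lesssim_E 1$. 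Then Hölder in $x$ followed by Cauchy--Schwarz in the $s'$ variable against the measure $ds'/s'$ — writing $\||D_x\psi_t||\psi_x||\psi_t|\|_{L^1_x} \le \|D_x\psi_t\|_{L^\infty_x}\|\psi_x\|_{L^2_x}\|\psi_t\|_{L^2_x}$ won't balance, so I would instead pair $\|D_x\psi_t\|_{L^\infty_x}$ with $\|\psi_x\|_{L^\infty_x}$ and $\|\psi_t\|_{L^2_x}$, so that $\int_0^\infty \|D_x\psi_t\|_{L^\infty_x}\|\psi_x\|_{L^\infty_x}\,ds' \lesssim \bigl(\int \|D_x\psi_t\|_{L^\infty_x}^2 s'\,ds'\bigr)^{1/2}\bigl(\int \|\psi_x\|_{L^\infty_x}^2 s'^{-1}\,ds'\bigr)^{1/2}$; wait, the second integral needs $\|\psi_x\|_{L^\infty_x}^2 s'\,ds'$, which is \eqref{linfty-integ-ord} with $k=1$. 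So one factor of $s'$ gets absorbed into each Strichartz bound and I need $\|\psi_t\|_{L^2_x}(s')^{-1}$ to be controlled — but that diverges. The clean resolution: estimate $\|D_x\psi_t\|_{L^\infty_x} \lesssim_E s'^{-1}$ on the range $s' \lesssim 1$ directly and rely on it being in $L^2(s'\,ds')$ on $s' \gtrsim 1$, combined with the pointwise decay $\|\psi_t\|_{L^\infty_x} \lesssim_E (s')^{-1/2}$ and $\|\psi_x\|_{L^2_x} \lesssim_E 1$; then the integrand is $\lesssim_E (s')^{-3/2}$ for large $s'$ and needs separate treatment near $0$. The cleanest single choice is $\| |D_x\psi_t||\psi_x||\psi_t| \|_{L^1_x} \le \|D_x\psi_t\|_{L^2_x} \|\psi_x\|_{L^\infty_x} \|\psi_t\|_{L^\infty_x} + $ symmetric permutations; with $\|D_x\psi_t\|_{L^2_x}\lesssim_E s'^{-1/2}$, $\|\psi_x\|_{L^\infty_x},\|\psi_t\|_{L^\infty_x}\lesssim_E s'^{-1/2}$, giving $\lesssim_E s'^{-3/2}$ — good at $\infty$, bad at $0$; near $0$ use $\|\psi_x\|_{L^\infty_x} \lesssim_E s'^{-1/2}$ but $\|D_x\psi_t\|_{L^2_x}$ is only $O(s'^{-1/2})$ while $\|\psi_t\|_{L^2_x}\lesssim_E 1$, so $\|D_x\psi_t\|_{L^2_x}\|\psi_x\|_{L^\infty_x}\|\psi_t\|_{L^2_x}$ won't work since $\psi_x$ is in $L^\infty$ not $L^2$. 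The actual fix, as in the proof of Lemma \ref{ubang}, is to use the $s$-integrated estimates \eqref{linfty-integ-ord} and \eqref{u-linfty-integ} which are precisely designed to remove the logarithm: pair $\|D_x\psi_t\|_{L^\infty_x}$ (with weight $s'$, $L^2$ in $ds'$ by \eqref{u-linfty-integ} with $k=1$) against $\|\psi_x\|_{L^\infty_x}$ (with weight $s'$, but we only have the $k=0$ version giving weight $s'^0$, i.e. $\int \|\psi_x\|_{L^\infty_x}^2\,ds' = \infty$ — no).

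The honest approach, and the one I expect the author takes: bound $\||D_x\psi_t||\psi_x||\psi_t|\|_{L^1_x} \le \|D_x\psi_t\|_{L^2_x}\|\psi_t\|_{L^2_x}\|\psi_x\|_{L^\infty_x} \lesssim_E s'^{-1/2}\cdot 1 \cdot s'^{-1/2} = s'^{-1}$ and replace one of the fixed-time $L^2$ bounds on $\psi_x$ or $\psi_t$ by an $L^\infty$-in-$x$, $L^2$-in-$s'$ weighted bound to kill the log — specifically use $\int_0^\infty \|D_x\psi_t\|_{L^\infty_x}^2\,s'\,ds' \lesssim_E 1$ and $\int_0^\infty \|\psi_x\|_{L^\infty_x}^2\,s'\,ds' \lesssim_E 1$ from \eqref{linfty-integ-ord} with $k=1$ together with $\sup_{s'} \|\psi_t(s')\|_{L^2_x} \lesssim_E 1$; then $\int_0^\infty \|D_x\psi_t\|_{L^\infty_x}\|\psi_x\|_{L^\infty_x}\|\psi_t\|_{L^2_x}\,ds' \le \sup_{s'}\|\psi_t\|_{L^2_x}\bigl(\int \|D_x\psi_t\|_{L^\infty_x}^2 s'\,ds'\bigr)^{1/2}\bigl(\int \|\psi_x\|_{L^\infty_x}^2 s'^{-1}\,ds'\bigr)^{1/2}$ — and $\int\|\psi_x\|_{L^\infty_x}^2 s'^{-1}\,ds' = \infty$ again. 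The weight bookkeeping is the whole difficulty. I therefore expect the main obstacle to be finding the exact Hölder split and pairing of the integrated estimates from Section \ref{littlewood-sec} and Lemma \ref{uheat-est} that makes $\int_0^\infty \|(D_x\psi_t)\psi_x\psi_t\|_{L^1_x}\,ds'$ converge at both $s'=0$ and $s'=\infty$ without a logarithmic loss; once that bookkeeping is done, the statement follows immediately from Duhamel and the $L^1_x$-boundedness of $e^{s\Delta}$ as in \eqref{sdel}. Everything else — the passage from \eqref{psw} to the pointwise Duhamel bound via Corollary \ref{compar}, and the use of \eqref{wn0} — is routine.
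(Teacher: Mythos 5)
Your framework — the reduction via \eqref{wn0}, \eqref{psw}, Duhamel's formula, and the $L^1_x$-boundedness of the heat propagator to showing
$$ \int_0^\infty \| |D_x \psi_t| |\psi_x| |\psi_t| \|_{L^1_x(\R^2)}(s')\ ds' \lesssim_E 1 $$
— is exactly what the paper does, and you correctly diagnose that the remaining issue is to choose a H\"older split that avoids logarithmic divergence at $s'=0$ and $s'=\infty$. But you never find the working split, cycle through $L^2\times L^\infty\times L^2$, $L^\infty\times L^2\times L^2$, and $L^2\times L^\infty\times L^\infty$ (all of which you correctly observe fail), and end by conceding that ``the main obstacle'' is precisely this bookkeeping. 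That is a genuine gap: the claim is not actually proved.

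The missing ingredient is the intermediate exponent $p=4$, i.e. the split
$$ \| |D_x \psi_t| |\psi_x| |\psi_t| \|_{L^1_x(\R^2)} \leq \| D_x \psi_t \|_{L^2_x(\R^2)} \| \psi_x \|_{L^4_x(\R^2)} \| \psi_t \|_{L^4_x(\R^2)}, $$
together with the $L^4$ parabolic Strichartz estimate \eqref{ulp}. Since $\| D_x \psi_t(s') \|_{L^2_x} \lesssim_E (s')^{-1/2}$ by Lemma \ref{basic}, Cauchy--Schwarz in $s'$ reduces matters to
$$ \int_0^\infty (s')^{-1/2} \| \psi_x(s') \|_{L^4_x(\R^2)}^2\ ds' \lesssim_E 1, \qquad \int_0^\infty (s')^{-1/2} \| \psi_t(s') \|_{L^4_x(\R^2)}^2\ ds' \lesssim_E 1, $$
and both hold by \eqref{ulp} with $p=4$ applied to $u=\psi_x$ and $u=\psi_t$, which satisfy the covariant heat equation \eqref{uheat} by \eqref{psix-heat} and \eqref{dst} respectively. (Note that $s^{-2/p}=s^{-1/2}$ precisely when $p=4$, so the $s'^{-1/2}$ coming from the $L^2$ factor exactly matches the Strichartz weight, with no residue; this is why the endpoints $p=2$ and $p=\infty$ that you tried cannot balance, while $p=4$ does.) This is the content of the paper's estimates \eqref{dx-1} and \eqref{dx-3}.
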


\begin{proof}  From \eqref{wn0}, \eqref{psw}, \eqref{heat-lp} and Duhamel's formula \eqref{duh} one has
$$ \| w(s,t) \|_{L^1_x(\R^2)} \lesssim \int_0^s \| |D_x \psi_t| |\psi_x| |\psi_t| \|_{L^1_x(\R^2)}(s')\ ds';$$
applying H\"older's inequality, it suffices to show that
\begin{align}
\sup_{s>0} s^{1/2} \| D_x \psi_t \|_{L^2_x(\R^2)} &\lesssim_E 1\label{dx-1} \\
\int_0^\infty s^{-1/2} \| \psi_{t,x} \|_{L^4_x(\R^2)}^2(s)\ ds &\lesssim_E 1 \label{dx-3}.
\end{align}
But \eqref{dx-1} follows Lemma \ref{basic}, while \eqref{dx-3} follows from \eqref{psix-heat}, \eqref{dst}, and Lemma \ref{uheat-est}.
\end{proof}

Parabolic regularity then lets us control spatial derivatives of $w$ also:

\begin{lemma}\label{wreg}  Let the notation and assumptions be as in Lemma \ref{basic}.  Then for all $t \in I$, $s > 0$, and $k \geq 0$ we have
\begin{equation}\label{l1x}
\| \partial_x^k w(s,t) \|_{L^1_x(\R^2)} \lesssim_{E,k} s^{-k/2}
\end{equation}
and similarly
$$ \| \partial_x^k w(s,t) \|_{L^2_x(\R^2)} \lesssim_{E,k} s^{-(k+1)/2}$$
and
\begin{equation}\label{linftyx}
 \| \partial_x^k w(s,t) \|_{L^\infty_x(\R^2)} \lesssim_{E,k} s^{-(k+2)/2}
 \end{equation}
for all $k \geq 0$.
\end{lemma}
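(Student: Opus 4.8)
The plan is to prove the $L^2_x$, $L^\infty_x$, and $L^1_x$ bounds in that order, the $L^2_x$ and $L^1_x$ bounds by an induction on $k$ whose inductive step is run on a single dyadic heat-time block $[s/2,s]$. For the $L^2_x$ bounds: differentiate \eqref{wneq} covariantly $k$ times, commuting $D_x^k$ past $D_iD_i$ using \eqref{curv-frame} and rewriting the time derivative $D_t\psi_i$ as $D_i\psi_t$ by the zero-torsion identity \eqref{zerotor-frame}, so that no genuine time differentiation survives; this gives
$$\partial_s D_x^k w = D_iD_i D_x^k w - (D_x^k w\wedge\psi_i)\psi_i + F_k ,$$
where $F_k$ collects trilinear terms $O(|D_x^a\psi_x||D_x^b\psi_x||D_x^c w|)$ and $O(|D_x^a\psi_t||D_x^b\psi_x||D_x^{c+1}\psi_t|)$ (with $a+b+c\le k$) together with curvature-commutator terms, all of which carry at most $k-1$ derivatives on $w$ except for a single top-order contribution $(\psi_i\wedge\psi_j)D_x^k w$ which enters \emph{antisymmetrically} and so drops out of the energy identity — exactly as in the proof of Lemma \ref{uheat-est}. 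Passing to $\partial_s|D_x^k w|$ via the diamagnetic inequality (Lemma \ref{dilemma}), the point, just as in the passage from \eqref{wneq} to \eqref{psw}, is that the self-interaction $-(D_x^k w\wedge\psi_i)\psi_i$ has the favourable sign $\bigl((D_x^k w\wedge\psi_i)\psi_i\bigr)\cdot D_x^k w\ge 0$ and may be discarded, leaving no borderline $s^{-1}|D_x^k w|$ term.

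Now run the energy argument of Lemma \ref{uheat-est} on the block $[s/2,s]$: the already-established level-$(k-1)$ bound $\|D_x^{k-1}w(s/2)\|_{L^2_x}\lesssim_{E,k} s^{-k/2}$ serves as the seed, integrating the energy identity over $[s/2,s]$ gives $\int_{s/2}^s\|D_x^k w\|_{L^2_x}^2\,ds'\lesssim s^{-k}$, hence (pigeonhole) a good time $s_0\in[s/2,s]$ with $\|D_x^k w(s_0)\|_{L^2_x}\lesssim s^{-(k+1)/2}$, and Duhamel's formula \eqref{duh} together with \eqref{heat-lp} from $s_0$ closes the estimate. Throughout $[s/2,s]$ the factors $\psi_x,\psi_t$ and their spatial derivatives are controlled by the crude bounds of Lemma \ref{basic}, and the factors $D_x^c w$ with $c<k$ by the inductive hypothesis, so no $s\to 0^+$ issue intervenes; the base case $k=0$ follows from Lemma \ref{wprelim}, the inequality \eqref{psw}, Duhamel, and the $L^1_x\to L^2_x$ smoothing in \eqref{heat-lp}. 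Replacing covariant by ordinary derivatives via \eqref{ax-infty} then gives $\|\partial_x^k w(s,t)\|_{L^2_x}\lesssim_{E,k}s^{-(k+1)/2}$, and the $L^\infty_x$ bound follows at once from Gagliardo--Nirenberg \eqref{gag-2}, since $\|\partial_x^k w\|_{L^\infty_x}\lesssim\|\partial_x^k w\|_{L^2_x}^{1/2}\|\partial_x^{k+2}w\|_{L^2_x}^{1/2}\lesssim s^{-(k+2)/2}$. For the $L^1_x$ bounds, parabolic smoothing offers no gain and interpolating down from the $L^2_x$ bounds is unavailable (there is no control on the spatial decay of $w$), so one argues directly, again by induction on $k$ over $[s/2,s]$: the base case $k=0$ is Lemma \ref{wprelim}; for $k\ge1$, apply Duhamel on $[s/2,s]$ to the equation $\partial_s(D_x^{k-1}w)=D_iD_i(D_x^{k-1}w)-(D_x^{k-1}w\wedge\psi_i)\psi_i+G_{k-1}$ for $D_x^{k-1}w$ (keeping the self-interaction and the connection inside the evolution) and place exactly one more covariant derivative on the evolution operator; since $|A_x|=O(s^{-1/2})$ and $|\psi_x|^2=O(s^{-1})$ on this block, that evolution obeys the same one-derivative $L^1_x$-smoothing estimate $\lesssim(s-s')^{-1/2}$ as $e^{(s-s')\Delta}$, and $(s-s')^{-1/2}$ is integrable on $[s/2,s]$. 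The boundary term is controlled by the level-$(k-1)$ estimate $\|D_x^{k-1}w(s/2)\|_{L^1_x}\lesssim s^{-(k-1)/2}$ (losing an extra $s^{-1/2}$), and $\|G_{k-1}(s')\|_{L^1_x}\lesssim s^{-(k+1)/2}$ on $[s/2,s]$ by Lemma \ref{basic} and the inductive hypothesis (all terms of $G_{k-1}$ carrying at most $k-2$ derivatives on $w$, or none), so $\int_{s/2}^s(s-s')^{-1/2}\|G_{k-1}(s')\|_{L^1_x}\,ds'\lesssim s^{-k/2}$, and \eqref{ax-infty} again converts covariant to ordinary derivatives.

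I expect the main obstacle to be the lengthy but essentially mechanical bookkeeping of the commutator and forcing terms produced by covariantly differentiating \eqref{wneq}, and the verification that they all close under the bounds of Lemma \ref{basic} and the inductive hypotheses — together with the routine parabolic-perturbation fact, implicit above, that on a dyadic heat-time block the covariant heat flow with potential $(\,\cdot\,\wedge\psi_i)\psi_i$ enjoys the same one-derivative smoothing estimates as the free heat semigroup. Both of these are entirely in the spirit of the estimates already carried out in Sections \ref{caloric-sec}--\ref{littlewood-sec}.
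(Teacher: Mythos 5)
Your plan is a genuinely different arrangement than the paper's, and it contains one point that is more of a promissory note than a proof.

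\textbf{The paper only needs one induction.} The paper proves \eqref{l1x} by a single induction on $k$ and then obtains both the $L^2_x$ and the $L^\infty_x$ bounds for free: by the Gagliardo--Nirenberg inequality \eqref{gag-6}, $\|\partial_x^k w\|_{L^2_x}\lesssim\|\partial_x^k w\|_{L^1_x}^{1/2}\|\partial_x^{k+2} w\|_{L^1_x}^{1/2}\lesssim s^{-(k+1)/2}$, and then \eqref{gag-2} gives $L^\infty_x$. You correctly observe that one cannot go from $L^2_x$ to $L^1_x$ (no spatial decay of $w$), but you miss that one \emph{can} go $L^1_x \Rightarrow L^2_x \Rightarrow L^\infty_x$, which makes your separate $L^2_x$ induction entirely redundant. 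Everything you set up for the $L^2_x$ step (the covariant differentiation, the antisymmetry observations, the Lemma \ref{uheat-est}-style energy/pigeonhole/Duhamel block argument) is reasonable and would likely work, but it is extra labour.

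\textbf{The $L^1_x$ step has a real gap.} The crux of your $L^1_x$ argument is the assertion that, on a dyadic heat-time block, the covariant evolution $\partial_s u = D_iD_i u - (u\wedge\psi_i)\psi_i$ obeys the one-derivative $L^1_x\to L^1_x$ smoothing estimate $\|D_x S(s,s')u\|_{L^1_x}\lesssim (s-s')^{-1/2}\|u\|_{L^1_x}$. This is never established in the paper, and it is not routine in the sense you intend: proving it requires essentially the same self-referential Duhamel bound and short-block bootstrap that the paper runs directly. The paper's route avoids the issue cleanly: it decomposes \eqref{wneq} as $\partial_s w = \Delta w + \partial_i F_i + G$ with $F_i = 2A_i w$ and $G$ zeroth-order, applies Duhamel with the \emph{free} heat propagator (whose $L^1_x$-smoothing \eqref{heat-lp} is already established), which produces an inequality in which $\|\partial_x^k w\|_{L^1_x}$ appears on both sides with coefficient $\lesssim s_0^{-1/2}\int_{s_0}^{s_1}(s_1-s)^{-1/2}\,ds\sim c^{1/2}$ on a block $[s_0,(1+c)s_0]$, and then absorbs by taking $c$ small. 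Your version packages this bootstrap as a black-box property of the propagator; if you want to take this route you must actually prove that property, and doing so will reproduce the paper's argument. Absorbing the $A_x$ terms into the propagator does not eliminate the need for the bootstrap, it only relocates it.

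\textbf{A minor imprecision.} You claim the forcing $G_{k-1}$ carries ``at most $k-2$ derivatives on $w$, or none.'' The curvature commutator $[D_x^{k-1},D_iD_i]w$ does produce a term of the schematic form $(\psi_x\wedge\psi_x)D_x^{k-1}w$, carrying $k-1$ derivatives on $w$. It is still harmless (the coefficient is $O(s^{-1})$ and the inductive hypothesis at level $k-1$ gives $\|D_x^{k-1}w\|_{L^1_x}\lesssim s^{-(k-1)/2}$, so the $L^1_x$ norm is $\lesssim s^{-(k+1)/2}$ as needed), but the count should be $\le k-1$, not $\le k-2$. Similarly, the $k=0$ base case of your $L^2_x$ induction needs the integrated bound $\int_0^\infty s^{-1/2}\|\psi_{t,x}(s)\|_{L^4_x}^2\,ds\lesssim 1$ from \eqref{ulp} (the way Lemma \ref{wprelim} uses it), not merely the fixed-time bounds of Lemma \ref{basic}, or else the $\int_0^s (s-s')^{-1/2}(s')^{-1}\,ds'$ integral diverges at $s'=0$.

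In short: correct observations throughout (the zero-torsion rewriting $D_t\psi_i=D_i\psi_t$, the favourable sign of $(v\wedge\psi_i)\psi_i\cdot v$, the antisymmetry of the top-order commutator, the use of the diamagnetic inequality), but more work than necessary and one unproven parabolic smoothing lemma whose proof is the very bootstrap the paper carries out explicitly. Prove \eqref{l1x} first, close it with the paper's free-propagator-plus-short-block-bootstrap argument, and then \eqref{gag-6}, \eqref{gag-2} give the rest with no further effort.
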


\begin{proof} It suffices to prove \eqref{l1x}, as the other two estimates then follow by the Gagliardo-Nirenberg inequality \eqref{gag-6}, \eqref{gag-2}.  For brevity we omit the explicit dependence on the wave-temporal variable $t$, and on the parameters $E,k$.

We establish \eqref{l1x} by induction on $k$.  The case $k=0$ is Lemma \ref{wsmall}, so suppose $k \geq 1$ and the claim has already been proven for smaller $k$.  We use \eqref{D-def} to express \eqref{wneq} in the form
\begin{equation}\label{psweq}
 \partial_s w = \Delta w + \partial_i F_i + G
 \end{equation}
where 
\begin{equation}\label{fdef}
F_i := 2 A_i w
\end{equation}
and $G$ takes the schematic form
\begin{equation}\label{gdef}
 G = \bigO( (\partial_x \Psi + \Psi^2) w + \Psi \partial_x \Psi + \Psi^4 )
\end{equation}
where $\Psi$ was as in Lemma \ref{basic}.
From Duhamel's formula \eqref{duh} and \eqref{heat-lp} we have
\begin{align*}
\| \partial_x^k w(s_1) \|_{L^1_x(\R^2)} &\lesssim (s_1-s_0)^{-1/2} \| \partial_x^{k-1} w(s_0) \|_{L^1_x(\R^2)} \\
&\quad + \int_{s_0}^{s_1} (s_1-s)^{-1/2} \| \partial_x^k F_x(s) \|_{L^1_x(\R^2)}\ ds \\
&\quad + \int_{s_0}^{s_1} \|\partial_x^k G(s)\|_{L^1_x(\R^2)}\ ds
\end{align*}
for all $0 < s_0 < s_1$.  A computation using Lemma \ref{basic} and the induction hypothesis reveals that
$$ \| \partial_x^k F_x(s) \|_{L^1_x(\R^2)} \lesssim_k s^{-1/2} \| \partial_x^k w(s) \|_{L^1_x(\R^2)} + s^{-(k+1)/2}$$
and
$$ \| \partial_x^k G(s) \|_{L^1_x(\R^2)} \lesssim_k s^{-1} \| \partial_x^k w(s) \|_{L^1_x(\R^2)} + s^{-(k+2)/2}$$
and thus
$$ \| \partial_x^k w(s_1) \|_{L^1_x(\R^2)} \lesssim_k (s_1-s_0)^{-1/2} s_0^{-k/2} + s_0^{-1/2}
\int_{s_0}^{s_1} (s_1-s)^{-1/2} \| \partial_x^k w(s) \|_{L^1_x(\R^2)}\ ds$$
whenever $0 < s_0 < s_1 < 2s_0$.  This implies that
$$ \sup_{s_0 \leq s \leq (1+c) s_0} (s-s_0)^{1/2}  \| \partial_x^k w(s) \|_{L^1_x(\R^2)} \lesssim_k s_0^{-k/2} + c^{1/2}
\sup_{s_0 \leq s \leq (1+c) s_0} (s-s_0)^{1/2}  \| \partial_x^k w(s) \|_{L^1_x(\R^2)}$$
for any $0 < c < 1$; setting $c$ sufficiently small we establish \eqref{l1x} as required.  (The finiteness of the norms here follows from Theorem \ref{dynamic-caloric}.)
\end{proof}

We can now get some estimates on time derivatives:

\begin{lemma}[Time derivative estimates]\label{time}  Let the notation and assumptions be as in Lemma \ref{basic}.  For all $t \in I$, $s > 0$, $k \geq 0$, we have
$$ \| \partial^k_x \partial_t \Psi(t) \|_{L^2_x(\R^2)} \lesssim_{E,k} s^{-(k+1)/2}$$
and
$$ \| \partial^k_x \partial_t \Psi(t) \|_{L^\infty_x(\R^2)} \lesssim_{E,k} s^{-(k+2)/2}$$
Similarly if $\partial_x \Psi$ is replaced by $\psi_s$, if $\partial_x^2$ is replaced by $\partial_s$, and/or $\partial_x$ is replaced by $D_x$.
\end{lemma}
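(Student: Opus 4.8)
The plan is to reduce everything to Lemma \ref{basic} and Lemma \ref{wreg} by writing the time derivative of each of $\psi_x,\psi_s,\psi_t,A_x,A_t$ explicitly in terms of already-controlled quantities, with \emph{no} surviving time derivatives on the right-hand side, so that the asserted bounds follow by the Leibniz rule, H\"older's inequality, and the Gagliardo--Nirenberg inequalities \eqref{gag-1}, \eqref{gag-2}. The relevant identities are: using the $x/t$ and $s/t$ analogues of the zero-torsion relation \eqref{zerotor-frame}, the definition \eqref{D-def} of $D$, and the caloric condition $A_s=0$ from \eqref{ass}, one has $\partial_t\psi_x = D_x\psi_t - A_t\psi_x = \partial_x\psi_t + A_x\psi_t - A_t\psi_x$ and $\partial_t\psi_s = D_s\psi_t - A_t\psi_s = \partial_s\psi_t - A_t\psi_s$, while the definition \eqref{wn-def} of the wave-tension field gives $\partial_t\psi_t = \psi_s - w - A_t\psi_t$. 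Integrating \eqref{sax} and \eqref{ast} from $s$ to $+\infty$ (legitimate by the decay supplied by Theorem \ref{dynamic-caloric}) and differentiating under the integral sign yields $\partial_t A_x(s) = \int_s^\infty [(\partial_t\psi_s)\wedge\psi_x + \psi_s\wedge(\partial_t\psi_x)](s')\,ds'$, and the analogous formula for $\partial_t A_t$ with $\psi_x$ replaced by $\psi_t$.

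Next I would prove the fixed-time bounds for $\psi_x,\psi_s,\psi_t$: apply $\partial_x^k$ to the three displayed identities, expand by the Leibniz rule, and estimate each resulting product in $L^2_x(\R^2)$ (respectively $L^\infty_x(\R^2)$) by placing the factor carrying the most spatial derivatives in $L^2_x$ (respectively $L^\infty_x$) and each remaining factor in $L^\infty_x$, invoking Lemma \ref{basic} for $\psi_x,\psi_t,A_x,A_t,\psi_s$ and $\partial_s\psi_t$, and Lemma \ref{wreg} for $w$ and its spatial derivatives. Each extra spatial derivative costs a factor $s^{-1/2}$, which matches the claimed powers exactly; the leading term (e.g. $\partial_x^{k+1}\psi_t$ inside $\partial_x^k\partial_t\psi_x$) already saturates the bound $s^{-(k+1)/2}$. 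The $L^\infty_x$ estimate then follows either from the same H\"older argument or from \eqref{gag-2} applied to $\partial_x^k\partial_t(\cdot)$ and $\partial_x^{k+2}\partial_t(\cdot)$.

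With $\partial_t\psi_s,\partial_t\psi_x,\partial_t\psi_t$ controlled at every spatial order, I would substitute them into the integral formulas for $\partial_t A_x$ and $\partial_t A_t$, differentiate $k$ times in $x$, and estimate the integrand by H\"older: it is $O((s')^{-(k+3)/2})$ in $L^2_x$ and $O((s')^{-(k+4)/2})$ in $L^\infty_x$, whose integrals over $[s,\infty)$ are $O(s^{-(k+1)/2})$ and $O(s^{-(k+2)/2})$, as required. The $D_x$-variants follow by writing $D_x=\partial_x+A_x$ and using the $A_x$-bounds of Lemma \ref{basic}; the $\psi_s$-variant is already contained in the estimates for $\partial_t\psi_s$; and the $\partial_s$-variant, i.e. $\partial_s\partial_t\Psi=\partial_t\partial_s\Psi$, is obtained by differentiating the parabolic equations \eqref{psix-heat}, \eqref{psis-heat}, \eqref{dst} (and \eqref{sax}, \eqref{ast}) in $t$ and inserting the bounds just proved.

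The only substantive point is to check that the scheme is not circular. The identity $\partial_t\psi_t = \psi_s - w - A_t\psi_t$ brings in the wave-tension field $w$, controlled only through Lemma \ref{wreg}; it is essential that no second time derivative of $\phi$ — equivalently, no further time derivative of $\psi_t$ — reappears on the right, and this is precisely the structural feature already exploited in \eqref{psw}, namely that the parabolic forcing term for $w$ carries no time derivatives. Granted this, all the estimates close at fixed time with no bootstrap in $t$, and the remainder is the routine derivative bookkeeping sketched above.
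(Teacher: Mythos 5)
Your proposal is correct and takes essentially the same route as the paper: you identify the same five pointwise identities (the zero-torsion relations for $\partial_t\psi_x$ and $\partial_t\psi_s$, the wave-tension identity $\partial_t\psi_t=\psi_s-w-A_t\psi_t$, and the integrated-from-infinity formulas for $\partial_tA_x$ and $\partial_tA_t$), and then close the estimates via Lemma \ref{basic}, Lemma \ref{wreg}, the Leibniz rule and Gagliardo--Nirenberg. Your writeup is somewhat more explicit than the paper's (in particular you spell out that $\partial_t\psi_s=\partial_s\psi_t-A_t\psi_s$ is needed before the $A_x$ and $A_t$ estimates, a dependency the paper leaves implicit), but the underlying argument is identical.
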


\begin{proof}  By the Gagliardo-Nireberg inequality \eqref{gag-2} it suffices to verify the $L^2$ estimate.

To deal with the $\psi_x$ component of $\Psi$, we use 
$$ \partial_t \psi_x = \partial_x \psi_t + A_x \psi_t - A_t \psi_x$$
and the claim then follows from Lemma \ref{basic}.  To deal with the $A_x$ component of $\Psi$, we use \eqref{sax} to write
$$ A_x = \int_s^\infty \psi_s \wedge \psi_x(s')\ ds'$$
and the claim then follows from Lemma \ref{basic} and the estimates already established for $\psi_x$.  To deal with the $\psi_t$ component of $\Psi$, we observe from \eqref{wn-def} that
$$ \partial_t \psi_t = \psi_s - w - A_t \psi_t$$
and the claim then follows from Lemma \ref{basic} and Lemma \ref{wreg}.  Finally, to deal with the $A_t$ component of $\Psi$, we use \eqref{dst} to write
$$ A_t = \int_s^\infty \psi_s \wedge \psi_t(s')\ ds'$$
and the claim then follows from Lemma \ref{basic} and the estimates already established for $\psi_x$ and $\psi_t$.
\end{proof}

We have now established boundedness of the first time derivative of $\Psi$ (and thus, implicitly, on the second time derivative of $\phi$).  However for our applications we need to also establish some \emph{uniform continuity} of this time derivative.  The first step in this process is to establish some decay of $w$ in the limit $s \to 0$ (which one would expect thanks to \eqref{wn0}).  In order to apply this to wave maps in the energy class, we now need to deal with convergent sequences of classical wave maps.  The first result is as follows.

\begin{lemma}\label{wsmall}  Let $(\phi^{(n)},I)$ be a sequence of classical wave maps all obeying \eqref{energy-bound} for a uniform $E > 0$, with the associated fields $\psi^{(n)}_x, \psi^{(n)}_s, \psi^{(n)}_t, A^{(n)}_x, A^{(n)}_t, w^{(n)}$, and such that the sequence $\phi^{(n)}$ is uniformly convergent in $\Energy$.  Then for every $\eps > 0$ there exists $S > 0$ such that
$$ \| w^{(n)}(s,t) \|_{L^1_x(\R^2)} \lesssim \eps$$
for all sufficiently large $n$ (depending on $\eps,S$), all $0 \leq s \leq 1/S$, and all $t \in I$.
\end{lemma}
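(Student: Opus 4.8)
The plan is to run the argument of Lemma \ref{wprelim}, but to extract genuine smallness (not mere boundedness) from the fact that we integrate only over the short heat-time window $[0,1/S]$, and to use the hypothesised $\Energy$-convergence to make that smallness uniform in $n$ and $t$. As in Lemma \ref{wprelim}, the vanishing \eqref{wn0} of $w^{(n)}$ at $s=0$, the differential inequality \eqref{psw}, Duhamel's formula \eqref{duh}, and the positivity and $L^1$-contractivity of the heat propagator give
$$ \| w^{(n)}(s,t) \|_{L^1_x(\R^2)} \lesssim \int_0^{s} \big\| |D_x\psi^{(n)}_t|\,|\psi^{(n)}_x|\,|\psi^{(n)}_t| \big\|_{L^1_x(\R^2)}(s',t)\ ds' $$
for $0 \le s \le 1/S$. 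Applying H\"older's inequality with exponents $2,4,4$, the fixed-time bound $\|D_x\psi^{(n)}_t(s',t)\|_{L^2_x}\lesssim_E (s')^{-1/2}$ from Lemma \ref{basic}, and Cauchy--Schwarz with respect to the measure $(s')^{-1/2}\,ds'$, it suffices to show that
$$ \int_0^{1/S} (s')^{-1/2}\,\| \psi^{(n)}_\alpha(s',t) \|_{L^4_x(\R^2)}^2\ ds' \longrightarrow 0 \qquad (S\to\infty) $$
uniformly in $n$ and in $t\in I$, for each of $\alpha = x$ and $\alpha = t$.

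For this I would use the pointwise domination $|\psi^{(n)}_\alpha(s,t)| \le (e^{s\Delta}|\psi^{(n)}_\alpha(0,t)|)(x)$, valid for $\alpha=x$ by \eqref{psix-compar} and for $\alpha=t$ by \eqref{upoint} (as $\psi_t$ solves a covariant heat equation of the form \eqref{uheat}, cf.\ \eqref{dst}), together with the parabolic Strichartz estimate of Lemma \ref{ubang} at $p=4$, $\int_0^\infty s^{-1/2}\|e^{s\Delta}g\|_{L^4_x}^2\,ds \lesssim \|g\|_{L^2_x}^2$. Decompose the initial datum as $|\psi^{(n)}_\alpha(0,t)| = g_\alpha(t) + r^{(n)}_\alpha(t)$, where $g_\alpha(t)$ is the $L^2_x$-limit of $|\psi^{(n)}_\alpha(0,t)|$ as $n\to\infty$; the existence of this limit, together with $\sup_{t\in I}\|r^{(n)}_\alpha(t)\|_{L^2_x} = o_{n\to\infty}(1)$, follows from the uniform convergence of $\iota(\phi^{(n)}[t])$ in $\Energy$ --- directly from the definition of $\L$ for $\alpha=t$ (the $\phi_1$-component), and from (a uniform-in-$t$ version of) Proposition \ref{moveit} for $\alpha=x$. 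Here one notes that $\|\,|\cdot|\,\|_{L^2_x}$ and the quantity $\|w\|_{L^1_x}$ are invariant under the residual $SO(m)$ gauge rotation, so passing to $SO(m)\backslash\L$ creates no difficulty. By Lemma \ref{ubang} the contribution of $r^{(n)}_\alpha(t)$ to the integral above is $\lesssim \|r^{(n)}_\alpha(t)\|_{L^2_x}^2 \le \sup_{t\in I}\|r^{(n)}_\alpha(t)\|_{L^2_x}^2 = o_{n\to\infty}(1)$, uniformly in $S$ and $t$.

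It remains to handle the contribution of the limiting field, i.e.\ to show $\sup_{t\in I}\int_0^{1/S} s^{-1/2}\|e^{s\Delta}g_\alpha(t)\|_{L^4_x}^2\,ds \to 0$ as $S\to\infty$. For each fixed $t$ this is the tail of an integral that is finite by Lemma \ref{ubang}, hence tends to $0$. To upgrade this to uniformity in $t$, observe that $t\mapsto g_\alpha(t)$ is continuous from the compact interval $I$ into $L^2_x(\R^2)$, being the uniform-in-$t$ limit of the maps $t\mapsto |\psi^{(n)}_\alpha(0,t)|$, which are continuous because each $\phi^{(n)}$ is classical and Theorem \ref{dynamic-caloric} makes all fields depend continuously on $t$ (and $|\cdot|$ is $1$-Lipschitz on $L^2_x$). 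Consequently the functions $t\mapsto \int_0^{1/S}s^{-1/2}\|e^{s\Delta}g_\alpha(t)\|_{L^4_x}^2\,ds$ are continuous on $I$, are monotone decreasing as $S\to\infty$, and converge pointwise to $0$; Dini's theorem then yields uniform convergence. Combining the two contributions and chaining back through Cauchy--Schwarz and H\"older, we obtain for $S$ large enough (depending on $\eps$), all $n$, all $t\in I$, and all $0\le s\le 1/S$ the desired bound $\|w^{(n)}(s,t)\|_{L^1_x}\lesssim_E \eps$.

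The main obstacle is purely one of bookkeeping: making the smallness uniform \emph{simultaneously} in $n$ and in $t$. The $\Energy$-convergence hypothesis is exactly what controls the $n$-dependence (via the decomposition into limiting field plus $\L$-small error, using Proposition \ref{moveit} for the spatial derivative fields), while compactness of $I$ combined with Dini's theorem controls the $t$-dependence; one must also keep track of the fact that the residual $SO(m)$ rotation ambiguity is harmless, which it is because every quantity that appears is gauge-invariant.
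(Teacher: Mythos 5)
Your proposal is correct and follows essentially the same route as the paper: reduce via Lemma \ref{wprelim}'s argument to the smallness of $\int_0^{1/S} s^{-1/2}\|\psi^{(n)}_{t,x}\|_{L^4_x}^2\,ds$, then use the pointwise domination by the free heat flow, Lemma \ref{ubang}, Proposition \ref{moveit}, and the uniform $\Energy$-convergence to obtain uniformity in $n$ and $t$. The only cosmetic difference is that the paper appeals directly to precompactness in $L^2_x$ of the set $\{|\psi^{(n)}_x(0,t,\cdot)| : t\in I,\ n\ge 1\}$ (and hence uniform integrability of the associated $L^4$ integrals), whereas you decompose into a limiting field plus an $o_{n\to\infty}(1)$ remainder and invoke Dini's theorem for the $t$-uniformity of the tail; these are equivalent bookkeeping devices.
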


\begin{proof}  
By arguing as in the proof of Lemma \ref{wprelim} it suffices to establish
\begin{align}
\sup_{0 < s \leq 1/S} s^{1/2} \| D_x \psi_t \|_{L^2_x(\R^2)} &\lesssim 1\label{dx-1b} \\
\int_0^{1/S} s^{-1/2} \| \psi_{t,x} \|_{L^4_x(\R^2)}^2(s)\ ds &\lesssim \eps \label{dx-3b}.
\end{align}
The claim \eqref{dx-1b} follows immediately from Lemma \ref{basic}, so we turn to \eqref{dx-3b}.
From Lemma \ref{moveit} we see that the set $\{ |\psi^{(n)}_x(0,t,\cdot)|: t \in I, n \geq 1 \}$ is precompact in $L^2_x(\R^2)$, and thus by Lemma \ref{ubang} and linearity we see that the integrals
$$ \int_0^\infty s^{-1/2} \| e^{s\Delta} |\psi^{(n)}_x(0,t,\cdot)| \|_{L^4_x(\R^2)}^2\ ds$$
for $t \in I$ are uniformly integrable; in particular, for $S$ large enough we have
\begin{equation}\label{init}
 \int_0^{1/S} s^{-1/2} \| e^{s\Delta} |\psi^{(n)}_x(0,t,\cdot)| \|_{L^4_x(\R^2)}^2\ ds \lesssim \eps
\end{equation}
for all $t \in I$ and all $n$.  Using \eqref{psix-compar} we conclude \eqref{dx-3b} for $\psi^{(n)}_x$.  The claim for $\psi^{(n)}_t$ (for which one also has precompactness), from \eqref{l-def}) is similar.
\end{proof}

For our purposes it is crucial that we can go beyond boundedness properties for $w^{(n)}$, and establish an additional uniform continuity property:

\begin{lemma}\label{kappalem}  Let the notation and assumptions be as in Lemma \ref{wsmall}.  Then for every $\eps > 0$ there exists $\kappa > 0$ such that for all sufficiently large $n$, all $t \in I$, and all $s > 0$ we have
$$ \| w^{(n)}(s,t_1) - w^{(n)}(s,t_2) \|_{L^1_x(\R^2)} \lesssim \eps$$
whenever $t_1,t_2 \in I$ are such that $|t_1-t_2| \leq \kappa$.  
\end{lemma}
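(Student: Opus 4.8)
The plan is to estimate the difference $\delta w := w^{(n)}(\cdot,t_1,\cdot) - w^{(n)}(\cdot,t_2,\cdot)$ directly. Since the caloric frame $e^{(n)}$ trivialises the pullback bundle, both $w^{(n)}(t_1)$ and $w^{(n)}(t_2)$ are genuine $\R^m$-valued functions on $\R^+\times\R^2$, so $\delta w$ makes sense, and from the divergence-form evolution equation $\partial_s w = \Delta w + \partial_i F_i + G$ of the proof of Lemma \ref{wreg} (with $F_i = 2A_i w$ and $G$ schematically of the form $(\partial_x\Psi+\Psi^2)w + \Psi^2\partial_x\Psi + \Psi^4$ in the notation of Lemma \ref{basic}) one gets $\partial_s\delta w = \Delta\delta w + \partial_i\delta F_i + \delta G$ with $\delta w(0) = 0$ by \eqref{wn0}. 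I would then run a Duhamel/Gronwall argument, feeding in the smallness of $w^{(n)}$ near $s=0$ from Lemma \ref{wsmall} and the fixed-time and time-derivative bounds of Lemmas \ref{basic}, \ref{time}, \ref{wprelim}, \ref{wreg}.

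First I would dichotomise in the heat-time variable: given $\eps>0$, pick $S$ as in Lemma \ref{wsmall}. For $0\le s\le 1/S$ and all large $n$, Lemma \ref{wsmall} gives $\|w^{(n)}(s,t_i)\|_{L^1_x(\R^2)}\lesssim\eps$, hence $\|\delta w(s)\|_{L^1_x(\R^2)}\lesssim\eps$ by the triangle inequality, with no constraint on $|t_1-t_2|$ at all. So it remains to treat $s\ge 1/S$, using $\|\delta w(1/S)\|_{L^1_x(\R^2)}\lesssim\eps$ as initial data. Here I would split $\delta F_i$ and $\delta G$ each into a piece of the form (bounded coefficient)$\times\delta w$ plus a piece in which one of the fields $\Psi,\partial_x\Psi,A_x$ is differenced in $t$; using Lemma \ref{basic} for the fixed-time bounds, Lemma \ref{time} to convert $\|\Psi(t_1)-\Psi(t_2)\|_{L^p_x(\R^2)}\lesssim|t_1-t_2|\sup_t\|\partial_t\Psi(t)\|_{L^p_x(\R^2)}$ with the decay rates of that lemma, Lemma \ref{wprelim} for $\|w\|_{L^1_x(\R^2)}\lesssim 1$ and Lemma \ref{wreg} for higher $L^1_x$-bounds on $w$, one obtains
$$\|\delta F_x(s)\|_{L^1_x(\R^2)}\lesssim s^{-1/2}\|\delta w(s)\|_{L^1_x(\R^2)} + |t_1-t_2|\,s^{-1},\qquad \|\delta G(s)\|_{L^1_x(\R^2)}\lesssim s^{-1}\|\delta w(s)\|_{L^1_x(\R^2)} + |t_1-t_2|\,s^{-3/2}.$$
Duhamel's formula \eqref{duh} from $s_0=1/S$ together with \eqref{heat-lp} then gives
$$\|\delta w(s)\|_{L^1_x(\R^2)}\lesssim \eps + |t_1-t_2|\,S^{1/2} + \int_{1/S}^s \big[(s-s')^{-1/2}(s')^{-1/2}+(s')^{-1}\big]\,\|\delta w(s')\|_{L^1_x(\R^2)}\,ds',$$
where $|t_1-t_2|S^{1/2}$ collects the two inhomogeneous pieces (each integrates against the heat kernel to something bounded uniformly in $s$); taking $\kappa$ small in terms of $\eps$ and $S$ makes this term $\lesssim\eps$.

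The main obstacle will be closing this integral inequality uniformly in $s$: the kernel $(s-s')^{-1/2}(s')^{-1/2}$ is uniformly integrable on $[1/S,s]$ (with integral $\le\pi$), but $(s')^{-1}$ is not, so a crude Gronwall loses a power of $s$. This is exactly the difficulty that was sidestepped in Lemma \ref{wprelim} by passing to the equation \eqref{psw} for $|w|$, in which the $w$-term disappears thanks to the sign $((w\wedge\psi_i)\psi_i)\cdot w\ge 0$. Accordingly, the natural fix is to perform the $t$-differencing not at the level of $\delta w$ but at the level of $|w|$ — comparing the two pointwise bounds $|w(s,t_i)|\le\int_0^s e^{(s-s')\Delta}\big[C|D_x\psi_t||\psi_x||\psi_t|\big](s',t_i)\,ds'$ and invoking the diamagnetic inequality of Lemma \ref{dilemma} — or, equivalently, to measure $\delta w$ in an integrated square-function norm such as $\big(\int_0^\infty s^{-1}\|\delta w(s)\|_{L^2_x(\R^2)}^2\,ds\big)^{1/2}$, which is finite a priori by Lemmas \ref{basic} and \ref{wreg} and against which the $(s')^{-1}$-weighted term becomes benign. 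With the offending term thus tamed, the Gronwall closes and yields $\|\delta w(s)\|_{L^1_x(\R^2)}\lesssim\eps$ uniformly for $s\ge 1/S$ and $n$ large, which combined with the $s\le 1/S$ case proves the lemma (with, say, $\kappa:=\eps S^{-1/2}$, up to absolute constants).
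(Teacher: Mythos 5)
You have the right setup and you have correctly diagnosed the main obstacle: estimating $\|\delta F_x\|_{L^1}\lesssim s^{-1/2}W(s)+\dots$ and $\|\delta G\|_{L^1}\lesssim s^{-1}W(s)+\dots$ with constant coefficients gives, after Duhamel, a Gronwall kernel $(s')^{-1}$ that is not integrable on $[1/S,\infty)$, so the iteration does not close. But the two fixes you propose at the end do not repair this.

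Comparing the two one-sided pointwise bounds $|w(s,t_i)|\le\int_0^s e^{(s-s')\Delta}(C|D_x\psi_t||\psi_x||\psi_t|)(s',t_i)\,ds'$ does not give any control on the difference $w(s,t_1)-w(s,t_2)$: upper bounds do not subtract. Indeed the sign structure that made Lemma \ref{wprelim} work (namely $((w\wedge\psi_i)\psi_i)\cdot w\ge 0$) disappears once you pass to the difference, since $\delta w$ is not $w$. As for the proposed square-function norm $\big(\int_0^\infty s^{-1}\|\delta w(s)\|_{L^2_x}^2\,ds\big)^{1/2}$: from Lemma \ref{wreg} one only has $\|w(s)\|_{L^2_x}\lesssim s^{-1/2}$, so the integrand is $\lesssim s^{-2}$ and the integral is not a priori finite near $s=0$; also the lemma asks for an $L^1_x$ conclusion, and it is not clear how to pass back from that $L^2$-based norm to $\sup_s\|\delta w(s)\|_{L^1_x}$ without re-encountering the same $s^{-1}$.

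The point you are missing is that the coefficient multiplying $W(s)$ is not really $s^{-1/2}$ or $s^{-1}$ but $s^{-1/2}f(s)$ and $s^{-1}f(s)$, where $f(s)$ is the natural dimensionless combination of the fields appearing in $F_i,G$, namely
$$f(s)=s^{1/2}\|A_x\|_{L^\infty_x}+s\|\partial_xA_x\|_{L^\infty_x}+s\|A_x\|_{L^\infty_x}^2+s\|\psi_x\|_{L^\infty_x}^2$$
(evaluated at $t_1$ and $t_2$). Lemma \ref{basic} gives $\|f\|_{L^\infty}\lesssim 1$, but the crucial gain is the \emph{integrated} bound $\int_0^\infty f(s)\,\frac{ds}{s}\lesssim_E 1$, which follows from \eqref{aint-infty} and \eqref{linfty-integ-ord}. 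This is exactly what converts the logarithmically divergent Gronwall kernel into a convergent one: one obtains
$$W(s_1)\lesssim W(s_0)+\int_{s_0}^{s_1}\big((s_1-s)^{-1/2}s^{-1/2}+s^{-1}\big)\big(f(s)W(s)+g(s)\big)\,ds,$$
and then partitions $[1/S,\infty)$ into $O(1/c)$ subintervals $[s_0,s_1]$ on each of which $\int_{s_0}^{s_1}f(s)\,\frac{ds}{s}\le c$, absorbing the $W$-terms on each subinterval for $c$ small, and concatenating. The inhomogeneous term is then handled as you propose, using Lemma \ref{time} to obtain $g(s)\lesssim\kappa s^{-1/2}$, which is summable against the kernel.

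So: your reduction to $s\ge 1/S$ via Lemma \ref{wsmall}, the divergence-form Duhamel setup, and the use of Lemma \ref{time} for the $t$-differenced inhomogeneous piece all match the paper. What is missing is the extraction of the scalar factor $f(s)$ with $\int f\,ds/s\lesssim 1$ and the finite-partition Gronwall; without it the argument has a genuine gap, and the alternatives you suggest do not fill it.
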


\begin{proof}  Let $S$ be as in Lemma \ref{wsmall}. For $s \leq 1/S$ the claim follows from that lemma, so we can assume that $s > 1/S$.  For brevity we omit the $n$ superscripts.  We fix $t_1, t_2$ and write $\delta u := u(t_2) - u(t_1)$ for any quantity $u$ depending on time.  We observe the product rule
\begin{equation}\label{prod}
\delta(uv) = (\delta u) v(t_1) + u(t_2) (\delta v).
\end{equation}

Write $W(s) := \|\delta w(1/S) \|_{L^1_x(\R^2)}$.  Our task is to show that $W(s) \lesssim \eps$ for all $s \geq 1/S$.
From Lemma \ref{wsmall} and the triangle inequality we already have
\begin{equation}\label{wis}
W(1/S) \lesssim \eps.
\end{equation}
Also, from \eqref{psweq} we have the heat equation
$$ \partial_s \delta w = \Delta \delta w + \partial_i \delta F_i + \delta G$$
where $F_i, G$ were defined in \eqref{fdef}, \eqref{gdef}.  Thus for any $0 < s_0 < s_1$ we have from Duhamel's formula \eqref{duh} and \eqref{heat-lp} that
$$ W(s_1) \lesssim W(s_0) 
+ \int_{s_0}^{s_1} (s_1-s)^{-1/2} \| \delta F_x(s) \|_{L^1_x(\R^2)} + \|\delta G(s) \|_{L^1_x(\R^2)}\ ds.$$
From \eqref{fdef}, \eqref{gdef}, \eqref{prod}, and H\"older's inequality we have
$$ \| \delta F_x \|_{L^1_x(\R^2)} \lesssim s^{-1/2} f(s) W(s) + s^{-1/2} g(s)$$
and
$$ \| \delta G \|_{L^1_x(\R^2)} \lesssim s^{-1} f(s) W(s) + s^{-1} g(s)$$
where
\begin{equation}\label{fsdef}
\begin{split}
 f(s) &:= s^{1/2} \| A_x(t_*) \|_{L^\infty_x(\R^2)} + s \| \partial A_x(t_*) \|_{L^\infty_x(\R^2)}
+ s \| A_x(t_*) \|_{L^\infty_x(\R^2)}^2 \\
&\quad + s \| \psi_x(t_*) \|_{L^\infty_x(\R^2)}^2
\end{split}
\end{equation}
and
\begin{equation}\label{gsdef}
\begin{split}
 g(s) &:= s^{1/2} \| \delta A_x(t_*) \|_{L^\infty_x(\R^2)} \| w(t_*) \|_{L^1_x(\R^2)} 
+ s \| \partial_x \delta \Psi(t_*) \|_{L^\infty_x(\R^2)} \| w(t_*) \|_{L^1_x(\R^2)} \\
&\quad + s \| \delta \Psi(t_*) \|_{L^\infty_x(\R^2)} \| \Psi(t_*) \|_{L^\infty_x(\R^2)} \| w(t_*) \|_{L^1_x(\R^2)} \\
&\quad 
+ s \| \delta \Psi(t_*) \|_{L^\infty_x(\R^2)} \| \Psi(t_*) \|_{L^2_x(\R^2)} \| \partial_x \Psi(t_*) \|_{L^2_x(\R^2)} \\
&\quad + s \| \Psi(t_*) \|_{L^2_x(\R^2)}^2 \| \partial_x \delta \Psi(t_*) \|_{L^\infty_x(\R^2)} \\
&\quad + s \| \delta \Psi(t_*) \|_{L^\infty_x(\R^2)} \| \Psi(t_*) \|_{L^2_x(\R^2)}^2 \| \Psi(t_*) \|_{L^\infty_x(\R^2)}
\end{split}
\end{equation}
where $t_*$ is summed over $t_1$ and $t_2$ (with multiple occurrences of $t_*$ being summed separately), and $\Psi$ is as in Lemma \ref{basic}.  Thus we have
$$
W(s_1) \lesssim W(s_0) 
+ \int_{s_0}^{s_1} ((s_1-s)^{-1/2} s^{-1/2} + s^{-1}) (f(s) W(s) + g(s))\ ds.$$
We can estimate
$$ \int_{s_0}^{s_1} ((s_1-s)^{-1/2} s^{-1/2} + s^{-1}) f(s)\ ds \lesssim c^{1/2} \|f\|_{L^\infty([s_0,s_1])} + c^{-1/2} \int_{s_0}^{s_1} f(s)\ \frac{ds}{s}$$
for any $c > 0$, and similarly for $g$, and thus
\begin{align*}
\sup_{s \in [s_0,s_1]} W(s) &\lesssim W(s_0) \\
&\quad +
(c^{1/2} \|f\|_{L^\infty([s_0,s_1])} \\
&\quad + c^{-1/2} \int_{s_0}^{s_1} f(s)\ \frac{ds}{s}) \sup_{s \in [s_0,s_1]} W(s) \\
&\quad + \|g\|_{L^\infty([s_0,s_1])} + \int_{s_0}^{s_1} g(s)\ \frac{ds}{s}.
\end{align*}
Applying \eqref{fsdef}, \eqref{ax-infty}, \eqref{aint-infty}, \eqref{linfty-const-ord}, \eqref{linfty-integ-ord} we have
$$ \|f\|_{L^\infty((0,\infty))} + \int_0^\infty \frac{f(s)}{s}\ ds \lesssim 1$$
and thus if we choose $c$ smaller than an absolute constant, and choose an interval $[s_0,s_1]$ such that $\int_{s_0}^{s_1} f(s) \frac{ds}{s} \leq c$, we conclude that
$$ \sup_{s \in [s_0,s_1]} W(s) \lesssim W(s_0)  
+ \|g\|_{L^\infty([s_0,s_1])} + \int_{s_0}^{s_1} g(s)\ \frac{ds}{s}.$$
Dividing $[1/S,T]$ into $O(1/c)$ intervals $[s_0,s_1]$ of the above form for any $T$, and then letting $T \to \infty$, we thus conclude the Gronwall-type inequality
$$ \sup_{s \geq 1/S} W(s) \lesssim W(1/S) +\|g\|_{L^\infty([1/S,\infty))} + \int_{1/S}^{\infty} g(s)\ \frac{ds}{s}.$$
In view of \eqref{wis}, it thus suffices to show that
$$ g(s) \lesssim \kappa / s^{1/2}$$
since the claim then follows by taking $\kappa$ small enough.

Applying Lemma \ref{basic} and Lemma \ref{wsmall} we have
$$ g(s) \lesssim s \| \partial_x \delta \Psi(t_*) \|_{L^\infty_x(\R^2)} +
+ s^{1/2} \| \delta \Psi(t_*) \|_{L^\infty_x(\R^2)}$$
and the claim now follows from the fundamental theorem of calculus and Lemma \ref{time}.
\end{proof}

\section{All weakly harmonic maps are trivial}\label{weakmap}

\subsection{The travelling case}

We shall shortly prove part (i) of Theorem \ref{selfsim}, which roughly speaking asserts that any travelling wave map in the energy class must be trivial; in particular, every stationary wave map in the energy class must be trivial.  Formally, from \eqref{cov} we expect stationary wave maps in the energy class to be equivalent to (weak) harmonic maps in the energy class, although it turns out to be non-trivial to make this statement rigorous due to the extremely low regularity of such maps.  Nevertheless, it is reasonable to expect that in order to establish Theorem \ref{selfsim}(i), one must first establish a result to the effect that all weakly harmonic maps in the energy class are trivial.

It is already clear from Proposition \ref{Decay} that there are no non-trivial \emph{classical} harmonic maps, since the heat flow on such maps is static and thus clearly does not obey the decay estimates in that proposition.  This argument does not directly yield the desired claim.  Nevertheless, it is possible to use the more quantitative analysis of the heat flow in the caloric gauge from Section \ref{littlewood-sec} to establish what we need.  For technical reasons it is convenient to work in the norm $L^1_\loc$ defined by \eqref{morrey}.
The precise statement we will prove is as follows:

\begin{theorem}[Nonlinear Poincar\'e inequality in the plane]\label{nlpoin} Let $\phi: \R^2 \to \H$ be a smooth map such that $\phi - \phi(\infty)$ is rapidly decreasing for some $\phi(\infty) \in \H$.  Suppose we have the energy bound
$$ \int_{\R^2} | \partial_x \phi |_{\phi^* h}^2\ dx \leq E$$
and the local near-harmonicity property
$$ \| |\eta^{ij} (\phi^* \nabla)_i \partial_j \phi|_{\phi^* h} \|_{L^1_\loc(\R^2)} \leq \eps$$
for some $E > 0$ and $0 < \eps < 1$ and some strictly positive definite constant-coefficient matrix $\eta^{ij}$.  Then we have
$$ \| |\partial_i \phi|_{\phi^* h} \|_{L^1_\loc(\R^2)} \lesssim_{E,\eta} \eps^{1/4}.$$
\end{theorem}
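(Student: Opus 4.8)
The plan is to run the harmonic map heat flow starting from $\phi$. Since $\phi$ is an approximate harmonic map, the flow barely moves; yet by the quantitative parabolic estimates of Section \ref{littlewood-sec} the energy density of the flow must have decayed by any fixed heat-time $S$. Balancing these two facts, optimised over $S$, will force the gradient of $\phi$ to be small in $L^1_\loc$.

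First I would reduce to the case $\eta^{ij}=\delta^{ij}$. Writing $\tilde\phi(y):=\phi(L^{-1}y)$ for a constant matrix $L$ with $L^{T}L=(\eta^{ij})^{-1}$, one checks that $\delta^{ij}(\tilde\phi^*\nabla)_i\partial_j\tilde\phi$ is the pullback of $\eta^{ij}(\phi^*\nabla)_i\partial_j\phi$, while the energy and the $L^1_\loc$ norms of $\tilde\phi$ differ from those of $\phi$ only by constants depending on $\eta$. So we may assume the tension field $\tau(\phi):=(\phi^*\nabla)_i\partial_i\phi$ obeys $\||\tau(\phi)|_{\phi^*h}\|_{L^1_\loc(\R^2)}\le\eps$. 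Now let $\phi:\R^+\times\R^2\to\H$ be the heat flow extension (Proposition \ref{Decay}) with caloric gauge $e$ (Theorem \ref{caloric-thm}) and fields $\psi_x,\psi_s,A_x$; since $\E(\phi)\le E$ the estimates of Proposition \ref{corpar}, Proposition \ref{abound} and Corollary \ref{corbound} all apply. The initial velocity of the flow is precisely the tension field, $\psi_s(0,\cdot)=e^*\tau(\phi)$, so $\|\psi_s(0)\|_{L^1_\loc}\le\eps$; by the pointwise bound \eqref{psis-compar} and the $L^1_\loc$-boundedness \eqref{sdel} of the heat semigroup, $\|\psi_s(s)\|_{L^1_\loc}\lesssim\eps$ for all $s>0$. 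Integrating \eqref{psi-evolve} and using \eqref{D-def},
$$ \||\partial_x\phi(0,\cdot)|_{\phi^*h}\|_{L^1_\loc}=\|\psi_x(0)\|_{L^1_\loc}\le\|\psi_x(S)\|_{L^1_\loc}+\int_0^S\|\partial_x\psi_s(s)\|_{L^1_\loc}\,ds+\int_0^S\|A_x(s)\|_{L^\infty_x}\|\psi_s(s)\|_{L^1_\loc}\,ds. $$
Here the first term is $\lesssim_E S^{-1/2}$ by the $k=1$ case of \eqref{linfty-const} (a unit ball has bounded area, and $|\psi_x|^2=\e_1$), and the last term is $\lesssim_E\eps\int_0^S s^{-1/2}\,ds\lesssim_E\eps S^{1/2}$ by \eqref{ax-infty}. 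Granting the smoothing bound $\|\partial_x\psi_s(s)\|_{L^1_\loc}\lesssim_E\eps s^{-1/2}$ (treated below), the middle term is likewise $\lesssim_E\eps S^{1/2}$, whence $\|\psi_x(0)\|_{L^1_\loc}\lesssim_E S^{-1/2}+\eps S^{1/2}$; taking $S=\eps^{-1/2}$ (legitimate since $\eps<1$) gives $\lesssim_E\eps^{1/4}$, and undoing the change of variables proves the theorem.

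The crux — and the step I expect to cost the most effort — is the $L^1_\loc$ parabolic smoothing estimate $\|\partial_x\psi_s(s)\|_{L^1_\loc}\lesssim_E\eps s^{-1/2}$. Because we only control $\psi_s(0)$ in $L^1_\loc$ (its $L^2$ norm is not small), the $L^2$ and $L^\infty$ estimates of Lemma \ref{uheat-est} do not suffice, and the argument must be redone in $L^1_\loc$. Writing the covariant heat equation \eqref{psis-heat} as $\partial_s\psi_s=\Delta\psi_s+N$ with $N=2A_i\partial_i\psi_s+(\partial_iA_i)\psi_s+A_iA_i\psi_s-(\psi_s\wedge\psi_i)\psi_i$, the bounds of Lemma \ref{basic} and Corollary \ref{corbound} give $\|N(s)\|_{L^1_\loc}\lesssim_E s^{-1/2}\|\partial_x\psi_s(s)\|_{L^1_\loc}+s^{-1}\eps$. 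Feeding this, together with $\|\psi_s\|_{L^1_\loc}\lesssim\eps$ and the elementary kernel bound $\|\partial_x e^{s\Delta}f\|_{L^1_\loc}\lesssim s^{-1/2}\|f\|_{L^1_\loc}$ (a variant of \eqref{sdel}), into Duhamel's formula \eqref{duh} over a short interval $[s_0,(1+c)s_0]$, yields an inequality of the form $\Lambda\lesssim_E\eps+c^{1/2}\Lambda$ for $\Lambda:=\sup_{s>0}s^{1/2}\|\partial_x\psi_s(s)\|_{L^1_\loc}$; choosing $c$ small depending on $E$ and using that $\Lambda<\infty$ qualitatively (from Proposition \ref{Decay} and Theorem \ref{caloric-thm}) closes the bootstrap, exactly in the manner of the proof of Lemma \ref{wreg}. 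The main subtlety to watch is that the feedback constant here depends on $E$, not on an absolute constant, which is why one must iterate over many short $s$-intervals rather than over dyadic blocks.
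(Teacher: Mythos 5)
Your proof is correct and follows the same outline as the paper's: reduce to $\eta = \delta$; run the harmonic map heat flow in the caloric gauge so that $\|\psi_s(0)\|_{L^1_\loc}\le\eps$ propagates via the comparison principle to $\|\psi_s(s)\|_{L^1_\loc}\lesssim\eps$ for all $s$; establish the key smoothing bound $\|\partial_x\psi_s(s)\|_{L^1_\loc}\lesssim_E(\text{small})\cdot s^{-1/2}$; and integrate $\partial_s\psi_x=\partial_x\psi_s+A_x\psi_s$ up to heat-time $S$, optimizing $S$. The one genuine difference is in the proof of that smoothing bound. The paper sidesteps the bootstrap entirely: it applies Duhamel only over the short interval $[(1-\delta)s,s]$ with a free parameter $\delta\in(0,1/2)$, estimates the forcing term $F$ crudely in $L^1_x(\R^2)$ by $s^{-1}$ using only the already-established $L^2$ and $L^\infty$ parabolic bounds from Proposition \ref{corpar}, Proposition \ref{abound} and Corollary \ref{corbound} (in particular the dangerous term $A_x\partial_x\psi_s$ is bounded via $\|A_x\|_{L^2}\|\partial_x\psi_s\|_{L^2}$, so $\|\partial_x\psi_s\|_{L^1_\loc}$ never feeds back into the estimate and there is no loop to close), and then optimizes $\delta$ to obtain $\|\partial_x\psi_s\|_{L^1_\loc}\lesssim(\delta s)^{-1/2}\eps+\delta^{1/2}s^{-1/2}\lesssim\eps^{1/2}s^{-1/2}$. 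This is one factor of $\eps^{1/2}$ weaker than your bootstrapped $\eps s^{-1/2}$, but still closes with $S=\eps^{-1/2}$. Your bootstrap does work (the displayed inequality should really read $\Lambda\lesssim_E\delta^{-1/2}\eps+\delta^{1/2}\Lambda$ with the $\eps$-coefficient depending on $\delta$; choosing $\delta\lesssim_E 1$ small and using $\Lambda<\infty$ from Theorem \ref{caloric-thm} closes it), and it is in fact slightly sharper: had you optimized at $S=\eps^{-1}$ rather than $S=\eps^{-1/2}$, you would have obtained the stronger conclusion $\lesssim_E\eps^{1/2}$. Two small points: the paper first performs a smooth truncation to make $\phi$ equal $\phi(\infty)$ outside a compact set before invoking Theorem \ref{caloric-thm}, which you should add; and you should make explicit (as the paper does) that the comparison $\|\partial_x e^{s\Delta}f\|_{L^1_\loc}\lesssim s^{-1/2}\|f\|_{L^1_\loc}$ follows from translation invariance of $L^1_\loc$ exactly as in \eqref{sdel}.
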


\begin{remark} The exponent $1/4$ here is probably non-optimal, but any expression on the right-hand side which decays to zero as $\eps \to 0$ will suffice for our applications.  The presence of the matrix $\eta^{ij}$ is necessary in order to handle travelling wave maps (as opposed to stationary ones), which correspond to Lorentz contracted harmonic maps.
\end{remark}

\begin{proof}  By a linear change of variables we can take $\eta$ to be the identity matrix, thus
$$ \||(\phi^* \nabla)_i \partial_i \phi|_{\phi^* h} \|_{L^1_\loc(\R^2)} \leq \eps.$$

By a smooth truncation and limiting argument we can reduce to the case in which $\phi$ is equal to $\phi(\infty)$ outside of a compact set (our bounds will not depend on the size of this compact set).

We fix $E$ and allow all implied constants to depend on $E$.  By Theorem \ref{caloric-thm} we may find a caloric gauge\footnote{Note that we apply the caloric gauge \emph{after} we change variables by diagonalising $\eta$.  To put this another way, we are not using the Euclidean caloric gauge here, but instead the caloric gauge associated to the metric $\eta$; similarly when we treat the self-similar case later in this section, we shall use a caloric gauge associated to the hyperbolic metric (though we shall conformally map this back to the Euclidean metric for convenience).  Thus we are in fact using multiple (caloric) gauges to study wave maps in this paper.} $e$ for $\phi$, giving rise to the derivative fields $\psi_x, \psi_s$ and connection fields $A_x$.  By hypothesis and \eqref{ps-frame} we have
$$ \| \psi_s(0) \|_{L^1_\loc(\R^2)} \leq \eps.$$
Meanwhile, by \eqref{psis-compar} and \eqref{sdel} we conclude
\begin{equation}\label{psis-m}
 \| \psi_s \|_{L^1_\loc(\R^2)} \lesssim \eps
\end{equation}
for all $s > 0$.

Now we control derivatives of $\psi_s$.  From \eqref{psis-heat} we have
$$ \partial_s \psi_s = \Delta \psi_s + F$$
for some forcing term $F$ of the form
$$ F := O( |A_x| |\partial_x \psi_s| + |\partial_x A_x| |\psi_s| + |A_x|^2 |\psi_s| + |\psi_x|^2 |\psi_s| ).$$
From Duhamel's formula \eqref{duh} we thus see that
$$ \psi_s(s) = e^{\delta s\Delta} \psi_s((1-\delta)s) + \int_{(1-\delta)s}^s e^{(s-s')\Delta} F(s')\ ds'$$
for any $s > 0$, where $0 < \delta < 1/2$ is a parameter to be chosen later.  We differentiate this to obtain
\begin{equation}\label{psisx}
 \partial_x \psi_s(s) = \partial_x e^{\delta s\Delta} \psi_s((1-\delta)s) + \int_{(1-\delta)s}^s \partial_x e^{(s-s')\Delta} F(s')\ ds'
\end{equation}
From Proposition \ref{abound}, Corollary \ref{corbound}, and H\"older's inequality we have
$$ \| F(s') \|_{L^1_x(\R^2)} \lesssim s^{-1} $$
and hence by \eqref{heat-lp}
\begin{equation}\label{fheat}
\| \partial_x e^{(s-s')\Delta} F(s') \|_{L^1_x(\R^2)} \lesssim (s-s')^{-1/2} s^{-1}.
\end{equation}
Meanwhile, an application of Fubini's theorem and \eqref{psis-m} yields that
$$ \| \partial_x e^{\delta s\Delta} \psi_s((1-\delta)s) \|_{L^1_\loc(\R^2)} \lesssim (\delta s)^{-1/2} \| \psi_s((1-\delta)s) \|_{L^1_\loc(\R^2)} \lesssim (\delta s)^{-1/2} \eps.$$
By Minkowski's inequality, we conclude that
$$ \| \partial_x \psi_s \|_{L^1_\loc(\R^2)} \lesssim (\delta s)^{-1/2} \eps + \delta^{1/2} s^{-1/2}.$$
Optimising this in $\delta$, we obtain
$$ \| \partial_x \psi_s \|_{L^1_\loc(\R^2)} \lesssim \eps^{1/2} s^{-1/2}.$$
Also, from \eqref{psis-m}, \eqref{ax-infty} we have
$$ \| A_x \psi_s \|_{L^1_\loc(\R^2)} \leq \| A_x \|_{L^\infty_x(\R^2)} \| \psi_s \|_{L^1_\loc(\R^2)} \lesssim \eps^{1/2} s^{-1/2}.$$
By \eqref{psi-evolve} we thus have
$$ \| \partial_s \psi_x \|_{L^1_\loc(\R^2)} \lesssim \eps^{1/2} s^{-1/2}$$
and thus by the fundamental theorem of calculus and Minkowski's inequality
$$ \| \psi_x(0) \|_{L^1_\loc(\R^2)} \lesssim \eps^{1/2} s^{1/2} + \| \psi_x(s) \|_{L^1_\loc(\R^2)}$$
for any $s > 0$.  On the other hand, from \eqref{linfty-const-ord} we have 
$$ \| \psi_x(s) \|_{L^1_\loc(\R^2)} \lesssim \| \psi_x(s) \|_{L^\infty_x(\R^2)} \lesssim s^{-1/2}.$$
Optimising in $s$ by setting $s := \eps^{-1/2}$, we obtain the claim.
\end{proof}

\subsection{The self-similar case}\label{self-harm}

Now we turn to the analogous elliptic theory required to rule out self-similar wave maps (Theorem \ref{selfsim}(ii)).  As observed in \cite{shatah-struwe}, if $\phi(t,x) = \phi(x/t)$ is a (classical) self-similar wave map on $(-\infty,0) \times \R^2$, then $\phi: \D \to \H$ can be viewed as a harmonic map from the hyperbolic disk\footnote{One can also identify the hyperbolic disk with the hyperbolic space $\H^2 \subset \R^{1+2}$ by identifying $(x_1,x_2)$ with $\frac{(1,x_1,x_2)}{\sqrt{1-x_1^2-x_2^2}}$.  With this identification, the map $f$ below becomes stereographic projection from the ``south pole'' $(-1,0)$ of the upper unit hyperboloid to the unit disk.} $\D = \{ (x_1,x_2): x_1^2+x_2^2 < 1 \}$ with metric $g$ expressible in either Cartesian $(x_1,x_2)$ or polar $(r,\theta)$ coordinates as
\begin{equation}\label{dsdef}
dg^2 := \frac{dx_1^2 + dx_2^2}{1-x_1^2-x_2^2} + \frac{(x_1 dx_1 + x_2 dx_2)^2}{(1-x_1^2-x_2^2)^2} = \frac{dr^2}{(1-r^2)^2} + \frac{r^2 d\theta^2}{1-r^2}
\end{equation}
to the hyperbolic space $\H$, which vanishes at the boundary.
As is well known, the map 
\begin{equation}\label{fconf}
f: x \mapsto \frac{x}{1+\sqrt{1-|x|^2}}
\end{equation}
or equivalently
$$
f^{-1}: y \mapsto \frac{2y}{1+|y|^2}
$$
is a conformal transformation from the hyperbolic disk $(\D, g)$ to the Euclidean disk $(\D, \eta)$; indeed, a direct computation shows that
$$ f^* \eta(x) = \frac{1-|x|^2}{(1+\sqrt{1-|x|^2})^2} g(x)$$
or equivalently
$$ f_* g(y) = \frac{4}{(1-|y|^2)^2} \eta(y).$$
It is also well known that in two dimensions, harmonic maps remain harmonic under conformal change of coordinates.  Thus the pushforward $f_* \phi := \phi \circ f^{-1}$ of $\phi$ is a smooth harmonic map on the Euclidean disk $(\D,\eta)$ that vanishes on the boundary.  A theorem of Lemaire \cite{lemaire} (which is valid for arbitrary target manifolds) then rules out the existence of such maps; in our case of negatively curved targets $\H$, one can also use Bochner-Weitzenb\"ock type identities (as have been used repeatedly in this paper already) to rule out such maps.

The argument of Lemaire relies quite heavily on smoothness (using unique continuation, for instance) and seems to be difficult to extend to the energy class. Nevertheless, the remaining portions of the above argument work quite well in the energy class as long as one avoids the boundary of the hyperbolic or Euclidean disk.  To conclude the argument, one has to understand (approximate or weak) harmonic maps on a slightly smaller disk $\{ |x| \leq 1 - \eps \}$ than the Euclidean disk, which are small but not completely vanishing on the boundary of that disk.  Our tool for this is as follows.

\begin{theorem}[Nonlinear Poincar\'e inequality in Euclidean disks]\label{nlpoin-disk} Let $\overline{\D_{r_0}} := \{ x \in \R^2: |x| \leq r_0\}$ be the closed disk of some radius $r_0 > 0$, and let $\phi: \overline{\D_{r_0}} \to \H$ be a smooth map.  Suppose we have the energy bound
\begin{equation}\label{dro}
 \int_{\overline{\D_{r_0}}} | \partial_x \phi |_{\phi^* h}^2\ dx \leq E
 \end{equation}
the near-harmonicity property
\begin{equation}\label{dro2}
\int_{\overline{\D_{r_0}}} |(\phi^* \nabla)_i \partial_i \phi|_{\phi^* h}\ dx \leq \eps
\end{equation}
and the boundary condition
\begin{equation}\label{boundary}
 \int_{\partial \D_{r_0}} |x_i \partial_i \phi|_{\phi^* h}\ d\sigma \leq \eps
 \end{equation}
where $d\sigma$ is the uniform probability measure on the circle $\partial \D_{r_0}$.  Then
\begin{equation}\label{eps1}
\int_{\overline{\D_{r_0}}} |\partial_i \phi|_{\phi^* h}\ dx \lesssim_E r_0 \eps^{1/4}.
\end{equation}
\end{theorem}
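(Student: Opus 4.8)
The plan is to run essentially the same argument as in Theorem \ref{nlpoin}, but on the disk $\overline{\D_{r_0}}$ rather than on all of $\R^2$, with the boundary condition \eqref{boundary} playing the role that the decay at infinity played in the $\R^2$ case. First I would rescale so that $r_0 = 1$; the scaling symmetry \eqref{scaling-data} shows both the energy hypothesis \eqref{dro} and the geometry are scale-invariant (the tension field and the boundary integral each pick up a single power of $r_0$, matching the single power of $r_0$ on the right side of \eqref{eps1}), so it suffices to prove the estimate with $r_0 = 1$ and the factor $r_0$ removed. Next, by a smooth approximation argument I would like to reduce to a situation where the caloric gauge machinery of Section \ref{caloric-sec}-\ref{littlewood-sec} applies; the cleanest route is to extend $\phi$ from $\overline{\D_{1-\delta}}$ (a slightly smaller disk) to a map on all of $\R^2$ that is constant outside a compact set, with energy still $O_E(1)$, and with the tension field still small in $L^1_{\loc}$ in the interior. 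One then runs the heat flow and picks a caloric gauge on this extension, obtaining derivative fields $\psi_x, \psi_s$ and connection $A_x$ satisfying all the quantitative bounds of Section \ref{littlewood-sec} with constants depending only on $E$.

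The core estimate is then, exactly as in the proof of Theorem \ref{nlpoin}: from $\|\psi_s(0)\|_{L^1_{\loc}}$ small (coming from \eqref{dro2} and \eqref{ps-frame}) and the comparison principle \eqref{psis-compar}, \eqref{sdel}, propagate $\|\psi_s(s)\|_{L^1_{\loc}} \lesssim \eps$ for all $s$; then use the forced heat equation \eqref{psis-heat} together with the parabolic regularity of Section \ref{littlewood-sec} to get $\|\partial_x \psi_s(s)\|_{L^1_{\loc}} \lesssim \eps^{1/2} s^{-1/2}$; then \eqref{psi-evolve} gives $\|\partial_s \psi_x(s)\|_{L^1_{\loc}} \lesssim \eps^{1/2} s^{-1/2}$; integrating from $s = 0$ to $s = \eps^{-1/2}$ and using $\|\psi_x(s)\|_{L^1_{\loc}} \lesssim \|\psi_x(s)\|_{L^\infty_x} \lesssim s^{-1/2}$ from \eqref{linfty-const-ord} at the top end, optimising in $s$ yields $\|\psi_x(0)\|_{L^1_{\loc}} \lesssim \eps^{1/4}$ on the interior region, which translates back to the bound \eqref{eps1} for $|\partial_i \phi|_{\phi^* h}$.

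The step I expect to be the main obstacle is the reduction to a global (or at least boundary-free) problem: unlike in Theorem \ref{nlpoin}, here $\phi$ is only defined on a bounded disk and need not be small or constant near $\partial \D_{r_0}$, so the heat flow on a naive extension need not keep the tension field small once one leaves $\overline{\D_{r_0}}$. Handling this requires using the boundary hypothesis \eqref{boundary} carefully: the radial derivative $x_i \partial_i \phi$ is small on $\partial \D_{r_0}$ in an $L^1(d\sigma)$ sense, which — combined with the interior near-harmonicity \eqref{dro2} — should let one control $\partial_x \phi$ on an annular neighbourhood of the boundary (using a one-dimensional integration in the radial direction plus the harmonic-map equation to bound the angular derivative, much as the Poincar\'e-inequality argument in Section \ref{nondeg-sec} converts a one-directional derivative bound into a full bound). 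Once $\partial_x \phi$ is shown to be $O_E(\eps^{c})$ near $\partial \D_{r_0}$ for some $c > 0$, one can truncate $\phi$ there, extend by a constant, and the global caloric-gauge argument above goes through with $\eps$ replaced by a suitable power, still decaying to zero; since the exponent $1/4$ is already acknowledged to be non-optimal, any such loss is harmless. The remaining bookkeeping — verifying that the interior bounds of Section \ref{littlewood-sec} survive the restriction to $\overline{\D_{r_0}}$ and that all constants depend only on $E$ — is routine.
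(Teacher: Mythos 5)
The core heat-flow computation you sketch for the interior is essentially the argument the paper uses too (it literally reduces to Theorem \ref{nlpoin}), and you correctly identify the reduction to a boundary-free problem as the main obstacle. But your proposed resolution of that obstacle has a genuine gap, and the paper takes a different route that bypasses it.

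You claim that the boundary condition \eqref{boundary} (radial derivative small in $L^1(d\sigma)$ on the circle) plus interior near-harmonicity \eqref{dro2} (tension small in $L^1(dx)$) should yield full-gradient smallness $|\partial_x\phi| = O_E(\eps^c)$ on an annular neighbourhood of $\partial\D_{r_0}$, by analogy with the Poincar\'e argument of Section \ref{nondeg-sec}. This does not follow. In Section \ref{nondeg-sec} the directional derivative is small in $L^2$ over a full two-dimensional region, which is a far stronger hypothesis than smallness on a circle of measure zero. Here the angular derivative $\partial_\theta\phi$ on the boundary is completely unconstrained by \eqref{boundary}, and an $L^1$ bound on the tension carries no quantitative local elliptic gain that would force the energy density to be small near the circle. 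For exact harmonic maps, the holomorphicity of the Hopf differential does convert boundary radial smallness into boundary angular smallness (this is exactly the classical conformality argument the paper invokes at the start of Section \ref{self-sec}), but making that argument quantitative for approximate harmonic maps is precisely the delicate point the paper spends the whole stress-energy analysis of Section \ref{self-sec} on, and it is not something you get for free here. There is also a secondary issue: even granted gradient smallness near $\partial\D_{r_0}$, "truncate and extend by a constant" is not immediate for a map into $\H$, since $\phi$ need not be close to any single point on that annulus.

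The paper's actual proof avoids the whole problem by a smooth Schwartz reflection in the circle: take $f(r) = r$ for $r < 1-\kappa$, $f(r) = 1/r$ for $r > 1+\kappa$, smoothly interpolated, and set $\tilde\phi(r,\theta) := \phi(f(r),\theta)$. Outside the transition annulus $\{|r-1|\le\kappa\}$, the map $(r,\theta)\mapsto(f(r),\theta)$ is conformal, and both the Dirichlet energy $\int |\partial_x\phi|^2\,dx$ and the total tension $\int |(\phi^*\nabla)_i\partial_i\phi|\,dx$ are conformally invariant in two dimensions, so $\tilde\phi$ inherits the bounds \eqref{dro}, \eqref{dro2} up to a factor of $2$. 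In the transition annulus the chain rule produces a term proportional to $\kappa^{-1}|\partial_r\phi(1,\theta)|$ in the tension, and this is exactly what the boundary hypothesis \eqref{boundary} is there to control; all remaining transition-annulus contributions are $O_\phi(\kappa)$ and vanish as $\kappa\to 0$. Thus \eqref{boundary} is consumed entirely in controlling the reflection's seam, not in bootstrapping an annular gradient bound. This is the idea your proposal is missing: you need the conformal invariance of energy and tension under inversion, which removes any need to show $\phi$ is nearly constant near the boundary.
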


\begin{proof} The theorem is scale-invariant and so we may normalise $r_0=1$.  By a limiting argument we may assume that $\phi$ is equal to a constant $\phi(0) \in \H$ in a neighbourhood of the origin.  We now suppress the dependence of implied constants on $E$.  We can of course take $\eps < 1$ as the claim is trivial (from Cauchy-Schwarz and the energy bound) otherwise.

The idea is to use the Schwartz reflection trick to replace the near-harmonic map on the disk with a near-harmonic map on the plane, so that our previous result (Theorem \ref{nlpoin}) can be applied.  To avoid boundary issues we will use a smooth version of this reflection trick.

Let $0 < \kappa < 1/2$ be a small parameter (which will eventually be sent to zero), and let $f: \R^+ \to [0,1)$ be a smooth function with $f(r) = r$ for $r < 1-\kappa$, $f(r) = 1/r$ for $r > 1+\kappa$, and $f' = O(1)$, $f'' = O(1/\kappa^2)$ for $1-\kappa \leq r \leq 1+\kappa$.  We then define the map $\tilde \phi: \R^2 \to \H$ in polar coordinates $(r,\theta)$ by
$$ \tilde \phi(r,\theta) := \phi( f(r), \theta ).$$
Since $\phi$ is equal to $\phi(0)$ near the origin, we see that $\tilde \phi$ is equal to $\phi(0)$ outside of a compact set.  The map $(r,\theta) \mapsto (f(r),\theta)$ is conformal outside of the annulus $\{ |r-1| \leq \kappa \}$.  One easily checks that the energy $\int | \partial_x \phi |_{\phi^* h}^2\ dx$ and total tension $\int 
|(\phi^* \nabla)_i \partial_i \phi|_{\phi^* h}\ dx$ are preserved under conformal transformations, and so we conclude that
$$ \int_{|r-1| \geq \kappa} | \partial_x \tilde \phi |_{\tilde \phi^* h}^2\ dx \leq 2E$$
and
$$ \int_{|r-1| \geq \kappa} |(\tilde \phi^* \nabla)_i \partial_i \tilde \phi|_{\tilde \phi^* h}\ dx \leq 2\eps$$
(the factor $2$ coming from the two-to-one nature of $f$).  

Now we consider what happens inside the annulus $\{ |r-1| \leq \kappa \}$.  Since $f'=O(1)$ in this region, and $\phi$ is smooth, we see from the chain rule that $| \partial_x \tilde \phi |_{\tilde \phi^* h} = O_\phi(1)$, and so the total contribution to the energy is at most $O_\phi(\kappa)$.  Now we consider the contribution 
\begin{equation}\label{tension}
\int_{|r-1| \leq \kappa} |(\tilde \phi^* \nabla)_i \partial_i \tilde \phi|_{\tilde \phi^* h}\ dx
\end{equation}
to the total tension.  If we write $u = (u^1,u^2): \R^2 \to \D$ for the map $u: (r,\theta) \mapsto (f(r),\theta)$, then the chain rule and product rule gives
$$ \partial_i \tilde \phi = (\partial_i u^j) \partial_j \phi(u)$$
and
\begin{equation}\label{delta}
 (\tilde \phi^* \nabla)_i \partial_i \tilde \phi = (\Delta u^j) \partial_j \phi(u) + (\partial_i u^j) (\partial_i u^k) (\phi^* \nabla)_k \partial_j \phi(u).
 \end{equation}
Since $f' = O(1)$ on the annulus, we have $\nabla u = O(1)$ on this region too, and so the second term in the right-hand side of \eqref{delta} is $O_\phi(1)$ and thus contributes $O_\phi(\kappa)$ to \eqref{tension}.  Meanwhile, a computation in polar coordinates (exploiting the smoothness of $\phi$) shows that
$$ |(\Delta u^j) \partial_j \phi(u(r,\theta))| \lesssim \frac{1}{\kappa} |\partial_r \phi(1,\theta)| + O_\phi(1)$$
in the annulus, and so by \eqref{boundary} the net contribution of this term to \eqref{delta} is $O(\eps) + O_\phi(\kappa)$.  Putting this all together, we see (if $\kappa$ is small enough depending on $\eps$ and $\phi$) that
the energy bound
$$ \int_{\R^2} | \partial_x \tilde \phi |_{\tilde \phi^* h}^2\ dx \lesssim 1$$
and small total tension
$$ \int_{\R^2} |(\tilde \phi^* \nabla)_i \partial_i \tilde \phi|_{\tilde \phi^* h}\ dx \lesssim \eps.$$
In particular
$$ \| |(\tilde \phi^* \nabla)_i \partial_i \tilde \phi|_{\tilde \phi^* h} \|_{L^1_\loc(\R^2)} \lesssim \eps.$$
We can now apply Theorem \ref{nlpoin} to conclude that
$$ \| |\partial_x \tilde \phi|_{\tilde \phi^* h} \|_{L^1_\loc(\R^2)} \lesssim \eps^{1/4}.$$
which implies that
$$ \int_{|x| \leq 1-\kappa} |\partial_x \phi|_{\phi^* h} \lesssim \eps^{1/4}.$$
Sending $\kappa \to 0$ we obtain the claim.
\end{proof}

We now apply a conformal transformation to establish an analogous claim for subdisks of the hyperbolic disk.

\begin{corollary}[Nonlinear Poincar\'e inequality in hyperbolic disks]\label{nlpoin-hyper} Let $-2 \leq t \leq -1$, and $\phi: \overline{\D_{r_0}} \to \H$ be a smooth map on the closed disk $\overline{\D_{r_0}}$ for some $|t|/2 < r_0 < |t|$.  Suppose we have the hyperbolic energy bound
\begin{equation}\label{hyperg}
 \int_{\overline{\D_{r_0}}} \frac{(t^2-r^2)^{1/2}}{t} | \partial_r \phi |_{\phi^* h}^2 + \frac{t}{r^2(t^2-r^2)^{1/2}} |\partial_\theta \phi|_{\phi^* h}^2\ dx \leq E
 \end{equation}
the small total hyperbolic tension property
\begin{equation}\label{hyperten}
\int_{\overline{\D_{r_0}}} \frac{1}{|t|(t^2-r^2)^{1/2}} |t^2 (\phi^* \nabla)_i \partial_i \phi - x_j x_k (\phi^* \nabla)_j \partial_k \phi - 2 x_j \partial_j \phi |_{\phi^* h}\ dx \leq \eps
\end{equation}
and the hyperbolic boundary condition
\begin{equation}\label{boundary-2}
\frac{(t^2-r_0^2)^{1/2}}{t^2} \int_{\partial \D_{r_0}} |x_i \partial_i \phi|_{\phi^* h}\ d\sigma \leq \eps.
\end{equation}
Then
\begin{equation}\label{epso}
 \int_{\overline{\D_{r_0}}} |\partial_r \phi|_{\phi^* h} + \frac{(t^2-r^2)^{1/2}}{t^2 r} |\partial_\theta \phi|_{\phi^* h}\ dx \lesssim_E \eps^{1/4}.
\end{equation}
\end{corollary}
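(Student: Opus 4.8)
The plan is to deduce Corollary \ref{nlpoin-hyper} from the flat-disk statement Theorem \ref{nlpoin-disk} via the conformal change of variables recalled in Section \ref{self-harm}. Equip $\overline{\D_{r_0}}$ with the hyperbolic metric $g_{\mathrm{hyp}}$ of the disk of radius $|t|$, which in polar coordinates is $\frac{t^2}{(t^2-r^2)^2}dr^2 + \frac{r^2}{t^2-r^2}d\theta^2$ (the rescaling of \eqref{dsdef}). A direct computation of the associated volume form and inverse metric shows that the three left-hand sides \eqref{hyperg}, \eqref{hyperten}, \eqref{boundary-2} are, up to multiplicative constants depending only on $|t|$ (hence $\asymp 1$ in the range $-2 \le t \le -1$), respectively the hyperbolic Dirichlet energy $\int |\partial\phi|_{\phi^* h}^2\,d\mathrm{vol}_{g_{\mathrm{hyp}}}$, the hyperbolic total tension $\int |(\phi^*\nabla)^{g_{\mathrm{hyp}}}\!\!\cdot\!\partial\phi|_{\phi^* h}\,d\mathrm{vol}_{g_{\mathrm{hyp}}}$, and the normalised boundary integral of the hyperbolic normal derivative of $\phi$ on $\partial\D_{r_0}$; the first-order term $-2x_j\partial_j\phi$ in \eqref{hyperten} is precisely the Christoffel correction coming from the fact that \eqref{dsdef} is not conformally flat in Cartesian coordinates.

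Next I would push everything forward by the conformal map. Composing \eqref{fconf} with the dilation $x \mapsto x/|t|$ on the source produces a conformal diffeomorphism $F$ from $(\overline{\D_{r_0}}, g_{\mathrm{hyp}})$ onto a flat disk $(\overline{\D_\rho}, \eta)$, where $\rho = c\, r_0 / (|t| + \sqrt{t^2 - r_0^2})$ and $c$ is a scaling constant on the target to be fixed. Writing $F$ radially and using conformality, the radial stretch $a(r) = d\tilde r/dr$ and angular stretch $b(r) = \tilde r/r$ satisfy $a = \tfrac{|t|}{(t^2-r^2)^{1/2}}\,b$ and $\frac{d}{dr}\log b = \frac{1}{r}\bigl(\frac{|t|}{(t^2-r^2)^{1/2}} - 1\bigr)$; the right-hand side is nonnegative and integrable all the way up to $r = |t|$, with $\int_0^{|t|} \frac{1}{r}\bigl(\frac{|t|}{(t^2-r^2)^{1/2}} - 1\bigr)\,dr = \log 2$. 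Hence, after choosing $c$ to normalise $b$ at one point, we get $b(r) \asymp 1$ throughout $\overline{\D_{r_0}}$ with absolute constants (which also gives $\rho \lesssim 1$). Setting $\tilde\phi := \phi \circ F^{-1}$, which is smooth on $\overline{\D_\rho}$ since $r_0 < |t|$ keeps us in the interior of the hyperbolic disk, the conformal invariance of Dirichlet energy and of total tension in two dimensions converts \eqref{hyperg} and \eqref{hyperten} into the hypotheses \eqref{dro} and \eqref{dro2} for $\tilde\phi$ (with $E, \eps$ replaced by $O(E), O(\eps)$), while on the boundary one computes $\rho/a(r_0) = r_0(t^2-r_0^2)^{1/2}/|t|$, so \eqref{boundary-2} yields \eqref{boundary} for $\tilde\phi$ up to a factor $|t| \asymp 1$.

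Theorem \ref{nlpoin-disk} then gives $\int_{\overline{\D_\rho}} |\partial_x \tilde\phi|_{\tilde\phi^* h}\,dx \lesssim_E \rho\,\eps^{1/4} \lesssim_E \eps^{1/4}$, and it remains to transfer this back. Since $|\partial_x \tilde\phi|_\eta = b^{-1}\bigl(\tfrac{t^2-r^2}{t^2}|\partial_r\phi|_{\phi^* h}^2 + \tfrac{1}{r^2}|\partial_\theta\phi|_{\phi^* h}^2\bigr)^{1/2}$ (expressed as a function of $(r,\theta)$) and the flat area element on $\D_\rho$ pulls back to $ab\,r\,dr\,d\theta$, a change of variables together with $(A^2+B^2)^{1/2} \asymp A + B$ gives
\[
\int_{\overline{\D_\rho}} |\partial_x \tilde\phi|_{\tilde\phi^* h}\,dx \;\asymp\; \int_{\overline{\D_{r_0}}} b\,\Bigl( |\partial_r\phi|_{\phi^* h} + \frac{|t|}{r(t^2-r^2)^{1/2}}\,|\partial_\theta\phi|_{\phi^* h} \Bigr)\,dx .
\]
Because $b \asymp 1$ and $\frac{|t|}{(t^2-r^2)^{1/2}} \ge \frac{(t^2-r^2)^{1/2}}{t^2}$ (equivalently $|t|^3 \ge t^2 - r^2$, true for $|t| \ge 1$), the right-hand side dominates the left-hand side of \eqref{epso} up to an absolute constant, and \eqref{epso} follows.

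The main obstacle is the bookkeeping around the conformal factor. The quantity being estimated is an $L^1$ norm of a gradient, which — unlike energy or total tension — is not conformally invariant, so the argument genuinely needs the conformal factor $b$ to be bounded above and below on the entire disk $\overline{\D_{r_0}}$; this is exactly what forces the hypotheses $-2 \le t \le -1$ and $|t|/2 < r_0 < |t|$ and makes the explicit integrability computation $\int_0^{|t|} \frac{1}{r}\bigl(\frac{|t|}{(t^2-r^2)^{1/2}}-1\bigr)\,dr = \log 2$ necessary. A secondary, purely computational point is to verify that the somewhat intricate tension density in \eqref{hyperten}, first-order correction included, is indeed the exact pullback under $F$ of the flat Laplacian $(\tilde\phi^*\nabla)_i\partial_i\tilde\phi$ up to a harmless factor $\asymp 1$ — this follows either by computing the Christoffel symbols of \eqref{dsdef} directly, or more cleanly by decomposing $F$ as the dilation composed with the conformal map \eqref{fconf} that flattens $g_{\mathrm{hyp}}$.
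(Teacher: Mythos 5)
Your proposal is correct and takes essentially the same route as the paper: normalise the time variable, identify the three hypotheses as the hyperbolic Dirichlet energy, total tension, and a normalised boundary trace for the rescaled Poincar\'e--Klein disk metric, push forward by the conformal map \eqref{fconf} to the flat disk, apply Theorem \ref{nlpoin-disk}, and then unwind. The one place where you add genuine value beyond the paper's terse treatment is the explicit bookkeeping of the radial/angular stretch factors $a(r), b(r)$, the observation that the non-conformally-invariant $L^1$ quantity in \eqref{epso} requires the conformal factor $b$ to be bounded above and below on the whole disk, and the verification via $\int_0^{|t|}\frac{1}{r}\bigl(\frac{|t|}{(t^2-r^2)^{1/2}}-1\bigr)\,dr = \log 2$ that $b$ oscillates by at most a factor of $2$; the paper simply invokes "a final application of the chain rule and change of variables." Your computations check out: the conformality ODE $\tilde r' = \tilde r\,|t|/(r\sqrt{t^2-r^2})$ is the correct one, the boundary transfer $\rho/a(r_0) = r_0\sqrt{t^2-r_0^2}/|t|$ matches the weight in \eqref{boundary-2} up to the factor $|t|\asymp 1$, and the domination $\frac{|t|}{r\sqrt{t^2-r^2}} \ge \frac{\sqrt{t^2-r^2}}{t^2 r}$ reduces to $|t|^3 \ge t^2-r^2$, valid for $|t|\ge 1$. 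One cosmetic remark: the free scaling constant $c$ on the target is unnecessary — taking $c=1$ already gives $b(r) = 1/\bigl(|t|+\sqrt{t^2-r^2}\bigr)$, which is manifestly $\asymp 1/|t| \asymp 1$ without any normalisation at a base point.
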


\begin{proof} We can rescale $t = -1$.  We embed $\overline{\D_{r_0}}$ in the hyperbolic disk $(\D, dg^2)$ with metric \eqref{dsdef}.  The Laplace-Beltrami operator $\Delta_g = \nabla_g^\alpha \partial_\alpha$ for this disk, where $\nabla_g$ is of course the Levi-Civita connection on $T\D$ given by the metric $g$, can be computed as
$$ \Delta_g u = (1-|x|^2) ( \partial_i \partial_i u - x_j x_k \partial_j \partial_k u - 2 x_j \partial_j u )$$
and similarly the hyperbolic tension field $g^{\alpha \beta} (\phi^* \nabla \oplus \nabla_g)_\alpha \partial_\beta \phi$ for a map $\phi: \D \to \H$ can be computed as
$$g^{\alpha \beta} (\phi^* \nabla \oplus \nabla_g)_\alpha \partial_\beta \phi = (1-|x|^2)
( (\phi^* \nabla)_i \partial_i \phi - x_j x_k (\phi^* \nabla)_j \partial_k \phi - 2 x_j \partial_j \phi )$$
where the connection $\phi^* \nabla \oplus \nabla_g$ on $\phi^* T\H \oplus T\D$ is the direct sum of the pullback $\phi^* \nabla$ of the Levi-Civita connection $\nabla$ on $T\H$ given by the metric $h$, and the Levi-Civita connection $\nabla_g$ on $T\D$ given by the metric $g$.  The volume measure $dg$ on the hyperbolic disk $\H$ can also be computed as
$$ dg = (1 - |x|^2)^{-3/2}\ dx$$
and the energy density $g^{\alpha \beta} \langle \partial_\alpha \phi, \partial_\beta \phi \rangle_{\phi^* h}$ can be computed in polar coordinates as
$$ g^{\alpha \beta} \langle \partial_\alpha \phi, \partial_\beta \phi \rangle_{\phi^* h} = (1-r^2)^2 |\partial_r \phi|_{\phi^* h}^2 + \frac{1-r^2}{r^2} |\partial_\theta \phi|_{\phi^* h}^2.$$
Thus the hypotheses \eqref{hyperg}, \eqref{hyperten} can be expressed more geometrically as
$$ \int_{\overline{\D_{r_0}}} g^{\alpha \beta} \langle \partial_\alpha \phi, \partial_\beta \phi \rangle_{\phi^* h}\ dg \leq E$$
and
$$ \int_{\overline{\D_{r_0}}} |g^{\alpha \beta} (\phi^* \nabla \oplus \nabla_g)_\alpha \partial_\beta \phi|_{\phi^* h}\ dg \leq \eps.$$
We now apply the conformal transformation \eqref{fconf} to map the hyperbolic disk $(\D,dg^2)$ to the Euclidean disk $(\D,dx^2)$, which maps $\overline{\D_{r_0}}$ to $\overline{\D_{\tilde r_0}}$ where $\tilde r_0 = \frac{r_0}{1+\sqrt{1-r_0}^2}$ is comparable to $1$.  If we let $\tilde \phi := \phi \circ f^{-1}$ be the pushforward of $\phi$ by $f$, we thus see (from the conformal invariance of energy and total tension) that $\tilde \phi$ obeys the hypotheses \eqref{dro}, \eqref{dro2}.  A direct application of the chain rule also lets one deduce \eqref{boundary} from \eqref{boundary-2}.  By Theorem \ref{nlpoin-disk} we see that $\tilde \phi$ obeys \eqref{eps1}, and a final application of the chain rule and change of variables (working in polar coordinates) gives \eqref{epso}.
\end{proof}

\begin{remark} The various powers of $(1-r^2)$ that appear in the above corollary are somewhat unpleasant.  (The powers of $r$ attached to the angular derivatives arise naturally from polar coordinates and do not cause any divergence.) The weights in \eqref{hyperten} and \eqref{epso} turn out to be irrelevant to our arguments, but the weights in \eqref{hyperg} and \eqref{boundary-2} are more delicate to handle.  The weight of $(1-r_0^2)^{1/2}$ in \eqref{boundary-2} is at the critical level, which allows us (barely) to deduce from finite energy assumptions that the left-hand side of \eqref{boundary-2} goes to zero as $r_0 \to 1$.  The negative power of $(1-r^2)^{1/2}$ in the angular derivative term of \eqref{hyperg}, though, causes more difficulty, as it is then not obvious how to control the hyperbolic energy \eqref{hyperg} by the ordinary energy without introducing factors which blow up as $r_0 \to 1$, which would seriously damage the rest of the argument.  Fortunately, as we shall see, there is a conformality argument based on the Hopf differential (also exploited in the argument of Lemaire \cite{lemaire} mentioned earlier) which allows us to control the angular derivatives by the radial ones in a manner that compensates for this negative power.
\end{remark}

\begin{remark} Our arguments here are rather ``extrinsic'' in nature, relying on the existence of explicit conformal mappings to convert problems on the hyperbolic disk to problems on the Euclidean disk and thence to the Euclidean plane.  Presumably one could also work more ``intrinsically'', for instance using a caloric gauge coming from a harmonic map heat flow associated to the metric of the original domain.  This would arguably be the more natural and geometric way to proceed, but would require generalising all the theory of the caloric gauge developed earlier to more general domains than $\R^2$.  Given the vast literature on general harmonic map heat flows, it is likely that this can be accomplished, but we will not attempt to do so here.
\end{remark}

\section{All travelling wave maps are trivial}\label{travel-sec}

We can now prove part (i) of Theorem \ref{selfsim}.  In principle, this follows from Theorem \ref{nlpoin}, but in order to convert rough travelling wave maps into (Lorentz contracted) rough harmonic maps, we will first have to reguarlise these maps by applying the harmonic map heat flow for a small amount of (heat-temporal) time.  But these regularised maps only solve the wave map equation approximately rather than exactly, and we need the full force of the estimates in Section \ref{heatwave-sec} to control for this effect, in particular exploiting the uniform continuity of second time derivatives of the wave map.

We turn to the details. Using Claim \ref{lwp-claim} (and Theorem \ref{energy-claim}) and a standard limiting argument, it suffices to show the following claim about classical wave maps.

\begin{proposition}\label{journey}  Let $I$ be a compact interval, let $v \in \R^2$ be a velocity with $|v| < 1$, and let $(\phi^{(n)},I)$ be a sequence of classical wave maps with $\iota(\phi^{(n)})$ uniformly convergent in $\Energy$, and such that
\begin{equation}\label{shoup}
 \sup_{t \in I} \int_{\R^2} |\partial_t \phi^{(n)} + v \cdot \partial_x \phi^{(n)}|_{(\phi^{(n)})^* h}^2\ dx = o_{n \to \infty}(1).
\end{equation}
Then $\E(\phi^{(n)})$ converges to zero.  (Note that the energy of a classical wave map is conserved in time.)
\end{proposition}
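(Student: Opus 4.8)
The plan is to place dynamic caloric gauges on the $\phi^{(n)}$ (Theorem~\ref{dynamic-caloric}) and then regularise by the harmonic map heat flow, extracting at each small heat--time an \emph{approximate} harmonic map, which Theorem~\ref{nlpoin} forces to be trivial. Fix the uniform energy bound $E$: it exists because $\iota(\phi^{(n)})$, being convergent in $\Energy$, is bounded, and $\E(\phi^{(n)}[t]) = d_{\Energy}(\iota(\phi^{(n)}[t]),0)^2$ by Lemma~\ref{energy-ident}, and is conserved in $t$. Passing to a subsequence and rotating the frames by a single element of the compact group $SO(m)$, arrange that for each $t\in I$ the pair $(\psi^{(n)}_s(\cdot,t,\cdot),\psi^{(n)}_t(0,t,\cdot))$ converges in $\L$, uniformly in $t$, to a limit $(\psi^{(\infty)}_s(\cdot,t,\cdot),\psi^{(\infty)}_t(0,t,\cdot))$; as in the proof of Theorem~\ref{energy-claim} (see the remark after Proposition~\ref{moveit}), the fields $\psi^{(n)}_x, A^{(n)}_x, \psi^{(n)}_s$ then also converge, in smooth topologies on $(0,\infty)\times\R^2$ for each fixed $t$, to a ``virtual heat flow'' $\psi^{(\infty)}_x, A^{(\infty)}_x, \psi^{(\infty)}_s$ obeying \eqref{zerotor-frame}, \eqref{curv-frame}, \eqref{ps-frame} and the diamagnetic inequality \eqref{psix-delta}. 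Fix a time $t_0$ in the interior of $I$. It suffices to prove $\psi^{(\infty)}_s(\cdot,t_0,\cdot)\equiv 0$: indeed $\L$--convergence then gives $\int_0^\infty\int_{\R^2}|\psi^{(n)}_s(s,t_0,x)|^2\,dx\,ds\to 0$, hence $\|\psi^{(n)}_x(0,t_0,\cdot)\|_{L^2_x(\R^2)}\to 0$ by (the proof of) Lemma~\ref{energy-ident}, hence $\|\psi^{(n)}_t(0,t_0,\cdot)\|_{L^2_x(\R^2)}\to 0$ by \eqref{shoup}, so $\E(\phi^{(n)})=\E(\phi^{(n)}[t_0])\to 0$.

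Write $\tilde u^{(n)} := \psi^{(n)}_t + v^j\psi^{(n)}_j$, the image of $\partial_t\phi^{(n)} + v\cdot\partial_x\phi^{(n)}$ in the frame; by \eqref{shoup}, $\|\tilde u^{(n)}(0,t,\cdot)\|_{L^2_x(\R^2)}=o_{n\to\infty}(1)$ uniformly in $t$. Since $\psi_x$ and $\psi_t$ solve the covariant heat equation \eqref{uheat}, so does the linear combination $\tilde u^{(n)}$, and Lemma~\ref{uheat-est} gives $\|\tilde u^{(n)}(s,t)\|_{L^2_x(\R^2)}\le\|\tilde u^{(n)}(0,t)\|_{L^2_x(\R^2)}$ and $\|\partial_x\tilde u^{(n)}(s,t)\|_{L^2_x(\R^2)}\lesssim_E s^{-1/2}\|\tilde u^{(n)}(0,t)\|_{L^2_x(\R^2)}$. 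Using the zero--torsion relation \eqref{zerotor-frame}, the definition \eqref{wn-def} of the wave--tension field, and $\psi^{(n)}_s=D_i\psi^{(n)}_i$, one obtains the identity
\begin{equation*}
\eta^{jk}D_j\psi^{(n)}_k = w^{(n)} - v^jD_j\tilde u^{(n)} + D_t\tilde u^{(n)},\qquad \eta^{jk}:=\delta^{jk}-v^jv^k,
\end{equation*}
where $\eta$ is positive definite because $|v|<1$. The left side, pushed forward by the frame, is $\eta^{jk}((\phi^{(n)})^*\nabla)_j\partial_k\phi^{(n)}$, so we must bound its $L^1_\loc$ norm at $(s_0,t_0)$ for suitable small $s_0$. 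The middle term is $o_{n\to\infty;s_0}(1)$ in $L^1_\loc(\R^2)$ by the bounds above together with $\|A^{(n)}_x(s_0)\|_{L^\infty_x(\R^2)}\lesssim_E s_0^{-1/2}$ from \eqref{ax-infty}; and by Lemma~\ref{wsmall}, given $\eps>0$ there is $S>0$ with $\|w^{(n)}(s_0,t_0)\|_{L^1_x(\R^2)}\lesssim\eps$ whenever $s_0\le 1/S$ and $n$ is large.

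The delicate term is $D_t\tilde u^{(n)}$: it is a time derivative of a quantity that is small at every fixed time but a priori not \emph{uniformly continuously} small in $t$, and this is precisely where the estimates of Section~\ref{heatwave-sec} enter. Writing $\partial_t\tilde u^{(n)}=\partial_t\psi^{(n)}_t+v^j\partial_t\psi^{(n)}_j$ with $\partial_t\psi^{(n)}_t=\psi^{(n)}_s-w^{(n)}-A^{(n)}_t\psi^{(n)}_t$ and $\partial_t\psi^{(n)}_j=\partial_j\psi^{(n)}_t+A^{(n)}_j\psi^{(n)}_t-A^{(n)}_t\psi^{(n)}_j$, every summand is bounded in $L^2_x(\R^2)$ for fixed $s_0$ by Lemma~\ref{time}, and --- crucially --- \emph{uniformly equicontinuous in $t$}, with modulus depending on $s_0$ but not on $n$: for the summands built from $\psi_s,\psi_t,\psi_x,A_x,A_t$ and their first spatial derivatives this follows from Lemma~\ref{time} (bounding their $\partial_t$ derivatives makes them Lipschitz in $t$ with $n$--uniform constants) together with the fixed--time bounds of Lemma~\ref{basic}, while for $w^{(n)}$ it follows from the uniform--continuity estimate of Lemma~\ref{kappalem} (in $L^1_x$) upgraded to $L^2_x$ via the spatial regularity of $w^{(n)}$ in Lemma~\ref{wreg} and the Gagliardo--Nirenberg inequality \eqref{gag-6}. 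Hence $\partial_t\tilde u^{(n)}(s_0,\cdot)$ has an $n$--uniform modulus $\omega_{s_0}$, and since for $t_0\pm h\in I$
\begin{equation*}
h\,\partial_t\tilde u^{(n)}(s_0,t_0)=\tilde u^{(n)}(s_0,t_0+h)-\tilde u^{(n)}(s_0,t_0)-\int_{t_0}^{t_0+h}\bigl(\partial_t\tilde u^{(n)}(s_0,\tau)-\partial_t\tilde u^{(n)}(s_0,t_0)\bigr)\,d\tau,
\end{equation*}
we get $\|\partial_t\tilde u^{(n)}(s_0,t_0)\|_{L^2_x(\R^2)}\le \tfrac{2}{h}\sup_{t\in I}\|\tilde u^{(n)}(0,t)\|_{L^2_x(\R^2)}+\omega_{s_0}(h)$; letting $n\to\infty$ and then $h\to 0$ gives $\|\partial_t\tilde u^{(n)}(s_0,t_0)\|_{L^2_x(\R^2)}\to 0$, and absorbing the negligible $A^{(n)}_t\tilde u^{(n)}$ gives $\|D_t\tilde u^{(n)}(s_0,t_0)\|_{L^1_\loc(\R^2)}\to 0$. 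I expect this control of $D_t\tilde u^{(n)}$ to be the main obstacle.

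Combining the three estimates: for every $\eps>0$ there is $S>0$ so that, with $s_0:=1/S$, one has $\limsup_{n\to\infty}\bigl\|\,|\eta^{jk}((\phi^{(n)})^*\nabla)_j\partial_k\phi^{(n)}(s_0,t_0)|_{(\phi^{(n)})^*h}\,\bigr\|_{L^1_\loc(\R^2)}\lesssim\eps$. Now $\phi^{(n)}(s_0,t_0,\cdot):\R^2\to\H$ is smooth with $\phi^{(n)}-\phi^{(n)}(\infty)$ Schwartz (Theorem~\ref{dynamic-caloric}), has energy $\le E$ (the energy is non--increasing along the heat flow by \eqref{bochner}), and the dynamic caloric gauge restricts to a caloric gauge for it; so Theorem~\ref{nlpoin}, applied with the matrix $\eta$, gives $\|\psi^{(n)}_x(s_0,t_0)\|_{L^1_\loc(\R^2)}\lesssim_{E}\eps_n^{1/4}$ for large $n$, where $\limsup_n\eps_n\lesssim\eps$. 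Passing to the virtual--heat--flow limit (locally uniform convergence of $\psi^{(n)}_x$ at fixed $(s_0,t_0)$) gives $\|\psi^{(\infty)}_x(s_0,t_0)\|_{L^1_\loc(\R^2)}\lesssim_E\eps^{1/4}$; since $\psi^{(\infty)}_x$ obeys \eqref{psix-delta}, Corollary~\ref{compar} and \eqref{sdel} propagate this upward, $\|\psi^{(\infty)}_x(s,t_0)\|_{L^1_\loc(\R^2)}\lesssim_E\eps^{1/4}$ for all $s\ge 1/S$. Fixing any $s^*>0$ and letting $\eps\to 0$ (so $1/S\to 0$) forces $\|\psi^{(\infty)}_x(s^*,t_0)\|_{L^1_\loc(\R^2)}=0$; as $\psi^{(\infty)}_x(\cdot,t_0,\cdot)$ is smooth on $(0,\infty)\times\R^2$ it vanishes identically there, whence $\psi^{(\infty)}_s=D^{(\infty)}_i\psi^{(\infty)}_i=0$ on $(0,\infty)\times\R^2$, which is what was needed.
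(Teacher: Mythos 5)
Your proposal follows essentially the same route as the paper: regularise via the harmonic map heat flow in the caloric gauge, define $\psi_v = \psi_t + v\cdot\psi_x$ as a solution of the covariant heat equation, use the identity $\eta^{ij}D_i\psi_j = w + D_t\psi_v - v\cdot D_x\psi_v$ with $\eta = I - v\otimes v$, control the delicate $D_t\psi_v$ term by the averaging/equicontinuity argument based on Lemma~\ref{kappalem} (your ``modulus $\omega_{s_0}$'' formulation is the same mechanism as the paper's Proposition~\ref{Tss-prop}), and then invoke Theorem~\ref{nlpoin} and the parabolic comparison principle to conclude. Two small remarks: the convergence of $\iota(\phi^{(n)})$ in $\Energy$ only gives convergence in $\mathcal{L}$ after an a priori \emph{time-dependent} rotation $U^{(n)}(t)\in SO(m)$, not ``a single element'' as you claim --- though this does not affect your argument since every estimate you actually use is rotation-invariant; and at the final step the paper avoids invoking the ``virtual heat flow'' limit object altogether, instead integrating $\|\psi^{(n)}_s\|_{L^1_\loc}$ over a compact $s$-interval (after a Gagliardo--Nirenberg step to pass from $\psi_x$ to $\partial_x\psi_x$) and using only $L^2$-convergence of $\psi^{(n)}_s$ to conclude $\psi^{(\infty)}_s\equiv 0$, which spares one from having to verify that the limit obeys \eqref{ps-frame} and \eqref{psix-delta}.
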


We now prove the proposition.  We use Theorem \ref{dynamic-caloric} to find a dynamic caloric gauge $e^{(n)}$ for $\phi^{(n)}$, with attendant fields $\psi^{(n)}_x$, $\psi^{(n)}_t$, $\psi^{(n)}_s$, $A^{(n)}_x$, $A^{(n)}_t$ and connection $D^{(n)}_x, D^{(n)}_t$.  Up to a time-dependent rotation\footnote{It is probably possible to show that this rotation does not in fact depend on time and so can be eliminated by rotating the frame $e(\infty)$ at infinity, but we will not need to do so here.}, the fields $(\psi^{(n)}_s(\cdot,t,\cdot), \psi^{(n)}_t(0,t,\cdot))$ converge uniformly in $\L$ to a limit $(\psi^{(\infty)}_s(\cdot,t,\cdot), \psi^{(\infty)}_t(0,t,\cdot))$, so in particular
\begin{equation}\label{conv-2}
\int_0^\infty \| |\psi^{(n)}_s(s,t)| - |\psi^{(\infty)}_s(s,t)| \|_{L^2_x(\R^2)}^2\ ds + \| |\psi^{(n)}_t(0,t)| - |\psi^{(\infty)}_t(0,t)| \|_{L^2_x(\R^2)} = o_{n \to \infty}(1)
\end{equation}
uniformly for $t \in I$.  

Since $\iota(\phi^{(n)})$ is convergent in the energy space, the energy of $\phi^{(n)}$ is bounded by some quantity $0 < E < \infty$ independent of $n$.  We now fix $E$ and allow all implied constants to depend on $E$.  We also allow implied constants to depend on the interval $I$ and the velocity $v$.

Write 
\begin{equation}\label{psiv-def}
\psi_v^{(n)} := \psi_t^{(n)} + v \cdot \psi_x^{(n)}.
\end{equation}
From \eqref{shoup} we see that
\begin{equation}\label{psivn}
 \| \psi_v^{(n)}(0,t) \|_{L^2_x(\R^2)} = o_{n \to \infty}(1)
 \end{equation}
uniformly for $t \in I$.  Meanwhile, from \eqref{psix-heat}, \eqref{dst} we see that $\psi_v$ solves the covariant heat equation \eqref{uheat}.  By Lemma \ref{uheat-est} we conclude that
\begin{equation}\label{psiv}
\| \partial_x^k \psi_v^{(n)}(s,t) \|_{L^2_x(\R^2)} = o_{n \to \infty;k}(s^{-k/2})
\end{equation}
uniformly for $t \in I$ and $s > 0$, for every $k \geq 0$; by the Gagliardo-Nirenberg inequality \eqref{gag-2} we conclude
\begin{equation}\label{psiv-infty}
\| \partial_x^k \psi_v^{(n)}(s,t) \|_{L^\infty_x(\R^2)} = o_{n \to \infty;k}(s^{-(k+1)/2})
\end{equation}
for the same range of $t,s,k$.  From \eqref{sax}, \eqref{ast} we also have
$$ A_v := A_t + v \cdot A_x = \int_s^\infty \psi_s \wedge \psi_v(s')\ ds'$$
and hence by Minkowski's inequality and \eqref{ax-infty}, \eqref{l2-const-ord}, \eqref{linfty-const-ord} we see that $A_v$ obeys the same estimates as $\psi_v$, in the sense that
\begin{equation}\label{av}
\| \partial_x^k A_v^{(n)}(s,t) \|_{L^2_x(\R^2)} = o_{n \to \infty;k}(s^{-k/2})
\end{equation}
and
\begin{equation}\label{av-infty}
\| \partial_x^k A_v^{(n)}(s,t) \|_{L^\infty_x(\R^2)} = o_{n \to \infty;k}(s^{-(k+1)/2})
\end{equation}
uniformly for $t \in I$ and $s > 0$, for every $k \geq 0$.

With these bounds we can now get bounds on the time derivatives of $\psi_v$ in the local $L^1$ norm \eqref{morrey}.

\begin{proposition}\label{Tss-prop}  Let $\eps > 0$ and $s_0 > 0$.  Then for all sufficiently large $n$ (depending on $\eps,s_0$), all $t \in I$, and all $s \geq s_0$ we have
\begin{equation}\label{tss}
\| \partial_t \psi^{(n)}_v(s,t) \|_{L^1_\loc(\R^2)} \lesssim \eps.
\end{equation}
\end{proposition}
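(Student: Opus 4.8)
The plan is to show that $U:=\partial_t\psi_v^{(n)}$ satisfies a covariant heat equation whose forcing term is \emph{small} (because every term in it is ``linear in $\psi_v^{(n)}$'' or in a spatial derivative of it), run a Duhamel-plus-comparison estimate on a heat-temporal interval $[s_1,\infty)$ with $s_1<s_0$, and then separately control the bottom endpoint $s=s_1$ by passing to the limit.

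First I would derive the evolution equation for $U$. Since $\psi_v^{(n)}=\psi_t^{(n)}+v\cdot\psi_x^{(n)}$ solves the covariant heat equation \eqref{uheat} (as noted in the lines before \eqref{psiv}, using \eqref{psix-heat} and \eqref{dst}), differentiating \eqref{uheat} in $t$ and commuting $\partial_t$ past $D_iD_i$ (picking up $\partial_tA_i$ terms) and past the curvature term gives
$$ \partial_s U = D_iD_i U - (U\wedge\psi_i)\psi_i + F, $$
where $F$ is a finite sum of terms each of which carries a factor of $\psi_v^{(n)}$ or $D_i\psi_v^{(n)}$, the other factors being among $\psi_x^{(n)},\ \partial_t\psi_x^{(n)},\ \partial_tA_x^{(n)},\ D_x\partial_tA_x^{(n)}$. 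Feeding in the smallness estimates \eqref{psiv}, \eqref{psiv-infty} for $\psi_v^{(n)}$ and the boundedness estimates of Lemma \ref{basic} and Lemma \ref{time} for the remaining fields, one gets $\|F(s,t)\|_{L^1_\loc(\R^2)}\lesssim o_{n\to\infty}(1)\,s^{-3/2}$ uniformly in $t\in I$, hence $\int_{s_1}^\infty\|F(s,t)\|_{L^1_\loc(\R^2)}\,ds=o_{n\to\infty}(1)\,s_1^{-1/2}$ for each fixed $s_1>0$.

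Next, because $((v\wedge w)w)\cdot v=\tfrac12|v\wedge w|^2\ge 0$, Lemma \ref{dilemma} yields $(\partial_s-\Delta)|U|\le|F|$ in the distributional sense, so by Duhamel's formula \eqref{duh}, the comparison principle (Corollary \ref{compar}), and the $L^1_\loc$ estimate \eqref{sdel} for $e^{s\Delta}$, we obtain for $0<s_1<s$
$$ \|U(s,t)\|_{L^1_\loc(\R^2)}\ \lesssim\ \|U(s_1,t)\|_{L^1_\loc(\R^2)}+\int_{s_1}^{s}\|F(s',t)\|_{L^1_\loc(\R^2)}\,ds'. $$
Choosing $s_1:=s_0/2$, the forcing integral is $o_{n\to\infty;s_0}(1)$ by the previous paragraph, so the proposition reduces to showing $\|U(s_1,t)\|_{L^1_\loc(\R^2)}=o_{n\to\infty;s_0}(1)$ uniformly in $t\in I$. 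For this I would use the pointwise identity, valid in any caloric gauge for a classical wave map (a consequence of the zero-torsion relation \eqref{zerotor-frame}, the definition \eqref{wn-def} of $w$, the relation \eqref{ps-frame}, and \eqref{D-def}),
$$ \partial_t\psi_v=(\delta_{kl}-v_kv_l)D_k\psi_l-w+v\cdot D_x\psi_v-A_t\psi_v , $$
in which $\tau:=(\delta_{kl}-v_kv_l)D_k\psi_l$ is precisely the caloric-frame representative of the Lorentz-contracted tension $\eta^{ij}(\phi^*\nabla)_i\partial_j\phi$, $\eta^{ij}=\delta_{ij}-v_iv_j$. At $s=s_1$ the terms $v\cdot D_x\psi_v^{(n)}$ and $A_t^{(n)}\psi_v^{(n)}$ are $o_{n\to\infty;s_1}(1)$ in $L^1_\loc$ by \eqref{psiv}, \eqref{av} and Lemma \ref{basic}; and for $\tau^{(n)}(s_1,t)-w^{(n)}(s_1,t)$ I would pass to the limit, noting that at the fixed positive heat-time $s_1$ the heat-extended wave maps $\phi^{(n)}$ in the caloric gauge converge (together with their $t$-derivatives, and uniformly in $t\in I$) in the local smooth topology to a travelling wave map with $\psi_v^{(\infty)}\equiv 0$, for which the displayed identity forces $\tau^{(\infty)}-w^{(\infty)}\equiv 0$; hence $\|\tau^{(n)}(s_1,t)-w^{(n)}(s_1,t)\|_{L^1_\loc(\R^2)}=o_{n\to\infty;s_1}(1)$ uniformly in $t$.

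The hard part, and the step requiring the most care, is exactly this last one: upgrading the $\Energy$-convergence of the $\phi^{(n)}$ to genuine convergence of the \emph{second-order} (in $t$) data $\partial_t\psi_v^{(n)}(s_1,\cdot)$ at a fixed positive heat-time, uniformly in $t\in I$ and with all constants uniform in $n$. This is where Lemma \ref{wsmall} (smallness of $w^{(n)}$ for small $s$) and Lemma \ref{kappalem} (uniform-in-$t$ continuity of $w^{(n)}$), together with the ``virtual heat flow'' picture behind Theorem \ref{energy-claim}, Proposition \ref{moveit} and the time-derivative estimates of Section \ref{heatwave-sec}, do the real work: they supply the equicontinuity and tightness needed to take the limit in the $\tau-w$ term without the control degrading as $s_1\to0$.
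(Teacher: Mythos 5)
Your proof and the paper's proof share the same key algebraic identity
$$\partial_t\psi_v=\psi_s-w+v\cdot D_x\psi_v-v_iv_jD_i\psi_j-A_t\psi_v \ (=(\delta_{kl}-v_kv_l)D_k\psi_l-w+v\cdot D_x\psi_v-A_t\psi_v),$$
but the two arguments are organized quite differently.  The paper does not write an evolution equation for $U=\partial_t\psi^{(n)}_v$ at all.  Instead, it fixes a single $s\ge s_0$ and a single $n$, and uses two fundamental-theorem-of-calculus tricks in the time variable $t$.  First, since $\|\psi^{(n)}_v(s,t)\|_{L^1_{\loc}}=o_{n\to\infty}(1)$ uniformly in $t$, the \emph{average}
$\frac{1}{\kappa}\int_{t_1}^{t_2}\partial_t\psi^{(n)}_v\,dt=\frac{1}{\kappa}\bigl(\psi^{(n)}_v(t_2)-\psi^{(n)}_v(t_1)\bigr)$
over any time window of length $\kappa$ is $o_{n\to\infty;\kappa}(1)$ in $L^1_{\loc}$.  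Second, the quantity one actually wants at a point $t$ is then controlled once one bounds the time \emph{difference} $\delta\partial_t\psi^{(n)}_v$ over a $\kappa$-window: this is done by differencing the identity above term by term, using Lemma \ref{kappalem} for $\delta w^{(n)}$ and using Lemma \ref{time} (second $t$-derivative bounds) together with FTC and $|\delta|\le\kappa$ for all the remaining terms.  No passage to the limit $n\to\infty$ of $\partial_t\psi^{(n)}_v$ or of $w^{(n)}$ is needed; only uniform-in-$t$ continuity of $w^{(n)}$ for each fixed $n$.

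Your Duhamel reduction from general $s\ge s_0$ to a single $s_1=s_0/2$ is a legitimate (if unnecessary) extra step, and the evolution equation for $U$ with a forcing term proportional to $\psi^{(n)}_v$ and its covariant derivatives is indeed available.  The genuine gap is where you flag it yourself: you need $\|\tau^{(n)}(s_1,t)-w^{(n)}(s_1,t)\|_{L^1_{\loc}}=o_{n\to\infty}(1)$ uniformly in $t$, and you propose to obtain this by asserting that ``the heat-extended wave maps $\phi^{(n)}$ in the caloric gauge converge (together with their $t$-derivatives, and uniformly in $t\in I$) in the local smooth topology.''  This convergence claim is not established anywhere in the paper, is strictly stronger than what Lemma \ref{kappalem} provides (which is only a modulus of continuity in $t$ for each fixed $n$, not convergence as $n\to\infty$), and is precisely the delicate assertion the paper's averaging-plus-differencing argument is designed to avoid.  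Even granting equicontinuity of $t\mapsto w^{(n)}(s_1,t)$, identifying the limit as $\tau^{(\infty)}-w^{(\infty)}$ (and knowing that this vanishes) requires making sense of the wave-tension field $w$ for the energy-class limit and proving that it is the limit of the $w^{(n)}$, which is nontrivial.  In short: your proposal is not circular, but it replaces the core difficulty by an unproven and substantially stronger convergence statement, whereas the paper sidesteps that difficulty entirely by only differencing in $t$ within a single index $n$.
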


\begin{proof}  We fix $s \geq s_0$, and omit the $n$ superscripts and the explicit dependence on the $s$ variable.  From \eqref{psivn} we already have
$$ \| \psi_v(t) \|_{L^1_\loc(\R^2)} \lesssim o_{n \to \infty}(1)$$
and hence by the fundamental theorem of calculus
$$ \| \frac{1}{\kappa} \int_{t_1}^{t_2} \partial_t \psi_v(t)\ dt \|_{L^1_\loc(\R^2)} \lesssim o_{n \to \infty;\kappa}(1)$$
whenever $t_1 < t_2$ lie in $I$ with $|t_2-t_1| = \kappa$, and $\kappa$ is a small parameter (at least as small as the quantity in Lemma \ref{kappalem}) to be chosen later.  Thus, by taking $n$ small enough depending on $\kappa$ and $\eps$, it suffices to show that
$$ \| \delta \partial_t \psi_v \|_{L^1_\loc(\R^2)} \lesssim \eps$$
in the notation of the proof of Lemma \ref{kappalem}, whenever $t_1,t_2 \in I$ is such that $|t_2-t_1| \leq \kappa$.

From \eqref{psiv-def}, \eqref{wn-def}, \eqref{zerotor-frame} we have the identity
\begin{equation}\label{wvv}
\partial_t \psi_v = \psi_s - w + v \cdot D_x \psi_v - v_i v_j D_i \psi_j - A_t \psi_v.
\end{equation}
By Lemma \ref{kappalem} we already have
$$ \| \delta w \|_{L^1_\loc(\R^2)} \leq \| \delta w \|_{L^1_x(\R^2)} \lesssim \eps$$
so by the fundamental theorem of calculus it suffices (by taking $\kappa$ sufficiently small) to show that
$$ \| \partial_t \psi_s \|_{L^1_\loc(\R^2)} + \| \partial_t (D_x \psi_v) \|_{L^1_\loc(\R^2)} + \| \partial_t( D_i \psi_j ) \|_{L^1_\loc(\R^2)} + \| \partial_t (A_t \psi_v ) \|_{L^1_\loc(\R^2)} \lesssim_{s_0} 1.$$
But this follows from Lemma \ref{time} and Lemma \ref{basic}.
\end{proof}

From \eqref{wvv}, \eqref{tss}, \eqref{av-infty}, \eqref{psiv}, Lemma \ref{basic}, Lemma \ref{wsmall}, and the triangle inequality we conclude that for every $\eps > 0$ we have
$$ \| \psi^{(n)}_s - v_i v_j D^{(n)}_i \psi^{(n)}_j(1/S) \|_{L^1_\loc(\R^2)} \lesssim \eps,$$
whenever $S$ is sufficiently large depending on $\eps$ and whenever $n$ is sufficiently large depending on $\eps, S$, thus achieving for the first time a smallness bound that is ``elliptic'' in the sense that it does not involve any time differrentiation.  As $|v| < 1$, the expression in the norm can be written here as $\eta^{ij} D^{(n)}_i \psi^{(n)}_j(1/S)$ for some positive definite $\eta^{ij}$ depending only on $v$.  Applying Theorem \ref{nlpoin} (and the subluminal hypothesis $|v| < 1$, we conclude that
$$ \| \psi^{(n)}_x(1/S) \|_{L^1_\loc(\R^2)} \lesssim \eps^{1/4}$$
By \eqref{psix-delta} and Corollary \ref{compar} we have
$$ |\psi^{(n)}_x(s)| \leq e^{(s-1/S)\Delta} |\psi^{(n)}_x(1/S)|$$
for all $s \geq 1/S$, and thus by \eqref{sdel} we have
$$ \| \psi^{(n)}_x(s) \|_{L^1_\loc(\R^2)} \lesssim \eps^{1/4}.$$
Meanwhile, from Lemma \ref{basic} we have
$$ \| \partial_x^2 \psi^{(n)}_x(s) \|_{L^1_\loc(\R^2)} \lesssim s^{-1}$$
and hence by the Gagliardo-Nirenberg inequality \eqref{gag-1} (applied to localised versions of $\psi^{(n)}_x$)
$$ \| \partial_x \psi^{(n)}_x(s) \|_{L^1_\loc(\R^2)} \lesssim s^{-1/2} \eps^{1/8}$$
and thus by \eqref{ps-frame} and Lemma \ref{basic} 
$$ \| \psi^{(n)}_s(s) \|_{L^1_\loc(\R^2)} \lesssim s^{-1/2} \eps^{1/8}.$$
If we let $J \subset (0,+\infty)$ be any compact interval, we thus see (by choosing $\eps$ sufficiently small, $S$ sufficiently large, and assuming $n$ large enough) that
$$ \int_J \| \psi^{(n)}_s(s) \|_{L^1_\loc(\R^2)}\ ds = o_{n \to \infty;J}(1).$$
In particular, for any compact set $K \subset (0,+\infty) \times \R^2$, we have
$$ \lim_{n \to \infty} \int_K |\psi^{(n)}_s| = 0.$$
On the other hand, recall that $\psi^{(n)}_s$ converges in $L^2(\R^+ \times \R^2)$ (and hence locally in $L^1$) to $\psi^{(\infty)}_s$.  We conclude that $\psi^{(\infty)}_s \equiv 0$, and so (by the above mentioned convergence)
$$ \int_0^\infty \int_{\R^2} |\psi^{(n)}_s|^2\ dx ds = o_{n \to \infty}(1)$$
uniformly for $t \in I$.  Using the energy identity (Lemma \ref{energy-ident}) we conclude that
$$ \int_{\R^2} |\psi^{(n)}_x(0,t,x)|^2\ dx = o_{n \to \infty}(1)$$
and then by \eqref{psivn} and the triangle inequality
$$ \int_{\R^2} |\psi^{(n)}_t(0,t,x)|^2\ dx = o_{n \to \infty}(1).$$
Thus $\E(\phi^{(n)})$ converges to zero as claimed, proving Proposition \ref{journey} and thus Theorem \ref{selfsim}(i).

\section{All self-similar wave maps are trivial}\label{self-sec}

We now begin the proof of Theorem \ref{selfsim}(ii).  In principle, this should be a repetition of the arguments of the previous section, with Corollary \ref{nlpoin-hyper} playing the role of Theorem \ref{nlpoin}.  There is a difficulty however arising from the weight $(1-r^2)^{-1/2}$ in the denominator in the bounded hyperbolic energy hypothesis \eqref{hyperg} which prevents one from immediately verifying that hypothesis.  To resolve this we must first perform an initial manipulation using the conservation of the stress-energy tensor \eqref{stress-def} in order to establish a certain angular derivative decay that will allow us to verify \eqref{hyperg}.  For technical reasons we must also establish some spatial decay of the heat flow of wave maps.  At that point we can repeat the arguments used to prove Theorem \ref{selfsim}(i).

\subsection{Stress-energy conservation, and angular derivative decay}

As discussed in Section \ref{self-harm}, classical self-similar wave maps are (after some conformal transformation) equivalent to harmonic maps $\phi: \D \to \H$ on the Euclidean disk which vanish on the boundary.  It is well known that
two-dimensional smooth harmonic maps on compact domains (such as the disk) must be \emph{conformal}, in the sense that the \emph{Hopf differential}
$$ \Psi(x) := |\partial_1 \phi|_{\phi^* h}^2 - |\partial_2 \phi|_{\phi^* h}^2 - 2 i \langle \partial_1 \phi, \partial_2 \phi \rangle_{\phi^* h}$$
vanished identically (or equivalently, that the derivative map $d\phi$ is angle-preserving).  To verify this conformality, one first observes (using the harmonic map equation, or from conservation of the stress-energy tensor) that $\Psi$ is holomorphic with respect to the standard complex structure of the disk $\D$; since $\Psi$ also vanishes on the boundary of the disk, the claim follows.  

The conformality implies in particular that in polar coordinates, one has $|\partial_r \phi|_{\phi^* h} = \frac{1}{r} |\partial_\theta \phi|_{\phi^* h}$ for harmonic maps on the disk which vanish on the boundary.  (One also has $\partial_r \phi$ orthogonal to $\partial_\theta \phi$, but we will not exploit this.)  Undoing the above-mentioned conformal transformations, this shows that for classical self-similar wave maps $\phi: (-\infty,0) \times \R^2 \to \H$, one has 
\begin{equation}\label{heu}
|\partial_r \phi|_{\phi^* h} = \frac{|t|}{r \sqrt{t^2-r^2}} |\partial_\theta \phi|_{\phi^* h}; \langle \partial_r \phi, \partial_\theta \phi \rangle_{\phi^* h} = 0
\end{equation}
inside the light cone (outside this cone, of course, $\phi$ is constant), and away from the spatial origin.  Applying this at time $t=-1$, one can in principle eliminate the divergence caused by the $(1-r^2)^{-1/2}$ in the denominator of \eqref{hyperg}, by estimating the angular component of the energy by the radial component.

It should probably be possible to establish the analogue of the conformality property \eqref{heu} directly for self-similar wave maps in the energy class (of course, it will trivially follow once one proves Theorem \ref{selfsim}(ii)).  The author was not able to accomplish this, but was instead able to obtain a weak averaged version of \eqref{heu} (with an error term) in this class that suffices for the task of establishing the hypothesis \eqref{hyperg}.  Our manipulations will ultimately be based on the holomorphicity of the Hopf differential, although this fact will be heavily disguised by various conformal transformations and also the presence of a time variable.

The precise statement we shall establish is as follows.

\begin{proposition}[Angular derivative decay]  Assume Claim \ref{lwp-claim}. Let $\phi: (-\infty,0) \to \Energy$ be a self-similar energy class wave map with energy $E$, and let $\eps > 0$.  Then 
\begin{equation}\label{ehu}
\int_{-2}^{-1} \int_{|t|-2\eps \leq |x| \leq |t|-\eps} |\partial_\theta \phi|_{\phi^* h}^2\ dx dt \lesssim \eps E.
\end{equation}
\end{proposition}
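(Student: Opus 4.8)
The plan is to deduce \eqref{ehu} from conservation of the stress-energy tensor $\T$ together with the exact self-similar scaling of $\T$; this produces, after a single integration by parts, what is morally the ``averaged over circles'' form of the conformality relation \eqref{heu}, in a shape that survives in the energy class (where $\T$ is only an $L^1(\R^2)$-valued object, by Theorem~\ref{energy-claim}(iv) and Claim~\ref{lwp-claim}).  Write $r = |x|$ and $\lambda = r/|t|$, abbreviate $|\partial_\theta\phi|^2 := |\partial_\theta\phi|^2_{\phi^* h}$, $|\partial_r\phi|^2 := |\partial_r\phi|^2_{\phi^* h}$ (all understood via the Gram tensor), and set $\T_{0r} := \tfrac{x_i}{r}\T_{0i}$.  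We may assume $\eps$ is smaller than a small absolute constant, since otherwise \eqref{ehu} follows from $\tfrac{1}{r^2}|\partial_\theta\phi|^2 \le |\partial_x\phi|^2_{\phi^* h}$ and the energy bound.  Since $r \gtrsim 1$ on the region $\Omega_\eps := \{(t,x) : -2 \le t \le -1,\ |t|-2\eps \le |x| \le |t|-\eps\}$, it suffices to prove
\begin{equation}\label{goalred}
\int_{\Omega_\eps} \frac{1}{r^2} |\partial_\theta\phi|^2\ dx\,dt \lesssim \eps E.
\end{equation}

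First I would record a pointwise identity inside the light cone.  By \eqref{selfsimilar-def}, $t\partial_t\phi + x\cdot\partial_x\phi = 0$ as a section of $\phi^*(T\H)$ almost everywhere, i.e. (using $x\cdot\partial_x = r\partial_r$ and $t<0$) $\partial_t\phi = \lambda\partial_r\phi$ a.e.; substituting this into the definitions \eqref{stress-def}, \eqref{energy-def} and using $|\partial_x\phi|^2_{\phi^* h} = |\partial_r\phi|^2 + \tfrac{1}{r^2}|\partial_\theta\phi|^2$, one obtains, a.e. inside the cone,
\begin{equation}\label{keyid}
\T_{0r} - \lambda\T_{00} = \frac{\lambda}{2}\Big( (1-\lambda^2)\,|\partial_r\phi|^2 - \frac{1}{r^2}|\partial_\theta\phi|^2 \Big).
\end{equation}
I would then record the two facts about $\T$ that drive the argument.  (a) Stress-energy conservation holds in the sense of distributions on $(-\infty,0)\times\R^2$: $\int \T_{00}\,\partial_t\chi = \int \T_{0i}\,\partial_i\chi$ for all compactly supported Lipschitz $\chi$; this follows by approximating $\phi$ by classical wave maps $\phi^{(n)}$ via Claim~\ref{lwp-claim}(iii),(iv), for which the classical divergence-free law holds, and noting that $\T^{(n)} \to \T$ locally uniformly in $t$ in $L^1_x$ by Theorem~\ref{energy-claim}(iv).  (b) Self-similarity forces $\phi(t,\cdot)$ to be, up to the rescaling $x \mapsto x/|t|$, independent of $t$ (the classical deduction from \eqref{selfsimilar-def} that $\phi$ is constant along scaling orbits applies to the Gram-tensor data of the energy-class solution directly); hence $\T_{\alpha\beta}(t,x) = t^{-2}\,\Theta_{\alpha\beta}(x/t)$ for a fixed $\Theta \in L^1(\R^2)$, and in particular $\int_{\R^2} W(|x|/|t|)\,\T_{00}(t,x)\ dx$ is independent of $t$ for every bounded measurable $W$.

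The main step is a single choice of test function.  Fix a smooth $\rho:\R\to[0,1]$ with $\rho\equiv1$ on $[-2,-1]$ and $\operatorname{supp}\rho\subset(-\tfrac52,-\tfrac12)$, so $\int_\R\rho' = 0$; and a Lipschitz $W:\R^+\to[0,1]$ with $W\equiv1$ on $[0,1-3\eps]$, $W\equiv0$ on $[1,\infty)$, affine in between, so $W'\le0$, $|W'|\sim\eps^{-1}$ on its support, and the set $\{(t,x): 1-3\eps < |x|/|t| < 1\}$ strictly contains $\Omega_\eps$.  Testing (a) against $\chi(t,x) := \rho(t)\,W(|x|/|t|)$ (which is Lipschitz with compact support, smooth near the axis and away from $t=0$) and applying the chain rule, the ``$\rho'$'' part collects into $\int_\R\rho'(t)\big(\int_{\R^2}W(|x|/|t|)\,\T_{00}(t,x)\,dx\big)\,dt$, which vanishes by (b) and $\int\rho'=0$; the remaining ``$W'$'' part gives $\int \rho\,|W'(|x|/|t|)|\,|t|^{-1}\big(\T_{0r}-\lambda\T_{00}\big)\,dx\,dt = 0$.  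Inserting \eqref{keyid} and rearranging,
\begin{equation}\label{final}
\int \frac{\rho\,|W'|\,\lambda}{2|t|}\cdot\frac{1}{r^2}|\partial_\theta\phi|^2\ dx\,dt \;=\; \int \frac{\rho\,|W'|\,\lambda}{2|t|}\,(1-\lambda^2)\,|\partial_r\phi|^2\ dx\,dt.
\end{equation}
On $\operatorname{supp}W'$ one has $\lambda > 1-3\eps$, hence $1-\lambda^2 < 6\eps$, while $|W'|\sim\eps^{-1}$; so the right side of \eqref{final} is $\lesssim \int_{\operatorname{supp}\rho}\int_{\R^2}|\partial_x\phi|^2_{\phi^* h}\,dx\,dt\lesssim E$ by conservation of energy.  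The left side is $\gtrsim \eps^{-1}\int_{\Omega_\eps}\tfrac{1}{r^2}|\partial_\theta\phi|^2\,dx\,dt$, since $\rho\equiv1$, $|t|\sim1$, $\lambda\sim1$ and $|W'|\sim\eps^{-1}$ on $\Omega_\eps$.  Combining gives \eqref{goalred} and hence \eqref{ehu}.

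The main obstacle is the soft but slightly delicate step (b): one must legitimately transfer both stress-energy conservation and the exact $t^{-2}$ scaling of $\T$ to an energy-class self-similar solution, given only that $\phi$ is a limit (in the sense of Claim~\ref{lwp-claim}(iv)) of classical wave maps that are themselves \emph{not} self-similar.  The resolution is to phrase everything — the conservation law, the scaling, the energy bound — through quantities that are continuous on $\Energy$ and stable under that convergence, namely $\T$ and $\E$ via Theorem~\ref{energy-claim}(iv), and to invoke self-similarity only for the limiting object $\phi$.  (An essentially equivalent route goes through the holomorphicity of the Hopf differential of the associated harmonic-type map on the hyperbolic disk, as indicated in Section~\ref{self-harm}; I prefer the stress-energy formulation above since it never leaves Minkowski space and directly yields the averaged identity \eqref{keyid}.)
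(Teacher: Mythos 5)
Your proof reaches the correct conclusion by a genuinely different route from the paper, and the core of the argument is sound; however, step (b) — the claim that $\T_{\alpha\beta}(t,x) = t^{-2}\Theta_{\alpha\beta}(x/t)$ exactly — is asserted without an actual argument, and as stated it is not an immediate consequence of Definition \ref{travel}. That definition only imposes the pointwise algebraic constraint $|t\partial_t\phi + x\cdot\partial_x\phi|^2_{\phi^* h} \equiv 0$ at each fixed time, together with vanishing outside the cone; upgrading this to the assertion that the solution is literally constant along scaling orbits in $\Energy$ (so that $\int_{\R^2} W(|x|/|t|)\,\T_{00}\,dx$ is $t$-independent) requires a transport or uniqueness argument in the energy class which is not trivial — indeed, differentiating $\int W(|x|/|t|)\,\T_{00}\,dx$ in $t$ and using your identity \eqref{keyid} shows that its constancy is equivalent to a circle-averaged conformality relation $\int_{\partial B(0,r)}\bigl((1-\lambda^2)|\partial_r\phi|^2 - \tfrac{1}{r^2}|\partial_\theta\phi|^2\bigr)\,d\sigma = 0$, which is essentially what Proposition 10.1 is morally about. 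So invoking (b) to make the $\rho'$-term vanish is circular.

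Fortunately (b) is not needed. Once you rearrange the tested conservation law (with $W' = -|W'|$ on its support) to
\begin{equation*}
\int \frac{\rho\,|W'|\,\lambda}{2|t|}\cdot\frac{1}{r^2}|\partial_\theta\phi|^2\ dx\,dt \;=\; \int \rho'\,W\,\T_{00}\ dx\,dt \;+\; \int \frac{\rho\,|W'|\,\lambda}{2|t|}\,(1-\lambda^2)\,|\partial_r\phi|^2\ dx\,dt,
\end{equation*}
the first term on the right is simply $O(E)$ by the uniform energy bound $\int_{\R^2}\T_{00}(t,\cdot)\,dx \le E$ and $\|\rho'\|_{L^1_t} \lesssim 1$, so you do not need it to vanish; together with your estimate that the second term is $O(E)$ (using $|W'|(1-\lambda^2) = O(1)$ on $\operatorname{supp}W'$), you still deduce the left side is $O(E)$, and the left side is $\gtrsim \eps^{-1}\int_{\Omega_\eps}\frac{1}{r^2}|\partial_\theta\phi|^2\,dx\,dt$. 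This gives \eqref{ehu}. So the fix is simply to replace the exact vanishing of the $\rho'$-term with the observation that it is bounded by $E$.

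With that repair in place, your argument is a correct alternative to the paper's. The paper's proof contracts $\partial^\alpha\T_{\alpha\beta}=0$ against the vector field $r^2\partial_t + tr\partial_r$ tangent to the hyperbolas $t^2-r^2 = \hbox{const}$, uses the pointwise self-similarity constraint to show the inhomogeneous term $x_i\T_{0i}+t\T_{ii}$ vanishes, deduces that the circle-average $F(t,r)$ of a certain combination $G$ of first derivatives is a function of $t^2-r^2$ only, and then transports the integral over $\Omega$ back to a region $\Omega'$ near the cone vertex (where $|t|=O(\sqrt\eps)$), winning there from the $|t|^2$ factor. Your proof uses only the $\beta=0$ component of the conservation law, tested directly against the radial cutoff $\rho(t)W(|x|/|t|)$, and inserts the algebraic identity \eqref{keyid} at the end; it is shorter and avoids the region comparison entirely, at the (small) cost of retaining an $O(E)$ boundary contribution rather than a vanishing one. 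Both rely on the same pointwise self-similarity algebra, and both are fundamentally manifestations of the holomorphicity of the Hopf differential; the paper's formulation makes this latent holomorphicity visible in the conserved quantity $F$, whereas yours keeps everything at the level of one weak integral identity.
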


Note that \eqref{ehu} would follow immediately from \eqref{heu} and energy conservation.  

\begin{proof}  
The main tool we shall use is the pointwise conservation 
\begin{equation}\label{stress-cons}
\partial^\alpha \T_{\alpha \beta} = 0
\end{equation}
of the stress-energy tensor \eqref{stress-def}.  This law is easily verified for classical wave maps, and extends to energy class wave maps by Claim \ref{lwp-claim} and a limiting argument, so long as we now interpret \eqref{stress-cons} in the sense of distributions.

We now contract \eqref{stress-cons} against the vector field\footnote{The choice of this vector field was obtained by the author by starting with a stress-energy based proof of the holomorphicity of the Hopf differential and then laboriously pulling back via the conformal change of variables to the unit hyperboloid, and then extending to the light cone via the scaling vector field $t\partial_t + r \partial_r$.  Presumably there is a more geometric reason why this particular vector field is relevant; certainly the fact that it is tangent to the hyperboloid (or equivalently, Minkowski-orthogonal to the scaling vector field $t \partial_t + r \partial_r$) is very natural geometrically.  More generally, it seems that the computations become more natural in hyperbolic polar coordinates, though the author eventually decided to use cylindrical coordinates $t,r,\theta$ instead.}  $r^2 \partial_t + tr \partial_r$, where of course $r$ is the spatial radial variable.  More precisely, from \eqref{stress-cons} we easily verify the identity
$$ \partial_t (r^2 \T_{00} + t x_i \T_{0i} ) - \partial_i( r^2 \T_{0i} + t x_j \T_{ij} ) = - x_i \T_{0i} - t \T_{ii}.$$
For classical wave maps one computes
$$ x_i \T_{0i} + t \T_{ii} = \langle \partial_t \phi, t \partial_t \phi + r \partial_r \phi \rangle_{h^* \phi}
$$ 
and hence by Cauchy-Schwarz
$$ |x_i \T_{0i} + t \T_{ii}| \lesssim \T_{00}^{1/2} |t \partial_t \phi + r \partial_r \phi|_{h^* \phi}.
$$ 
By approximating the self-similar wave map $\phi$ by classical wave maps using Claim \ref{lwp-claim} and then using \eqref{selfsimilar-def}, we conclude that
$$ x_i \T_{0i} + t \T_{ii} \equiv 0$$
for self-similar wave maps.  Thus we have the distributional identity
$$ \partial_t (r^2 \T_{00} + t x_i \T_{0i} ) - \partial_i( r^2 \T_{0i} + t x_j \T_{ij} ) = 0.$$
Recalling that the divergence $\partial_i X_i$ of a vector field can be expressed in polar coordinates as
$$ \partial_i X_i = (\partial_r + \frac{1}{r}) X_r + \frac{1}{r^2} \partial_\theta X_\theta$$
(as can be seen for instance by computing $\int_{\R^2} f \partial_i X_i$ for scalar test functions $f$ via integration by parts and moving to polar coordinates), we conclude that 
\begin{equation}\label{sa}
 \partial_t (r^2 \T_{00} + t r \T_{0r} ) - (\partial_r+\frac{1}{r})( r^2 \T_{0r} + t r \T_{rr} ) - \frac{1}{r^2} \partial_\theta( r^2 \T_{0\theta} + t r \T_{\theta r} ) = 0
 \end{equation}

For classical wave maps, a computation shows that
$$ r^2 \T_{00} + t x_i \T_{0i} = -\frac{1}{2} r G + \langle t \partial_t \phi + r \partial_r \phi, \frac{r^2}{2t} \partial_t \phi - \frac{r^3}{2t^2} \partial_r \phi + r \partial_r \phi \rangle_{\phi^* h}$$
where $G$ is the quantity
\begin{equation}\label{A-def}
 G := \frac{r^2(t^2-r^2)}{t^2} |\partial_r \phi|_{\phi^* h}^2 - |\partial_\theta \phi|_{\phi^* h}^2
\end{equation}
and thus on taking limits as before we see for self-similar wave maps that
$$ r^2 \T_{00} + t x_i \T_{0i} = -\frac{1}{2} G $$
in the distributional sense, away from the spatial origin.  Similarly, for classical wave maps we have
$$ r^2 \T_{0r} + t r \T_{rr} = \frac{1}{2} \frac{t G}{r} + \frac{r}{2t} |t \partial_t \phi + r \partial_r \phi|_{\phi^* h}^2$$
and thus for self-similar wave maps we have
$$ r^2 \T_{0r} + t r \T_{rr} = \frac{t G}{2r}.$$
Inserting this back into \eqref{sa} we obtain
$$ \partial_t ( - \frac{1}{2} G ) - (\partial_r + \frac{1}{r} ) \frac{t G}{2r} -
\frac{1}{r^2} \partial_\theta( r^2 \T_{0\theta} + t r \T_{\theta r} ) $$
which rearranges to
$$ (r \partial_t + t \partial_r) G = -\frac{2}{r} \partial_\theta( r^2 \T_{0\theta} + t r \T_{\theta r} )$$
and in particular
$$ (r \partial_t + t \partial_r) \int_0^{2\pi} G(t,r,\theta)\ d\theta = 0$$
in the distributional sense for $r > 0$.  Thus the distribution 
\begin{equation}\label{Fdef}
F(t,r) := \int_0^{2\pi} G(t,r,\theta)\ d\theta
\end{equation}
is a function of $t^2-r^2$ only.  

Now let $\Omega$ be the region
$$ \Omega := \{ (t,r): -2 \leq t \leq -1; |t|-2\eps \leq r \leq |t|-\eps \}$$
and $\Omega'$ in the region
$$ \Omega' := \{ (t,r): -10\sqrt{\eps} \leq t \leq -\sqrt{\eps}/10; |t|/10 \leq r \leq 9|t|/10 \}$$
Observe that for every point $(t,r)$ in $\Omega$ there exists $(t',r') \in \Omega'$ with $t^2-r^2 = (t')^2 - (r')^2$ (and thus $F(t,r) = F(t',r')$); indeed there is an arc in $\Omega'$ of length comparable to $\sqrt{\eps}$ with this property.  Applying the change of variables formula we conclude that
$$ |\int_\Omega F(t,r)\ r dr dt| \lesssim \int_{\Omega'} |F(t,r)|\ dr dt.$$
On the other hand, from \eqref{Fdef}, \eqref{A-def} we have
$$ |F(t,r)| \lesssim |t|^2 \int_0^{2\pi} \T_{00}(t,r,\theta)\ d\theta$$
in the cone $r \leq |t|$, and thus (by polar coordinates and energy conservation) we see that
$$ \int_{\Omega'} |F(t,r)|\ dr dt \lesssim \sqrt{\eps} \int_{-10\sqrt{\eps} \leq t \leq -\sqrt{\eps}/10} \int_{\R^2} \T_{00}\ dx dt \lesssim E \eps$$
and thus by conservation of energy
$$ |\int_\Omega F(t,r)\ r dr dt| \lesssim E \eps.$$
From \eqref{fdef}, \eqref{A-def} we thus conclude that
$$ \int_{-2}^{-1} \int_{|t|-2\eps \leq |x| \leq |t|-\eps} \frac{1}{r} |\partial_\theta \phi|_{\phi^* h}^2\ dx dt
\lesssim
\int_{-2}^{-1} \int_{|t|-2\eps \leq |x| \leq |t|-\eps} \frac{r(t^2-r^2)}{t^2} |\partial_r \phi|_{\phi^* h}^2\ dx dt + E \eps.$$
But on the region of integration, we have $\frac{r(t^2-r^2)}{t^2} = O(\eps)$ and so the integrand is $O(\eps \T_{00})$.  By energy conservation, we obtain the claim.
\end{proof}

This leads to an important corollary which will be needed for us to apply Corollary \ref{nlpoin-hyper} later in the argument.

\begin{corollary}[Bounded hyperbolic energy on average]
Assume Claim \ref{lwp-claim}. Let $\phi: (-\infty,0) \to \Energy$ be a self-similar energy class wave map with energy $E$.  Then we have
\begin{equation}\label{ehu-hyper}
\int_{-2}^{-1} \int_{|x| \leq |t|} \frac{(t^2-r^2)^{1/2}}{t} | \partial_r \phi |_{\phi^* h}^2 + \frac{t}{r^2(t^2-r^2)^{1/2}} |\partial_\theta \phi|_{\phi^* h}^2\ dx dt \lesssim E.
\end{equation}
\end{corollary}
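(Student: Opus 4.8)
The plan is to reduce everything to the angular derivative decay estimate \eqref{ehu} together with conservation of energy, by isolating the two places where the weights in \eqref{ehu-hyper} can blow up: the factor $(t^2-r^2)^{-1/2}$ near the light cone $r=|t|$ in the angular term, and the factor $r^{-2}$ near the spatial origin. Throughout, $-2 \le t \le -1$, so $|t| \in [1,2]$ and $0 \le r \le |t| \le 2$ on the region of integration, and the weights in \eqref{ehu-hyper} are read with $|t|$ in place of $t$, as in \eqref{hyperg}. The ingredients we use --- the conservation law $\int_{\R^2}\T_{00}(t,\cdot)\,dx = E$ for all $t$, and the estimate \eqref{ehu} --- are both available for energy-class solutions by Claim \ref{lwp-claim} as in the proof of the previous proposition, and the pointwise inequalities $|\partial_r\phi|_{\phi^* h}^2 \le |\partial_x\phi|_{\phi^* h}^2 \le 2\T_{00}$ and $|\partial_\theta\phi|_{\phi^* h}^2 \le r^2|\partial_x\phi|_{\phi^* h}^2 \le 2r^2\T_{00}$ (the latter from $\partial_\theta = -x_2\partial_1 + x_1\partial_2$ and Cauchy--Schwarz) hold almost everywhere. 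The crucial observation is that the $r^2$ in the second inequality precisely cancels the $r^{-2}$ in the angular weight.

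First I would split the cone $\{0 \le r < |t|\}$ into the bulk $B := \{ r \le |t| - 1/8 \}$ and the collar $N := \{ |t| - 1/8 < r < |t| \}$. On $B$ one has $|t| + r \ge 1$ and $|t| - r \ge 1/8$, hence $(t^2-r^2)^{1/2}$ is bounded above and below by absolute constants; consequently the radial weight $\frac{(t^2-r^2)^{1/2}}{|t|}$ is $O(1)$ and, using the second pointwise bound to absorb the $r^{-2}$, the angular integrand is $\lesssim |t|\,|\partial_x\phi|_{\phi^* h}^2 \lesssim \T_{00}$. Thus $B$ contributes $\lesssim \int_{-2}^{-1}\int_{\R^2}\T_{00}\,dx\,dt \lesssim E$ by energy conservation. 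On $N$ one has $r > 7/8$ and $|t|+r$ comparable to $1$, so $r^{-2} = O(1)$ and $(t^2-r^2)^{1/2}$ is comparable to $(|t|-r)^{1/2}$; the radial term on $N$ is again bounded by $\int\int\T_{00} \lesssim E$, and it remains only to estimate the angular term on $N$.

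For that I would decompose $N$ into the dyadic shells $S_j := \{ |t| - 2^{-j+1} \le r \le |t| - 2^{-j} \}$, $j \ge 4$, which together cover $N$ up to a null set. On $S_j$ one has $|t| - r$ comparable to $2^{-j}$, so the angular weight $\frac{|t|}{r^2(t^2-r^2)^{1/2}}$ is $\lesssim 2^{j/2}$; moreover $S_j = \{ |t| - 2\cdot 2^{-j} \le |x| \le |t| - 2^{-j}\}$ is exactly the region occurring in \eqref{ehu} with $\eps = 2^{-j}$, so the previous proposition gives $\int_{-2}^{-1}\int_{S_j}|\partial_\theta\phi|_{\phi^* h}^2\,dx\,dt \lesssim 2^{-j}E$. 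Hence $S_j$ contributes $\lesssim 2^{j/2}\cdot 2^{-j}E = 2^{-j/2}E$ to the angular term of \eqref{ehu-hyper}, and summing the geometric series over $j \ge 4$ yields $\lesssim E$. Adding the bulk and collar estimates proves \eqref{ehu-hyper}.

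The only substantive point --- and the reason the preceding proposition had to be proved with a full power of $\eps$ rather than, say, $\eps^{1/2}$ --- is that in the last step the blow-up $2^{j/2}$ of the angular weight on the shell $S_j$ must be beaten by the gain $2^{-j}E$ coming from \eqref{ehu}; this it is, with room to spare. Everything else is routine bookkeeping with energy conservation and the pointwise bounds above.
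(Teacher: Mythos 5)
Your proof is correct and follows essentially the same route as the paper: the radial term is controlled directly by energy conservation, and the angular term is handled by dyadic decomposition of the disk combined with \eqref{ehu}, using precisely the fact (which the paper flags explicitly) that the exponent of $\eps$ in \eqref{ehu} exceeds $1/2$ so that the dyadic sum converges. The only editorial quibble is your closing remark that the preceding proposition ``had to be proved with a full power of $\eps$'': as you note yourself a moment earlier, any exponent strictly greater than $1/2$ would suffice, so the full power is a convenience rather than a necessity.
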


\begin{proof} The contribution of $\partial_r \phi$ is clearly acceptable by energy conservation.  The contribution of $\partial_\theta \phi$ can be dealt with by dyadic partitioning of the disk $\{ x: |x| \leq |t| \}$ and \eqref{ehu} (the key point here being that the exponent of $\eps$ on the right-hand side of \eqref{ehu} is strictly greater than $1/2$).
\end{proof}

\subsection{Preliminary reduction}

For the remainder of this section, we assume Claim \ref{lwp-claim}, and let $\phi: (-\infty,0) \to \Energy$ be a self-similar energy class wave map with energy $E$, so in particular we have the angular decay estimate \eqref{ehu}.  We allow all implied constants to depend on $E$.

Using Theorem \ref{energy-claim} and Claim \ref{lwp-claim},
we can find a sequence $(\phi^{(n)}, [-2,-1])$ be a sequence of classical wave maps with $\iota(\phi^{(n)})$ uniformly convergent in $\Energy$ to $\phi$ on $[-2,-1]$.  We use Theorem \ref{dynamic-caloric} to place each classical wave map $\phi^{(n)}$ in a caloric gauge, creating the usual fields $\psi_x, \psi_t, \psi_s, A_x, A_t$.  From Theorem \ref{energy-claim} we see that
\begin{equation}\label{gramcon}
\Gram^{(n)}(t) \hbox{ converges uniformly to } \Gram(t)
\end{equation}
as $n \to \infty$, uniformly in $t \in [-2,-1]$.  From this and Definition \ref{travel} we thus see that
\begin{equation}\label{toast}
 \int_{|x| \geq |t|} |\psi^{(n)}_x|^2 + |\psi^{(n)}_t|^2\ dx = o_{n \to \infty}(1)
 \end{equation}
and
\begin{equation}\label{inscale}
 \int_{|x| \leq |t|} |t \psi^{(n)}_t + x \cdot \psi^{(n)}_x|^2 = o_{n \to \infty}(1)
\end{equation}
uniformly for all $t \in [-2,-1]$.    Also, from \eqref{ehu} and \eqref{gramcon} we have
\begin{equation}\label{shell}
\int_{-2}^{-1} \int_{|t|-2\eps \leq |x| \leq |t|-\eps} |\psi^{(n)}_\theta|^2\ dx dt \lesssim \eps^{1/2} + o_{n \to \infty; \eps}(1)
\end{equation}
for all $\eps > 0$, where $\psi^{(n)}_\theta := r \sin \theta \psi^{(n)}_1 - r \cos \theta \psi^{(n)}_2$ is the angular component of the derivative field.

Suppose that we were able to show that
\begin{equation}\label{limf}
 \liminf_{n \to \infty} \inf_{t \in [-2,-1]} \int_{|x| \leq |t|-\eps_0} \T_{00}^{(n)}(t,x)\ dx = 0
\end{equation}
for all $\eps_0 > 0$.  By finite speed of propagation we have
$$ \int_{|x| \leq 1-\eps} \T_{00}^{(n)}(-1,x)\ dx \leq \inf_{t \in [-2,-1]} \int_{|x| \leq |t|-\eps} \T_{00}^{(n)}(t,x)\ dx$$
and thus from \eqref{limf} and taking limits (using \eqref{gramcon}) we see that
$$ \int_{|x| \leq 1-\eps_0} \T_{00}(-1,x) = 0$$
for all $\eps_0 > 0$, and thus $\T_{00}(-1) \equiv 0$ on the disk $|x| < 1$.  From Definition \ref{travel} we also have $\T_{00}(-1) \equiv 0$ outside this disk, and so $\phi$ has zero energy, and the claim follows.  Thus it will suffice to show \eqref{limf} for each $\eps > 0$.

\subsection{Spatial decay}

As in Section \ref{travel-sec}, we will need to pass to the regularisations $\phi^{(n)}(t,s,\cdot)$ of the classical wave maps $\phi^{(n)}$ in order to conduct our analysis.  A new technical difficulty arises in the self-similar case from the spatial weight $x$ that appears for instance in \eqref{inscale}.  Morally speaking, the wave maps and their heat extensions are primarily localised to the vicinity of the light cone $\{ (t,x): |x| \leq |t| \}$, and $t$ will be localised between $-2$ and $-1$, and so these weights should cause no difficulty.  However, in practice, bounds such as
\eqref{toast} are not quite strong enough to ensure that expressions such as $\|x \cdot \psi^{(n)}_x\|_{L^2_x(\R^2)}$ are bounded uniformly in $n$, which leads to some technical difficulties.  There are at least two ways to resolve this issue.  One is to improve the properties of the approximating wave maps $\phi^{(n)}$ so that they are supported in some uniformly bounded region, e.g. the region $\{ (t,x): -2 \leq t \leq 1; |x| \leq |t|+1\}$.  This seems to be possible, but requires some technical lemmas on approximation of compactly supported data in the energy class by classical data of slightly larger support which turn out to be remarkably annoying to actually prove.  We will therefore adopt an alternate approach, which is to truncate the weights $x$ at some spatial scale $R = R_n$ which is growing slowly with $n$.

More precisely, since $\T_{00}$ vanishes outside of the light cone by Definition \ref{travel}, we know that
$$ \int_{\R^2} |x|^2 \T_{00}\ dx \lesssim 1$$
for all $-2 \leq t \leq -1$.  By \eqref{gramcon}, we conclude that
$$ \sup_{-2 \leq t \leq -1} \int_{\R^2} \min( |x|^2, R^2 ) \T_{00}^{(n)}\ dx \lesssim 1 + o_{n \to \infty;R}(1)$$
for each $R > 0$.  Thus, $R_n$ increases sufficiently slowly to infinity with $n$, then
$$ \sup_{-2 \leq t \leq -1} \int_{\R^2} \min( |x|^2, R_n^2) \T_{00}^{(n)}\ dx \lesssim 1,$$
for all $n$ or equivalently, as phrased in the caloric gauge,
\begin{equation}\label{suptx}
\sup_{-2 \leq t \leq -1} \int_{\R^2} \min(|x|^2, R_n^2) (|\psi^{(n)}_x|^2 + |\psi^{(n)}_t|^2)(t,0,x)\ dx \lesssim 1.
\end{equation}
We now observe that this energy localisation estimate persists for bounded times under the heat flow.

\begin{lemma}[Energy localisation]\label{spread}  With the notation and assumptions as above, we have
$$ \sup_{-2 \leq t \leq -1} \sup_{0 \leq s \leq 1} \int_{\R^2} \min( |x|^2, R_n^2 ) (|\psi^{(n)}_x|^2 + |\psi^{(n)}_t|^2)(t,s,x)\ dx \lesssim 1$$
for all $n$.
\end{lemma}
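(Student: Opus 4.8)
The plan is to propagate the weighted bound \eqref{suptx} from $s=0$ to $0 \le s \le 1$ by reducing matters to a purely linear heat-kernel estimate. Write $\chi(x) := \min(|x|^2, R_n^2)$ and suppress the $n$ superscript on the fields, so the claim is
\[ \sup_{-2 \le t \le -1}\ \sup_{0 \le s \le 1} \int_{\R^2} \chi(x)\,(|\psi_x|^2 + |\psi_t|^2)(t,s,x)\ dx \lesssim 1. \]
First I would invoke the diamagnetic/comparison bounds that are already available for each fixed $t$: one has $|\psi_x(t,s)| \le e^{s\Delta}|\psi_x(t,0)|$ by \eqref{psix-compar}, and likewise $|\psi_t(t,s)| \le e^{s\Delta}|\psi_t(t,0)|$, since $\psi_t$ solves the covariant heat equation \eqref{uheat} (cf.\ \eqref{dst}) and hence obeys the pointwise estimate \eqref{upoint} of Lemma \ref{uheat-est}. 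Thus it suffices to establish, for any measurable $f \ge 0$ on $\R^2$, the linear inequality
\[ \int_{\R^2} \chi\,(e^{s\Delta} f)^2\ dx \lesssim \int_{\R^2} \chi\, f^2\ dx + \int_{\R^2} f^2\ dx \qquad (0 \le s \le 1), \]
and then to apply it with $f := |\psi_x(t,0,\cdot)|$ and $f := |\psi_t(t,0,\cdot)|$, bounding the first term on the right by \eqref{suptx} and the second by the energy bound \eqref{energy-bound}.

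For the linear inequality, the key point is that $\sqrt{\chi(x)} = \min(|x|,R_n)$ is $1$-Lipschitz with a constant independent of $R_n$, so $\chi(x) \le 2\chi(y) + 2|x-y|^2$ for all $x,y \in \R^2$. By Cauchy--Schwarz (Jensen) applied to the probability density $y \mapsto p_s(x-y)$, with $p_s$ the kernel in \eqref{heat-prop}, we have $(e^{s\Delta}f)(x)^2 \le (e^{s\Delta} f^2)(x)$; multiplying by $\chi(x)$, integrating, and using Fubini gives
\[ \int_{\R^2} \chi\,(e^{s\Delta}f)^2\ dx \le \int_{\R^2} f(y)^2\,(e^{s\Delta}\chi)(y)\ dy. \]
The Lipschitz estimate together with the second-moment identity $\int_{\R^2} |z|^2 p_s(z)\,dz = 4s$ yields $(e^{s\Delta}\chi)(y) \le 2\chi(y) + 8s \le 2\chi(y) + 8$ for $s \le 1$, which is exactly the desired bound. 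Since \eqref{suptx} and \eqref{energy-bound} are uniform in $n$ and no constant above depends on $R_n$, the conclusion is uniform in $n$ and in $t \in [-2,-1]$.

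The argument is essentially routine, and I do not anticipate a genuine obstacle; the only points requiring a little care are that the cutoff $\chi$ is merely Lipschitz (not smooth) at the transition scale $|x| = R_n$, and that the final bound must not degrade as $R_n \to \infty$. Both are handled by working with $\sqrt{\chi}$ rather than $\chi$, whose Lipschitz constant is $1$ regardless of $R_n$, together with the fact that the heat flow run for time $s \le 1$ transports mass only across distances $O(1) \ll R_n$ for large $n$. If one prefers to avoid the corner of $\chi$ altogether, one may instead take a smooth weight comparable to $\min(|x|^2, R_n^2)$ with $|\nabla \sqrt{\chi}| = O(1)$, at no cost to the argument.
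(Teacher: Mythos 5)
Your proof is correct, and it takes a genuinely different route from the paper's. The paper works at the level of the differential inequality: it uses $\partial_s|\psi_t|^2 \leq \Delta|\psi_t|^2$ (from \eqref{dst} and the diamagnetic inequality), tests against a smooth rescaled bump $R_n^2\chi(x/R_n)$ comparable to $\min(|x|^2,R_n^2)$, integrates by parts so that $\Delta$ lands on the cutoff (which is $O(1)$ pointwise, uniformly in $R_n$, by the scaling of the smooth profile), and closes with energy conservation and the fundamental theorem of calculus in $s$. You instead exponentiate to the integrated pointwise comparison $|\psi_x(s)|\le e^{s\Delta}|\psi_x(0)|$ and $|\psi_t(s)|\le e^{s\Delta}|\psi_t(0)|$ (both already available via \eqref{psix-compar} and \eqref{upoint}), and reduce the entire matter to a clean, purely linear weighted-$L^2$ bound for the free heat propagator. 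The ingredients in that linear bound — Jensen's inequality $(e^{s\Delta}f)^2 \le e^{s\Delta}(f^2)$, self-adjointness to move the propagator onto the weight, the $1$-Lipschitz property of $\sqrt{\min(|x|^2,R_n^2)}$ with constant independent of $R_n$, and the two-dimensional second-moment identity $\int|z|^2 p_s(z)\,dz = 4s$ — all check out. Your route has the advantage of isolating a self-contained linear lemma and dispenses with the need for a smooth mollification of the weight; the paper's is marginally shorter once the regularized cutoff has been introduced. The two arguments are essentially equivalent in strength, and both rely on precisely the same geometric input (the comparison principle for $|\psi_x|$, $|\psi_t|$ in the caloric gauge).

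One small remark: equation \eqref{dst} as printed in the paper has the sign $+(\psi_t\wedge\psi_i)\psi_i$; as your argument implicitly requires (and as the paper's own application in this proof confirms, since $\partial_s|\psi_t|^2\le\Delta|\psi_t|^2$ needs the favorable sign), this should read $-(\psi_t\wedge\psi_i)\psi_i$, so that $\psi_t$ genuinely solves \eqref{uheat} and Lemma \ref{uheat-est} / \eqref{upoint} apply. You handled this correctly.
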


\begin{proof}  Fix $n$ and $t$; we omit the explicit dependence on these parameters.  Let $\chi(x)$ be a smooth bump function on $\R^2$ equal to $|x|^2$ when $|x| \leq 1/2$ and equal to $1$ for $|x| \geq 1$.    It will suffice to show that
$$ \int_{\R^2} \chi(x/R_n) |\psi_t(s,x)|^2\ dx \lesssim 1$$
for $0 \leq s \leq 1$, and similarly with $\psi_t$ replaced by $\psi_x$.  We shall just prove this for $\psi_t$, as the claim for $\psi_x$ is similar.  From \eqref{dst} we have 
$$ \partial_s |\psi_t|^2 \leq \Delta |\psi_t|^2$$
and so
$$ \partial_s \int \chi(x/R_n) |\psi_t|^2\ dx
\leq \int \Delta(\chi(x/R_n)) |\psi_t|^2\ dx.$$
A computation shows that $\Delta(\chi(x/R_n)) = O(1)$, and hence by energy conservation
$$ \partial_s \int \chi(x/R_n) |\psi_t|^2\ dx = O(1).$$
The claim now follows from \eqref{suptx} and the fundamental theorem of calculus.
\end{proof}

\subsection{Appproximate self-similarity for the heat flow}

The heat flow equation is invariant under parabolic scaling $(s,x) \mapsto (\lambda^2 s, \lambda x)$, and hence one expects the heat flow for a self-similar wave map to be self-similar under the combined scaling $(t,s,x) \to (\lambda t, \lambda^2 s, \lambda x)$.  Expressed infinitesimally, if $t \psi_t + x \cdot \psi_x = 0$ at $s=0$, then one expects $t \psi_t + x \cdot \psi_x + 2s \cdot \psi_s = 0$ for later values of $s$.  To formalise this intuition for our approximately self-similar wave maps $\phi^{(n)}$, we introduce the quantity
$$ \psi^{(n)}_X := t \psi^{(n)}_t + x \cdot \psi^{(n)}_x + 2s \cdot \psi^{(n)}_s$$

\begin{lemma}[Approximate self-similarity]\label{asss}  We have
$$ \int_{|x| \leq R_n} |\psi^{(n)}_X(t,s,x)|^2\ dx \leq o_{n \to \infty}(1)$$
uniformly for all $-2 \leq t \leq -1$ and $0 \leq s \leq 1$, and for all $n$.
\end{lemma}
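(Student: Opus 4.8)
The plan is to realise $\psi^{(n)}_X$ as an \emph{exact} solution of the covariant heat equation \eqref{uheat} and then to propagate the (spatially localised) smallness of its $s=0$ value, namely $\psi^{(n)}_X(t,0,\cdot)=t\psi^{(n)}_t(t,0,\cdot)+x\cdot\psi^{(n)}_x(t,0,\cdot)$, by a weighted energy estimate. First I would verify the key identity: since each of $\psi^{(n)}_t,\psi^{(n)}_1,\psi^{(n)}_2,\psi^{(n)}_s$ solves \eqref{uheat} (by \eqref{psix-heat}, \eqref{psis-heat}, \eqref{dst}), and since $t$ and $x^i$ are constant in the heat variable $s$ with $\partial_j x^i=\delta^i_j$, a direct computation using $D_j\psi_j=\psi_s$ (equation \eqref{ps-frame}) gives $\partial_s\psi^{(n)}_X=D_iD_i\psi^{(n)}_X-(\psi^{(n)}_X\wedge\psi^{(n)}_i)\psi^{(n)}_i$; the one point to watch is that the term $2\psi^{(n)}_s$ produced by $\partial_s(2s\,\psi^{(n)}_s)$ is exactly the term $2D_j\psi^{(n)}_j$ produced by $D_iD_i(x^j\psi^{(n)}_j)=x^jD_iD_i\psi^{(n)}_j+2D_j\psi^{(n)}_j$, so the two match. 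Consequently $\partial_s|\psi^{(n)}_X|^2\le\Delta|\psi^{(n)}_X|^2$ in the distributional sense, exactly as in the proof of \eqref{energy-ineq}.

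Next I would fix a bump function $\chi\colon\R^2\to[0,1]$ with $\chi=1$ on $\{|x|\le1\}$ and $\chi$ supported in $\{|x|\le2\}$, and test $\partial_s|\psi^{(n)}_X|^2\le\Delta|\psi^{(n)}_X|^2$ against $\chi(x/R_n)$ (the relevant integrals being finite by the Schwartz decay from Theorem \ref{dynamic-caloric}). The forcing $\Delta(\chi(x/R_n))$ is $O(R_n^{-2})$ and supported in $\{R_n\le|x|\le2R_n\}$, where $|t|\le2$ gives $|\psi^{(n)}_X|^2\lesssim|\psi^{(n)}_t|^2+R_n^2|\psi^{(n)}_x|^2+s^2|\psi^{(n)}_s|^2$; using Lemma \ref{spread} (which yields $\int_{|x|\ge R_n}(|\psi^{(n)}_x|^2+|\psi^{(n)}_t|^2)\lesssim R_n^{-2}$) together with $\|\psi^{(n)}_s(t,s)\|_{L^2_x(\R^2)}\lesssim s^{-1/2}$ from Lemma \ref{basic}, this whole forcing integrates to $O(R_n^{-2})$ uniformly for $0\le s\le1$. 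Integrating in $s$ and using $\chi(x/R_n)=1$ on $\{|x|\le R_n\}$ then gives
$$\int_{|x|\le R_n}|\psi^{(n)}_X(t,s,x)|^2\,dx\ \le\ \int_{\R^2}\chi(x/R_n)\,|\psi^{(n)}_X(t,0,x)|^2\,dx\ +\ O(R_n^{-2})$$
for all $0\le s\le1$, uniformly in $t\in[-2,-1]$.

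Finally I would bound the initial term by splitting $\{|x|\le 2R_n\}$ at the light cone $\{|x|=|t|\}$: inside the cone the integrand is exactly $|t\psi^{(n)}_t+x\cdot\psi^{(n)}_x|^2$, whose integral is $o_{n\to\infty}(1)$ by \eqref{inscale}; on the outer annulus $\{|t|\le|x|\le2R_n\}$ the integrand is $\lesssim|\psi^{(n)}_t|^2+R_n^2|\psi^{(n)}_x|^2$, whose integral is $\lesssim(1+R_n^2)\,o_{n\to\infty}(1)$ by \eqref{toast}. Since we are free to let $R_n\to\infty$ as slowly as we wish — slowing it down only reinforces \eqref{suptx} and hence Lemma \ref{spread} — we may additionally arrange (once and for all, at the point where $R_n$ is first chosen) that $R_n^2$ times each of the finitely many $o_{n\to\infty}(1)$ quantities above still tends to $0$. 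Combining the three steps yields $\int_{|x|\le R_n}|\psi^{(n)}_X(t,s,x)|^2\,dx=o_{n\to\infty}(1)$ uniformly for $-2\le t\le-1$ and $0\le s\le1$, which is the assertion of the lemma.

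The main obstacle is Step 3 rather than the algebra of Step 1: the weight $x$ appearing in $\psi^{(n)}_X(t,0,\cdot)$ is $L^2$-small only \emph{inside} the light cone (via \eqref{inscale}), while on the outer annulus $\{|t|\le|x|\le R_n\}$ one only controls $|x\cdot\psi^{(n)}_x|$ by $R_n$ times an $L^2$-small quantity. This loss is neutralised by the slow divergence of $R_n$ together with the localised nature of the energy estimate in Step 2, which keeps the entire argument inside $\{|x|\lesssim R_n\}$ and so never encounters the uncontrolled weight $|x|$ near spatial infinity. (Everything else — the covariant Leibniz manipulations in Step 1 and the parabolic bounds in Step 2 — is routine given the estimates already established in Sections \ref{littlewood-sec} and \ref{heatwave-sec}.)
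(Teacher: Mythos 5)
Your proof is correct, and it takes a genuinely different (and somewhat leaner) route from the paper's. The paper truncates the vector field itself, working with $\tilde\psi_X := t\psi_t + \eta(x/R_n)\,x\cdot\psi_x + 2s\psi_s$; the covariant heat equation for $\tilde\psi_X$ then acquires error terms of the form $O(1_{|x|\ge R_n}|D_x\psi_x|)$ and $O(R_n^{-1}|\psi_x|)$, and controlling the first of these requires a Gagliardo--Nirenberg interpolation (between the $L^2$ smallness of $\psi_x$ on $\{|x|\ge R_n/2\}$ from Lemma \ref{spread} and the bound $\|\nabla_x^2\psi_x\|_{L^2}=O(s^{-1})$ from Corollary \ref{corbound}) to obtain $\|D_x\psi_x\|_{L^2(|x|\ge R_n)}=o_{n\to\infty}(s^{-1/2})$. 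You instead keep $\psi_X$ untouched — correctly observing that it is an \emph{exact} solution of the covariant heat equation \eqref{uheat}, since the extra $2\psi_s$ from $\partial_s(2s\psi_s)$ exactly cancels the $2D_j\psi_j=2\psi_s$ generated by $D_iD_i(x^j\psi_j)$ via \eqref{ps-frame} — and localise the energy estimate by testing against $\chi(x/R_n)$. Your forcing term $\int\Delta(\chi(x/R_n))|\psi_X|^2\,dx$ involves only $|\psi_X|^2$ on the annulus $\{R_n\le|x|\le2R_n\}$, which you control directly from Lemma \ref{spread} and Lemma \ref{basic} with no gradient bounds and no interpolation. In both arguments the initial-data term $\int_{|x|\lesssim R_n}|\psi_X(0)|^2\,dx$ is bounded via \eqref{inscale} inside the light cone and \eqref{toast} outside, at the cost of a factor $R_n^2$, so both require the (standard) device of choosing $R_n\to\infty$ slowly enough that $R_n^2$ times the relevant $o_{n\to\infty}(1)$ quantities still tends to zero; you are right to note this explicitly, and it costs nothing. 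Net effect: your approach trades the paper's truncation-plus-interpolation for a cutoff-test-function argument and thereby sidesteps the parabolic regularity step entirely, which is a modest but genuine simplification.
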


\begin{proof}  We fix $t$ and $n$, and omit the explicit dependence on these quantities.  For technical reasons we will work with the truncated expression
$$ \tilde \psi_X := t \psi_t + \eta(x/R_n) x \cdot \psi_x + 2s \cdot \psi_s$$
where $\eta$ is a bump function supported on the ball $\{|x| \leq 2\}$ which equals $1$ on the ball $\{|x| \leq 1\}$.

A computation involving \eqref{dst}, \eqref{psix-heat}, \eqref{psis-heat}, and the Leibniz rule shows that
$$ \partial_s \tilde \psi_X = D_i D_i \tilde \psi_X - (\tilde \psi_X \wedge \psi_i) \psi_i
- 2 \partial_i (\eta(x/R_n) x_j) D_i \psi_j - 2 \Delta(\eta(x/R_n) x) \cdot \psi_x + 2 \psi_s$$
We observe that
\begin{align*}
\partial_i (\eta(x/R_n) x_j) D_i \psi_j - \psi_s &= \eta(x/R_n) D_j \psi_j + O( 1_{|x| \geq R_n} |D_x \psi_x| ) - D_j \psi_j \\
&= O( 1_{|x| \geq R_n} |D_x \psi_x| )
\end{align*}
and
$$ \Delta(\chi(x/R_n) x) \cdot \psi_x  = O( R_n^{-1} |\psi_x| )$$
and thus by Lemma \ref{dilemma}
$$ \partial_s |\tilde \psi_X| \leq \Delta |\tilde \psi_X| + O( 1_{|x| \geq R_n} |D_x \psi_x| )
+ O( R_n^{-1} |\psi_x| )$$
and thus
$$ \partial_s \| \tilde \psi_X(s)\|_{L^2_x(\R^2)} \leq O( \| D_x\psi_x\|_{L^2_x(|x| \geq R_n)} 
+ R_n^{-1} \| \psi_x \|_{L^2_x(\R^2)} ).$$
From Corollary \ref{corbound} we have $\| \psi_x \|_{L^2_x(\R^2)} = O(1)$ and  
$\| \nabla_x^2 \psi_x \|_{L^2_x(\R^2)} = O(s^{-1})$.  Meanwhile, from Lemma \ref{spread} we have $\| \psi_x \|_{L^2(|x| \geq R_n/2)} = o_{n \to \infty}(1)$.  From the Gagliardo-Nirenberg inequality \eqref{gag-1} and a localisation argument we conclude that
$$ \| \nabla_x \psi_x \|_{L^2_x(|x| \geq R_n)} = o_{n \to \infty}(s^{-1/2})$$
and hence by \eqref{ax-infty}
$$ \| D_x\psi_x\|_{L^2_x(|x| \geq R_n)} = o_{n \to \infty}(s^{-1/2}).$$
We thus conclude that
$$ \partial_s \| \tilde \psi_X(s)\|_{L^2_x(\R^2)} \leq o_{n \to \infty}(s^{-1/2}) + O(R_n^{-1}).$$
On the other hand, from \eqref{inscale}, \eqref{suptx} we have
$$\| \tilde \psi_X(0)\|_{L^2_x(\R^2)} = o_{n \to \infty}(1).$$
By the fundamental theorem of calculus (and the fact that $R_n \to \infty$) we thus have
$$\| \tilde \psi_X(s)\|_{L^2_x(\R^2)} = o_{n \to \infty}(1).$$
for all $0 \leq s \leq 1$, and the claim follows.
\end{proof}

We can also obtain similar bounds on higher derivatives:

\begin{corollary}[Approximate self-similarity, II]\label{asss2}  Let $k \geq 0$.  If $R_n$ goes to infinity sufficiently slowly, then we have
$$ \int_{|x| \leq R_n} |\nabla_x^k \psi^{(n)}_X(t,s,x)|^2\ dx \leq o_{n \to \infty}(s^{-k})$$
and
$$ \sup_{|x| \leq R_n} |\nabla_x^k \psi^{(n)}_X(t,s,x)| \leq o_{n \to \infty}(s^{-(k+1)/2})$$
uniformly for all $-2 \leq t \leq -1$ and $0 < s \leq 1$, and for all $n$.
\end{corollary}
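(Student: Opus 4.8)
The plan is to bootstrap from the base case, Lemma \ref{asss}, using covariant parabolic regularity exactly as Lemma \ref{uheat-est} bootstraps from the $L^2$ energy inequality, but keeping careful track of the forcing term that appears because $\psi_X^{(n)}$ is not an exact solution of the covariant heat equation \eqref{uheat}. Recall from the proof of Lemma \ref{asss} that the truncation $\tilde\psi_X := t\psi_t + \eta(x/R_n)\,x\cdot\psi_x + 2s\cdot\psi_s$, which agrees with $\psi_X^{(n)}$ on $\{|x|\le R_n\}$ (where $\eta\equiv1$), solves
\[ \partial_s\tilde\psi_X = D_iD_i\tilde\psi_X - (\tilde\psi_X\wedge\psi_i)\psi_i + \mathcal{E}, \]
where $\mathcal{E}$ is a sum of terms of the schematic form $1_{|x|\gtrsim R_n}\,\bigO(|D_x\psi_x|)$ together with terms $\bigO(R_n^{-1-j}|\nabla_x^{\le 1}\psi_x|)$ supported on $|x|\sim R_n$, with derivatives of the cutoff only improving the powers of $R_n^{-1}$. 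The first step is to record that all spatial derivatives of $\mathcal{E}$ are not merely bounded but small: combining Corollary \ref{corbound} and Lemma \ref{basic} with the non-concentration statement of Lemma \ref{spread} (which gives $\|\psi_x\|_{L^2_x(|x|\ge R_n/2)}=o_{n\to\infty}(1)$) and interpolating via Gagliardo--Nirenberg \eqref{gag-1} on localisations of $\psi_x$ — the same interpolation already used inside the proof of Lemma \ref{asss} — one obtains $\|\nabla_x^k\mathcal{E}(t,s)\|_{L^2_x(\R^2)}=o_{n\to\infty}(s^{-(k+1)/2})$ uniformly in $t\in[-2,-1]$ and $s\in(0,1]$, for every $k\ge0$.

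The second step is the induction on $k$ for the $L^2$ bounds. Differentiating the equation above $k$ times covariantly and commuting derivatives via \eqref{zerotor-frame}, \eqref{curv-frame} (exactly as in the passage from \eqref{uheat} to \eqref{dxu}) gives
\[ \partial_s D_x^k\tilde\psi_X = D_iD_iD_x^k\tilde\psi_X + \sum_{k_1+k_2+k_3=k}\bigO(|D_x^{k_1}\psi_x|\,|D_x^{k_2}\psi_x|\,|D_x^{k_3}\tilde\psi_X|) + D_x^k\mathcal{E}, \]
and hence by Lemma \ref{dilemma} and the decay \eqref{linfty-const} of the $\psi_x$ factors,
\[ \partial_s|D_x^k\tilde\psi_X| \le \Delta|D_x^k\tilde\psi_X| + \sum_{k_3=0}^{k}\bigO\!\left(s^{(k_3-k-2)/2}|D_x^{k_3}\tilde\psi_X|\right) + |D_x^k\mathcal{E}|. \]
One then runs precisely the argument of Lemma \ref{uheat-est}: a Duhamel representation \eqref{duh} over an interval $[s_0,s]$ with $s_0\simeq s/2$, together with \eqref{heat-lp}, using the integrated bound $\int_0^1 s'^{\,k-1}\|D_x^k\tilde\psi_X(t,s')\|_{L^2_x}^2\,ds'=o_{n\to\infty}(1)$ at the previous level to absorb the $k_3=k$ term and the already-established fixed-time bounds for $k_3<k$; the contribution of the forcing $D_x^k\mathcal{E}$ is handled by Cauchy--Schwarz and Step 1. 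This integrated bound is itself part of the induction, obtained as in Proposition \ref{corpar} / Lemma \ref{uheat-est} by multiplying the equation by $D_x^k\tilde\psi_X$ and integrating in space and in $s$ over $(0,1]$ (the base case $k=0$ being Lemma \ref{asss}, and the forcing contribution $\int_0^1|\langle D_x^k\tilde\psi_X,D_x^k\mathcal{E}\rangle|\,ds'$ being $o_{n\to\infty}(1)$ by Step 1). Converting covariant to ordinary derivatives via \eqref{ax-infty} as usual, and restricting to $\{|x|\le R_n\}$ where $\tilde\psi_X=\psi_X^{(n)}$, yields $\|\nabla_x^k\psi_X^{(n)}(t,s)\|_{L^2_x(|x|\le R_n)}=o_{n\to\infty}(s^{-k/2})$, the first claimed estimate. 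The $L^\infty$ estimate then follows from the $L^2$ estimates at levels $k$ and $k+2$ and Gagliardo--Nirenberg \eqref{gag-2}, exactly as \eqref{u-linfty-fixed} follows from \eqref{u-l2-fixed} in Lemma \ref{uheat-est}.

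The main obstacle is Step 1: verifying that $\mathcal{E}$ and all of its spatial derivatives are genuinely $o_{n\to\infty}(s^{-(k+1)/2})$ rather than merely $O(s^{-(k+1)/2})$. This is where the slowly-growing truncation scale $R_n$ and Lemma \ref{spread} are essential — without the non-concentration of $\psi_x$ at scale $R_n$, the term $1_{|x|\gtrsim R_n}D_x\psi_x$ would only be bounded, and the whole conclusion would degrade to $O(\,\cdot\,)$. Once this smallness is in hand, the remainder is a routine (if lengthy) transcription of the parabolic regularity machinery already set up for Lemma \ref{uheat-est}, and the $o_{n\to\infty}$ gains propagate through the Duhamel/Gronwall iteration without loss.
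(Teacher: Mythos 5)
Your proposal is a valid proof strategy, but it is a genuinely different (and much heavier) route than the paper's, which is essentially a two-line interpolation argument. The paper's proof goes as follows: from Lemma \ref{basic}, the $L^2(\R^2)$ bounds on $\psi_x,\psi_t,\psi_s$ and their derivatives give the crude bound $\|\nabla_x^{2k}\psi_X^{(n)}(t,s)\|_{L^2(|x|\le R_n)} \lesssim_{k,R_n} s^{-k}$ (the constant is polynomial in $R_n$ because of the weight $x$ in $\psi_X$). One then smoothly truncates $\psi_X^{(n)}$ to the ball of radius $R_n$ and applies the Gagliardo--Nirenberg inequality \eqref{gag-1}, interpolating between this crude high-derivative bound and the $k=0$ smallness from Lemma \ref{asss} (which already holds at fixed time, for every $s\in[0,1]$). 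This immediately yields $\|\nabla_x^{k}\psi_X^{(n)}(t,s)\|_{L^2(|x|\le R_n/2)} = o_{n\to\infty;R_n,k}(s^{-k/2})$, with the $L^\infty$ estimate following by one more application of \eqref{gag-2}; since the $R_n$-dependence is only polynomial and the $o_{n\to\infty}(1)$ gain is independent of it, choosing $R_n\to\infty$ slowly enough absorbs the $R_n$-constant. No new analysis of the heat equation for $\psi_X$ is needed at all.

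Your proof instead re-runs the entire covariant parabolic bootstrap of Lemma \ref{uheat-est} for the equation satisfied by $\tilde\psi_X$, carrying along the forcing error $\mathcal{E}$ and propagating the $o_{n\to\infty}$ gains through the Duhamel/Gronwall iteration. This can probably be made to work, but it is considerably more elaborate than necessary, and in particular your ``Step 1'' --- estimating $\|\nabla_x^k\mathcal{E}\|_{L^2}$ by Gagliardo--Nirenberg interpolation against Lemma \ref{spread} --- is doing implicitly the same interpolation that the paper applies directly to $\psi_X$ itself. Note also that the claimed base case for the integrated bound is off: with your normalisation $\int_0^1 (s')^{k-1}\|D_x^k\tilde\psi_X\|_{L^2}^2\,ds' = o_{n\to\infty}(1)$, the $k=0$ case reads $\int_0^1 (s')^{-1}\|\tilde\psi_X\|^2\,ds'$, which does not follow from (and is not even made finite by) the uniform-in-$s$ fixed-time bound of Lemma \ref{asss}; the correct $k=0$ integrated quantity is the energy dissipation $\int_0^1\|D_x\tilde\psi_X\|_{L^2}^2\,ds'$, obtained from the energy identity with forcing, not from Lemma \ref{asss} directly. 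This is fixable, but it is another sign that the parabolic bootstrap is the long way around: once one has the fixed-time $o_{n\to\infty}(1)$ bound at $k=0$ (Lemma \ref{asss}) together with $O_{R_n}$-bounds at all orders (Lemma \ref{basic}), interpolation alone closes the argument.
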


\begin{proof}  From Lemma \ref{basic} we have
$$ \int_{|x| \leq R_n} |\nabla_x^k \psi^{(n)}_X(t,s,x)|^2\ dx \lesssim_{k,R_n} s^{-k}$$
for all $k \geq 0$.  Applying Lemma \ref{asss2} and the Gagliardo-Nirenberg inequality \eqref{gag-1} (and smoothly truncating $\psi^{(n)}_X$ to the region $|x| \leq R_n$) we conclude
$$ \int_{|x| \leq R_n/2} |\nabla_x^k \psi^{(n)}_X(t,s,x)|^2\ dx \leq o_{n \to \infty;R_n,k}(s^{-k})$$
and
$$ \sup_{|x| \leq R_n/2} |\nabla_x^k \psi^{(n)}_X(t,s,x)| \leq o_{n \to \infty;R_n,k}(s^{-(k+1)/2})$$
and the claim follows for $R_n$ sufficiently slowly growing.
\end{proof}

\subsection{Elliptic bounds}

As in Section \ref{travel-sec}, we need to show that an elliptic quantity (not containing time derivatives) is small.  The key identity here is
\begin{equation}\label{psinx}
 D^{(n)}_{\overline{X}} \psi^{(n)}_X - \psi^{(n)}_X = - t^2 w^{(n)} + t^2 \psi^{(n)}_s - x_i x_j D^{(n)}_i \psi^{(n)}_j - 2 x_i \psi^{(n)}_i - 4 s x_i D_i \psi^{(n)}_s - 6s \psi^{(n)}_s - 4 s^2 \partial_s \psi^{(n)}_s
\end{equation}
where $w^{(n)}$ is the wave-tension field
$$ D^{(n)}_{\overline{X}} = t D^{(n)}_t - x_i D^{(n)}_i - 2s \partial_s;$$
this identity is easily verified using \eqref{zerotor-frame} and the Leibniz rule $D_\alpha (f \varphi) = (\partial_\alpha f) \varphi + f D_\alpha \varphi$ for scalar fields $f$ and vector fields $\varphi$.

Lemma \ref{asss} already lets us control $\psi^{(n)}_X$, and Corollary \ref{asss2} (and Lemma \ref{basic}) lets us control most of $D^{(n)}_{\overline{X}} \psi^{(n)}_X$ except for the time derivative.  And of course the wave-tension field is controlled by Lemma \ref{wprelim}.  To end up controlling a purely spatial expression, we must thus control the time derivative of $\psi^{(n)}_X$.  This is done by the following analogue to Proposition \ref{Tss-prop}:

\begin{proposition}\label{Tss-self}  Let $\eps > 0$, $R \geq 1$, and $0 < s_0 < 1$.  Then for all sufficiently large $n$ (depending on $\eps,s_0, R$), all $-2 \leq t \leq -1$, and all $s_0 \leq s \leq 1$ we have
\begin{equation}\label{tss-self}
\| \partial_t \psi^{(n)}_X \|_{L^1(|x| \leq R)} \lesssim \eps.
\end{equation}
\end{proposition}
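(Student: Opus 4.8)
The plan is to follow the template of Proposition~\ref{Tss-prop}, with Lemma~\ref{asss} playing the role that \eqref{psivn} played there, the identity \eqref{psinx} playing the role of \eqref{wvv}, and Lemma~\ref{kappalem} again used as a black box to handle the wave-tension field.

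\emph{Reduction to uniform continuity in $t$.} Fix $s\in[s_0,1]$. Since $R_n\to\infty$, we have $R\le R_n$ for all large $n$, so by Lemma~\ref{asss} and Cauchy--Schwarz on the fixed-size region $\{|x|\le R\}$ we get $\|\psi^{(n)}_X(t,s,\cdot)\|_{L^1(|x|\le R)}=o_{n\to\infty;R}(1)$ uniformly in $t\in[-2,-1]$. By the fundamental theorem of calculus in $t$,
\[
\Bigl\| \tfrac{1}{\kappa}\int_{t_1}^{t_2}\partial_t\psi^{(n)}_X(t,s,\cdot)\,dt\Bigr\|_{L^1(|x|\le R)} = o_{n\to\infty;\kappa,R}(1)
\]
whenever $t_1,t_2\in[-2,-1]$ with $t_2-t_1=\kappa$. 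Writing $\partial_t\psi^{(n)}_X$ at a given time as this average plus an average of differences between times at distance $\le\kappa$, it suffices (taking $n$ large depending on $\kappa,\eps,R$) to prove that $\|\delta\partial_t\psi^{(n)}_X\|_{L^1(|x|\le R)}\lesssim\eps$ whenever $t_1,t_2\in[-2,-1]$ obey $|t_1-t_2|\le\kappa$, where $\delta u:=u(t_2)-u(t_1)$ and $\kappa$ is a small parameter to be chosen, at least as small as required by Lemma~\ref{kappalem}.

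\emph{Using the identity \eqref{psinx}.} Since $D^{(n)}_{\overline X}=tD^{(n)}_t-x_iD^{(n)}_i-2s\partial_s$ and $|t|\in[1,2]$ is bounded away from zero, \eqref{psinx} together with $\partial_t=D_t-A_t$ rearranges to express $\partial_t\psi^{(n)}_X$ as $t^{-1}$ times a sum of terms, each a product of a $t$-independent coefficient that is $O_R(1)$ on $\{|x|\le R,\ 0\le s\le1\}$ (the weights $x_i$, $x_ix_j$, $s$, $s^2$) with one of $w^{(n)}$, $\psi^{(n)}_s$, $\partial_s\psi^{(n)}_s$, $\psi^{(n)}_X$, $D^{(n)}_i\psi^{(n)}_X$, $\partial_s\psi^{(n)}_X$, $D^{(n)}_i\psi^{(n)}_j$, $D^{(n)}_i\psi^{(n)}_s$, $A^{(n)}_t\psi^{(n)}_X$. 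Applying $\delta$, the term involving $w^{(n)}$ contributes $\lesssim|t_2-t_1|\,\|w^{(n)}(t_1)\|_{L^1_x(\R^2)}+\|\delta w^{(n)}\|_{L^1_x(\R^2)}\lesssim\kappa+\eps$ by Lemma~\ref{wprelim} and Lemma~\ref{kappalem}. For every other term $T$, $\|\delta T\|_{L^1(|x|\le R)}\le\kappa\sup_{t_*\in[t_1,t_2]}\|\partial_t T(t_*,s,\cdot)\|_{L^1(|x|\le R)}$, and each such supremum is $O_{R,s_0}(1)$: the fields $\psi^{(n)}_x,\psi^{(n)}_t,\psi^{(n)}_s,A^{(n)}_x,A^{(n)}_t$ and their spatial and $\partial_s$ derivatives, together with their $\partial_t$-derivatives, are controlled in $L^2_x$ — hence, on $\{|x|\le R\}$, in $L^1$ — by Lemma~\ref{basic} and Lemma~\ref{time}, with any negative power of $s$ absorbed into $O_{s_0}(1)$ since $s\ge s_0$; the terms involving $\psi^{(n)}_X$, $D^{(n)}_i\psi^{(n)}_X$, $\partial_s\psi^{(n)}_X$ reduce to the same inputs after expanding $\psi^{(n)}_X=t\psi^{(n)}_t+x\cdot\psi^{(n)}_x+2s\,\psi^{(n)}_s$ and differentiating (using \eqref{dst}, \eqref{psix-heat}, \eqref{psis-heat} to trade $\partial_s$ for spatial derivatives, and Corollary~\ref{asss2} where the sharper $o_{n\to\infty}$-bounds are convenient). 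Choosing $\kappa$ small depending on $\eps,R,s_0$ yields \eqref{tss-self}.

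\emph{Main obstacle.} The only real difficulty is the bookkeeping in the second step: one must check that after rearranging \eqref{psinx} and differentiating in $t$, every resulting term genuinely falls under Lemma~\ref{basic}, Lemma~\ref{time} (or Corollary~\ref{asss2}), paying particular attention to $D^{(n)}_i\psi^{(n)}_X$ and $\partial_s\psi^{(n)}_X$, which mix heat-temporal, spatial, and wave-temporal differentiation. A secondary point of care is uniformity: all estimates must hold uniformly in $t\in[-2,-1]$ and $s\in[s_0,1]$, with the $s$-dependence absorbed into $O_{s_0}(1)$; this is precisely why the hypothesis $s_0>0$ appears, since the $\partial_t$-bounds of Lemma~\ref{time} degenerate like a negative power of $s$ as $s\to0$.
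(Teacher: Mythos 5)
Your proposal follows the same overall strategy as the paper's proof: reduce via Lemma \ref{asss} and the fundamental-theorem-of-calculus averaging trick to bounding $\|\delta\partial_t\psi^{(n)}_X\|_{L^1(|x|\le R)}$, expand $\partial_t\psi^{(n)}_X$ via the identity \eqref{psinx} (rearranged using $D_t=\partial_t+A_t$ and dividing by $t$), handle the $w^{(n)}$ term via Lemma \ref{kappalem}, and control the remaining terms via Lemma \ref{basic} and Lemma \ref{time}. The one place where you diverge slightly is in the treatment of the $\psi^{(n)}_X$ terms ($A_t\psi_X$, $\partial_x\psi_X$, $A_x\psi_X$, $\partial_s\psi_X$, $\psi_X$): the paper dispatches these directly by noting that $\psi^{(n)}_X$ itself is $o_{n\to\infty}(1)$ by Lemma \ref{asss}/Corollary \ref{asss2} (so no fundamental-theorem-of-calculus step is needed for them), whereas you propose expanding $\psi^{(n)}_X=t\psi_t+x\cdot\psi_x+2s\psi_s$ and running the same $\partial_t$-boundedness/FTC argument as for the other terms. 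Both routes work — yours requires the extra check that $\partial_t\psi_X$ is merely $O_{R,s_0}(1)$ (not the smallness you are trying to prove, so there is no circularity), while the paper's is a little cleaner since it exploits the smallness already available. This is a cosmetic rather than substantive difference; the argument is correct.
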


\begin{proof}  
As in the proof of Proposition \ref{Tss-prop}, we fix $s_0 \leq s \leq 1$, and omit the $n$ superscripts and the explicit dependence on the $s$ variable.  From Lemma \ref{asss} we already have
$$ \| \psi_X(t) \|_{L^1_\loc(|x| \leq R)} \lesssim o_{n \to \infty;R}(1).$$
Using the fundamental theorem of calculus as in Proposition \ref{Tss-prop}, it thus suffices to show that
$$ \| \delta \partial_t \psi_X \|_{L^1(|x| \leq R)} \lesssim \eps$$
in the notation of the proof of Lemma \ref{kappalem}, whenever $t_1,t_2 \in [-2,-1]$ is such that $|t_2-t_1| \leq \kappa$ for some sufficiently small $\kappa$ (independent of $n$), and $n$ is sufficiently large depending on $\eps,R,\kappa$.

From \eqref{psinx} we can express $\partial_t \psi_X$ as a linear combination of the expressions
$$ A_t \psi_X, \partial_x \psi_X, A_x \psi_X, \partial_s \psi_X, \psi_X, w, \psi_s, \partial_x \psi_x, A_x \psi_x, \psi_x, \partial_x \psi_s, A_x \psi_s, \partial_s \psi_s$$
where the coefficients depend in a smooth manner on $s, t, x$.  We thus need to control the $L^1(|x| \leq R)$ norm of $\delta$ applied to all of the above expressions.  Lemma \ref{kappalem} lets one deal with the $w$ term (if $\kappa$ is small enough).  All the terms involving $\psi_X$ can be handled by Lemma \ref{asss} or Corollary \ref{asss2}, together with Lemma \ref{basic}.  For all the other terms, it suffices by the fundamental theorem of calculus to obtain a bound of $O_{s_0,R}(1)$ on the $L^1(|x| \leq R)$ norms of the time derivatives of these terms, but this follows from Lemma \ref{time} and Lemma \ref{basic}.
\end{proof}

\begin{corollary}\label{star-cross}  If $\eps > 0$, and $S \geq 1$ is sufficiently large depending on $\eps$, then
$$
\| t^2 \psi^{(n)}_s - x_i x_j D^{(n)}_i \psi^{(n)}_j - 2 x_i \psi^{(n)}_i (1/S) \|_{L^1(|x| \leq 10)} \lesssim \eps$$
for all $-2 \leq t \leq -1$ and all sufficiently large $n$ (depending on $\eps, S$).
\end{corollary}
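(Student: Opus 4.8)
The plan is to read Corollary \ref{star-cross} as the elliptic payoff of the self-similarity identity \eqref{psinx}, in exact analogy with how in Section \ref{travel-sec} the identity \eqref{wvv} was used to extract the elliptic quantity $\eta^{ij} D_i \psi_j$. Starting from \eqref{psinx}, I would solve for the target quantity: rearrange to write
\begin{align*}
t^2 \psi^{(n)}_s - x_i x_j D^{(n)}_i \psi^{(n)}_j - 2 x_i \psi^{(n)}_i
&= D^{(n)}_{\overline{X}} \psi^{(n)}_X - \psi^{(n)}_X + t^2 w^{(n)} \\
&\quad + 4 s x_i D_i \psi^{(n)}_s + 6s \psi^{(n)}_s + 4 s^2 \partial_s \psi^{(n)}_s,
\end{align*}
and then estimate the $L^1(|x| \leq 10)$ norm of the right-hand side at the fixed heat-time $s = 1/S$, for $S$ large depending on $\eps$ and $n$ large depending on $\eps$ and $S$. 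Each term on the right is to be shown to be $O(\eps)$ (or $o_{n\to\infty}(1)$, which is the same thing once $n$ is large enough).

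The term-by-term bookkeeping goes as follows. The wave-tension term $t^2 w^{(n)}(1/S)$ is handled by Lemma \ref{wsmall}: choosing $S$ large forces $\|w^{(n)}(s,t)\|_{L^1_x(\R^2)} \lesssim \eps$ for $s \leq 1/S$ and all large $n$, and on $-2 \leq t \leq -1$ the factor $t^2$ is $O(1)$. The purely $\psi_X$-type contributions — namely $\psi^{(n)}_X$ itself and the pieces of $D^{(n)}_{\overline{X}} \psi^{(n)}_X = t D_t \psi_X - x_i D_i \psi_X - 2s \partial_s \psi_X$ that do \emph{not} involve a bare time derivative — are all controlled on $|x| \leq R_n$ (hence on $|x| \leq 10$, once $R_n > 10$) by Lemma \ref{asss} and Corollary \ref{asss2}: these give $\|\psi^{(n)}_X\|_{L^2_x(|x|\le R_n)} = o_{n\to\infty}(1)$ and $\|\nabla_x^k \psi^{(n)}_X\|_{L^2_x(|x|\le R_n)} = o_{n\to\infty}(s^{-k})$, and the covariant derivatives are converted to ordinary ones using \eqref{ax-infty} from Proposition \ref{abound}; at the fixed scale $s = 1/S$ these are all $o_{n\to\infty;S}(1)$. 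The term $t D_t \psi_X = t \partial_t \psi_X + t A_t \psi_X$ splits into the time-derivative piece $t\partial_t\psi_X$, which is exactly what Proposition \ref{Tss-self} delivers (with $R = 10$, $s_0 = 1/S$: $\|\partial_t \psi^{(n)}_X\|_{L^1(|x|\le 10)} \lesssim \eps$ for large $n$), and $t A_t \psi_X$, handled by Lemma \ref{basic} (bounding $A_t$ in $L^\infty$ at $s = 1/S$) times the $L^2$ control on $\psi_X$. Finally the three explicit $\psi_s$-terms $4sx_iD_i\psi_s$, $6s\psi_s$, $4s^2\partial_s\psi_s$ at $s = 1/S$: these carry positive powers of $s$, and by Lemma \ref{basic} one has $\|\psi_s(s)\|_{L^2_x}\lesssim s^{-1/2}$, $\|D_x\psi_s(s)\|_{L^2_x}\lesssim s^{-1}$, $\|\partial_s\psi_s(s)\|_{L^2_x}\lesssim s^{-3/2}$ (the last via \eqref{psis-heat}), together with the weight $x_i = O(1)$ on $|x|\le 10$, so each of these is $O(s^{1/2}) = O(S^{-1/2}) \lesssim \eps$ once $S$ is large. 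Collecting, the right-hand side is $\lesssim \eps$ in $L^1(|x|\le 10)$, which is the claim.

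The main obstacle is making Proposition \ref{Tss-self} usable here without circularity, i.e. confirming that the argument is genuinely \emph{elliptic} at the end: the time-derivative control in Proposition \ref{Tss-self} is itself purchased via \eqref{psinx} plus the uniform continuity Lemma \ref{kappalem} and the time-derivative bounds of Lemma \ref{time}, so one must be careful that the bootstrap closes — the smallness of $\partial_t \psi^{(n)}_X$ comes from integrating in $t$ the already-established smallness of $\psi^{(n)}_X$ (Lemma \ref{asss}) against a \emph{bounded} (not small) bound on $\partial_t^2$, exactly as in Proposition \ref{Tss-prop}. A secondary technical point is the order of quantifiers: $R_n \to \infty$ must be slow enough that all of Lemmas \ref{spread}, \ref{asss}, Corollary \ref{asss2} hold simultaneously, and then $S$ is chosen depending on $\eps$ (large enough for the $w$ term and the explicit $s$-powers), and only then is $n$ sent to infinity; since $R_n \to \infty$ we will have $R_n > 10$ for all large $n$, so restricting the weights to $|x| \leq 10$ loses nothing. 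No new machinery is needed — every estimate invoked is already in the paper — so the proof is a careful assembly rather than a new idea.
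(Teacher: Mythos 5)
The plan and decomposition are the same as the paper's, and most of the term-by-term bookkeeping (the $w$ term via Lemma \ref{wsmall}, the $\psi_X$-bearing terms via Lemma \ref{asss} and Corollary \ref{asss2}, the time-derivative piece via Proposition \ref{Tss-self}) matches what the paper does. But there is one arithmetic slip that turns out to hide a genuine gap, and it is precisely the point the paper spends the second half of its proof on.

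You claim that all three of $4sx_iD_i\psi_s$, $6s\psi_s$, $4s^2\partial_s\psi_s$ are $O(s^{1/2})$ in $L^1(|x|\le 10)$ using Lemma \ref{basic}. That is right for $s\psi_s$ (norm $\lesssim s\cdot s^{-1/2}=s^{1/2}$) and for $s^2\partial_s\psi_s$ (norm $\lesssim s^2\cdot s^{-3/2}=s^{1/2}$), but \emph{not} for $sx_iD_i\psi_s$: here $\|D_x\psi_s(s)\|_{L^2_x}\lesssim s^{-1}$, so the contribution is $\lesssim s\cdot s^{-1}=O(1)$, which does not decay as $S\to\infty$. Lemma \ref{basic} alone thus fails to make this term small; your assertion that it is $O(s^{1/2})$ is incorrect. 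This is exactly the term the paper calls out ("Lemma \ref{basic} just barely fails to establish this") and then handles by a separate compactness argument: since $\iota(\phi^{(n)}[t])$ is uniformly convergent in $\Energy$, the set $\{\psi^{(n)}_s(t)\}$ is precompact in $L^2(\R^+\times\R^2)$, hence uniformly square-integrable, giving $\int_0^{s_0}\int_{\R^2}|\psi_s|^2\,dx\,ds = o_{s_0\to0}(1)$ uniformly in $n,t$; then the parabolic energy decay \eqref{energy-ineq} and a pigeonhole pick out a good $s$-slice on which $\|D_x\psi_s\|_{L^2_x}=o_{s_0\to0}(1)$, and a bound $\partial_s\int|D_x\psi_s|^2 = O(s^{-1})$ from Lemma \ref{basic} propagates this to the slice $s=1/S$ itself. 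Your proposal, as written, is missing this step entirely, and there is no substitute argument in the proposal to make the $sD_x\psi_s$ contribution small — so the proof does not close. Everything else is fine, but this one term needs the extra compactness input.
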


\begin{proof}  We suppress $n$ and write $s :=1/S$.  We use \eqref{psinx} to expand the expression inside the norm as a linear combination of
$$ \partial_t \psi_X, A_t \psi_X, \partial_x \psi_X, A_x \psi_X, s \partial_s \psi_X, w, D_x \psi_s, s A_x \psi_s, s \psi_s, s^2 \partial_s \psi_s.$$
where the coefficients depend smoothly on $t$ and $x$ (but we retain the $s$ dependence in order to exploit the smallness of $s$).  The $\partial_t \psi_X$ term is acceptable by Proposition \ref{Tss-self}.  All other terms involving $\psi_X$ are acceptable by Lemma \ref{asss} or Corollary \ref{asss2}, together with Lemma \ref{basic}.  The $w$ term is acceptable by Proposition \ref{wsmall} if we take $S$ large enough.    From Lemma \ref{basic} the last two terms
$s \psi_s, s^2 \partial_s \psi_s$ have an $L^2(\R^2)$ norm of $O(s^{1/2})$ and are thus acceptable if $S$ is large enough.  The only remaining term to handle is $s D_x \psi_s$.  It would suffice (by H\"older's inequality) to show that
$$ \| s D_x \psi_s\|_{L^2_x(\R^2)} \lesssim o_{s \to 0}(1)$$
uniformly in $n,t$; note that Lemma \ref{basic} just barely fails to establish this.  For this, we return back to construction of the energy space.  Observe from construction that the set $\{ \phi^{(n)}[t]: n \geq 1, -2 \leq t \leq -1 \}$ is precompact in $\Energy$, and thus by \eqref{l-def} the functions $\{ \psi^{(n)}_s(t): n \geq 1, -2 \leq t \leq -1 \}$ is precompact in the space $L^2(\R^+ \times \R^2, ds dx)$.  In particular, these functions are uniformly square-integrable, and thus by monotone convergence we have
$$ \int_0^{s_0} \int_{\R^2} |\psi_s|^2\ dx ds = o_{s_0 \to 0}(1)$$
uniformly in $n, t$.  But from \eqref{psis-heat}, \eqref{energy-ineq} we have
$$ \partial_s \int_{\R^2} |\psi_s|^2\ dx \leq - 2 \int_{\R^2} |D_x \psi_s|^2\ dx$$
whence we conclude
$$ \int_{s_0/2}^{s_0} \int_{\R^2} |D_x \psi_s|^2\ dx ds \lesssim o_{s_0 \to 0}(s_0)$$
and so by the pigeonhole principle there exists $s = (1 - o_{s_0 \to 0}(1)) s_0$ for each $n$ such that
$$ \int_{\R^2} |D_x \psi_s(s)|^2\ dx \lesssim o_{s_0 \to 0}(1).$$
On the other hand, from Lemma \ref{basic} and the Leibniz rule we see that
$$ \partial_s \int_{\R^2} |D_x \psi_s(s)|^2\ dx = O(s^{-1})$$
and so the claim follows from the fundamental theorem of calculus.
\end{proof}

\subsection{Wrapping up}

We are now almost ready to conclude the proof of \eqref{limf}.  We first need to take the estimate
\eqref{ehu-hyper}, which is taking place at $s=0$, and move it to a slightly larger value of $s$.

\begin{lemma}\label{hyperlem}  If $\eps > 0$, and $S \geq 1$ is sufficiently large depending on $\eps$, then
\begin{equation}\label{ehu-hyper-slop}
\int_{-2}^{-1} \int_{|x| \leq |t|-\eps} \frac{(t^2-r^2)^{1/2}}{t} |\psi^{(n)}_r(t,1/S,x)|^2 + \frac{t}{r^2(t^2-r^2)^{1/2}} |\psi^{(n)}_\theta(t,1/S,x)|^2\ dx dt \lesssim E
\end{equation}
whenever $n$ is sufficiently large depending on $\eps, S$.
\end{lemma}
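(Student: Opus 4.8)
The plan is to start from the hyperbolic energy bound \eqref{ehu-hyper}, which is an assertion about the heat-time $s=0$, and propagate it forward to $s=1/S$. Strictly, \eqref{ehu-hyper} concerns the self-similar limit $\phi$, but by the uniform convergence \eqref{gramcon} of $\Gram^{(n)}$ it transfers to the approximants $\phi^{(n)}$ at $s=0$ up to an error $o_{n\to\infty}(1)$. Two observations drive the propagation. First, from \eqref{psi-evolve}, \eqref{ps-frame} and integration by parts one has the instantaneous energy identity $\partial_s\int_{\R^2}|\psi^{(n)}_x(t,s,\cdot)|^2\,dx=-2\int_{\R^2}|\psi^{(n)}_s(t,s,\cdot)|^2\,dx\le0$, so $\int_{\R^2}|\psi^{(n)}_x(t,s,x)|^2\,dx\le 2\E(\phi^{(n)})\lesssim E$ uniformly in $s\ge0$ and $t\in[-2,-1]$. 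Second, the angular field $\psi^{(n)}_\theta=x_2\psi^{(n)}_1-x_1\psi^{(n)}_2$ itself solves the covariant heat equation \eqref{uheat}: using $D_iD_i(x_2\psi_1-x_1\psi_2)=x_2D_iD_i\psi_1-x_1D_iD_i\psi_2+2(D_2\psi_1-D_1\psi_2)$, the zero-torsion identity \eqref{zerotor-frame} (which annihilates the last term), and the bilinearity of $\wedge$ applied to \eqref{psix-heat}. Hence, exactly as in \eqref{psix-delta}--\eqref{psix-compar}, Lemma \ref{dilemma} and Corollary \ref{compar} give the pointwise comparison $|\psi^{(n)}_\theta(t,s,x)|\le e^{s\Delta}|\psi^{(n)}_\theta(t,0,\cdot)|(x)$.

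Next I would split $\{|x|\le|t|-\eps\}$ into the bulk region $\{t^2-r^2\gtrsim1\}$ and the near-cone annulus $\{t^2-r^2\lesssim1\}$. On the whole region the radial weight $(t^2-r^2)^{1/2}/|t|$ is $\bigO(1)$, so the radial part of the left side of \eqref{ehu-hyper-slop} is $\lesssim\int_{-2}^{-1}\int_{\R^2}|\psi^{(n)}_x(t,1/S,x)|^2\,dx\,dt\lesssim E$ by the first observation. On the bulk the angular weight is $\bigO(1/r^2)$, so using $|\psi^{(n)}_\theta|\le r|\psi^{(n)}_x|$ (which has no singularity at the origin) the bulk angular part is also $\lesssim E$. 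It remains only to control the near-cone angular contribution, on which $r\sim1$ and the weight is comparable to $(t^2-r^2)^{-1/2}$.

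For this I decompose the annulus into the dyadic shells $\Sigma_j:=\{|t|-2^{-j}\le|x|\le|t|-2^{-j-1}\}$ for $j$ with $2^{-j}\lesssim1$ and $2^{-j}\ge\eps$, on which $(t^2-r^2)^{1/2}\sim2^{-j/2}$. Using the pointwise comparison for $\psi^{(n)}_\theta$, the Jensen inequality $(e^{s\Delta}g)^2\le e^{s\Delta}(g^2)$, the self-adjointness of $e^{s\Delta}$, and choosing $S$ large depending on $\eps$ so that the heat length $S^{-1/2}$ is much smaller than every shell width $2^{-j}\ge\eps$, one obtains $\int_{\Sigma_j}|\psi^{(n)}_\theta(t,1/S,x)|^2\,dx\lesssim\int_{\widetilde\Sigma_j}|\psi^{(n)}_\theta(t,0,x)|^2\,dx+(\text{far heat-kernel tails})$, where $\widetilde\Sigma_j$ is a bounded dilation of $\Sigma_j$ still lying within distance $\bigO(2^{-j})$ of the light cone; the tail term is harmless, being Gaussian-small in $S2^{-2j}\ge S\eps^2$ times $\int_{|x|\le3}|\psi^{(n)}_\theta(t,0,x)|^2\,dx\lesssim1$ (from \eqref{suptx}, valid once $R_n\ge3$), together with a contribution from $\{|x|\ge3\}$ absorbed by a further slow-growth requirement on $R_n$. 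By \eqref{ehu} and \eqref{gramcon} one has $\int_{-2}^{-1}\int_{\widetilde\Sigma_j}|\psi^{(n)}_\theta(t,0,x)|^2\,dx\,dt\lesssim 2^{-j}(E+o_{n\to\infty}(1))$, so multiplying by the weight $\sim2^{j/2}$ and summing over $j$ (using $\sum_j2^{-j/2}\lesssim1$), the near-cone angular contribution at $s=1/S$ is $\lesssim E+o_{n\to\infty;\eps,S}(1)$, hence $\lesssim E$ once $n$ is large enough depending on $\eps,S$. Combining with the radial and bulk bounds yields \eqref{ehu-hyper-slop}.

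The main obstacle is precisely the angular contribution near the light cone. The crude estimate $|\psi^{(n)}_\theta|\le r|\psi^{(n)}_x|$ discards a full power of $(t^2-r^2)^{-1/2}$ and produces only an $\eps$-dependent bound; one genuinely needs the near-cone decay of $\psi_\theta$ supplied by the Hopf-differential computation (\eqref{ehu}) \emph{and} the fact that this decay survives a bounded amount of heat flow, which is exactly what the parabolic comparison principle for $\psi_\theta$ gives once one has noticed that $\psi_\theta$ solves the covariant heat equation. The remaining care is with the order of quantifiers: $S$ must be taken large in terms of $\eps$ so that parabolic smoothing at scale $S^{-1/2}$ does not blur one dyadic shell into the next, and only afterwards is $n$ sent to infinity.
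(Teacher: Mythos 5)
Your proof is correct, but it proceeds by a genuinely different route than the paper's, and it is worth contrasting the two. The paper's proof is short: it forms the cutoff quadratic form $F = a_{ij}\,\psi_i\cdot\psi_j$, where $a_{ij}(t,x)$ carries both the hyperbolic weights and a smooth cutoff to $\{|x|\le|t|-\eps/2\}$ (and is therefore smooth, compactly supported, and positive semi-definite), and then uses \eqref{psix-heat} and the positive semi-definiteness of $a$ to derive
$\partial_s F \le D_kD_kF - 2(\partial_k a_{ij})\partial_k(\psi_i\cdot\psi_j) - (\Delta a_{ij})\psi_i\cdot\psi_j$, which after integration in $x$ and then in $s\in[0,1/S]$ yields the claim immediately, at the cost of an $O_{E,\eps}(1/S)$ error that choosing $S$ large in $\eps$ absorbs. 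Your proof instead isolates the observation (correct, and rather elegant) that $\psi_\theta = x_2\psi_1 - x_1\psi_2$ itself solves the covariant heat equation \eqref{uheat}, because the potentially troublesome first-order term $2(D_2\psi_1-D_1\psi_2)$ produced by $D_iD_i$ acting on the polynomial coefficients vanishes identically by zero torsion \eqref{zerotor-frame}; this makes the full pointwise comparison $|\psi_\theta(s)|\le e^{s\Delta}|\psi_\theta(0)|$ available. You then propagate the $s=0$ near-cone decay \eqref{ehu} forward in heat-time by decomposing the annulus into dyadic shells $\Sigma_j$, using Jensen and self-adjointness of $e^{s\Delta}$, and exploiting that $S^{-1/2}\ll\eps\le 2^{-j}$ so the heat kernel does not blur one shell into the next, finally summing $\sum_j 2^{j/2}\cdot 2^{-j}\lesssim 1$. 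Both arguments need $S\gtrsim_\eps 1$ for essentially the same reason (to control the amount of parabolic smoothing relative to the cutoff's thinnest scale $\eps$), and both then take $n$ large to pass from the limit $\phi$ to the approximants via \eqref{gramcon}. The paper's proof is shorter and avoids the shell-by-shell bookkeeping; yours is more hands-on but exposes the additional structure that the angular component is itself a supersolution of the linear heat equation, which is not visible in the paper's argument. The tail estimates and the order of quantifiers in your write-up are handled correctly.
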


\begin{proof}  We suppress the index $n$.  Let $\eta(t,x)$ be a cutoff function on $[-2,-1] \times \R^2$ which equals $1$ when $|x| \leq |t|-\eps$ and vanishes when $|x| \geq |t|-\eps/2$.  Let 
$$ F(t,s,x) := \eta(t,x) [\frac{(t^2-r^2)^{1/2}}{t} |\psi_r(t,s,x)|^2 + \frac{t}{r^2(t^2-r^2)^{1/2}} |\psi_\theta(t,s,x)|^2].$$
From \eqref{ehu-hyper} and \eqref{gramcon} we already know that
$$
\int_{-2}^{-1} \int_{\R^2} F(t,0,x)\ dx dt \lesssim E$$
if $n$ is large enough depending on $\eps$.  On the other hand, we can express $F$ in the form
$$ F = a_{ij} \psi_i \cdot \psi_j$$
for some smooth compactly supported positive semi-definite function $a_{ij}(t,x)$ on $[-2,-1] \times \R^2$ depending on $\eps$.  From \eqref{psix-heat} and the positive definite nature of $a$ we see that
$$ \partial_s F \leq D_k D_k F - 2 (\partial_k a_{ij}) \partial_k(\psi_i \cdot \psi_j) - (\Delta a_{ij}) \psi_i \cdot \psi_j$$
(this can be seen for instance by diagonalising $a$ at any given point).  Integrating this, we conclude that
$$ \partial_s \int_{\R^2} F\ dx \leq \int_{\R^2} (\Delta a_{ij}) \psi_i \cdot \psi_j\ dx.$$
Since $a$ is smooth compactly supported, we conclude from energy conservation that
$$ \partial_s \int_{\R^2} F\ dx \leq O_{E,a}(1).$$
The claim now follows from the fundamental theorem of calculus.
\end{proof}

Now let $\eps_0, \delta > 0$ be arbitrary.  The energy densities $\{ \T_{00}[t]: -2 \leq t \leq 1 \}$ form a compact subset of $L^1(\R^2)$, and hence there exists $0 < \eps \leq \eps_0$ (depending on $\delta$) such that
\begin{equation}\label{epso2}
\int_{|t| - 2\eps \leq |x| \leq |t|} \T_{00}(t,x)\ dx \lesssim \delta^2
\end{equation}
for all $-2 \leq t \leq -1$.

Now let $S\geq 1$ be sufficiently large depending on $\eps,\delta$, and let $n$ be sufficiently large depending on $\eps, \delta, S$.  
From \eqref{ehu-hyper-slop} and the pigeonhole principle we can find a time $t = t_n \in [-2,-1]$ such that
\begin{equation}\label{hyper-1}
\int_{|x| \leq |t|-\eps/2} \frac{(t^2-r^2)^{1/2}}{t} |\partial_r \phi^{(n)}(t,1/S,x)|_{(\phi^{(n)})^* h}^2 + \frac{t}{r^2(t^2-r^2)^{1/2}} |\partial_\theta \phi^{(n)}(t,1/S,x)|_{(\phi^{(n)})^* h}^2\ dx dt \lesssim E.
\end{equation}
From Corollary \ref{star-cross} (with $\eps$ replaced by $\delta (\eps')^{1/2}$) we also have
\begin{equation}\label{hyper-2}
\int_{|x| \leq |t|-\eps/2} \frac{1}{|t|(t^2-r^2)^{1/2}} |(t^2 (\phi^{(n)})^* \nabla)_i \partial_i \phi^{(n)} - x_j x_k ((\phi^{(n)})^* \nabla)_j \partial_k \phi^{(n)} - 2 x_j \partial_j \phi^{(n)} |_{(\phi^{(n)})^* h}\ dx \lesssim \delta.
\end{equation}
Meanwhile, from \eqref{epso2} and \eqref{gramcon} we thus have
$$
\int_{|t| - 2\eps \leq |x| \leq |t|} \T^{(n)}_{00}(t,x)\ dx \lesssim \delta^2$$
and in particular
$$
\int_{|t| - 2\eps \leq |x| \leq |t|} |\partial_x \phi^{(n)}(t,0,x)|^2_{(\phi^{(n)})^* h}\ dx \lesssim \delta^2.$$
Arguing as in the proof of Lemma \ref{hyperlem} we conclude
$$
\int_{|t| - \eps \leq |x| \leq |t|-\eps/2} |\partial_x \phi^{(n)}(t,1/S,x)|^2_{(\phi^{(n)})^* h}\ dx \lesssim \delta^2.$$
Thus by the pigeonhole principle we can find $|t|-\eps \leq r_0 \leq |t|-\eps/2$ such that
$$
\int_{|x| = r_0} |\partial_x \phi^{(n)}(t,1/S,x)|^2_{(\phi^{(n)})^* h}\ d\sigma(x) \lesssim \delta^2 / \eps$$
(where $d\sigma$ is uniform probability measure) and hence by H\"older's inequality
$$ \frac{(t^2-r_0^2)^{1/2}}{t^2} \int_{|x|=r_0} |x_i \partial_i \phi^{(n)}|_{(\phi^{(n)})^* h}\ d\sigma \leq \delta.$$
We have now have all the hypotheses for Corollary \ref{nlpoin-hyper}, and conclude that
$$
 \int_{\overline{\D_{r_0}}} |\partial_r \phi^{(n)}|_{(\phi^{(n)})^* h} + \frac{(t^2-r^2)^{1/2}}{t^2 r} |\partial_\theta \phi^{(n)}|_{(\phi^{(n)})^* h}\ dx \lesssim_E \delta^{1/4}.$$
In particular this implies that
$$
 \liminf_{n \to \infty} \inf_{t \in [-2,-1]} \int_{|x| \leq |t|-\eps_0} \T_{00}^{(n)}(t,x)\ dx 
 \lesssim_{\eps_0, E} \delta^{1/4}.$$
Since $\delta$ can be arbitrary, we obtain \eqref{limf} as desired.  The proof of Theorem \ref{selfsim}(ii) is now complete.

\end{document}